\documentclass[reqno,11pt]{article}
\usepackage{graphicx, amsmath, amsthm,amssymb}

\def\normH#1#2{\norm{#2}_{H_0^{#1}}}
\def\TKr{T_{\math{Kr}}}
\def\V{\Vhat}

\input{stochbif.sty}


\begin{document}


\title{Sharp estimates for metastable lifetimes\\ in parabolic SPDEs: 
Kramers' law and beyond
}
\author{Nils Berglund and Barbara Gentz\footnote{Research supported by CRC 701 \lq\lq Spectral Structures and Topological Methods in Mathematics\rq\rq.}}
\date{}

\maketitle

\begin{abstract}
We prove a Kramers-type law for metastable transition times for a class of
one-dimensional parabolic stochastic partial differential equations (SPDEs) with
bistable potential. The expected transition time between local minima of the
potential energy depends exponentially on the energy barrier to overcome,
with an explicit prefactor related to functional determinants. Our results cover
situations where the functional determinants vanish owing to a bifurcation,
thereby rigorously proving the results of formal computations announced
in~\cite{BG09a}. The proofs rely on a spectral Galerkin approximation of the
SPDE by a finite-dimensional system, and on a potential-theoretic approach to
the computation of transition times in finite dimension.
\end{abstract}

\leftline{\small{\it 5 February 2012.\/} Revised {\it 29 November 2012.\/} }
\leftline{\small 2010 {\it Mathematical Subject Classification.\/} 
60H15, 
35K57 (primary),    
60J45, 
37H20 (secondary) 
}
\noindent{\small{\it Keywords and phrases.\/}
Stochastic partial differential equations, 
parabolic equations,
reaction--diffusion equations, 
meta\-stability, 
Kramers' law, 
exit problem, 
transition time,
large deviations, 
Wentzell--Freidlin theory, 
potential theory, 
capacities,
Galerkin approximation, 
subexponential asymptotics,
pitchfork bifurcation.
}  

\tableofcontents


\section{Introduction}
\label{sec_intro}

Metastability is a common physical phenomenon, in which a system
quickly moved across a first-order phase transition line takes a long time to
settle in its equilibrium state. This behaviour has been established rigorously
in two main classes of mathematical models. The first class consists of lattice
models with Markovian dynamics of Metropolis type, such as the Ising model with
Glauber dynamics or the lattice gas with Kawasaki dynamics
(see~\cite{denHollander04,OlivieriVares05} for recent surveys). 

The second class of models consists of stochastic differential equations driven
by weak Gaussian white noise. For dissipative drift, sample paths of such
equations tend to spend long time spans near attractors of the system without
noise, with occasional transitions between attractors. In the particular case
where the drift term is given by minus the gradient of a potential, the
attractors are local minima of the potential, and the mean transition time
between local minima is governed by Kramers' law~\cite{Eyring,Kramers}: In the
small-noise limit, the transition time is exponentially large in the potential
barrier height between the minima, with a multiplicative prefactor depending on
the curvature of the potential at the local minimum the process starts in and at
the highest saddle crossed during the transition. While the exponential
asymptotics was proved to hold by Freidlin and Wentzell using the theory of
large deviations~\cite{VF69,FW}, the first rigorous proof of Kamers' law,
including the prefactor, was obtained more recently by Bovier, Eckhoff, Gayrard
and Klein~\cite{BEGK,BGK} via a potential-theoretic approach.
See~\cite{Berglund_Kramers_11} for a recent review.

The aim of the present work is to extend Kramers' law to a class of parabolic
stochastic partial differential equations of the form 
\begin{equation}
 \label{SPDEin}
\6u_t(x) = \bigbrak{\Delta u_t(x) - U'(u_t(x))} \6t + \sqrt{2\eps}\6W(t,x)\;, 
\end{equation} 
where $x$ belongs to an interval $[0,L]$, $u(x)$ is real-valued and $W(t,x)$
denotes space-time white noise. If the potential $U$ has several local minima
$u_i$, the deterministic limiting system admits several stable stationary
solutions: these are simply the constant solutions, equal  to $u_i$ everywhere.
It is natural to expect that the transition time between these stable solutions
is also governed by a formula of Kramers type. In the case of the double-well
potential $U(u)=\frac14u^4 - \frac12 u^2$, the exponential asymptotics of the
transition time was determined and proved to hold by Faris and
Jona-Lasinio~\cite{Faris_JonaLasinio82}. The prefactor was computed formally, by
analogy with the finite-dimensional case, by Maier and
Stein~\cite{Maier_Stein_PRL_01,Maier_Stein_SPIE_2003,Stein04}, except for
particular interval lengths $L$ at which Kamers' formula breaks down because of
a bifurcation. The behaviour near bifurcation values has been derived
formally in~\cite{BG09a}. 

In the present work, we provide a full proof for Kramers' law for SPDEs of the
form~\eqref{SPDEin}, for a general class of double-well potentials $U$. The
results cover all finite positive values of the interval length, and thus
include bifurcation values. One of the main ingredients 
of the proof is a result by Bl\"omker and Jentzen on spectral Galerkin
approximations~\cite{Blomker_Jentzen_09}, which allows us to reduce the system
to a finite-dimensional one. This reduction requires some a priori bounds on
moments of transition times, which we obtain by large-deviation techniques
(though it might be possible  to obtain them by other methods). Transition
times for the finite-dimensional equation can be accurately estimated by the
potential-theoretic approach of~\cite{BEGK,BGK}, provided one can control
capacities uniformly in the dimension. Such a control has been achieved
in~\cite{BBM2010} in a particular case, the so-called synchronised regime of a
chain of coupled bistable particles introduced in~\cite{BFG06a,BFG06b}. 
Part of the work of the present paper consists in establishing such a control
for a general class of systems. We note that although we limit ourselves to the
one-dimensional case, there seems to be no fundamental obstruction to extending
the technique to SPDEs in higher dimensions driven by a $Q$-Wiener process. 
Very recently, Barret has independently obtained an
alternative proof of Kramers' law for non-bifurcating one-dimensional
SPDEs, using a different approach based on approximations by finite
differences~\cite{Barret_2012}.

The remainder of this paper is organised as follows. Section~\ref{sec_results}
contains the precise definition of the model, an overview of needed properties
of the deterministic system, and the statement of all results.
Section~\ref{sec_proof} outlines the essential steps of the proofs. Technical
details of the proofs are deferred to subsequent sections. Section~\ref{sec_det}
contains the needed estimates on the deterministic partial differential
equation, including an infinite-dimensional normal-form analysis of
bifurcations. In Section~\ref{sec_ld} we derive the required a priori estimates
for the stochastic system, mainly based  on large-deviation principles.
Section~\ref{sec_cap} contains the sharp estimates of capacities, while
Section~\ref{sec_times} combines the previous results to obtain precise
estimates of expected transition times in finite dimension. 


\bigskip
\noindent
{\bf Acknowledgements:}
We would like to thank Florent Barret, Dirk Bl\"omker, Martin Hairer, Arnulf
Jentzen and Dan Stein for helpful discussions. 
BG thanks the MAPMO, Orl\'eans, and NB thanks the CRC 701 {\it
Spectral  Structures and Topological Methods in Mathematics\/} at the  
University of Bielefeld, for kind hospitality and financial support.


\newpage



\section{Results}
\label{sec_results}


\subsection{Parabolic SPDEs with bistable potential}
\label{ssec_def}

Let  $L$ be a positive constant, and let $E=\cC([0,L],\R)$ denote the Banach
space of continuous functions $u:[0,L]\to\R$, equipped with the
sup norm $\norm{\cdot}_{L^\infty}$. 

We consider the parabolic SPDE
\begin{equation}
\6u_t(x) = \bigbrak{\Delta u_t(x) - U'(u_t(x))} \6t + \sqrt{2\eps}\6W(t,x)\;,
\qquad t\in\R_+\;, \ x\in[0,L]
 \label{SPDE}
\end{equation} 
with 
\begin{itemiz}
\item	either periodic boundary conditions (b.c.)
\begin{equation}
 \label{def01}
u(0) = u(L)\;, 
\end{equation} 
\item	or zero-flux Neumann boundary conditions
\begin{equation}
 \label{def02}
\dpar ux(0) = \dpar ux(L) = 0\;,
\end{equation} 
\end{itemiz}
and initial condition $u_0\in E$, satisfying the same boundary conditions.

In~\eqref{SPDE}, $\Delta$ denotes the second derivative (the one-dimensional
Laplacian), $\eps>0$ is a small parameter, and $W(t,x)$ denotes space--time
white noise, defined as the cylindrical Wiener process compatible with the b.c. 
The local potential $U:\R\to\R$ will be assumed to satisfy a certain number of
properties, which are detailed below. When considering a general class of local
potentials, it is useful to keep in mind the example
\begin{equation}
 \label{def03}
U(u) = \frac14 u^4 - \frac12 u^2\;. 
\end{equation} 
Observe that $U$ has two minima, located in $u=-1$ and $u=+1$, and a
local maximum in $u=0$. Furthermore, the quartic growth as $u\to\pm\infty$
makes $U$ a confining potential. As a result, for small $\eps$, solutions
of~\eqref{SPDE}
will be localised with high probability, with a preference for staying near
$u=1$ or $u=-1$. 

The bistable and confining nature of $U$ are two essential features that we
want to keep for all considered local potentials. A first set of assumptions on
$U$ is the following:

\begin{assump}[Assumptions on the class of potentials $U$] \hfill
\label{assump_U}
\begin{enumH}
\item[U1:]	$U: \R\to\R$ is of class $\cC^3$. 
In some cases (namely, when $L$ is
close to $\pi$ for Neumann b.c.\ and close to $2\pi$ for periodic b.c.), our
results require $U$ to be of class $\cC^5$.\footnote{Actually, for $L$ near
a critical value, the results hold under the assumption~$U\in\cC^{4}$, with a
weaker control on the error terms.}\

\item[U2:]	$U$ has exactly two local minima and one local maximum, and
$U''$ is nonzero at all three stationary points. Without loss of 
generality, we may assume that the local maximum is in $u=0$ and that
$U''(0)=-1$. The positions of the minima will be denoted by $u_-<0<u_+$.
 
\item[U3:]	There exist constants $M_0>0$ and $p_0\geqs2$ such that
the potential and its derivatives satisfy 
$\abs{U^{(j)}(u)} \leqs M_0(1+\abs{u}^{2p_0-j})$ for $j=0,1,2,3$ and all
$u\in\R$.

\item[U4:]	There exist constants $\alpha\in\R, \beta>0$ such that
$U(u)\geqs \beta u^2 - \alpha$ for all $u\in\R$.

\item[U5:]	For any $\gamma>0$, there exists an $M_1(\gamma)$ such
that $U'(u)^2 \geqs \gamma u^2 - M_1(\gamma)$ for all $u\in\R$. In addition,
there exist constants $a>0$ and $b,c\in\R$ such that 
\begin{equation}
 \label{def04}
 U'(u+v) - U'(u) \geqs a v^{2p_0-1} + b \abs{u}^{2p_0-1} + c
\end{equation} 
holds for all $u\in\R$ and all $v\geqs0$. 

\item[U6:]	There exists a constant $M_2$ such that $U''(u)\geqs -M_2$ and 
\begin{equation}
 \label{def05}
 u^{2(p_0-1)}U''(u) - 2(p_0-1) u^{2p_0-3}U'(u) \geqs -M_2
\end{equation} 
for all $u\in\R$.
\end{enumH}
\end{assump}

\begin{remark}
 \label{rem_ass_U}
 A sufficient condition for U3--U6 of Assumption~\ref{assump_U} to hold is that 
 the potential can be written as $U(u)=p(u)+U_0(u)$, where $p$ is a polynomial
of even degree $2p_0\geqs4$, with strictly positive leading coefficient, and
$U'_0$ is a Lipschitz continuous function, cf.~\cite[Remark~2.6]{Cerrai_1999}. 
\end{remark}

Let us recall the definition of a mild solution of~\eqref{SPDE}. We denote by
$\e^{\Delta t}$ the Markov semigroup of the heat equation $\sdpar ut = \Delta
u$, defined by the convolution 
\begin{equation}
 \label{heat_semigroup}
(\e^{\Delta t}u)(x) = \int_0^L G_t(x,y)u(y)\6y\;. 
\end{equation} 
Here $G_t(x,y)$ denotes the Green function of the Laplacian compatible with the
considered boundary conditions. It can be written as
\begin{equation}
 \label{mild01}
G_t(x,y) = \sum_k \e^{-\nu_k t}e_k(x)\cc{e_k(y)}\;, 
\end{equation} 
where the $e_k$ form a complete orthonormal basis of eigenfunctions of the
Laplacian, with eigenvalues $-\nu_k$. 
That is, 
\begin{itemiz}
\item	for periodic b.c., 
\begin{equation}
 \label{cob_periodic}
e_k(x) = \frac{1}{\sqrt{L}}\e^{2k\pi \icx x/L}\;,\quad k\in\Z\;,
\qquad 
\text{and } \nu_k=\biggpar{\frac{2k\pi}{L}}^2\;;
\end{equation} 
\item	for Neumann b.c., 
\begin{equation}
 \label{cob_Neumann}
e_0(x) = \frac{1}{\sqrt{L}}\;,\;
e_k(x) = \sqrt{\frac{2}{L}}\cos\biggpar{\frac{k\pi x}{L}}\;,\quad
k\in\N\;,
\qquad 
\text{and } \nu_k=\biggpar{\frac{k\pi}{L}}^2\;.
\end{equation} 
\end{itemiz}

A \emph{mild solution}\/ of the SPDE~\eqref{SPDE} is by definition a solution to
the integral equation 
\begin{equation}
 \label{mild_solution}
u_t = \e^{\Delta t}u_0 
- \int_0^t \e^{\Delta(t-s)}U'(u_s)\6s
+ \sqrt{2\eps} \int_0^t \e^{\Delta(t-s)}\6W(s)\;.
\end{equation} 
Here the stochastic integral can be represented as a series of one-dimensional
It\^o integrals 
\begin{equation}
 \label{mild_02} 
\int_0^t \e^{\Delta(t-s)}\6W(s)
= \sum_k \int_0^t \e^{-\nu_k(t-s)} \6W^{(k)}_s e_k \;,
\end{equation} 
where the $W^{(k)}_t$ are independent standard Wiener processes (see for
instance~\cite{Jetschke_86}). It is known that for a confining local potential
$U$,~\eqref{SPDE} admits a pathwise unique mild
solution in $E$~\cite{DaPrato_Jabczyk_92}. 

\begin{figure}
\centerline{\includegraphics*[clip=true,width=\textwidth]{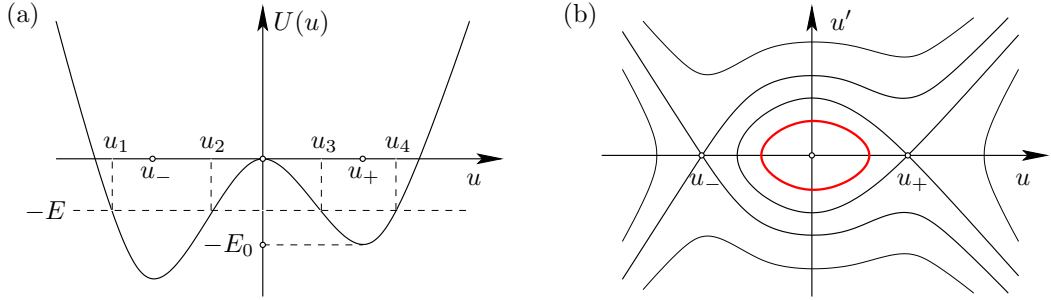}}
\caption[]{(a) Example of a local potential $U$. (b) Level sets of
the first integral $H(u,u')$. Bounded stationary solutions compatible with the
boundary conditions only exist in the inner region $\set{H(u,u')<E_0}$. The
periodic orbit contained in $\set{H(u,u')=E}$ crosses the $u$-axis at points
$u=u_2(E)$ and $u=u_3(E)$, defined in (a).
}
\label{fig_potential}
\end{figure}


\subsection{The deterministic equation}
\label{ssec_det}

Consider for a moment the deterministic partial differential equation 
\begin{equation}
 \label{PDE}
\sdpar{u_t}{t}(x) = \Delta u_t(x) - U'(u_t(x))\;. 
\end{equation} 
Stationary solutions of~\eqref{PDE} have to satisfy the second-order ordinary
differential equation 
\begin{equation}
 \label{ODE}
u''(x) = U'(u(x)) \;,
\end{equation} 
together with the boundary conditions. Note that this equation describes the
motion of a particle of unit mass in the inverted potential $-U$. 
There are exactly three stationary solutions which do not depend on $x$, given
by 
\begin{equation}
 \label{ODE01}
u^*_-(x) \equiv u_-\;, \qquad 
u^*_+(x) \equiv u_+\;, \qquad 
u^*_0(x) \equiv 0\;. 
\end{equation} 
Depending on the boundary conditions and the value of $L$, there may be
additional, non-constant stationary
solutions. They can be found by observing that~\eqref{ODE} is a Hamiltonian
system, with first integral 
\begin{equation}
 \label{ODE02}
H(u,u') = \frac12 (u')^2 - U(u)\;. 
\end{equation} 
Orbits of~\eqref{ODE} belong to level sets of $H$ (\figref{fig_potential}b). 
Bounded orbits only exist for $H<E_0$, where~\footnote{Here and below, we use the shorthands $a\vee b \defby \max\set{a,b}$ and $a\wedge b \defby \min\set{a,b}$.}%
\begin{equation}
 \label{ODE03}
E_0 = -(U(u_-)\vee U(u_+))\;. 
\end{equation} 
For any $E\in(0,E_0)$, there exist exactly four values 
$u_1(E)<u_2(E)<0<u_3(E)<u_4(E)$ of $u$ for which $U(u)=-E$
(\figref{fig_potential}a). The periodic solution corresponding to $H=E$ crosses
the $u$-axis at $u=u_2(E)$ and $u=u_3(E)$, and has a period 
\begin{equation}
 \label{ODE04}
T(E) = 2 \int_{u_2(E)}^{u_3(E)} \frac{\6u}{\sqrt{E+U(u)}}\;. 
\end{equation} 
The fact that $U''(0)=-1$ implies that $\lim_{E\to 0} T(E)=2\pi$ (in this
limit, stationary solutions approach those of a harmonic oscillator with unit
frequency). In addition, we have $\lim_{E\to E_0}T(E)=+\infty$, because the
level set $H=E_0$ is composed of homoclinic orbits (or heteroclinic orbits if
$U(u_-)=U(u_+)$).
 
We will make the following assumption, which imposes an additional condition on
the local potential:

\begin{assump}
\label{assump_U02}
The period $T(E)$ is strictly increasing on $[0,E_0)$.
\end{assump}

\begin{remark}
\label{rem_T(E)} 
A normal-form analysis (cf.~Section~\ref{ssec_normalform}) shows that if
$U\in\cC^5$, then $T(E)$ is increasing near $E=0$ if and only if 
\begin{equation}
 \label{ODE05b}
U^{(4)}(0) > -\frac53 U'''(0)^2\;. 
\end{equation}  
Furthermore, a sufficient (but not necessary) condition for
Assumption~\ref{assump_U02} to hold true is that 
\begin{equation}
 \label{ODE05}
U'(u)^2 - 2U(u)U''(u) > 0 
\qquad
\text{for all $u\in(u_-,u_+)\setminus\set{0}$} 
\end{equation} 
(see Appendix~\ref{app_A}).
Note that this condition is satisfied for the particular
potential~\eqref{def03}.
\end{remark}

Under Assumption~\ref{assump_U02}, nonconstant stationary solutions satisfying
periodic b.c.\ only exist for $L>2\pi$, while stationary solutions satisfying Neumann
b.c.\ only exist for $L>\pi$; they are obtained by taking the top or bottom half
of a closed curve with constant $H$. Additional stationary solutions appear
whenever $L$ crosses a multiple of $2\pi$ or $\pi$. More precisely
(\figref{fig_bifurcation}), 
\begin{itemiz}
\item	for periodic b.c., there exist $n$ families of nonconstant stationary
solutions whenever $L\in(2n\pi,2(n+1)\pi]$ for some $n\geqs1$, where members of
a same family are of the form $u^*_{n,\varphi}(x)=u^*_{n,0}(x+\varphi)$,
$0\leqs\varphi<L$; 
\item	for Neumann b.c., there exist $2n$ nonconstant stationary
solutions whenever $L\in(n\pi,(n+1)\pi]$ for some $n\geqs1$, where solutions
occur in pairs $u^*_{n,\pm}(x)$ related by the symmetry
$u^*_{n,-}(x)=u^*_{n,+}(L-x)$. 
\end{itemiz}

\begin{figure}
\centerline{\includegraphics*[clip=true,width=\textwidth]{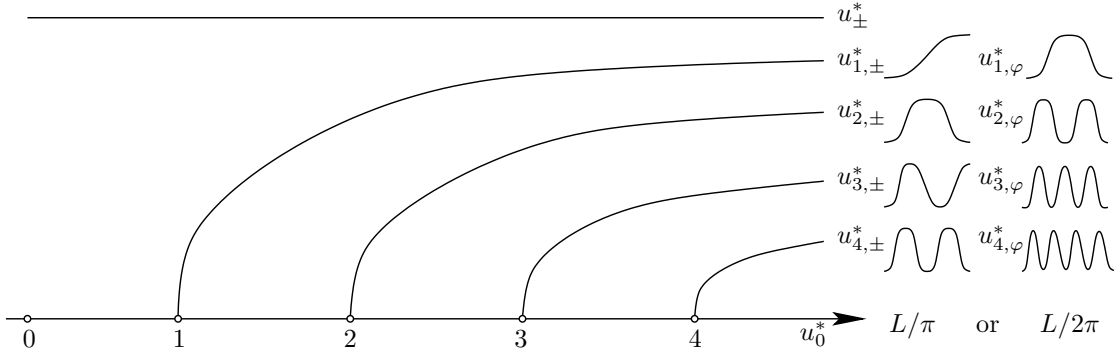}}
\caption[]{Schematic representation of the deterministic bifurcation diagram.
Nonconstant stationary solutions appear whenever $L$ is a multiple of $\pi$ for
Neumann b.c., and of $2\pi$ for periodic b.c. For Neumann b.c., the stationary
solutions $u^*_{n,\pm}$ contain $n$ kinks. For periodic b.c.,
all members of the family $\set{u^*_{n,\varphi}, 0\leqs\varphi<L}$ contain $n$
kink--antikink pairs. The transition states ($n=1$) are also called
instantons~\cite{Maier_Stein_SPIE_2003}. 
}
\label{fig_bifurcation}
\end{figure}

Next we examine the stability of these stationary solutions. Stability of a
stationary solution $u_0$ is determined by the variational equation
\begin{equation}
 \label{var_eqn}
\sdpar{v_t}{t}(x) = \Delta v_t(x) - U''(u_0(x)) v_t(x) 
\bydef Q[u_0]v_t(x)\;,
\end{equation} 
by way of the sign of the eigenvalues of the linear operator $Q[u_0] = \Delta
- U''(u_0(\cdot))$. For the space-homogeneous stationary
solutions~\eqref{ODE01}, the eigenvalues of $Q$ are simply shifted eigenvalues
of the Laplacian. Thus 
\begin{itemiz}
\item	For periodic b.c., the eigenvalues of $Q[u^*_0]$ are given by
$-\lambda_k$, where 
\begin{equation}
 \label{ODE07}
\lambda_k = \nu_k - 1 = \biggpar{\frac{2k\pi }{L}}^2 - 1\;, 
\quad k\in\Z\;. 
\end{equation} 
It follows that $u^*_0$ is always unstable: it has one positive eigenvalue for
$L\leqs2\pi$,
and the number of positive eigenvalues increases by $2$ each time $L$ crosses a
multiple of $2\pi$. The eigenvalues of $Q[u^*_\pm]$ are given by 
$-\nu_k - U''(u_\pm)$ and are always negative, implying that $u^*_+$ and $u^*_-$
are stable. 
\item	For Neumann b.c., the eigenvalues of $Q[u^*_0]$ are given by
$-\lambda_k$, where 
\begin{equation}
 \label{ODE08}
\lambda_k = \nu_k - 1 = \biggpar{\frac{k\pi}{L}}^2 - 1\;, 
\quad k\in\N_0\;. 
\end{equation} 
Again $u^*_0$ is always unstable: it has one positive eigenvalue for
$L\leqs\pi$,
and the number of positive eigenvalues increases by $1$ each time $L$ crosses a
multiple of $\pi$. As before, $u^*_+$ and $u^*_-$ are always stable. 
\end{itemiz}
The problem of determining the stability of the nonconstant stationary solutions
is equivalent to characterising the spectrum of a Schr\"odinger operator, and
thus to solving a Sturm--Liouville problem. In general, there is no
closed-form expression for the eigenvalues. However, a bifurcation analysis for
$L$ equal to multiples of $2\pi$ or $\pi$ (cf. Section~\ref{ssec_normalform})
shows that 
\begin{itemiz}
\item	for periodic b.c., the stationary solutions $u^*_{n,\varphi}$ appearing
at $L=2n\pi$ have $2n-1$ positive eigenvalues and one eigenvalue equal to
zero (associated with translation symmetry), the other eigenvalues being
negative;
\item	for Neumann b.c., the stationary solutions $u^*_{n,\pm}$ appearing
at $L=n\pi$ have $n$ positive eigenvalues while the other eigenvalues are
negative.
\end{itemiz}

A last important object for the analysis is the potential energy  
\begin{equation}
 \label{pot_energy}
V[u] = \int_0^L \biggbrak{\frac12 u'(x)^2 + U(u(x))}\6x\;. 
\end{equation} 
For $u+v$ satisfying the b.c., the Fr\'echet derivative of $V$ at $u$ in the
direction $v$ is given by 
\begin{align}
\nonumber 
\nabla_v V[u] &\defby \lim_{\eps\to0} \frac1\eps \bigpar {V[u+\eps v] -V[u]}\\
\nonumber 
&= \int_0^L \bigbrak{u'(x)v'(x) + U'(u(x))v(x)} \6x \\
&= \int_0^L \bigbrak{-u''(x) + U'(u(x))}v(x) \6x\;.
\label{pot_Frechet}
\end{align} 
Thus stationary solutions of the deterministic equation~\eqref{PDE} are also
stationary points of the potential energy. A similar computation shows that the
second-order Fr\'echet derivative of $V$ at $u$ is the bilinear map 
\begin{equation}
 \label{pot_Frechet2}
\nabla^2_{v_1,v_2} V[u] :
(v_1,v_2) \mapsto -\int_0^L (Q[u]v_1)(x)v_2(x)\6x\;. 
\end{equation} 
Hence the eigenvalues of the second derivative coincide, up to their sign, with
those of the Sturm--Liouville problem for the variational
equation~\eqref{var_eqn}. In particular, the stable stationary solutions $u^*_+$
and $u^*_-$ are local minima of the potential energy. 

We call \emph{transition states}\/ between $u^*_+$ and $u^*_-$ the stationary
points of $V$ at which $\nabla^2 V$ has one and only one negative eigenvalue.
Thus 
\begin{itemiz}
\item	for periodic b.c., $u^*_0$ is the only transition state for $L\leqs
2\pi$, while for $L>2\pi$, all members of the family $u^*_{1,\varphi}$ are
transition states;
\item	for Neumann b.c., $u^*_0$ is the only transition state for $L\leqs
\pi$, while for $L>\pi$, the transition states are the two stationary solutions
$u^*_{1,\pm}$.
\end{itemiz}
Note that for given $L$ and given b.c., $V$ has the same value at all
transition states. Transition states are characterised by the following
property: Consider all continuous paths $\gamma$ in $E$ connecting $u^*_-$ to
$u^*_+$. For each of these paths, determine the maximal value of $V$ along the
path, and call \emph{critical}\/ those paths for which that value is the
smallest possible. Then for any critical path, the maximal value of $V$ is
assumed on a transition state. 


\subsection{Main results}
\label{ssec_res}

We can now state the main results of this work. We start with the case of
Neumann b.c. We fix parameters $r, \rho>0$ and an initial condition $u_0$ such
that $\norm{u_0-u^*_-}_{L^\infty}\leqs r$. Let 
\begin{equation}
 \label{main01}
\tau_+ = \inf\bigsetsuch{t>0}{\norm{u_t-u^*_+}_{L^\infty} < \rho}\;.
\end{equation} 
We are interested in sharp estimates of the expected first-hitting time 
$\expecin{u_0}{\tau_+}$ for small values of $\eps$. 

\begin{figure}
\centerline{\includegraphics*[clip=true,width=\textwidth]{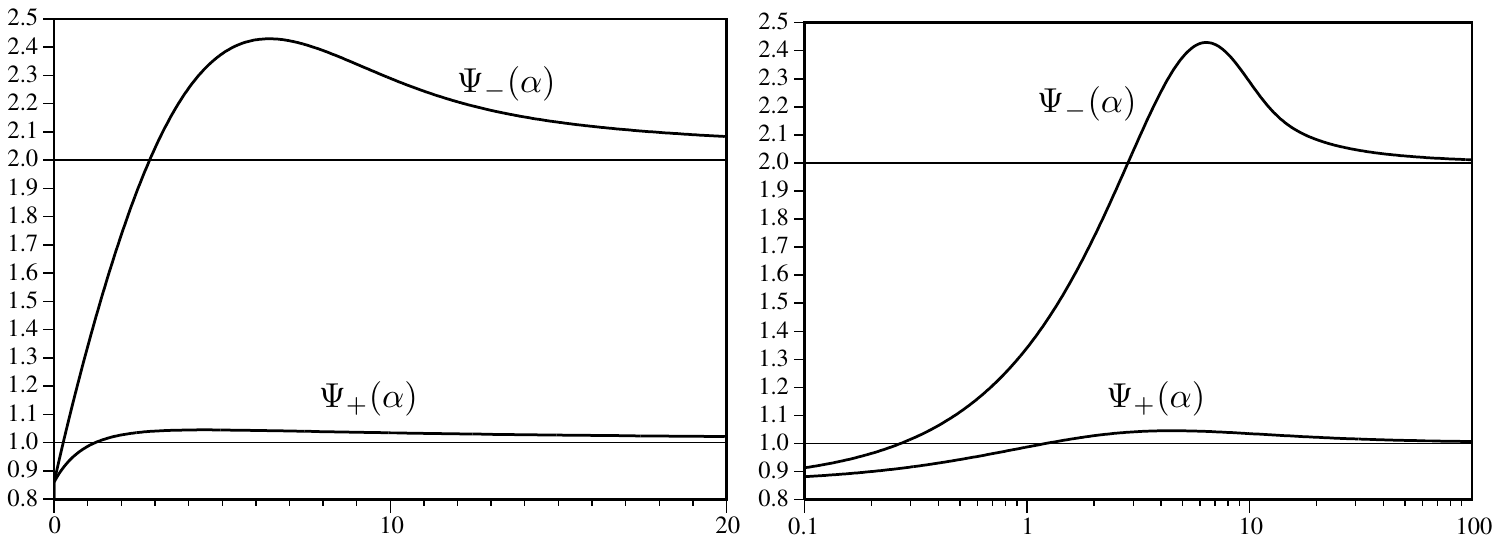}}
\caption[]{The functions $\Psi_\pm(\alpha)$ shown on a linear and on a
logarithmic scale. 
}
\label{fig_Psi}
\end{figure}

Recall from~\eqref{ODE08} that the eigenvalues of the variational equation at
$u^*_0\equiv0$ are given by $-\lambda_k$ where $\lambda_k = (k\pi/L)^2-1$. Those
at $u^*_-$ are given by $-\nu_k^-$ where 
\begin{equation}
 \label{main02}
\nu_k^- = \biggpar{\frac{k\pi}{L}}^2 + U''(u_-)\;.  
\end{equation} 
When $L>\pi$, we denote the eigenvalues at the transition states $u^*_{1,\pm}$
by $-\mu_k$ where 
\begin{equation}
 \label{main03}
\mu_0 < 0 < \mu_1 < \mu_2 < \dots
\end{equation}
We further introduce two functions $\Psi_\pm:\R_+\to\R_+$, which play a r\^ole
for the behaviour of $\expecin{u_0}{\tau_+}$ when $L$ is close to $\pi$. They
are given by 
\begin{align}
\label{psiplus}
\Psi_+(\alpha) &= \sqrt{\frac{\alpha(1+\alpha)}{8\pi}}
\e^{\alpha^2/16} K_{1/4} \biggpar{\frac{\alpha^2}{16}}\;,\\
\Psi_-(\alpha) &= \sqrt{\frac{\pi\alpha(1+\alpha)}{32}}
\e^{-\alpha^2/64} 
\biggbrak{I_{-1/4} \biggpar{\frac{\alpha^2}{64}}
+ I_{1/4} \biggpar{\frac{\alpha^2}{64}}}\;,
\label{psiminus} 
\end{align}
where $I_{\pm1/4}$ and $K_{1/4}$ denote modified Bessel functions of
first and second kind. The functions $\Psi_\pm$ are bounded below and above by
positive constants, and satisfy 
\begin{equation}
 \label{psilimit}
\lim_{\alpha\to+\infty} \Psi_+(\alpha) = 1\;, 
\qquad
\lim_{\alpha\to-\infty} \Psi_-(\alpha) = 2\;, 
\end{equation} 
and
\begin{equation}
 \label{psi0}
\lim_{\alpha\to0} \Psi_+(\alpha) = \lim_{\alpha\to0} \Psi_-(\alpha)
= \frac{\Gamma(1/4)}{2^{5/4}\sqrt{\pi}}\;.
\end{equation} 
See~\figref{fig_Psi} for plots of these functions. 

\begin{theorem}[Neumann boundary conditions]
\label{thm_Neumann}
For Neumann b.c.\ and sufficiently small $r, \rho$ and $\eps$, the following
holds true.
\begin{enum}
\item	If $L<\pi$ and $L$ is bounded away from $\pi$, then 
\begin{equation}
 \label{Kramers_Neumann_smallL}
\bigexpecin{u_0}{\tau_+} = 
2\pi \biggpar{\frac{1}{\abs{\lambda_0}\nu^-_0} \prod_{k\geqs1}
\frac{\lambda_k}{\nu^-_k}}^{1/2} 
\e^{(V[u^*_0]-V[u^*_-])/\eps}
\bigbrak{1+\Order{\eps^{1/2}\abs{\log\eps}^{3/2}}}\;.
\end{equation} 
\item	If $L>\pi$ and $L$ is bounded away from $\pi$, then 
\begin{equation}
 \label{Kramers_Neumann_largeL}
\bigexpecin{u_0}{\tau_+} = 
\pi \biggpar{\frac{1}{\abs{\mu_0}\nu^-_0} \prod_{k\geqs1}
\frac{\mu_k}{\nu^-_k}}^{1/2} 
\e^{(V[u^*_{1,\pm}]-V[u^*_-])/\eps}
\bigbrak{1+\Order{\eps^{1/2}\abs{\log\eps}^{3/2}}}\;.
\end{equation} 
\item	If $L\leqs\pi$ and $L$ is in a neighbourhood of $\pi$, then 
\begin{equation}
 \label{Kramers_Neumann_Lleqpi}
 \bigexpecin{u_0}{\tau_+} = 
2\pi \biggpar{\frac{\lambda_1+\sqrt{C\eps}}
{\abs{\lambda_0}\nu^-_0\nu^-_1}
\prod_{k\geqs2}
\frac{\lambda_k}{\nu^-_k}}^{1/2} 
\frac{\e^{(V[u^*_0]-V[u^*_-])/\eps}}
{\Psi_+(\lambda_1/\sqrt{C\eps})}
\bigbrak{1+R_+(\eps,\lambda_1)}\;,
\end{equation} 
where 
\begin{equation}
 \label{Kramers_defC}
C = \frac1{4L} \Bigbrak{ U^{(4)}(0) 
+ \frac{8\pi^2 - 3L^2}{4\pi^2 - L^2} U'''(0)^2}\;, 
\end{equation}
and the remainder $R_+$ satisfies 
\begin{equation}
 \label{Kramers_Rplus}
R_+(\eps,\lambda) = \biggOrder{\biggbrak{\frac{\eps\abs{\log\eps}^3}
{\max\set{\abs{\lambda},\sqrt{\eps\abs{\log\eps}}}}}^{1/2}}\;.
\end{equation} 
\item	If $L\geqs\pi$ and $L$ is in a neighbourhood of $\pi$, then 
\begin{equation}
 \label{Kramers_Neumann_Lgeqpi}
 \bigexpecin{u_0}{\tau_+} = 
2\pi \biggpar{\frac{\mu_1+\sqrt{C\eps}}
{\abs{\mu_0}\nu^-_0\nu^-_1}
\prod_{k\geqs2}
\frac{\mu_k}{\nu^-_k}}^{1/2} 
\frac{\e^{(V[u^*_{1,\pm}]-V[u^*_-])/\eps}}
{\Psi_-(\mu_1/\sqrt{C\eps})}
\bigbrak{1+R_-(\eps,\mu_1)}\;,
\end{equation} 
where $C$ is given by~\eqref{Kramers_defC}, and the remainder $R_-$ is of the
same order as $R_+$. 
\end{enum}
\end{theorem}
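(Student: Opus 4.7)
The plan is to combine a spectral Galerkin truncation with the potential-theoretic representation of mean hitting times developed in~\cite{BEGK,BGK}. I first project the SPDE~\eqref{SPDE} onto the span of $e_0,\dots,e_d$, producing a gradient SDE on $\R^{d+1}$ with potential $V_d$ obtained by restricting $V$ to truncated Fourier series. A priori bounds on moments of $\tau_+$, derived from Freidlin--Wentzell large deviations together with exponential tightness of the $L^\infty$-norm, combined with the Bl\"omker--Jentzen Galerkin approximation estimate, reduce the problem to computing the mean hitting time of the truncated system with $d=d(\eps)\to\infty$ as $\eps\to0$.

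At the finite-dimensional level I use the representation
\begin{equation*}
\bigexpecin{u_0}{\tau_+} = \frac{\int h_d(u)\,\e^{-V_d(u)/\eps}\,\6u}{\capacity_d(\cA,\cB)}\;,
\end{equation*}
where $\cA,\cB$ are small $L^\infty$-balls around $u^*_-$ and $u^*_+$ and $h_d$ is the associated equilibrium potential. The numerator is evaluated by Laplace's method at $u^*_-$, yielding $(2\pi\eps)^{(d+1)/2}\bigpar{\prod_{k=0}^{d}\nu_k^-}^{-1/2}\e^{-V[u^*_-]/\eps}\bigbrak{1+\Order{\eps^{1/2}\abs{\log\eps}^{3/2}}}$. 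The capacity is bracketed by the Dirichlet and Thomson variational principles, which reduce to Gaussian integrals in a $\sqrt{\eps\abs{\log\eps}}$-tube transverse to the separatrix through the relevant saddle (either $u^*_0$ for $L<\pi$, or one of $u^*_{1,\pm}$ for $L>\pi$), producing the Eyring--Kramers prefactor $\abs{\lambda_0}/2\pi$ (respectively $\abs{\mu_0}/2\pi$) times a Gaussian determinant.

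For the non-bifurcating cases~(i) and~(ii), every transverse Hessian eigenvalue at the saddle is bounded away from zero uniformly in $d$, so the ratio of numerator to capacity yields directly $\bigpar{\abs{\lambda_0}\nu^-_0}^{-1/2}\bigpar{\prod_{k\geqs1}\lambda_k/\nu_k^-}^{1/2}$ and the analogue with $\mu_k$. The factors $\lambda_k/\nu_k^-$ and $\mu_k/\nu_k^-$ tend to $1$ like $1-\Order{k^{-2}}$ for large $k$, so the infinite product converges and the Galerkin tail is absorbed into the error term, producing~\eqref{Kramers_Neumann_smallL} and~\eqref{Kramers_Neumann_largeL}.

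The bifurcating cases~(iii) and~(iv) are the main difficulty, since the critical transverse eigenvalue $\lambda_1$ (respectively $\mu_1$) passes through zero as $L$ crosses $\pi$, so the Gaussian approximation in that direction breaks down. To handle this I invoke the infinite-dimensional normal-form analysis carried out in Section~\ref{sec_det}, which replaces the quadratic form in the critical mode by a quartic normal form $\frac12 \lambda_1 w^2+\frac14 C w^4$, the coefficient $C$ of~\eqref{Kramers_defC} arising from elimination of the non-critical modes (the $L$-dependent second term reflecting resonance with the $k=2$ Fourier mode through the cubic term $U'''(0)$). After the rescaling $w=(\eps/C)^{1/4}z$, the capacity integral in the critical direction reduces to $\int\e^{-\alpha z^2/2-z^4/4}\,\6z$ with $\alpha=\lambda_1/\sqrt{C\eps}$, which evaluates in closed form to the Bessel-function expressions $\Psi_\pm(\alpha)$ of~\eqref{psiplus}--\eqref{psiminus}. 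The hard part will be establishing matching upper and lower capacity bounds that are uniform both in $d$ and throughout a full neighbourhood of the bifurcation, since the critical direction requires non-Gaussian treatment while the remaining transverse directions (one of them still nearly degenerate) must be handled by delicate Gaussian estimates; the remainder $R_\pm(\eps,\lambda_1)$ in~\eqref{Kramers_Rplus} encodes the trade-off between these two regimes.
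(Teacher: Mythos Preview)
Your outline follows the paper's strategy closely: Galerkin truncation controlled by the Bl\"omker--Jentzen estimate, a priori moment bounds from large deviations, and the potential-theoretic ratio $J_d(A,B)/\capacity_{A_d}(B_d)$ evaluated by Laplace asymptotics at $u^*_-$ for the numerator and by variational bounds near the relevant saddle for the capacity. The normal-form treatment of the bifurcating direction and the appearance of $\Psi_\pm$ are also handled as in the paper.

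Two points deserve attention. First, the identity you write as $\bigexpecin{u_0}{\tau_+}=J_d/\capacity_d$ is not what potential theory gives: equation~\eqref{pot12}--\eqref{pot13} only yields
\[
\int_{\partial A_d}\bigexpecin{v_0}{\tau^{(d)}_{B_d}}\,\nu_{d,B}(\6v_0)
=\frac{J_d(A,B)}{\capacity_{A_d}(B_d)}
\]
for a probability measure $\nu_{d,B}$ on $\partial A_d$, not the expectation from a fixed $u_0$. Passing from this averaged statement to the pointwise one requires an additional coupling argument showing that two solutions started in $A$ contract exponentially with high probability (the paper invokes the Martinelli--Olivieri--Scoppola result, Theorem~\ref{thm_MOS}, and then Proposition~\ref{prop_main_result}). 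Without this step the argument is incomplete.

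Second, the paper does not use the Thomson principle for the capacity lower bound. Instead it restricts the Dirichlet form to a thin slab $\widehat D=[-\rho,\rho]\times\widehat D_\perp$ through the saddle, keeps only the $\partial_{y_0}$-component of $\nabla h^*$, solves the resulting one-dimensional Euler--Lagrange problem explicitly, and then uses a priori bounds on the equilibrium potential $h^*(\pm\rho,y_\perp)$ (Propositions~\ref{new_prop_eqpot1}--\ref{new_prop_eqpot2}) to control the boundary values. This is what makes the lower bound uniform in $d$; a Thomson-type flow construction would require building a divergence-free unit flow with controlled energy in arbitrarily high dimension, which is not obviously easier.
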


Note that~\eqref{psi0} (together with the fact that $\mu_k(L)\to\lambda_k(\pi)$
as $L\to\pi_+$) shows that $\expecin{u_0}{\tau_+}$ is indeed continuous
at $L=\pi$. In a neighbourhood of order $\sqrt{\eps}$ of $L=\pi$, the prefactor
of the transition time is of order $\eps^{1/4}$, while it is constant to
leading order when $L$ is bounded away from $\pi$.

We have written here the different expressions for the expected transition time
in a generic way, in terms of eigenvalues and potential-energy
differences. Note however that several quantities appearing in the theorem admit
more explicit expressions:
\begin{itemiz}
\item	We have $V[u^*_0]=0$ and $V[u^*_-]=U(u_-)$, while $V[u^*_{1,\pm}]$ is
determined by solving~\eqref{ODE} with the help of the first
integral~\eqref{ODE02}. For the symmetric double-well potential~\eqref{def03},
it can be expressed explicitly in terms of elliptic integrals. 

\item	The two identities 
\begin{equation}
 \label{product_formulas}
\prod_{k=1}^\infty \biggpar{1 - \frac{x^2}{k^2}} = \frac{\sin(\pi x)}{\pi x}\;,
\qquad 
\prod_{k=1}^\infty \biggpar{1 + \frac{x^2}{k^2}} = \frac{\sinh(\pi x)}{\pi x} 
\end{equation} 
imply that the prefactor in~\eqref{Kramers_Neumann_smallL} is given by 
\begin{equation}
 \label{prefactor_Neumann_smallL} 
2\pi \biggpar{\frac{1}{\abs{\lambda_0}\nu^-_0} \prod_{k\geqs1}
\frac{\lambda_k}{\nu^-_k}}^{1/2} 
= 2\pi \Biggpar{\frac{\sin
L}{\sqrt{U''(u_-)}\,\sinh\bigpar{L\sqrt{U''(u_-)}\,}}}^{1/2}\;.
\end{equation} 

\item	Since there is no closed-form expression for the eigenvalues $\mu_k$,
it might seem impossible to compute the prefactor appearing
in~\eqref{Kramers_Neumann_largeL}. In fact, techniques developed for the
computation of Feynman integrals allow to compute the product of such ratios of
eigenvalues, also called a ratio of functional determinants,
see~\cite{Forman1987,McKane_Tarlie_1995,ColinVerdiere1999,
Maier_Stein_PRL_01,Maier_Stein_SPIE_2003} .
\end{itemiz}

\begin{figure}
\centerline{\includegraphics*[clip=true,width=\textwidth]{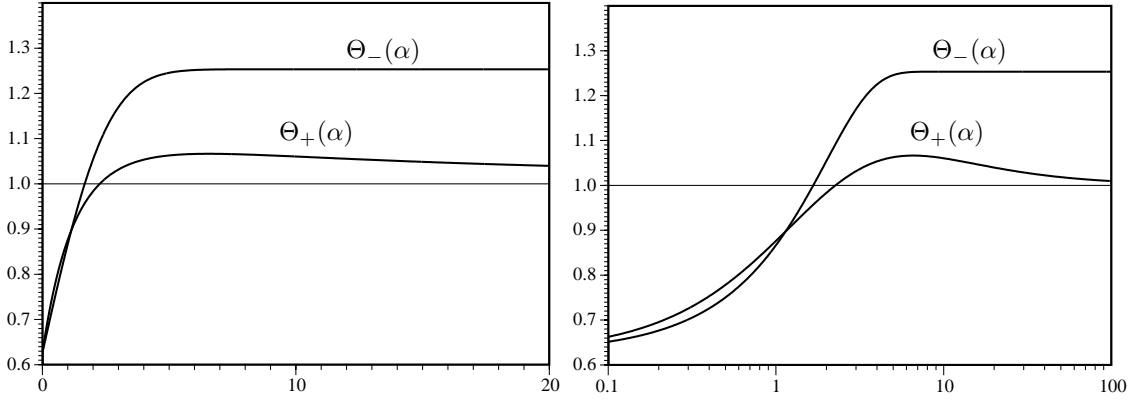}}
\caption[]{The functions $\Theta_\pm(\alpha)$ shown on a linear and on a
logarithmic scale. 
}
\label{fig_Theta}
\end{figure}

We now turn to the case of periodic b.c. In that case, the
eigenvalues of the variational equation at
$u^*_0\equiv0$ are given by $-\lambda_k$ where $\lambda_k = (2k\pi/L)^2-1$.
Those at $u^*_-$ are given by $-\nu_k^-$ where 
\begin{equation}
 \label{main12}
\nu_k^- = \biggpar{\frac{2k\pi}{L}}^2 + U''(u_-)\;.  
\end{equation} 
When $L>2\pi$, we denote the eigenvalues at the family of transition states
$u^*_{1,\varphi}$ by $-\mu_k$ where 
\begin{equation}
 \label{main13}
\mu_0 < \mu_{-1}=0 < \mu_{1} < \mu_2, \mu_{-2} <  \dots
\end{equation}
We further introduce two functions $\Theta_\pm:\R_+\to\R_+$, which play a r\^ole
for the behaviour of $\expecin{u_0}{\tau_+}$ when $L$ is close to $2\pi$. They
are given by 
\begin{align}
\label{thetaplus}
\Theta_+(\alpha) &= \sqrt{\frac{\pi}{2}} (1+\alpha) \e^{\alpha^2/8}
\Phi\biggpar{-\frac{\alpha}{2}}\;,\\
\Theta_-(\alpha) &= \sqrt{\frac{\pi}{2}} 
\Phi\biggpar{\frac{\alpha}{2}}\;,
\label{thetaminus} 
\end{align}
where $\Phi(x)=(2\pi)^{-1/2}\int_{-\infty}^x \e^{-t^2/2}\6t$ denotes the
distribution function of a standard Gaussian random variable. 
The functions $\Theta_\pm$ are bounded below and above by positive constants,
and satisfy 
\begin{equation}
 \label{thetalimit}
\lim_{\alpha\to+\infty} \Theta_+(\alpha) = 1\;, 
\qquad
\lim_{\alpha\to-\infty} \Theta_-(\alpha) = \sqrt{\frac{\pi}{2}} \;, 
\end{equation} 
and
\begin{equation}
 \label{theta0}
\lim_{\alpha\to0} \Theta_+(\alpha) = \lim_{\alpha\to0} \Theta_-(\alpha)
= \sqrt{\frac{\pi}{8}}\;.
\end{equation} 
See~\figref{fig_Theta} for plots of these functions.

\begin{theorem}[Periodic boundary conditions]
\label{thm_Kramers_periodic_bc} 
For periodic b.c.\ and sufficiently small $r, \rho$ and $\eps$, the following
holds true.
\begin{enum}
\item	If $L<2\pi$ and $L$ is bounded away from $2\pi$, then 
\begin{equation}
 \label{Kramers_periodic_smallL}
\bigexpecin{u_0}{\tau_+} = 
\frac{2\pi}{\sqrt{\abs{\lambda_0}\nu^-_0}} \biggpar{\prod_{k\geqs1}
\frac{\lambda_k}{\nu^-_k}}
\e^{(V[u^*_0]-V[u^*_-])/\eps}
\bigbrak{1+\Order{\eps^{1/2}\abs{\log\eps}^{3/2}}}\;.
\end{equation} 
\item	If $L\leqs2\pi$ and $L$ is in a neighbourhood of $2\pi$, then 
\begin{equation}
 \label{Kramers_periodic_Lleqpi}
\bigexpecin{u_0}{\tau_+} = 
\frac{2\pi}{\sqrt{\abs{\lambda_0}\nu^-_0}} 
\frac{\lambda_1+\sqrt{2C\eps}}{\nu_1^-}
\biggpar{\prod_{k\geqs2}
\frac{\lambda_k}{\nu^-_k}}
\frac{\e^{(V[u^*_0]-V[u^*_-])/\eps}}{\Theta_+(\lambda_1/\sqrt{2C\eps}\,)}
\bigbrak{1+R_+(\eps,\lambda_1)}\;,
\end{equation} 
where 
\begin{equation}
 \label{Kramers_defC2}
C = \frac1{4L} \Bigbrak{ U^{(4)}(0) 
+ \frac{32\pi^2 - 3L^2}{16\pi^2 - L^2} U'''(0)^2}\;, 
\end{equation}
and the remainder $R_+$ satisfies~\eqref{Kramers_Rplus}.

\item	If $L\geqs2\pi$ and $L$ is in a neighbourhood of $2\pi$, then 
\begin{equation}
 \label{Kramers_periodic_Lgeqpi}
 \bigexpecin{u_0}{\tau_+} = 
\frac{2\pi}{\sqrt{\abs{\mu_0}\nu^-_0}} 
\frac{\sqrt{2C\eps}}{\nu_1^-}
\biggpar{\prod_{k\geqs2}
\frac{\sqrt{\mu_k\mu_{-k}}}{\nu^-_k}}
\frac{\e^{(V[u^*_{1,\varphi}]-V[u^*_-])/\eps}}
{\Theta_-(\mu_1/\sqrt{8C\eps}\,)}
\bigbrak{1+R_-(\eps,\mu_1)}\;,
\end{equation} 
where $C$ is given by~\eqref{Kramers_defC2}, and the remainder $R_-$ is of the
same order as $R_+$. 

\item	If $L>2\pi$ and $L$ is bounded away from $2\pi$, then 
\begin{equation}
 \label{Kramers_periodic_LargeL}
 \bigexpecin{u_0}{\tau_+} = 
\frac{2\pi}{\sqrt{\abs{\mu_0}\nu^-_0}} 
\frac{\sqrt{2\pi\eps\mu_1}}{\nu_1^- }
\biggpar{\prod_{k\geqs2}
\frac{\sqrt{\mu_k\mu_{-k}}}{\nu^-_k}}
\frac{\e^{(V[u^*_{1,0}]-V[u^*_-])/\eps}}
{L \norm{(u^*_{1,0})'}_{L^2}}
\bigbrak{1+\Order{\eps^{1/2}\abs{\log\eps}^{3/2}}}\;.
\end{equation}
\end{enum}
\end{theorem}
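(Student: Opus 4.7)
The proof follows the same scheme as for Theorem~\ref{thm_Neumann}. I would first replace the SPDE~\eqref{SPDE} by its $d$-dimensional spectral Galerkin truncation in the basis~\eqref{cob_periodic}, so that the dynamics becomes a gradient SDE with drift $-\nabla V^{(d)}$, where $V^{(d)}$ is the natural projection of the potential energy~\eqref{pot_energy}. The Bl\"omker--Jentzen approximation theorem together with the large-deviation a priori bounds mentioned in the introduction reduce the computation of $\bigexpecin{u_0}{\tau_+}$ to that of an expected hitting time in finite dimension, which is in turn evaluated by the potential-theoretic identity of~\cite{BEGK,BGK} as a ratio of an equilibrium measure by a capacity. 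Sharp Laplace asymptotics of both quantities, uniform in~$d$, then yield the four claimed formulas upon taking $d\to\infty$. The new feature compared with the Neumann case is the continuous family of transition states $\set{u^*_{1,\varphi}, 0\leqs\varphi<L}$ generated by translation invariance.

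Cases~(i) and~(iv) are non-bifurcating. In case~(i) the unique saddle is the constant state $u^*_0$, which is non-degenerate: a standard Laplace expansion of capacity and equilibrium density yields~\eqref{Kramers_periodic_smallL}. The product $\prod_{k\geqs1}\lambda_k/\nu_k^-$ appears unsquared, as opposed to the Neumann case, because both $e_k$ and $e_{-k}$ contribute independently in~\eqref{cob_periodic}; convergence as $d\to\infty$ follows from $\lambda_k/\nu_k^-=1+\Order{k^{-2}}$, guaranteed by Assumption~\ref{assump_U}. In case~(iv) the saddle set is the orbit $\set{u^*_{1,\varphi}}$, on which $\nabla^2 V$ has one negative eigenvalue $-\mu_0$, one zero eigenvalue with eigenvector the tangent $(u^*_{1,\varphi})'$, and positive eigenvalues $\mu_1,\mu_{\pm k}$ for $k\geqs 2$. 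I would compute the capacity by decomposing coordinates transverse and tangent to the orbit: Gaussian Laplace integration over the non-zero stable directions produces the product $\prod_{k\geqs2}\sqrt{\mu_k\mu_{-k}}/\nu_k^-$ together with $\sqrt{\mu_1}/\nu_1^-$, the zero mode is integrated exactly along the circle of length $L\norm{(u^*_{1,0})'}_{L^2}$, and the missing Gaussian in the zero direction supplies the extra $\sqrt{2\pi\eps}$, combining to give~\eqref{Kramers_periodic_LargeL}.

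In the bifurcating cases~(ii) and~(iii), where $L$ lies in a neighbourhood of $2\pi$, two eigenvalues of $\nabla^2 V[u^*_0]$ cross zero simultaneously (reflecting the $O(2)$ symmetry of the periodic problem), so the Gaussian approximation breaks down on a two-dimensional centre eigenspace. I would perform an infinite-dimensional Lyapunov--Schmidt reduction on this eigenspace (the normal-form analysis of Section~\ref{ssec_normalform}) to derive an effective potential of the form $\frac12 \lambda_1\abs{z}^2 + C\abs{z}^4$ in a complex coordinate $z$, with $C$ given by~\eqref{Kramers_defC2}. Integrating this density in polar coordinates produces the functions $\Theta_\pm$ defined in~\eqref{thetaplus}--\eqref{thetaminus}: $\Theta_+$ for $L<2\pi$, when only $u^*_0$ is a saddle, and $\Theta_-$ for $L>2\pi$, when the small-amplitude orbit $u^*_{1,\varphi}$ is the saddle. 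The limits~\eqref{theta0} together with the small-amplitude behaviour of the arc-length factor then give continuity of $\bigexpecin{u_0}{\tau_+}$ at $L=2\pi$ and the asymptotic matching of~(ii) with~(i) and of~(iii) with~(iv).

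The main difficulty is obtaining capacity estimates that are uniform both in the dimension~$d$ and in~$L$ across the bifurcation. The presence of a continuous family of saddles in~(iii)--(iv), combined with the near-degeneracy of two eigenvalues in~(ii)--(iii), forces one to combine Laplace asymptotics on a two-dimensional centre manifold with an integral along the saddle circle, while controlling the normal-form remainders multiplicatively so as to preserve the exponential accuracy of the Kramers prefactor. This is where the hypothesis $U\in\cC^5$ enters, and the precise form of the error bound $R_\pm(\eps,\mu_1)$ in~\eqref{Kramers_Rplus} reflects the trade-off between the eigenvalue gap $\abs{\mu_1}$ and the noise level $\sqrt\eps$.
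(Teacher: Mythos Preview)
Your proposal is correct and follows essentially the same approach as the paper: Galerkin truncation combined with the Bl\"omker--Jentzen approximation and large-deviation a priori bounds reduces to finite dimension, the potential-theoretic identity expresses the expected hitting time as the ratio $J_d(A,B)/\capacity_{A_d}(B_d)$, and the capacity for periodic b.c.\ is computed exactly as you describe --- via a two-dimensional centre-manifold normal form in polar coordinates near $L=2\pi$ (yielding $\Theta_\pm$), and via integration along the saddle circle of length $L\norm{(u^*_{1,0})'}_{L^2}$ for $L>2\pi+c$. One minor terminological slip: the numerator in the potential-theoretic identity is the integral of the equilibrium \emph{potential} $h_{A,B}$ against $\e^{-V/\eps}\6y$, not an equilibrium measure; otherwise your outline matches the paper's argument in Sections~\ref{ssec_periodic} and~\ref{sec_times}.
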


Note that for $L\geqs 2\pi - \Order{\sqrt{\eps}}$, the prefactor of
$\expecin{u_0}{\tau_+}$ is proportional to $\sqrt{\eps}/L$. This is due to the
existence of the continuous family of transition states 
\begin{equation}
 \label{main20}
u^*_{1,\varphi}(x) = u^*_{1,0}(x+\varphi)\;, 
\qquad
0\leqs \varphi < L
\end{equation} 
owing to translation symmetry. The quantity 
\begin{equation}
 \label{main21}
 L \norm{(u^*_{1,0})'}_{L^2}
= L \Biggbrak{\int_0^L \biggpar{\dtot{}{x}u^*_{1,0}(x)}^2 \6x}^{1/2}
\end{equation} 
plays the r\^ole of the \lq\lq length of the saddle\rq\rq. One shows (cf.\
Section~\ref{ssec_periodic}) that for $L$ close to $2\pi$, $\mu_1$ is close to
$-2\lambda_1$ and 
\begin{equation}
 \label{main22}
 L \norm{(u^*_{1,0})'}_{L^2}
= 2\pi \sqrt{\frac{\mu_1}{8C}} + \Order{\mu_1}\;, 
\end{equation} 
which implies
shows that~\eqref{Kramers_periodic_Lgeqpi} and~\eqref{Kramers_periodic_LargeL} are 
indeed compatible. 

As in the case of Neumann b.c., several of the above quantities admit more
explicit expressions. For instance, the identities~\eqref{product_formulas}
imply that the prefactor in~\eqref{Kramers_periodic_smallL} is given by  
\begin{equation}
 \label{prefactor_periodic_smallL} 
\frac{2\pi}{\sqrt{\abs{\lambda_0}\nu^-_0}} \biggpar{\prod_{k\geqs1}
\frac{\lambda_k}{\nu^-_k}}
= \frac{2\pi \sin(L/2)}
{\sinh\bigpar{\sqrt{U''(u_-)}\,L/2}}\;.
\end{equation} 
See~\cite{Stein_JSP_04} for an explicit expression of the prefactor for
$L>2\pi$, for a particular class of double-well potentials.



\newpage


\section{Outline of the proof}
\label{sec_proof}


\subsection{Potential theory}
\label{ssec_pot}

A first key ingredient of the proof is the potential-theoretic approach to
metastability of finite-dimensional SDEs developed in~\cite{BEGK,BGK}. 
Given a confining potential $V:\R^d\to\R$, consider the diffusion defined by  
\begin{equation}
 \label{pot01}
\6x_t = -\nabla V(x_t)\6t + \sqrt{2\eps}\6W_t\;, 
\end{equation} 
where $W_t$ denotes $d$-dimensional Brownian motion. The diffusion is
reversible with respect to the invariant measure 
\begin{equation}
 \label{pot02}
\mu(\6x) = \frac{1}{Z} \e^{-V(x)/\eps}\6x\;, 
\end{equation} 
where $Z$ is the normalisation. This follows from the fact that its
infinitesimal generator 
\begin{equation}
 \label{pot03}
\cL = \eps\Delta - \nabla V(x)\cdot\nabla
= \eps \e^{V/\eps} \nabla \cdot \e^{-V/\eps} \nabla 
\end{equation} 
is self-adjoint in $L^2(\R^d, \mu(\6x))$. 

Let $A, B, C\subset\R^d$ be measurable sets which are regular (that is, their
complement is a region with continuously differentiable boundary). We are
interested in the expected first-hitting time 
\begin{equation}
 \label{pot04}
w_A(x) = \bigexpecin{x}{\tau_A}\;. 
\end{equation} 
Dynkin's formula shows that $w_A(x)$ solves the Poisson problem 
\begin{align}
\nonumber
\cL w_A(x) &= -1 & &x\in A^c\;, \\
w_A(x) &= 0 & &x\in A\;.
\label{pot05} 
\end{align}
The solution of~\eqref{pot05} can be expressed in terms of the Green function
$G_{A^c}(x,y)$ as 
\begin{equation}
 \label{pot06}
w_A(x) = -\int_{A^c} G_{A^c}(x,y)\6y\;. 
\end{equation} 
Reversibility implies that the Green function satisfies the symmetry 
\begin{equation}
 \label{pot07}
\e^{-V(x)/\eps} G_{A^c}(x,y) =  \e^{-V(y)/\eps} G_{A^c}(y,x)\;.
\end{equation} 
Another important quantity is the \defwd{equilibrium potential} 
\begin{equation}
 \label{pot08}
h_{A,B}(x) = \bigprobin{x}{\tau_A<\tau_B}\;. 
\end{equation} 
It satisfies the Dirichlet problem 
\begin{align}
\nonumber
\cL h_{A,B}(x) &=0 & &x\in (A\cup B)^c\;, \\
\nonumber
h_{A,B}(x) &= 1 & &x\in A\;, \\
h_{A,B}(x) &= 0 & &x\in B\;,
\label{pot09} 
\end{align}
whose solution can be expressed in terms of the Green function and an 
\defwd{equilibrium measure}\/ $e_{A,B}(\6y)$ on $\partial A$ defined by 
\begin{equation}
 \label{pot10}
h_{A,B}(x) = \int_{\partial A} G_{B^c}(x,y) e_{A,B}(\6y)\;.  
\end{equation} 
Finally, the \defwd{capacity}\/ between $A$ and $B$ is defined as
\begin{equation}
 \label{pot11}
\capacity_A(B) = -\int_{\partial A} \e^{-V(y)/\eps} e_{A,B}(\6y)\;.  
\end{equation}  
The key observation is that the relations~\eqref{pot10}, \eqref{pot07}
and~\eqref{pot06} can be combined to yield 
\begin{align}
\nonumber
\int_{A^c} h_{C,A}(y)\e^{-V(y)/\eps}\6y 
&= \int_{A^c} \int_{\partial C} G_{A^c}(y,z) e_{C,A}(\6z) \e^{-V(y)/\eps}\6y \\
&= - \int_{\partial C} w_A(z) \e^{-V(z)/\eps} e_{C,A}(\6z)\;.
\label{pot12}
\end{align}
The approach used in~\cite{BEGK} is to take $C$ to be a ball of radius $\eps$
centred in $x$, and to use Harnack inequalities to show that $w_A(z)\simeq
w_A(x)$ on $C$. It then follows from~\eqref{pot11} that 
\begin{equation}
 \label{pot13}
\int_{A^c} h_{C,A}(y)\e^{-V(y)/\eps}\6y 
\simeq w_A(x) \capacity_C(A)\;. 
\end{equation} 
The left-hand side can be estimated using a priori bounds on the equilibrium
potential. Thus a sufficiently precise estimate of the capacity
$\capacity_C(A)$ will yield a good estimate for
$\expecin{x}{\tau_A}=w_A(x)$. Now it follows from Green's identities that the
capacity can also be expressed as a Dirichlet form evaluated at the equilibrium potential:
\begin{equation}
 \label{pot14}
\capacity_A(B) = \eps \int_{(A\cup B)^c} \norm{\nabla h_{A,B}(x)}^2
\e^{-V(x)/\eps}\6x\;. 
\end{equation} 
Even more useful is the variational representation 
\begin{equation}
 \label{pot15}
\capacity_A(B) = \eps \inf_{h\in\cH_{A,B}}  \int_{(A\cup B)^c} \norm{\nabla
h(x)}^2 \e^{-V(x)/\eps}\6x\;,
\end{equation}
where $ \cH_{A,B}$ denotes the set of twice weakly differentiable functions
satisfying the boundary conditions in~\eqref{pot09}. Indeed, inserting a
sufficiently good guess for the equilibrium potential on the right-hand side
immediately yields a good upper bound. A matching lower bound can be obtained
by a slightly more involved argument. 

Several difficulties prevent us from applying the same strategy directly to the
infinite-dimensional equation~\eqref{SPDE}. It is possible, however, to
approximate~\eqref{SPDE} by a finite-dimensional system, using a spectral
Galerkin method, to estimate first-hitting times for the finite-dimensional
system using the above ideas, and then to pass to the limit.  


\subsection{Spectral Galerkin approximation}
\label{ssec_galerkin}

Let $P_d:E\to E$ be the projection operator defined by 
\begin{equation}
 \label{Galerkin1}
(P_du)(x) = \sum_{\abs{k}\leqs d} 
y_k e_k(x)\;,
\qquad 
y_k = y_k[u] = \int_0^L \cc{e_k(y)} u(y)\6y\;,
\end{equation} 
where the $e_k$ are the basis vectors compatible with the boundary
conditions, given by~\eqref{cob_periodic} or~\eqref{cob_Neumann}. 
We denote by $E_d$ the finite-dimensional image of $E$ under $P_d$. 
Let $u_t(x)$ be the mild solution of the SPDE~\eqref{SPDE}
and let $u^{(d)}_t(x)$ be the solution of the projected equation 
\begin{equation}
 \label{Galerkin3}
\6u^{(d)}_t(x) = P_d\bigbrak{\Delta u^{(d)}_t(x) - U'(u^{(d)}_t(x))}\6t +
\sqrt{2\eps}P_d\6W(t,x)\;, 
\end{equation} 
called Galerkin approximation of order $d$. 
It is known (see, for instance, \cite{Jetschke_86}) that~\eqref{Galerkin3} is
equivalent to the finite-dimensional system of SDEs
\begin{equation}
 \label{Galerkin4}
\6 y_k(t) = -\dpar{}{y_{k}} \Vhat (y(t))\6t + \sqrt{2\eps} \6W_k(t)\;,
\qquad
\abs{k}\leqs d\;,
\end{equation} 
where the $W_k(t)$ are independent standard Brownian motions, and 
the potential is given by  
\begin{equation}
 \label{Galerkin4V}
\Vhat(y) = V\Biggbrak{\sum_{\abs{k}\leqs d} y_k e_k}\;. 
\end{equation} 
We will need an estimate of the deviation of the Galerkin
approximation $u_t^{(d)}$ from $u_t$. Such estimates are available in the
numerical
analysis literature. For instance, \cite{Liu_CMS_2003} provides an estimate for
the Sobolev norm $\normH{r}{u}^2 = \sum_k (1+k^2)^r \abs{y_k}^2$,
with $r<1/2$. We shall use the more precise results
in~\cite{Blomker_Jentzen_09}, which allow for a control in the (stronger)
sup norm. Namely, we have the following result:

\begin{theorem}
\label{thm_Galerkin}
Fix a $T>0$. Let $U'$ be locally Lipschitz, and assume 
\begin{equation}
 \label{Galerkin5}
\sup_{d\in\N} \sup_{0\leqs t\leqs T} \norm{u_t^{(d)}(\omega)}_{L^\infty} <
\infty
\end{equation}  
for all $\omega\in\Omega$. Then, for any $\gamma\in(0,1/2)$, there exists an almost 
surely finite random variable $Z:\Omega\to\R_+$ such that 
\begin{equation}
 \label{Galerkin6}
 \sup_{0\leqs t\leqs T} \norm{u_t(\omega) - u_t^{(d)}(\omega)}_{L^\infty}
\leqs Z(\omega) d^{-\gamma}
\end{equation} 
for all $\omega\in\Omega$.
\end{theorem}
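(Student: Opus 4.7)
The plan is to reduce the statement to a pathwise Gronwall estimate on the error $e_t := u_t - u_t^{(d)}$ via the mild formulations of the SPDE and its Galerkin projection, taking $u_0^{(d)}=P_d u_0$ as the matching initial condition. Subtracting the two mild equations yields
\begin{equation*}
e_t = (I-P_d)\e^{\Delta t}u_0
- \int_0^t \e^{\Delta(t-s)}\bigbrak{U'(u_s) - P_d\mskip1mu U'(u_s^{(d)})}\6s
+ \sqrt{2\eps}\,(I-P_d)\int_0^t \e^{\Delta(t-s)}\6W(s).
\end{equation*}
Splitting the nonlinear piece as $[U'(u_s)-U'(u_s^{(d)})] + (I-P_d)U'(u_s^{(d)})$ and taking the $L^\infty$ norm gives
\begin{equation*}
\norm{e_t}_{L^\infty} \leqs I_d + N_d(t) + S_d(t,\omega)
+ L_\star\!\int_0^t \norm{e_s}_{L^\infty}\6s,
\end{equation*}
where $L_\star=L_\star(\omega)$ is the Lipschitz constant of $U'$ on the ball supplied by assumption~\eqref{Galerkin5} (supplemented by the standard a priori $L^\infty$ bound on $u_t$ for confining $U$, so that $u_t$ and every $u_t^{(d)}$ lie in a common $\omega$-dependent ball).

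Three tail terms remain to estimate. The initial-data tail $I_d=\sup_{t\leqs T}\norm{(I-P_d)\e^{\Delta t}u_0}_{L^\infty}$ vanishes by the smoothing property of the heat semigroup, and since $u_0$ is fixed the rate of vanishing can be absorbed into the final random constant $Z(\omega)$. The nonlinear tail $N_d(t)=\int_0^t\norm{\e^{\Delta(t-s)}(I-P_d)U'(u_s^{(d)})}_{L^\infty}\6s$ is controlled by an estimate of the form $\norm{\e^{\Delta(t-s)}(I-P_d)f}_{L^\infty}\leqs C(t-s)^{-1/2-\delta}d^{-1+2\delta}\norm{f}_{L^\infty}$, obtained by splitting the heat kernel into low and high modes and using $\nu_k\sim k^2$; the sup bound~\eqref{Galerkin5} makes $\norm{U'(u_s^{(d)})}_{L^\infty}$ $\omega$-uniformly bounded on $[0,T]$, so the $s$-integral converges and yields a decay $d^{-\gamma}$ for any $\gamma<1/2$.

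The heart of the proof is the stochastic-convolution tail
\begin{equation*}
 S_d(T,\omega) = \sqrt{2\eps}\sup_{0\leqs t\leqs T}
 \biggnorm{(I-P_d)\int_0^t\e^{\Delta(t-s)}\6W(s)}_{L^\infty}
 \leqs Z(\omega)\,d^{-\gamma}.
\end{equation*}
For fixed $t$, the Fourier modes of $Z_t:=\int_0^t\e^{\Delta(t-s)}\6W(s)$ are independent centred Gaussians with variance $(1-\e^{-2\nu_k t})/(2\nu_k)\lesssim k^{-2}$, so $\sum_{\abs{k}>d}$ decays like $d^{-1}$ in $L^2$. Promoting this from a pointwise-in-$t$, $L^2$-in-$x$ bound to a uniform-in-$t$, $L^\infty$-in-$x$ bound at the essentially sharp rate $d^{-\gamma}$ for every $\gamma<1/2$ is the technical core: one employs the factorization identity $\e^{\Delta(t-s)}=c_\alpha\int_s^t\e^{\Delta(t-r)}(t-r)^{\alpha-1}(r-s)^{-\alpha}\e^{\Delta(r-s)}\6r$, applies $L^p$-bounds on the resulting factor process, and concludes via a Kolmogorov / Garsia--Rodemich--Rumsey continuity argument; this is precisely the content of~\cite{Blomker_Jentzen_09}. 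Feeding the three bounds into Gronwall and absorbing $\e^{L_\star T}$ together with $I_d$ and $N_d$ into a single random constant produces the stated conclusion.

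The main obstacle is thus the sup-norm (as opposed to $L^2$- or $H^r$-norm) control of the stochastic convolution at a rate approaching $d^{-1/2}$: the naive $L^2$ estimate does not survive the embedding into $L^\infty$, and the uniformity in $t\in[0,T]$ demands a Kolmogorov-type regularity argument on the factorized process rather than a pointwise Gaussian tail bound. Once that single analytic input is granted, the remainder of the argument is a routine combination of semigroup smoothing, the pathwise sup bound~\eqref{Galerkin5}, and Gronwall's inequality.
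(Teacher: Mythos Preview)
The paper's proof is a one-line citation: it invokes Theorem~3.1 of~\cite{Blomker_Jentzen_09} and verifies its four abstract hypotheses (semigroup regularity, local Lipschitz drift, approximation of the stochastic convolution, and the a~priori bound~\eqref{Galerkin5}). Your proposal instead opens that black box and reproduces the mild-formulation / Gronwall scheme underlying such results. This is a legitimate and more explicit route, but since you still defer to~\cite{Blomker_Jentzen_09} for the decisive uniform-in-$t$, $L^\infty$-in-$x$ bound on $(I-P_d)W_\Delta$, you have shifted the citation boundary one layer inward rather than removed it; what you gain is that the role of each hypothesis becomes visible.

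Two loose ends in your details. Your treatment of $I_d$ glosses over $t=0$: there is no smoothing there, and $I_d\geqs\norm{(I-P_d)u_0}_{L^\infty}$, which for a merely continuous $u_0$ need not even tend to zero in sup norm (divergent Fourier series exist), let alone at rate $d^{-\gamma}$; in the paper's applications $u_0$ is smooth so this is moot, but the general argument needs either a regularity assumption on $u_0$ or a separate small-time treatment. And the exponents in your $N_d$ estimate are mispaired: from $\e^{-\nu_k\tau}\leqs C_\beta(\nu_k\tau)^{-\beta}$ one gets $\tau^{-\beta}d^{1-2\beta}$ for $\beta\in(1/2,1)$, hence either $\tau^{-1/2-\delta}d^{-2\delta}$ or $\tau^{-1+\delta}d^{-1+2\delta}$, not the hybrid you wrote --- though either corrected version still makes the nonlinear tail subordinate to the stochastic one, so your conclusion is unaffected.
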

\begin{proof}
The result follows directly from~\cite[Theorem 3.1]{Blomker_Jentzen_09},
provided we verify the validity of four assumptions given in~\cite[Section 
2]{Blomker_Jentzen_09}. 
\begin{itemiz}
\item	Assumption 1 concerns the regularity of the semigroup $\e^{\Delta t}$
associated with the heat kernel, and is satisfied as shown in~\cite[Lemma 
4.1]{Blomker_Jentzen_09}.
\item	Assumption 2 is the local Lipschitz condition on $U'$. 
\item	Assumption 3 concerns the deviation of $P_d W(t,x)$ from $W(t,x)$ and
is satisfied according to~\cite[Proposition 4.2]{Blomker_Jentzen_09}.
\item	Assumption 4 is~\eqref{Galerkin5}. 
\qed
\end{itemiz}
\renewcommand{\qed}{}
\end{proof}


\subsection{Proof of the main result}
\label{ssec_proofmain}

For $r, \rho>0$ sufficiently small constants we define the balls 
\begin{align}
A =A(r) &=  \setsuch{u\in E}{\norm{u-u^*_-}_{L^{\infty}}\leqs r}\;, \\
B = B(\rho) &=  \setsuch{u\in E}{\norm{u-u^*_+}_{L^{\infty}}\leqs \rho}\;.
 \label{proofmain01}
\end{align} 
If $u^*_{\text{ts}}$ stands for a transition state between $u^*_-$ and
$u^*_+$, we denote by
\begin{equation}
 \label{proofmain00}
H_0 = V[u^*_{\text{ts}}] - V[u^*_-] 
\end{equation} 
the communication height from $u^*_-$ to $u^*_+$.
We fix an initial condition $u_0 \in A$, and write $u_0^{(d)}=P_du_0$ for its
projection on the finite-dimensional space $E_d$.  Finally we set $A_d=A\cap
E_d$. Consider the first-hitting times 
\begin{align}
\nonumber
\tau^{(d)}_B &= \inf\setsuch{t>0}{u^{(d)}_t\in B}\;,\\
\tau_B &= \inf\setsuch{t>0}{u_t\in B}\;.
\label{infd7}
\end{align}
We first need some a priori bounds on moments of these hitting times. They
are stated in the following result, which is proved in Section~\ref{sec_ld}. 

\begin{prop}[A priori bound on moments of hitting times]
\label{prop_apriori_hit} 
For any $\eta>0$, there exist constants $\eps_{0}=\eps_0(\eta), T_{1}=T_1(\eta), H_1 > 0$ such
that for all $\eps\in(0,\eps_0)$, there exists a $d_{0}(\eps)>0$ such that \begin{equation}
 \label{infd9} 
\sup_{v_0\in A}
\expecin{v_0}{\tau_B^2} \leqs T_1^2\e^{2(H_0+\eta)/\eps}
\qquad\text{and}\qquad
\sup_{d\geqs d_{0}}\sup_{v_0\in A}
\bigexpecin{v_0^{(d)}}{(\tau^{(d)}_{B})^2} \leqs T_1^2\e^{2H_1/\eps}\;.
\end{equation}
\end{prop}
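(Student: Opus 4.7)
The plan is to derive both moment bounds from a Freidlin--Wentzell large deviation principle combined with a Markov iteration argument, carried out first for the SPDE \eqref{SPDE} and then for the Galerkin approximation \eqref{Galerkin4}, with care to uniformity in the projection dimension $d$.

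First, I would invoke the LDP for \eqref{SPDE} (Faris--Jona-Lasinio and its generalisations, which apply to the class of potentials fixed by U3--U6). A standard energy argument identifies the quasi-potential separating $u^*_-$ from $B$ with the communication height $H_0=V[u^*_{\math{ts}}]-V[u^*_-]$. Using an explicit nearly-optimal path --- for instance, the time-reversed gradient flow from $u^*_{\math{ts}}$ down to $u^*_-$, concatenated with the forward gradient flow from $u^*_{\math{ts}}$ down to $u^*_+$, suitably regularised --- the LDP lower bound yields a constant $T_\eta$ such that
\begin{equation*}
\sup_{v_0\in A}\bigprobin{v_0}{\tau_B>T_\eta \e^{(H_0+\eta)/\eps}} \leqs \tfrac{1}{2}
\end{equation*}
for all $\eps$ small enough. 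The same argument applied to the SDE \eqref{Galerkin4}, whose drift is $-\nabla \Vhat$ with $\Vhat$ the restriction of $V$ to $E_d$, yields an analogous bound with some $H_1$ in place of $H_0+\eta$, uniform in $d\geqs d_0(\eps)$.

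Second, assumptions U4--U6 furnish a Lyapunov function (essentially $V$ itself, bootstrapped from $L^{2p_0}$ to $L^\infty$ using the smoothing of the heat semigroup and the one-dimensional Sobolev embedding) ensuring that with probability at least $1-\e^{-c/\eps}$ the process remains in a large ball $D\supset A\cup B$ throughout any time window of length $T^*\defby T_\eta\e^{(H_0+\eta)/\eps}$. Combined with the one-step hitting estimate above and the strong Markov property applied at the times $kT^*$, this yields $\bigprobin{v_0}{\tau_B>kT^*}\leqs 2^{-k+1}$, from which
\begin{equation*}
\bigexpecin{v_0}{\tau_B^2} = 2\int_0^\infty t\, \bigprobin{v_0}{\tau_B>t}\,\6t \leqs T_1^2\e^{2(H_0+\eta)/\eps}
\end{equation*}
follows by summation. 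An identical argument for the Galerkin process, now applied at multiples of the Galerkin threshold $T_1\e^{H_1/\eps}$, gives the second claimed bound.

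The main obstacle I anticipate is ensuring that the confinement estimates on $\norm{u_t^{(d)}}_{L^\infty}$ are uniform in $d$. Because $\Vhat$ is simply $V$ restricted to $E_d$, its dissipativity is inherited from $V$ by construction, giving uniform $L^{2p_0}$-moment bounds on $u_t^{(d)}$ via It\^o's formula applied to $\Vhat(y(t))$ (and to $\norm{u_t^{(d)}}_{L^{2p_0}}^{2p_0}$, which is where U6 enters). Passing these to $L^\infty$ bounds uniformly in $d$ is where the one-dimensional geometry and the explicit bases \eqref{cob_periodic}--\eqref{cob_Neumann} are used. The constant $H_1$, in contrast, need not be sharp --- any finite uniform upper bound on the finite-dimensional quasi-potentials suffices --- so convergence of $H_0^{(d)}$ to $H_0$, which would be delicate to establish directly, is not required for this a priori estimate.
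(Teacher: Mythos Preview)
Your high-level strategy for the infinite-dimensional bound---LDP lower bound plus Markov iteration---is correct in spirit, but there is a genuine gap in the uniformity step. The LDP lower bound gives, for each fixed $v_0$, an $\eps_0(v_0)$ below which the one-step hitting estimate holds; turning this into a bound on $\sup_{v_0}$ requires a compactness argument. Neither the ball $A$ nor your large confinement ball $D$ is compact in $E=\cC^0$ (they are $L^\infty$-balls), so your iteration at times $kT^*$---which needs the one-step bound uniformly over wherever the process happens to be at time $kT^*$, not just over $A$---is not justified as written. The paper closes this gap by introducing a second scale of sets $A_2(R)=\{\|u\|_{\cC^\alpha}\le R\}$, which \emph{are} compact in $E$ by Arzel\`a--Ascoli. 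The parabolic smoothing (Cerrai's a priori $L^\infty$ bounds followed by a H\"older-norm estimate on the mild formulation) drives the process into $A_2(R_2)$ after an $\Order{1}$ time, uniformly in the initial condition; the LDP lower bound is then established on this compact set via a finite cover and a Gronwall-type comparison of nearby solutions. The whole argument is packaged as a ``three-set argument'' (Corollary~\ref{cor_ldf4}) with $A_2(R_2)\setminus B$, $B$, and the complement of a large $L^\infty$-ball, controlling separately the time to hit $B$ or exit, the return time, and the probability of exiting before hitting $B$.

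For the Galerkin bound your proposed direct route---redo the LDP and Lyapunov estimates for the SDE~\eqref{Galerkin4} uniformly in $d$---is different from the paper's and harder. The paper does not invoke any large-deviation principle for the finite-dimensional system. Instead it transfers the already-proved infinite-dimensional estimates to $u^{(d)}$ via the approximation Theorem~\ref{thm_Galerkin}: on the event $\Omega_d=\{\sup_{t\le T}\|u_t-u^{(d)}_t\|_{L^\infty}\le\delta\}$, hitting times of $B_d$ by $u^{(d)}$ and of slightly fattened/shrunken balls by $u$ are sandwiched, and $\fP(\Omega_d^c)$ is made small by taking $d\ge d_0(\eps)$. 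This is why the second bound in~\eqref{infd9} carries only a crude exponent $H_1$ rather than $H_0+\eta$: nothing sharp about the finite-dimensional quasi-potential is needed, and the uniformity in $d$ comes for free from the convergence of the Galerkin scheme rather than from uniform dissipativity estimates on $\Vhat$.
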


The next result applies to all finite-dimensional Galerkin approximations, and
is based on the potential-theoretic approach. The detailed proof is given 
in Sections~\ref{sec_cap} and~\ref{sec_times}.

\begin{prop}[Bounds on expected hitting times in finite dimension] 
\label{prop_bounds_finite} 
There exists $\eps_{0}>0$ such that for any $0<\eps<\eps_{0}$ there exists a 
$d_0=d_{0}(\eps)<\infty$ such that for all $d\geqs d_0$,
there exists a probability measure $\nu_{d,B}$ supported on $\partial A_d$ such
that 
\begin{equation}
 \label{infd8}
C(d,\eps) \e^{H(d)/\eps} \bigbrak{1-R^-_{d,B}(\eps)}
\leqs \int_{\partial A_d}
\bigexpecin{v_0}{\tau^{(d)}_B} \nu_{d,B}(\6v_0) 
\leqs 
C(d,\eps) \e^{H(d)/\eps} \bigbrak{1+R^+_{d,B}(\eps)}\;,
\end{equation} 
where the quantities $C(d,\eps)$, $H(d)$ and $R^\pm_{d,B}(\eps)$ are explicitly
known. 
They satisfy 
\begin{itemiz}
\item	$\lim_{d\to\infty}C(d,\eps)\bydef C(\infty,\eps)$ exists and is finite;
\item	$\lim_{d\to\infty}H(d) = H_0$ is given by the communication height;
\item	the remainders $R^\pm_{d,B}(\eps)$ are uniformly bounded in $d$ and 
$R^\pm_B(\eps)=\sup_d R^\pm_{d,B}(\eps)$ satisfies
$\lim_{\eps\to0}\smash{R^\pm_B(\eps)}=0$.
\end{itemiz}
\end{prop}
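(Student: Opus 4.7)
\medskip
\noindent\textbf{Proof proposal for Proposition~\ref{prop_bounds_finite}.}

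The plan is to apply the potential-theoretic machinery outlined in Section~\ref{ssec_pot} to the finite-dimensional Galerkin SDE~\eqref{Galerkin4}, which is the gradient flow of $\Vhat$ on $E_d$ perturbed by white noise, and then to verify that the resulting Kramers-type formula has a well-defined limit as $d\to\infty$. The natural choice of the measure $\nu_{d,B}$ will be the normalised equilibrium measure $e_{A_d,B_d}/\capacity_{A_d}(B_d)$ supported on $\partial A_d$; indeed, combining~\eqref{pot06}, \eqref{pot07}, \eqref{pot10} and~\eqref{pot11} one obtains the exact identity
\begin{equation*}
\int_{\partial A_d} \bigexpecin{v_0}{\tau^{(d)}_B} \nu_{d,B}(\6v_0)
= \frac{\displaystyle\int_{A_d^c} h_{A_d,B_d}(y)\,\e^{-\Vhat(y)/\eps}\6y}
{\capacity_{A_d}(B_d)}\,,
\end{equation*}
which is the basic object to estimate.

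The first main step is to produce matching upper and lower bounds on the capacity $\capacity_{A_d}(B_d)$ by Laplace asymptotics at the transition state $\hat u^*_{\mathrm{ts}}\in E_d$. The upper bound follows from the variational principle~\eqref{pot15} applied to a test function depending only on the coordinate along the unstable eigenvector of $\hessian{\Vhat}[\hat u^*_{\mathrm{ts}}]$, built as the error-function profile used in~\cite{BEGK}; this reduces the Dirichlet integral to a one-dimensional Gaussian that evaluates to $\eps\,(2\pi\eps)^{(2d-1)/2}\sqrt{|\hat\mu_0|}\,\e^{-\Vhat[\hat u^*_{\mathrm{ts}}]/\eps}\,/\sqrt{|{\det\hessian{\Vhat}[\hat u^*_{\mathrm{ts}}]}|}$ up to a relative error of order $\sqrt{\eps|\log\eps|^3}$. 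The matching lower bound requires restricting the Dirichlet form to a suitable tubular neighbourhood of the instanton curve and bounding below by an exact 1D capacity, in the spirit of Sections 4.2--4.3 of~\cite{BEGK}. In the non-degenerate regime (statements (1) and (4) when $L$ is bounded away from the bifurcation value), the smallest positive eigenvalue $\hat\mu_1$ is uniformly bounded below in $d$ and this Gaussian treatment suffices; at the bifurcation ($L$ close to $\pi$ or $2\pi$) the Taylor expansion of $\Vhat$ along the near-zero mode must be carried out to fourth order, and the resulting non-Gaussian integral produces exactly the factors $\Psi_\pm$ or $\Theta_\pm$ through formulas~\eqref{psiplus}--\eqref{psiminus} and~\eqref{thetaplus}--\eqref{thetaminus}; translation invariance in the periodic case introduces the extra flat direction responsible for the factor $L\norm{(u^*_{1,0})'}_{L^2}$.

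The second main step is to estimate the numerator. A~priori bounds on $h_{A_d,B_d}$ (again following~\cite{BEGK}, Lemma~4.2 and its adaptation) show that the equilibrium potential equals $1+O(\e^{-c/\eps})$ on a suitable neighbourhood of the well $\hat u^*_-$ and is exponentially small outside it, so Laplace's method around $\hat u^*_-$ yields
$\int h_{A_d,B_d}\e^{-\Vhat/\eps}\simeq (2\pi\eps)^{(2d+1)/2} \e^{-\Vhat[\hat u^*_-]/\eps}\,/\sqrt{\det\hessian{\Vhat}[\hat u^*_-]}$,
again up to a relative error $O(\sqrt\eps\,|\log\eps|^{3/2})$. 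Taking the quotient, the powers of $\eps$ and $2\pi$ combine to the expected prefactor, and one is left with the ratio of determinants
\begin{equation*}
\frac{\det\hessian{\Vhat}[\hat u^*_{\mathrm{ts}}]}{\det\hessian{\Vhat}[\hat u^*_-]}
= \prod_{|k|\leqs d}\frac{\hat\mu_k}{\hat\nu_k^-}\,,
\end{equation*}
where the eigenvalues $\hat\mu_k,\hat\nu_k^-$ of the finite-dimensional Hessians converge (as $d\to\infty$) to those of the Sturm--Liouville operators $-Q[u^*_{\mathrm{ts}}]$ and $-Q[u^*_-]$ discussed in Section~\ref{ssec_det}; for large $|k|$ both behave like $\nu_k$, so the ratio telescopes. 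Existence of $\lim_{d\to\infty} C(d,\eps)=C(\infty,\eps)$ then follows from the Weyl-type asymptotics of the eigenvalues together with the product identities~\eqref{product_formulas}, while $H(d)=\Vhat[\hat u^*_{\mathrm{ts}}]-\Vhat[\hat u^*_-]\to H_0$ because the critical points of $\Vhat$ converge in $E$ to those of~$V$ (this requires a spectral-Galerkin normal-form argument, provided by Section~\ref{sec_det}).

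The main obstacle I anticipate is not the leading asymptotics at a given dimension, which is standard BEGK theory, but rather establishing uniformity in $d$ of the relative error terms $R^{\pm}_{d,B}(\eps)$. The Harnack-type estimates and the localisation of the Dirichlet integral in~\cite{BEGK} depend implicitly on constants that can deteriorate with the dimension, and the $2d$-dependent Gaussian tails that are discarded must be shown to be summable uniformly in $d$. Controlling this uniformity requires exploiting the specific structure of $\Vhat$: the quadratic form $\hessian{\Vhat}[\hat u^*_-]$ has eigenvalues growing like $(k\pi/L)^2$, which provides enough spectral gap to make all tail estimates uniform. The delicate sub-case is again the bifurcation regime, where a single eigenvalue is allowed to vanish; here one isolates that degenerate direction, treats it by the non-Gaussian Laplace expansion yielding $\Psi_\pm$ or $\Theta_\pm$, and applies the standard Gaussian estimates dimension-uniformly to the remaining modes.
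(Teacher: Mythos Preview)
Your outline matches the paper's approach: the averaged expectation is expressed exactly as $J_d(A,B)/\capacity_{A_d}(B_d)$ via the normalised equilibrium measure (Proposition~\ref{prop_times_1}), and the two factors are estimated separately by Laplace-type asymptotics with dimension-uniform error control (Section~\ref{sec_cap} for the capacity, Section~\ref{ssec_integ} for the numerator).

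One point where the paper's mechanism differs from what you sketch deserves mention. You write that the a~priori bounds on $h_{A_d,B_d}$ follow ``again following~\cite{BEGK}, Lemma~4.2 and its adaptation'', and later that the spectral gap of $\hessian{\Vhat}$ is what makes all tail estimates uniform. The spectral growth does control the Gaussian tails in the capacity upper bound and in the Laplace integral for $J_d$; but for the capacity \emph{lower} bound (Proposition~\ref{prop_cap_neumann_smallL_lower} and its analogues) and for the numerator \emph{upper} bound (Proposition~\ref{prop_integ_upper}) one needs pointwise bounds on $h^{(d)}_{A_d,B_d}$ that are uniform in $d$, and the BEGK argument in fixed dimension does not directly deliver this. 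The paper's device (Propositions~\ref{new_prop_eqpot1} and~\ref{new_prop_eqpot2}) is to bound $h^{(d)}_{A_d,B_d}(u_0)=\probin{u_0}{\tau^{(d)}_{A_d}<\tau^{(d)}_{B_d}}$ by comparing $u^{(d)}_t$ with the infinite-dimensional solution $u_t$ on the event $\Omega_d$ of Theorem~\ref{thm_Galerkin}, and then applying the infinite-dimensional large-deviation principle of~\cite{Faris_JonaLasinio82,Freidlin88}. This is why the threshold $d_0$ in the statement is $\eps$-dependent: one needs $d$ large enough that $\fP(\Omega_d^c)$ is below the relevant exponential scale. Your proposal would be complete once this transfer step is made explicit.
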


Then we have the following result.

\begin{prop}[Averaged bounds on the expected first-hitting time in infinite
dimension]
\label{prop_infd}
Pick a $\delta \in (0,\rho)$. 
There exist $\eps_0>0$ and probability measures $\nu_+$ and $\nu_-$ on 
$\partial A$ such that for $0<\eps<\eps_0$, 
\begin{align}
\nonumber
\int_{\partial A} \bigexpecin{v_0}{\tau_{B(\rho)}} \nu_+(\6v_0) 
&\leqs 
C(\infty,\eps) \e^{H_0/\eps} \bigbrak{1+2R^+_{B(\rho-\delta)}(\eps)}\;, \\
\int_{\partial A} \bigexpecin{v_0}{\tau_{B(\rho)}} \nu_-(\6v_0) 
&\geqs 
C(\infty,\eps) \e^{H_0/\eps} \bigbrak{1-2R^-_{B(\rho+\delta)}(\eps)}\;.
 \label{infd10}
\end{align}
\end{prop}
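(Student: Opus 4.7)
The plan is to transfer the finite-dimensional bounds of Proposition~\ref{prop_bounds_finite} to the infinite-dimensional hitting time $\tau_{B(\rho)}$, by choosing the Galerkin dimension $d=d(\eps)$ sufficiently large as $\eps\to0$. Concretely, we take $\nu_+=\nu_{d,B(\rho-\delta)}$ and $\nu_-=\nu_{d,B(\rho+\delta)}$ from Proposition~\ref{prop_bounds_finite}, viewed as probability measures on $\partial A$ via the inclusion $\partial A_d\subset\partial A$ (valid because $u^*_-\in E_d$).

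The first step is a pathwise comparison of hitting times via Theorem~\ref{thm_Galerkin}. For a time horizon $T>0$ to be chosen, introduce
\begin{equation*}
\Omega^+_d(T)=\Bigset{\sup_{0\leqs t\leqs T}\bignorm{u_t-u_t^{(d)}}_{L^\infty}\leqs\delta}
\cap\Bigset{\tau^{(d)}_{B(\rho-\delta)}\leqs T}\;,
\end{equation*}
and $\Omega^-_d(T)$ analogously with $B(\rho+\delta)$ in place of $B(\rho-\delta)$. On $\Omega^+_d(T)$, at the instant $\tau^{(d)}_{B(\rho-\delta)}$ the two processes differ by at most $\delta$, so $u_t\in B(\rho)$ there, giving $\tau_{B(\rho)}\leqs\tau^{(d)}_{B(\rho-\delta)}$. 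On $\Omega^-_d(T)$, if $\tau_{B(\rho)}\leqs T$ then $u^{(d)}_{\tau_{B(\rho)}}\in B(\rho+\delta)$, hence $\tau^{(d)}_{B(\rho+\delta)}\leqs\tau_{B(\rho)}$.

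The bad events are handled by Cauchy--Schwarz: for $v_0\in\partial A_d$,
\begin{equation*}
\Bigabs{\bigexpecin{v_0}{\tau_{B(\rho)}}-\bigexpecin{v_0}{\tau^{(d)}_{B(\rho\mp\delta)}}}
\leqs\Bigpar{\bigexpecin{v_0}{\tau_{B(\rho)}^2}+\bigexpecin{v_0}{(\tau^{(d)}_{B(\rho\mp\delta)})^2}}^{1/2}\bigprobin{v_0}{(\Omega^\pm_d(T))^c}^{1/2}\;.
\end{equation*}
Proposition~\ref{prop_apriori_hit} bounds the two second moments uniformly by $T_1^2\e^{2(H_0+\eta)/\eps}$ and $T_1^2\e^{2H_1/\eps}$. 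Choosing $T=T(\eps)=\e^{c/\eps}$ with $c$ sufficiently large compared to $H_0$, $H_1$ and $\eta$, Markov's inequality renders $\bigprobin{v_0}{\tau^{(d)}_{B(\rho\mp\delta)}>T}$ exponentially small in $1/\eps$; a polynomial-in-$T$ moment bound on the random variable $Z(T)$ of Theorem~\ref{thm_Galerkin}---available from the quantitative form of the Bl\"omker--Jentzen estimate---then allows us to pick $d(\eps)$ growing at least exponentially in $1/\eps$ so that $\fP\{Z(T)d^{-\gamma}>\delta\}$ is also exponentially small.

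Averaging the two hitting-time inequalities against $\nu_\pm$ and invoking Proposition~\ref{prop_bounds_finite}, together with $C(d,\eps)\to C(\infty,\eps)$ and $H(d)\to H_0$, yields~\eqref{infd10}; the exponentially small Cauchy--Schwarz error is absorbed into the doubled remainder $2R^\pm_{B(\rho\mp\delta)}(\eps)$ once $\eps_0$ is small enough. The main obstacle is the simultaneous calibration of $d(\eps)$ and $T(\eps)$: $T$ must be large enough for the finite-dimensional process to reach its target before $T$ with overwhelming probability, which in turn forces $d$ to grow fast enough to dominate the $T$-dependence of the Galerkin error. This pivots on a sufficiently sharp tail estimate for $Z(T)$ as $T\to\infty$.
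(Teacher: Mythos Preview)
Your overall architecture matches the paper's: compare $\tau_{B(\rho)}$ with $\tau^{(d)}_{B(\rho\mp\delta)}$ on a good event where the Galerkin approximation stays $\delta$-close up to a time horizon, control the bad event by Cauchy--Schwarz and the second-moment bounds of Proposition~\ref{prop_apriori_hit}, and then average against the finite-dimensional measures of Proposition~\ref{prop_bounds_finite}. The inclusion $\partial A_d\subset\partial A$ is also fine.

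The difference is in how you handle the two limits, and here you make life harder than necessary. You insist on choosing $T=T(\eps)$ and $d=d(\eps)$ simultaneously, which forces you to control the tail of $Z(T)$ as $T\to\infty$. You then appeal to ``the quantitative form of the Bl\"omker--Jentzen estimate'' for a polynomial-in-$T$ moment bound on $Z(T)$; but nothing of the sort is stated in Theorem~\ref{thm_Galerkin}, and extracting it from~\cite{Blomker_Jentzen_09} would be additional work that you have not carried out. As written, this step is a gap.

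The paper sidesteps this entirely by decoupling the limits. For fixed $\eps$ and a free parameter $K$, set the time horizon to $K\TKr$ with $\TKr=C(\infty,\eps)\e^{H_0/\eps}$. Then
\[
\fP(\Omega_{K,d}^c)\leqs \fP\{Z>\delta d^\gamma\}+\frac{\expecin{v_0^{(d)}}{\tau^{(d)}_{B_-}}}{K\TKr}\;,
\]
and one takes $\limsup_{d\to\infty}$ \emph{first}: since $Z$ is almost surely finite for the fixed time horizon $K\TKr$, the first term vanishes with no quantitative input whatsoever, while the second is bounded by $M(\eps)/K$ via Proposition~\ref{prop_apriori_hit}. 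Only \emph{afterwards} is $K$ taken large enough that the Cauchy--Schwarz remainder drops below $R^+_{B_-}(\eps)$, and finally one freezes a single sufficiently large $d$ to realise the $\limsup$ bound, setting $\nu_+=\nu_{d,B_-}$. This order of operations is exactly the ``obstacle'' you flag at the end of your sketch --- but the point is that it is not an obstacle at all once you stop trying to tie $T$ to $\eps$ before sending $d\to\infty$.
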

\begin{proof}
To ease notation, we write $B=B(\rho)$, $B_\pm=B(\rho\pm\delta)$ and 
$\TKr = C(\infty,\eps) \e^{H_0/\eps}$. For given $v_0\in\partial
A$ and $K>0$, define the
event
\begin{equation}
 \label{infd10:1}
\Omega_{K,d} \defby \biggset{\sup_{t\in[0,K\TKr]}
\norm{v_t-v^{(d)}_t}_{L^\infty} \leqs
\delta,\, \tau^{(d)}_{B_-}\leqs K\TKr}\;,
\end{equation} 
where $v_t$ and $v_t^{(d)}$ denote the solutions of the original and the
projected
equation with respective initial conditions $v_0$ and $P_d v_0$. 
Theorem~\ref{thm_Galerkin} and Markov's inequality imply 
\begin{equation}
 \label{infd10:2}
\fP(\Omega_{K,d}^c) \leqs 
\prob{Z>\delta d^\gamma}
+ \frac{\bigexpecin{v_0^{(d)}}{\tau^{(d)}_{B_-}}}{K\TKr}\;.
\end{equation} 
Choosing $\eps_{0}$ and $d_0(\eps)$ such that
Proposition~\ref{prop_apriori_hit} applies, the last summand can be bounded  
using~\eqref{infd9} and the Cauchy--Schwarz inequality. This yields 
\begin{equation}
 \label{infd10:2b}
\limsup_{d\to\infty} \fP(\Omega_{K,d}^c) \leqs \frac{M(\eps)}K
\qquad\text{where } 
M(\eps) = \frac{T_1}{C(\infty,\eps)}\e^{(H_1-H_0)/\eps}\;.
\end{equation} 
We decompose 
\begin{equation}
 \label{infd10:3}
\bigexpecin{v_0}{\tau_B} 
=  \bigexpecin{v_0}{\tau_B\indicator{\Omega_{K,d}}} + 
\bigexpecin{v_0}{\tau_B\indicator{\Omega_{K,d}^c}}\;.
\end{equation} 
In order to estimate the first summand, we note that by definition of
$B$, $B_+$ and $B_-$,  
\begin{equation}
 \label{infd10:4}
 \tau^{(d)}_{B_+} \leqs \tau_B \leqs
\tau^{(d)}_{B_-}
\qquad
\text{on $\Omega_{K,d}$\;.}
\end{equation} 
It follows that 
\begin{align}
\nonumber
 \bigexpecin{v_0^{(d)}}{\tau^{(d)}_{B_+}} -
\bigexpecin{v_0^{(d)}}{\tau^{(d)}_{B_+}\indicator{\Omega_{K,d}^c}}
&= \bigexpecin{v_0^{(d)}}{\tau^{(d)}_{B_+}\indicator{\Omega_{K,d}}}
\\
\nonumber
&\leqs \bigexpecin{v_0}{\tau_B\indicator{\Omega_{K,d}}} \\
&\leqs
\bigexpecin{v_0^{(d)}}{\tau^{(d)}_{B_-}\indicator{\Omega_{K,d}}} 
\leqs \bigexpecin{v_0^{(d)}}{\tau^{(d)}_{B_-}}\;.
 \label{infd10:5}
\end{align} 
The second summand in~\eqref{infd10:3} can be bounded by Cauchy--Schwarz:
\begin{equation}
 \label{infd10:6}
 0 \leqs \bigexpecin{v_0}{\tau_B\indicator{\Omega_{K,d}^c}} 
\leqs \sqrt{\expecin{v_0}{\tau_B^2}} \sqrt{\fP(\Omega_{K,d}^c)}\;.
\end{equation} 
This shows that
\begin{equation}
 \label{infd10:10}
\bigexpecin{v_0^{(d)}}{\tau^{(d)}_{B_+}} - 
\sqrt{\expecin{v_0}{\bigpar{\tau^{(d)}_{B_+}}^2} \fP(\Omega_{K,d}^c)}
\leqs \bigexpecin{v_0}{\tau_B} \leqs 
\bigexpecin{v_0^{(d)}}{\tau^{(d)}_{B_-}} + 
\sqrt{\expecin{v_0}{\tau_B^2} \fP(\Omega_{K,d}^c)}\;.
\end{equation} 
Proposition~\ref{prop_bounds_finite} shows that 
\begin{equation}
 \label{infd10:11}
\limsup_{d\to\infty} 
\int_{\partial A_d}  \bigexpecin{v_0}{\tau^{(d)}_{B_-}}
\nu_{d,B_-}(\6v_0) \leqs \TKr \bigbrak{1 + R^+_{B_-}(\eps)}\;,
\end{equation} 
while Lebesgues's dominated convergence theorem and~\eqref{infd10:2b} yield 
\begin{equation}
 \label{infd10:12}
\limsup_{d\to\infty} 
\int_{\partial A_d}  \sqrt{\expecin{v_0}{\tau_B^2} \fP(\Omega_{K,d}^c)}
\,\nu_{d,B_-}(\6v_0) 
\leqs 
\sqrt{\sup_{v_0\in A}\expecin{v_0}{\tau_B^2} \frac{M(\eps)}K }\;.
\end{equation} 
Inserting~\eqref{infd10:11} and~\eqref{infd10:12} in~\eqref{infd10:10} shows
that 
\begin{equation}
\limsup_{d\to\infty} 
\frac{1}{\TKr} \int_{\partial A_d} \expecin{v_0}{\tau_B} \nu_{d,B_-}(\6v_0)
\leqs 
1+R^+_{B_-}(\eps) + \sqrt{\frac{\sup_{v_0\in A}\expecin{v_0}{\tau_B^2}}{\TKr^2}
\frac{M(\eps)}K}\;.
 \label{infd10:7}
\end{equation} 
Taking $K$ sufficiently large, the third summand can be made smaller than the
second one. The upper bound in~\eqref{infd10} then follows with
$\nu_+=\nu_{d,B_-}$ for $d$ sufficiently large. The proof of the lower bound is
analogous. 
\end{proof}

To finish the proof of the main result, we need 

\begin{theorem}
\label{thm_MOS}
There exist constants $\eps_0, \kappa, t_0, m, c>0$ such that 
such that for all $\eps < \eps_0$, 
\begin{equation}
 \label{infd11}
\biggprob{\sup_{u_0,v_0\in A}
\frac{\norm{u_t-v_t}_{L^\infty}}{\norm{u_0-v_0}_{L^\infty}} < c\e^{-mt} 
\; \forall t>t_0} \geqs 1 - \e^{-\kappa/\eps}\;. 
\end{equation}  
Here $u_t$ and $v_t$ denote the mild solutions of the
SPDE~\eqref{SPDE} with respective initial conditions $u_0$ and $v_0$. 
\end{theorem}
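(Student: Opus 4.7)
The plan is to combine a synchronous coupling with a deterministic contraction argument. Couple $u_t$ and $v_t$ to be driven by the \emph{same} cylindrical Wiener process, with initial conditions $u_0, v_0 \in A$. The difference $w_t = u_t - v_t$ then satisfies the noise-free linear PDE
\begin{equation*}
 \partial_t w_t(x) = \Delta w_t(x) - J_t(x)\, w_t(x), \qquad w_0 = u_0 - v_0,
\end{equation*}
where $J_t(x) = \int_0^1 U''\bigl(\theta u_t(x) + (1-\theta) v_t(x)\bigr)\,\6\theta$ by the mean-value theorem applied pointwise in $x$. All the stochasticity is encoded in $J_t$ through the pathwise dependence of $u_t, v_t$ on $W$.

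First I would establish a deterministic contraction. Since $U''(u_-) > 0$ by Assumption~U2, there exist $r^* > r$ and $c_0 > 0$ with $U''(u) \geqs c_0$ whenever $\abs{u - u_-} \leqs r^*$. Define the $L^\infty$-tube $\cN = \bigsetsuch{u \in E}{\norm{u - u^*_-}_{L^\infty} \leqs r^*}$. On the event that both $u_s, v_s$ lie in $\cN$ for all $s \in [0,t]$, we have $J_s(x) \geqs c_0$ pointwise. Setting $\tilde w_s = \e^{c_0 s} w_s$ yields $\partial_s \tilde w_s = \Delta \tilde w_s - (J_s - c_0)\tilde w_s$ with a non-positive zeroth-order coefficient; the parabolic maximum principle (compatible with both Neumann and periodic b.c.) then gives $\norm{\tilde w_t}_{L^\infty} \leqs \norm{w_0}_{L^\infty}$, and hence $\norm{w_t}_{L^\infty} \leqs \e^{-c_0 t} \norm{w_0}_{L^\infty}$, the desired bound with $m = c_0$ and $c = 1$.

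Next I would reduce the uncountable supremum over initial conditions to a single scalar exit event. Let $u^0_t$ denote the solution of~\eqref{SPDE} started at $u^*_-$, and define the inner tube $\cN_0 = \setsuch{u}{\norm{u - u^*_-}_{L^\infty} \leqs r^* - r}$. I claim that on the event $\{u^0_t \in \cN_0 \text{ for all } t \in [0, T]\}$, every trajectory $u^{u_0}_t$ with $u_0 \in A$ stays in $\cN$ for the same time range. Arguing by contradiction along the first exit time $\tau$ of $u^{u_0}_t$ from $\cN$, both $u^{u_0}_s$ and $u^0_s$ lie in $\cN$ for $s \in [0, \tau)$, so the previous step applies and yields $\norm{u^{u_0}_\tau - u^0_\tau}_{L^\infty} \leqs r\, \e^{-c_0 \tau} \leqs r$; combined with $u^0_\tau \in \cN_0$, this forces $u^{u_0}_\tau$ into the interior of $\cN$, contradicting the definition of $\tau$.

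Finally, a Freidlin--Wentzell-type large-deviation principle for~\eqref{SPDE} (in the spirit of~\cite{Faris_JonaLasinio82} and as developed in Section~\ref{sec_ld}) provides a positive quasipotential $V^*$ such that, for every $\kappa < V^*$ and every $T \leqs \e^{(V^* - \kappa)/\eps}$,
\begin{equation*}
 \prob{u^0_t \text{ exits } \cN_0 \text{ before time } T} \leqs \e^{-\kappa/\eps}
\end{equation*}
for $\eps$ small enough. Choosing for instance $\kappa = V^*/2$ and $T(\eps) = \e^{V^*/(2\eps)}$, which exceeds any metastable time scale appearing in the main theorems, the three steps combine to give, on an event of probability $\geqs 1 - \e^{-\kappa/\eps}$,
\begin{equation*}
 \norm{u_t - v_t}_{L^\infty} \leqs \e^{-c_0 t}\, \norm{u_0 - v_0}_{L^\infty}
 \quad\text{for all } t \in [0, T(\eps)] \text{ and all } u_0, v_0 \in A,
\end{equation*}
which is stronger than the stated bound. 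The main technical obstacle is the sup-norm exit estimate: sup-norm large deviations for an SPDE driven by space--time white noise are more delicate than their $L^2$ or Hilbert-space counterparts, but the required bounds are exactly those established in Section~\ref{sec_ld} as part of the a~priori estimates underlying Proposition~\ref{prop_apriori_hit}.
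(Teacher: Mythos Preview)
The paper does not give its own proof of Theorem~\ref{thm_MOS}; it cites Corollary~3.1 of Martinelli--Olivieri--Scoppola and notes that their argument carries over to Neumann and periodic b.c.\ because the Hessian at $u^*_\pm$ is nondegenerate. Your synchronous-coupling approach is exactly the MOS starting point, and your first two steps are correct: the noise cancels in the difference, the parabolic maximum principle gives $L^\infty$-contraction with rate $c_0$ on the event that both trajectories stay in $\cN$, and the reduction to the single exit event $\{u^0_t\in\cN_0\ \forall t\in[0,T]\}$ is clean.

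The gap is in the last step. You assert that $T(\eps)=\e^{V^*/(2\eps)}$ ``exceeds any metastable time scale appearing in the main theorems'', but $V^*$ is the cost to exit the tube $\cN_0$, whose radius is constrained by the requirement $U''\geqs c_0$ on $\cN$. For the model potential~\eqref{def03} one has $U''(u)>0$ only for $\abs{u}>1/\sqrt3$, forcing $r^*<1-1/\sqrt3$; the potential barrier to leave such a tube along the constant mode is then strictly smaller than the communication height $H_0=V[u^*_{\text{ts}}]-V[u^*_-]$, so that $T(\eps)\ll\e^{H_0/\eps}$. Since the application in Proposition~\ref{prop_main_result} uses the contraction at the random time $\tau_B$ (see~\eqref{infd14:03}), which is of order $\e^{H_0/\eps}$, your estimate stops too early to be useful there---and indeed on your good event the process never leaves $\cN$, so $\tau_B>T(\eps)$ automatically.

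What your argument is missing is the control of the flow \emph{after} the first exit from $\cN_0$. The MOS proof handles this by showing that, with probability $\geqs 1-\e^{-\kappa/\eps}$, excursions into the region where $U''$ may be negative are so short and infrequent that the expansion they cause is dominated by the contraction accumulated during the long residence near the minima. It is this iterated excursion analysis, not a single confinement estimate, that produces the statement ``$\forall t>t_0$'' with no upper cutoff.
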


This result has been proved 
in~\cite[Corollary 3.1]{Martinelli_Olivieri_Scoppola_89} in the case of
Dirichlet b.c., under the condition $L\neq\pi$. The reason for this restriction is
that for $L=\pi$, the Hessian at the potential minimum has a zero eigenvalue. 
For Neumann and periodic b.c., this difficulty does not occur, because the
potential minima always have only strictly negative eigenvalues. 

\begin{prop}[Main result]
\label{prop_main_result} 
Pick $\delta\in(0,\rho/2)$. 
There exists $\eps_0>0$ such that for all $\eps<\eps_0$, 
\begin{equation}
 \label{infd12}
C(\infty,\eps) \e^{H_0/\eps} \bigbrak{1-3R^-_{B(\rho+2\delta)}(\eps)}
\leqs \bigexpecin{u_0}{\tau_B(\rho)} \leqs 
C(\infty,\eps) \e^{H_0/\eps} \bigbrak{1+3R^+_{B(\rho-2\delta)}(\eps)}\;. 
\end{equation} 
\end{prop}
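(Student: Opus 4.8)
The plan is to transfer the \emph{averaged} estimates of Proposition~\ref{prop_infd} to the pointwise estimate for the fixed initial condition $u_0\in A$, using the contraction bound of Theorem~\ref{thm_MOS} to forget the difference between $u_0$ and a random starting point $v_0\in\partial A$ drawn from the measures $\nu_\pm$ produced by Proposition~\ref{prop_infd}. The heuristic is that two solutions started in $A$ and driven by the same noise become closer than $\delta$ within a time of order one, so that the first-hitting time $\tau_{B(\rho)}$ of $B(\rho)$ by $u$ is sandwiched between the first-hitting times of the inflated and deflated balls $B(\rho\pm\delta)$ by $v$, up to contributions of events of exponentially small probability.

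Concretely, I would couple $u_t$ (issued from $u_0$) with $v_t$ (issued from $v_0\in\partial A$) through the same cylindrical Wiener process, write $\tau^v_C=\inf\setsuch{t>0}{v_t\in C}$, and let $\cG$ denote the event in~\eqref{infd11}, so that $\fP(\cG^c)\leqs\e^{-\kappa/\eps}$. Since $\norm{u_0-v_0}_{L^\infty}\leqs 2r$ and $c\e^{-mt}\to0$, there is a constant $T^\star$, depending only on $r,\delta$ (and on $c,m,t_0$), so that on $\cG$ one has $\norm{u_t-v_t}_{L^\infty}<\delta$ for all $t\geqs T^\star$. Hence, on $\cG\cap\{\tau^v_{B(\rho-\delta)}\geqs T^\star\}$ one has $\tau_{B(\rho)}\leqs\tau^v_{B(\rho-\delta)}$, and on $\cG\cap\{\tau_{B(\rho)}\geqs T^\star\}$ one has $\tau^v_{B(\rho+\delta)}\leqs\tau_{B(\rho)}$.

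For the upper bound I would split, for each $v_0\in\partial A$,
\[
\bigexpecin{u_0}{\tau_{B(\rho)}}
= \E\bigbrak{\tau_{B(\rho)}\indicator{\cG\cap\{\tau^v_{B(\rho-\delta)}\geqs T^\star\}}}
+ \E\bigbrak{\tau_{B(\rho)}\indicator{(\cG\cap\{\tau^v_{B(\rho-\delta)}\geqs T^\star\})^c}}\;,
\]
where $\E$ denotes expectation for the above coupling (so $\E[\tau_{B(\rho)}]=\expecin{u_0}{\tau_{B(\rho)}}$). The first term is at most $\expecin{v_0}{\tau_{B(\rho-\delta)}}$ by the implication above, and the second is bounded by Cauchy--Schwarz using $\expecin{u_0}{\tau_{B(\rho)}^2}\leqs T_1^2\e^{2(H_0+\eta)/\eps}$ from Proposition~\ref{prop_apriori_hit} together with $\fP\bigl((\cG\cap\{\tau^v_{B(\rho-\delta)}\geqs T^\star\})^c\bigr)\leqs\e^{-\kappa/\eps}+\sup_{v_0\in A}\fP^{v_0}\{\tau_{B(\rho-\delta)}<T^\star\}$, the last supremum being exponentially small by the large-deviation estimates of Section~\ref{sec_ld} (an excursion to $B$ in bounded time has positive cost). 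Averaging over $v_0$ against $\nu_+$ and applying Proposition~\ref{prop_infd} with target ball $B(\rho-\delta)$ --- legitimate since $\delta<\rho/2$, and producing the remainder $R^+_{B(\rho-2\delta)}$ --- yields
\[
\bigexpecin{u_0}{\tau_{B(\rho)}}
\leqs C(\infty,\eps)\e^{H_0/\eps}\bigbrak{1+2R^+_{B(\rho-2\delta)}(\eps)}
+ T_1\e^{(H_0+\eta)/\eps}\bigpar{\e^{-\kappa/\eps}+\e^{-c'/\eps}}^{1/2}
\]
for some $c'>0$. Choosing $\eta<\kappa/2$ and then $\eps$ small enough, the last term is at most $C(\infty,\eps)\e^{H_0/\eps}R^+_{B(\rho-2\delta)}(\eps)$, which gives the claimed upper bound with $3R^+$. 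The lower bound is obtained symmetrically: one splits on $\cG\cap\{\tau_{B(\rho)}\geqs T^\star\}$, uses $\tau^v_{B(\rho+\delta)}\leqs\tau_{B(\rho)}$ there to get $\expecin{u_0}{\tau_{B(\rho)}}\geqs\expecin{v_0}{\tau^v_{B(\rho+\delta)}}$ up to a Cauchy--Schwarz remainder, and applies Proposition~\ref{prop_infd} with $\nu_-$ and target ball $B(\rho+\delta)$, which produces exactly the remainder $R^-_{B(\rho+2\delta)}$.

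The main obstacle is the error bookkeeping: one must check that every spurious term introduced by the coupling --- the additive constant $T^\star$ (absorbed trivially since $C(\infty,\eps)\e^{H_0/\eps}$ is exponentially large), the probability $\e^{-\kappa/\eps}$ that the contraction fails, and the probability of an anomalously fast transition --- is dominated, for small $\eps$, by $C(\infty,\eps)\e^{H_0/\eps}$ times the remainders $R^\pm_B(\eps)$ already present in Proposition~\ref{prop_infd}. This is what forces $\eta<\kappa/2$ in Proposition~\ref{prop_apriori_hit}, and uses that $1/C(\infty,\eps)$ and $1/R^\pm_B(\eps)$ grow at most polynomially in $1/\eps$, so that the exponentially small factors are swallowed in the passage from $2R^\pm$ to $3R^\pm$.
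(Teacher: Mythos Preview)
Your proposal is correct and follows essentially the same approach as the paper: couple $u_t$ and $v_t$ via the same noise, use Theorem~\ref{thm_MOS} together with a large-deviation bound ruling out an anomalously fast transition in time $T^\star$, sandwich $\tau_{B(\rho)}$ between $\tau^{v}_{B(\rho\pm\delta)}$ on the resulting good event, control the complement by Cauchy--Schwarz and Proposition~\ref{prop_apriori_hit}, and then average against $\nu_\pm$ to invoke Proposition~\ref{prop_infd}. The only cosmetic difference is that the paper bundles the contraction and the no-fast-transition conditions into a single event $\Omega_T$ (with the latter phrased as $\tau^{u_0}_{B_+}\geqs T$), whereas you keep them separate and tailor the no-fast-transition condition to each inequality; your bookkeeping of the error terms is also slightly more explicit.
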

\begin{proof}
As before we write $B=B(\rho)$, $B_\pm=B(\rho\pm\delta)$. 
Given a constant $T\geqs t_0$, consider the event 
\begin{equation}
 \label{infd14:01}
\Omega_T = \biggset{\tau_{B_{+}}\geqs T, \sup_{v_0\in A} 
\frac{\norm{v_t-u_t}_{L^\infty}}{\norm{v_0-u_0}_{L^\infty}}
< c\e^{-mt} \;\forall t\geqs T}\;.
\end{equation} 
Then Theorem~\ref{thm_MOS} and the standard large-deviation estimate Corollary~\ref{cor_ldp_Vbar} show that 
for any $T>0$, there exist constants $\eps_0,\kappa_1>0$ such that for
$\eps<\eps_0$, 
\begin{equation}
 \label{infd14:02}
\fP(\Omega_T^c) \leqs \e^{-\kappa_1/\eps}\;.
\end{equation} 
Note that for all $v_0\in A$, 
\begin{equation}
 \label{infd14:03}
\tau^{v_0}_{B_+} \leqs \tau^{u_0}_B \leqs \tau^{v_0}_{B_-}
\qquad \text{on $\Omega_T$\;,}
\end{equation} 
provided $T$ is large enough that $rc\e^{-mT} \leqs \delta/2$.
In order to prove the upper bound, we start by observing that 
\begin{equation}
 \label{infd14:04}
\bigexpecin{u_0}{\tau_B \indicator{\Omega_T}}
=  \bigexpecin{u_0}{\tau_B \indicator{\Omega_T}}
\int_{\partial A} \nu_+(\6v_0)
\leqs \int_{\partial A} \bigexpecin{v_0}{\tau_{B_-}}\nu_+(\6v_0)\;,
\end{equation} 
which can be bounded above with Proposition~\ref{prop_infd}. 
Furthermore, by Cauchy--Schwarz, we have 
\begin{equation}
 \label{infd14:05}
\bigexpecin{u_0}{\tau_B \indicator{\Omega_T^c}}
\leqs \sqrt{\expecin{u_0}{\tau_B^2}} \sqrt{\fP(\Omega_T^c)}
\leqs T_1 \e^{(H_0+\eta-\kappa_1/2)/\eps}\;.
\end{equation} 
For the lower bound, we use the decomposition
\begin{align}
\nonumber
\bigexpecin{u_0}{\tau_B} 
&\geqs \bigexpecin{u_0}{\tau_B\indicator{\Omega_T}}
\int_{\partial A} \nu_-(\6v_0) \\
&\geqs \int_{\partial A}
\bigexpecin{v_0}{\tau_{B_+}}\nu_-(\6v_0) - \int_{\partial A}
\bigexpecin{v_0}{\tau_{B_+}\indicator{\Omega_T^c}}\nu_-(\6v_0)\;.
 \label{infd14:06}
\end{align}  
The first term on the right-hand side can be bounded below with
Proposition~\ref{prop_infd}, while the second one is bounded above by 
\begin{equation}
 \label{infd14:07}
\sqrt{\sup_{v_0\in \partial A}\bigexpecin{v_0}{\tau_{B_+}^2}} 
\sqrt{\fP(\Omega_T^c)} \leqs T_1 \e^{(H_0+\eta-\kappa_1/2)/\eps}\;.
\end{equation} 
This concludes the proof, provided we choose $\eta < \kappa_1/2$ when applying
Proposition~\ref{prop_infd}.
\end{proof}




\newpage

\section{Deterministic system}
\label{sec_det}


This section gathers a number of needed results on the deterministic
partial differential equation~\eqref{PDE}. Some general properties of
the equation are discussed e.g.\ in~\cite{ChafeeInfante_74,Jolly_89}. 

In Section~\ref{ssec_function} we introduce various function spaces and
inequalities required in the analysis. In Section~\ref{ssec_bounds_pot},
we establish some general bounds on the potential energy $V$ and its
derivative. Section~\ref{ssec_normalform} analyses the behaviour of the
potential energy at bifurcation points, and Section~\ref{ssec_tpot}
contains a result on the relation between $V$ and its restrictions to
finite-dimensional subspaces.  


\subsection{Function spaces}
\label{ssec_function} 

We introduce two scales of function spaces that will play a r\^ole in the
sequel. Let $I$ denote either a compact interval $[0,L]\subset\R$ or the circle
$\T^1=\R/(2\pi\Z)$.

We denote by $\cC^{0} = \cC^{0}(I)$ the space of all continuous functions
$u:I\to\R$. Note that $I$ is compact so that the functions from $\cC^{0}$ are bounded. 
When equipped with the sup norm 
$\norm{u}_{\cC^0} = \sup_{x\in I} \abs{u(x)}$, $\cC^0$ is a Banach space.
For $\alpha>0$, we define the H\"older seminorm 
\begin{equation}
 \label{function01}
[u]_\alpha = \sup_{x\neq y} \frac{\abs{u(x)-u(y)}}{\abs{x-y}^\alpha}\;,
\end{equation} 
the H\"older norm $\norm{u}_{\cC^\alpha} = \norm{u}_{\cC^0} +
[u]_\alpha$, and write $\cC^\alpha =
\setsuch{u\in\cC^{0}}{\norm{u}_{\cC^\alpha}<\infty}$ for the associated Banach
space. 

For $1\leqs p\leqs \infty$, $L^p = L^p(I)$ denotes the space of all $u:I\to\R$
with bounded $L^p$-norm. Note that for $u\in \cC^0$,
$\norm{u}_{L^\infty}=\norm{u}_{\cC^0}$. When $u\in L^2(\T^1)$, we write its
Fourier series as
\begin{equation}
 \label{function02}
u(x) = \sum_{k\in\Z} y_k e_k(x)\;, 
\qquad
e_k(x) =\frac{ \e^{\icx k x}}{\sqrt{2\pi}}\;.
\end{equation} 
For $s\geqs 0$, we define the Sobolev norm 
\begin{equation}
 \label{function03}
\norm{u}_{H^s}^2 \equiv \norm{y}_{H^s}^2
= \sum_{k\in\Z} (1+k^2)^s \abs{y_k}^2\;,
\end{equation} 
and denote by $H^s=H^s(\T^1)=\setsuch{u\in L^2(\T^1)}{\norm{u}_{H^s}<\infty}$
the fractional Sobolev space (also called Bessel
potential space). Note
that $H^s$ is a 
Hilbert space, and $H^0=L^2$. 
The norm $\norm{u}_{H^1}$ can be equivalently defined by 
$\norm{u}_{H^1}^2 = \norm{u}_{L^2}^2 + \norm{u'}_{L^2}^2$, where $u'$ is the
weak derivative of $u$.

\begin{lemma}
 \label{lemma_Morrey}
For any $\alpha\geqs0$ and $s>\alpha+1/2$, there exists a constant
$C=C(\alpha,s)$ such that 
\begin{equation}
 \label{function04}
\norm{u}_{\cC^\alpha} \leqs C \norm{u}_{H^s} 
\qquad
\forall u\in H^s(\T^1)\;. 
\end{equation}  
As a consequence, we have $H^s(\T^1) \subset \cC^\alpha(\T^1)$. 
\end{lemma}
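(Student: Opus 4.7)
The plan is to use the Fourier series representation of $u$ together with the Cauchy--Schwarz inequality in Fourier space, which is the standard route to such Sobolev embedding statements on the torus. I will treat the case $\alpha\in[0,1]$ in detail; if $\alpha>1$ is of interest, one reduces to this case by applying the result to derivatives $u^{(n)}$ for $n=\intpart{\alpha}$, whose Fourier coefficients are $(\icx k)^n y_k$ so that $\norm{u^{(n)}}_{H^{s-n}}\leqs\norm{u}_{H^s}$.

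First I would bound the sup norm. From $u(x)=\sum_{k\in\Z}y_k e_k(x)$ with $\abs{e_k(x)}=1/\sqrt{2\pi}$, the triangle inequality gives
\begin{equation*}
\norm{u}_{\cC^0}\leqs \frac{1}{\sqrt{2\pi}}\sum_{k\in\Z}\abs{y_k}
=\frac{1}{\sqrt{2\pi}}\sum_{k\in\Z}(1+k^2)^{-s/2}\cdot(1+k^2)^{s/2}\abs{y_k}.
\end{equation*}
Applying Cauchy--Schwarz yields
\begin{equation*}
\norm{u}_{\cC^0}\leqs \frac{1}{\sqrt{2\pi}}\biggpar{\sum_{k\in\Z}(1+k^2)^{-s}}^{1/2}\norm{u}_{H^s},
\end{equation*}
and the sum converges because $s>1/2$ (comparison with $\int(1+x^2)^{-s}\6x$). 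In particular the Fourier series converges absolutely and uniformly, so $u$ is indeed continuous.

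Next I would bound the H\"older seminorm $[u]_\alpha$ for $\alpha\in(0,1]$. The elementary interpolation bound
\begin{equation*}
\bigabs{\e^{\icx kx}-\e^{\icx ky}}\leqs\min\set{2,\abs{k}\abs{x-y}}\leqs 2^{1-\alpha}\abs{k}^\alpha\abs{x-y}^\alpha
\end{equation*}
(verified by separately considering $\abs{k}\abs{x-y}\leqs 2$ and $\abs{k}\abs{x-y}>2$) gives, term by term in the Fourier series,
\begin{equation*}
\frac{\abs{u(x)-u(y)}}{\abs{x-y}^\alpha}\leqs \frac{2^{1-\alpha}}{\sqrt{2\pi}}\sum_{k\in\Z}\abs{k}^\alpha\abs{y_k}.
\end{equation*}
Another application of Cauchy--Schwarz then yields
\begin{equation*}
[u]_\alpha\leqs \frac{2^{1-\alpha}}{\sqrt{2\pi}}\biggpar{\sum_{k\in\Z}\abs{k}^{2\alpha}(1+k^2)^{-s}}^{1/2}\norm{u}_{H^s},
\end{equation*}
and the sum is finite precisely under the hypothesis $s>\alpha+1/2$. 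Summing the two bounds produces the claimed inequality with $C(\alpha,s)$ given by an explicit expression in terms of the two convergent series.

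There is no real obstacle: the entire argument is a direct Fourier-series calculation, and the sharpness of the threshold $s>\alpha+1/2$ comes from the $\sum (1+k^2)^{\alpha-s}$ integrability condition. The only small point requiring attention is the interpolation step for $\abs{\e^{\icx kx}-\e^{\icx ky}}$, which is elementary. The embedding $H^s(\T^1)\subset\cC^\alpha(\T^1)$ is immediate from the norm inequality.
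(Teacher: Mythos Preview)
Your proof is correct: the Cauchy--Schwarz argument in Fourier space is exactly the standard route to this Sobolev embedding on the torus, and both the sup-norm and H\"older-seminorm estimates are carried out cleanly, including the interpolation bound $\min\set{2,\abs{k}\abs{x-y}}\leqs 2^{1-\alpha}\abs{k}^\alpha\abs{x-y}^\alpha$.

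The paper does not actually supply a proof of this lemma; it is stated as a known result and immediately followed by a remark noting that for $s=1$ one recovers Morrey's inequality. So there is no paper proof to compare against --- your argument is a complete and standard justification of a fact the paper takes for granted.
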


\begin{remark}
 \label{rem_Morrey} 
In the particular case $s=1$, \eqref{function04} can be strengthened to
Morrey's inequality 
\begin{equation}
 \label{function05}
 \norm{u}_{\cC^{1/2}} \leqs C \norm{u}_{H^1} 
\qquad
\forall u\in H^1(\T^1)\;.
\end{equation} 
\end{remark}

Let $p, q$ satisfy $1\leqs p\leqs 2 \leqs q\leqs\infty$ and
$\frac1p+\frac1q=1$. Then the Hausdorff--Young
inequalities~\cite{Dunford_Schwartz_I} state that there exist constants $C_1(p)$
and $C_2(p)$ such that 
\begin{equation}
 \label{norm04}
\norm{u}_{L^q} \leqs C_1\norm{y}_{\ell^p} 
\qquad\text{and}\qquad
\norm{y}_{\ell^q} \leqs C_2\norm{u}_{L^p}\;.
\end{equation} 
We consider now some properties of convolutions $y*z$ defined by 
\begin{equation}
 \label{norm01}
(y*z)_k = \sum_{l\in\Z} y_l z_{k-l}\;. 
\end{equation} 
Young's inequality states that for $1\leqs p,q,r\leqs\infty$ such that 
$\frac1p+ \frac1q = \frac1r + 1$, 
\begin{equation}
 \label{norm03}
\norm{y*z}_{\ell^r} \leqs \norm{y}_{\ell^p} \norm{z}_{\ell^q}\;. 
\end{equation} 

\begin{lemma}
\label{lem_Saitoh_B}
Let $r, s, t \in (0,1/2)$ be such that $t < r+s-1/2$. Then 
there exists a constant $C=C(r,s,t)$ such that 
\begin{equation}
 \label{norm20}
\norm{y*z}_{H^t} \leqs C \norm{y}_{H^r} \norm{z}_{H^s}\;. 
\end{equation} 
\end{lemma}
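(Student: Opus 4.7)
The plan is to reduce the estimate to a combination of Young's convolution inequality and a weighted H\"older embedding, which is the classical approach for bilinear Sobolev-type estimates of this kind.

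First, I would use the triangle inequality $|k| \leqs |l| + |k-l|$ together with the sub-additivity of $x\mapsto x^t$ for $t\in(0,1)$ to obtain the weight-splitting bound
\begin{equation*}
(1+k^2)^{t/2} \leqs C_t \bigbrak{(1+l^2)^{t/2} + (1+(k-l)^2)^{t/2}}.
\end{equation*}
Plugging this into the definition of $\norm{y*z}_{H^t}^2 = \sum_k (1+k^2)^t \abs{(y*z)_k}^2$ yields
\begin{equation*}
\norm{y*z}_{H^t} \leqs C\bignorm{\abs{Y_t}*\abs{z}}_{\ell^2} + C\bignorm{\abs{y}*\abs{Z_t}}_{\ell^2},
\end{equation*}
where $Y_{t,l} = (1+l^2)^{t/2}\abs{y_l}$ and $Z_{t,l} = (1+l^2)^{t/2}\abs{z_l}$. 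It suffices to estimate the first term, the second being symmetric.

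Next, I would apply Young's inequality~\eqref{norm03} with exponents $p,q\in(1,2)$ satisfying $1/p + 1/q = 3/2$, to obtain $\norm{\abs{Y_t}*\abs{z}}_{\ell^2} \leqs \norm{Y_t}_{\ell^p}\norm{z}_{\ell^q}$. The two $\ell^p$ factors are then converted back to weighted $\ell^2$ (i.e.\ $H^r$ and $H^s$) norms by H\"older's inequality: for $p<2$,
\begin{equation*}
\norm{Y_t}_{\ell^p}^p = \sum_l (1+l^2)^{-(r-t)p/2}\cdot(1+l^2)^{rp/2}\abs{y_l}^p
\leqs \biggpar{\sum_l (1+l^2)^{-(r-t)p/(2-p)}}^{(2-p)/2} \norm{y}_{H^r}^p,
\end{equation*}
and the weight series converges provided $(r-t)p/(2-p) > 1/2$, i.e.\ $1/p < (r-t)+1/2$. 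Analogously, $\norm{z}_{\ell^q} \leqs C\norm{z}_{H^s}$ when $1/q < s + 1/2$.

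The remaining step, which is mostly bookkeeping, is to check that under the hypothesis $t<r+s-1/2$ the two exponent constraints together with $1/p+1/q=3/2$ admit a solution with $p,q\in(1,2)$. Adding the constraints gives $3/2 = 1/p+1/q < (r-t+1/2) + (s+1/2) = r+s-t+1$, which is precisely equivalent to $t<r+s-1/2$; conversely, when this holds, the open interval of admissible values of $1/p$ is nonempty. The only subtlety is ensuring $1/p>1/2$, which needs $r-t+1/2>1/2$, i.e.\ $r>t$: this is automatic because, under the standing assumptions $r,s\in(0,1/2)$, failing $r>t$ would force $s>1/2$ via $t<r+s-1/2\leqs t+s-1/2$. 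Thus the main obstacle is purely computational --- interlocking the Young and H\"older exponents so that the threshold $t<r+s-1/2$ emerges precisely at the borderline of admissibility; there is no conceptual difficulty once this accounting is done.
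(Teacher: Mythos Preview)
Your argument is correct and complete, but it follows a genuinely different route from the paper's. You use the weight-splitting $(1+k^2)^{t/2}\leqs C_t\bigbrak{(1+l^2)^{t/2}+(1+(k-l)^2)^{t/2}}$, then apply Young's inequality $\ell^p*\ell^q\hookrightarrow\ell^2$ with $1/p+1/q=3/2$, and finally H\"older to absorb the residual weights; the threshold $t<r+s-1/2$ emerges from the exponent bookkeeping. The paper instead applies Cauchy--Schwarz directly inside the convolution sum, reducing everything to the pointwise kernel estimate
\[
\sum_{l\in\Z}\frac{1}{(1+l^2)^{r}(1+(k-l)^2)^{s}}\leqs\frac{C}{(1+k^2)^{t}}\;,
\]
which is checked by elementary splitting of the summation range; after that, only the trivial Young inequality $\ell^1*\ell^1\to\ell^1$ is needed, applied to the squared weighted sequences. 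Your approach is more modular and avoids proving any kernel bound, at the cost of the exponent juggling in the last paragraph; the paper's approach concentrates all the work in one explicit inequality, from which the condition $t<r+s-1/2$ is read off directly. Both yield the same constant dependence $C=C(r,s,t)$.
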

\begin{proof}
Define $w_k$ by 
\begin{equation}
 \label{norm21:01}
\frac1{w_k} = \sum_{l\in Z} \frac{1}{(1+l^2)^r} \frac{1}{(1+(k-l)^2)^s}\;. 
\end{equation} 
Splitting the sum at $-\abs{k}, 0, \abs{k}/2, \abs{k}$ and $2\abs{k}$, and
bounding each sum by an integral, one easily shows that 
\begin{equation}
 \label{norm21:02}
\frac1{w_k} \leqs \frac{C}{(1+k^2)^t}\;. 
\end{equation} 
Let $\tilde y_k = (1+k^2)^{r/2} \abs{y_k}$ and 
$\tilde z_k = (1+k^2)^{s/2} \abs{z_k}$. 
Adapting a computation in~\cite{Saitoh_2000}, we write 
\begin{equation}
\bigabs{(y*z)_k}
\leqs \sum_{l\in\Z} \abs{y_l}\abs{z_{k-l}} 
= \sum_{l\in\Z} \frac{1}{(1+l^2)^{r/2} (1+(k-l)^2)^{s/2}}
\tilde y_l \tilde z_{k-l}\;.
 \label{norm21:03}
\end{equation} 
By the Cauchy--Schwarz inequality, 
\begin{equation}
 \label{norm21:04}
\bigpar{(y*z)_k}^2 
\leqs \frac{1}{w_k} \sum_{l\in\Z}  
\tilde y_l^2 \tilde z_{k-l}^2\;.
\end{equation} 
If $\tilde y^2, \tilde z^2$ denote the vectors with components $\tilde y_l^2$
and $\tilde z_l^2$, it follows from~\eqref{norm21:02} that 
\begin{align}
 \label{norm21:05}
\norm{y*z}_{H^t}^2 
&\leqs \sum_{k\in\Z} C \sum_{l\in\Z}  
\tilde y_l^2 \tilde z_{k-l}^2
= C \sum_{k\in\Z} (\tilde y^2*\tilde z^2)_k
= C \norm{\tilde y^2*\tilde z^2}_{\ell^1} 
\leqs C \norm{\tilde y^2}_{\ell^1}\norm{\tilde z^2}_{\ell^1}
\end{align} 
by Young's inequality, and the results follows since
$\norm{\tilde y^2}_{\ell^1}=\norm{y}_{H^r}^2$, 
$\norm{\tilde z^2}_{\ell^1}=\norm{z}_{H^s}^2$. 
\end{proof}

Finally, the following estimate allows to bound the usual $\ell^r$-norm in
terms of Sobolev norms. 

\begin{lemma}
\label{lem_weighted}
Fix $1\leqs r<2$. For any $s>1/r-1/2$, there exists a finite $C(s)$ such that 
\begin{equation}
 \label{norm12}
\norm{y}_{\ell^r} \leqs C(s) \norm{y}_{H^{s}}\;. 
\end{equation}  
\end{lemma}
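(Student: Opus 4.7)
The plan is a direct application of Hölder's inequality with a judiciously chosen weight, so that one factor reproduces the Sobolev norm and the other factor is a convergent numerical series under the stated condition.

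First I would write, for each $k\in\Z$,
\begin{equation*}
\abs{y_k}^r = \bigpar{\abs{y_k}\,(1+k^2)^{s/2}}^r \cdot (1+k^2)^{-rs/2}.
\end{equation*}
Then, summing over $k$ and applying Hölder's inequality with conjugate exponents $p = 2/r$ and $q = 2/(2-r)$ (here $1\leqs r < 2$ makes both exponents finite and $\geqs 1$), I obtain
\begin{equation*}
\sum_{k\in\Z} \abs{y_k}^r
\leqs \Biggpar{\sum_{k\in\Z} \abs{y_k}^2 (1+k^2)^s}^{r/2}
\Biggpar{\sum_{k\in\Z} (1+k^2)^{-rs/(2-r)}}^{(2-r)/2}.
\end{equation*}
The first factor equals $\norm{y}_{H^s}^{r}$, so raising everything to the power $1/r$ yields the desired inequality, provided the second series converges.

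The second series $\sum_{k\in\Z}(1+k^2)^{-\alpha}$ is finite exactly when $2\alpha > 1$, i.e.\ when $\alpha > 1/2$. With $\alpha = rs/(2-r)$ this translates to $rs/(2-r) > 1/2$, equivalently $s > (2-r)/(2r) = 1/r - 1/2$, which is precisely the hypothesis. Denoting
\begin{equation*}
C(s) = \Biggpar{\sum_{k\in\Z} (1+k^2)^{-rs/(2-r)}}^{(2-r)/(2r)},
\end{equation*}
we obtain $\norm{y}_{\ell^r} \leqs C(s)\norm{y}_{H^s}$.

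No step here looks to be a serious obstacle; the only point worth noting is that the restriction $r<2$ is essential because the Hölder exponent $q=2/(2-r)$ diverges as $r\to 2$, consistent with the well-known failure of $H^{1/2}\hookrightarrow L^2$-type embeddings at the critical exponent. The borderline case $r=1$ is simply Cauchy--Schwarz and recovers the condition $s > 1/2$ familiar from Lemma~\ref{lemma_Morrey}.
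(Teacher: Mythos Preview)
Your proof is correct and follows exactly the same route as the paper's: the same weighted decomposition of $\abs{y_k}^r$ and the same application of H\"older's inequality with exponents $2/r$ and $2/(2-r)$ (the paper merely labels them $q$ and $p$ rather than $p$ and $q$). The convergence check and the resulting constant are identical.
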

\begin{proof}
Apply H\"older's inequality, with $p=2/(2-r)$ and $q=2/r$, to the decomposition 
$\abs{y_k}^r = (1+k^2)^{-rs/2} \cdot (1+k^2)^{rs/2}\abs{y_k}^r$. 
\end{proof}

By the Hausdorff--Young inequality~\eqref{norm04}, this implies the Sobolev embedding theorem
\begin{equation}
 \label{norm14}
\norm{u}_{L^p} \leqs C(s,p) \norm{y}_{H^s} 
\end{equation} 
whenever $p\geqs2$ and $s> 1/2 - 1/p$. 


\subsection{Bounds on the potential energy}
\label{ssec_bounds_pot}

In this subsection, we derive some bounds involving the potential energy 
\begin{equation}
 \label{bp01}
V[u] = \int_0^L \biggbrak{\frac12 u'(x)^2 + U(u(x))} \6x
\end{equation}
and its gradient. Periodic and Neumann boundary conditions can be treated in a
unified way by writing the Fourier series as 
\begin{equation}
 \label{bp02} 
u(x) = \sum_{k\in\Z} z_k \frac{\e^{\icx b k \pi x/L}}{\sqrt{L}}\;,
\qquad z_k=\cc{z_{-k}}\;,
\end{equation} 
where $b=1$ and $z_k=z_{-k}$ for Neumann b.c., and $b=2$ for periodic b.c.
The value of the potential expressed in Fourier variables becomes 
\begin{equation}
 \label{bp03}
\Vhat(z) = V[u(\cdot)] 
= \frac12 \sum_{k\in\Z}\nu_k \abs{z_k}^2 
+ \int_0^L U \Biggpar{\sum_{k\in\Z} z_k \frac{\e^{\icx b k \pi
x/L}}{\sqrt{L}}}\6x\;,
\end{equation} 
where $\nu_k = (b\pi k)^2/L^2$. 

\begin{lemma}[Bounds on \protect{$\Vhat$}]
\label{lem_V1} 
There exist constants $\alpha', \beta', M'_0>0$ such that 
\begin{equation}
 \label{bp05}
\beta' \norm{z}_{H^1}^2 - \alpha' \leqs 
\Vhat(z) \leqs M'_0(1+\norm{z}_{H^1}^2)^{p_0}\;. 
\end{equation} 
\end{lemma}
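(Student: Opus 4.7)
The plan is to identify the Fourier-side norm $\|z\|_{H^1}^2 = \sum_k (1+k^2)|z_k|^2$ with (a constant multiple of) the physical-side norm $\|u\|_{H^1}^2 = \|u\|_{L^2}^2 + \|u'\|_{L^2}^2$. By Parseval this identification reduces to comparing $1+k^2$ with $1+\nu_k = 1+(b\pi k/L)^2$, so the two norms differ only by multiplicative constants depending on $b$ and $L$. This equivalence turns the desired estimate on $\Vhat(z)$ into a pair of estimates on the functional $u\mapsto V[u]$ with respect to the standard $H^1$ norm, where the assumptions on $U$ apply directly.

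For the lower bound, I would write the gradient term as $\frac12 \sum_k \nu_k |z_k|^2 = \frac12 \|u'\|_{L^2}^2$ and apply assumption U4 pointwise to the potential term to get $\int_0^L U(u(x))\,dx \geq \beta \|u\|_{L^2}^2 - \alpha L$. Combining these gives
\begin{equation*}
\Vhat(z) \;\geq\; \tfrac12 \|u'\|_{L^2}^2 + \beta \|u\|_{L^2}^2 - \alpha L \;\geq\; \min\!\Bigset{\tfrac12,\beta}\,\|u\|_{H^1}^2 - \alpha L,
\end{equation*}
which yields the desired bound with $\beta' = c(L,b)\min\set{1/2,\beta}$ and $\alpha' = \alpha L$.

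For the upper bound, the gradient term is controlled by $\|z\|_{H^1}^2$ directly (it is a subset of the defining sum). For the potential term, assumption U3 with $j=0$ gives $|U(u)| \leq M_0(1 + |u|^{2p_0})$, so
\begin{equation*}
\int_0^L U(u(x))\,dx \;\leq\; M_0 L + M_0 L \,\|u\|_{L^\infty}^{2p_0}.
\end{equation*}
The key analytic input is the one-dimensional Sobolev embedding $H^1 \hookrightarrow \cC^0$, which is the $s=1$, $\alpha=0$ case of Lemma~\ref{lemma_Morrey} (applied on $\T^1$, with the Neumann case obtained by even reflection). This gives $\|u\|_{L^\infty} \leq C\|u\|_{H^1} \leq C'\|z\|_{H^1}$. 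Therefore $\int_0^L U(u)\,dx \leq C(1+\|z\|_{H^1}^{2p_0})$, and adding the gradient contribution together with the elementary inequality $a^2 + a^{2p_0} \leq 2(1+a^2)^{p_0}$ (valid since $p_0 \geq 2$) yields the claimed upper bound.

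The only minor subtlety is the discrepancy between the $H^1$ norm on $\T^1$ used in the function-space section and the norm that naturally arises for functions on $[0,L]$ with Neumann b.c.; this is handled either by the even reflection trick (extending $u$ evenly to $[-L,L]$ and then periodically, which keeps $u$ in $H^1$ with an equivalent norm) or by simply redoing the Morrey-type argument with the basis~\eqref{cob_Neumann}. No step is genuinely hard: the proof is bookkeeping of norm equivalences together with one application each of U3 and U4.
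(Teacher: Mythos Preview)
Your proof is correct and follows essentially the same route as the paper: Parseval to relate $\|u'\|_{L^2}^2$ and $\|u\|_{L^2}^2$ to $\|z\|_{H^1}^2$, assumption~U4 for the lower bound, and assumption~U3 plus a Sobolev embedding for the upper bound. The only cosmetic difference is that the paper controls $\int_0^L |u|^{2p_0}\,dx$ via the embedding $H^1\hookrightarrow L^{2p_0}$ (equation~\eqref{norm14}), whereas you go through the slightly stronger embedding $H^1\hookrightarrow L^\infty$ (Lemma~\ref{lemma_Morrey}); both are standard in dimension one and yield the same conclusion.
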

\begin{proof}
By Assumption~\ref{assump_U}~(U3), we have 
\begin{equation}
 \label{bp06:1}
V[u] \leqs \frac12 \norm{u'}_{L^2}^2 + M_0(1+\norm{u}_{L^{2p_0}}^{2p_0})\;, 
\end{equation} 
where 
\begin{equation}
 \label{bp06:2}
\norm{u'}_{L^2}^2 = \frac{b^2\pi^2}{L^2} \sum_{k\in\Z} k^2 z_k^2 \leqs
\frac{b^2\pi^2}{L^2} \norm{z}_{H^1}^2\;.
\end{equation} 
By~\eqref{norm14} we have 
\begin{equation}
 \label{bp06:3}
 \norm{u}_{L^{2p_0}} \leqs C(1,2p_0)\norm{z}_{H^1}\;,
\end{equation} 
which implies the upper bound. The lower bound is obtained in a similar way,
using Assumption~\ref{assump_U}~(U4). 
\end{proof}

The gradient of $\Vhat(z)$ and the Fr\'echet derivative of $V[u]$ are related
by  
\begin{equation}
 \label{bp11}
\dpar \Vhat{z_k}(z) = \nabla_{e_k} V[u]\;,
\end{equation} 
where $e_{k}$ is defined in \eqref{cob_periodic} or \eqref{cob_Neumann}, respectively.
Thus, by~\eqref{pot_Frechet} and Parseval's identity, 
\begin{equation}
 \label{bp12}
\norm{\nabla \Vhat(z)}_{\ell^2}^2 
= \sum_{k\in\Z} \dpar{\Vhat}{z_k} \cc{\dpar{\Vhat}{z_k}} (z) 
= \int_0^L \bigbrak{-u''(x) + U'(u(x))}^2 \6x\;.
\end{equation} 

\begin{lemma}[Lower bound on \protect{$\norm{\nabla \Vhat}_{\ell^2}^2$}]
\label{lem_V2} 
For any $\rho>0$ there exists $M'_1(\rho)$ such that 
\begin{equation}
 \label{bp13}
\norm{\nabla \Vhat(z)}_{\ell^2}^2 \geqs \rho \norm{z}_{H^1}^2 -
M'_1(\rho)\;.
\end{equation} 
\end{lemma}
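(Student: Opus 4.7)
The plan is to start from the identity~\eqref{bp12}, namely
\begin{equation*}
\norm{\nabla \Vhat(z)}_{\ell^2}^2 = \int_0^L \bigbrak{-u''(x) + U'(u(x))}^2 \6x\;,
\end{equation*}
and expand the square into three terms $\int u''^2 + \int U'(u)^2 - 2\int u''\, U'(u)$. The natural first move is to integrate by parts the cross term: writing $\frac{\dd}{\dd x} U'(u) = U''(u)\,u'$, one obtains
\begin{equation*}
-2\int_0^L u''(x)\, U'(u(x))\,\dd x = \bigbrak{-2u'(x)U'(u(x))}_0^L + 2\int_0^L U''(u(x))\, u'(x)^2\,\dd x\;,
\end{equation*}
and the boundary term vanishes under either periodic b.c.\ or Neumann b.c.\ ($u'(0)=u'(L)=0$). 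This is the one place where the boundary conditions are used in an essential way.

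Next I would invoke the pointwise estimates in Assumption~\ref{assump_U}. From U6 we have $U''(u)\geqs -M_2$ uniformly, and from U5 we have $U'(u)^2 \geqs \gamma u^2 - M_1(\gamma)$ for any chosen $\gamma>0$. Combining them yields
\begin{equation*}
\norm{\nabla \Vhat(z)}_{\ell^2}^2 \geqs \int_0^L u''(x)^2\,\dd x - 2M_2 \int_0^L u'(x)^2\,\dd x + \gamma \int_0^L u(x)^2\,\dd x - L\, M_1(\gamma)\;.
\end{equation*}
Passing to Fourier coefficients via~\eqref{bp02} and Parseval, the right-hand side equals $\sum_{k\in\Z} (\nu_k^2 - 2M_2 \nu_k + \gamma)\abs{z_k}^2 - L M_1(\gamma)$.

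It remains to compare the coefficient $\nu_k^2-2M_2\nu_k+\gamma$ with $\rho(1+k^2)$. Using the elementary bound $2M_2 \nu_k \leqs \tfrac12 \nu_k^2 + 2M_2^2$, the coefficient is at least $\tfrac12 \nu_k^2 + \gamma - 2M_2^2$. Choosing $\gamma = \rho + 2M_2^2$, this is at least $\tfrac12 \nu_k^2 + \rho$, and since $\nu_k = (b\pi k/L)^2$ grows like $k^4$, there exists an integer $K=K(\rho,L)$ such that $\tfrac12 \nu_k^2 \geqs \rho k^2$ for all $\abs{k}\geqs K$. For the finitely many remaining indices $\abs{k}<K$, the coefficient $\nu_k^2-2M_2\nu_k+\gamma - \rho(1+k^2)$ is bounded below by a constant depending only on $\rho$, $M_2$, $L$, $b$, so the corresponding deficit can be absorbed into the additive constant $M'_1(\rho)$. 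Summing over $k$ then yields~\eqref{bp13}.

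I do not anticipate a genuine obstacle: the only nontrivial step is verifying that the boundary contributions in the integration by parts vanish for both types of boundary conditions, and the rest is a routine mode-by-mode quadratic form estimate exploiting the super-quadratic growth $\nu_k^2\sim k^4$ in frequency, which easily beats the linear growth $1+k^2$ of the $H^1$-weight.
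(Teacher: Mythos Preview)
Your proof follows exactly the same route as the paper's: expand the square in~\eqref{bp12}, integrate the cross term by parts (the boundary contribution vanishes for both periodic and Neumann b.c.), apply U5 and U6 pointwise, and pass to Fourier coefficients to obtain
\[
\norm{\nabla\Vhat(z)}_{\ell^2}^2 \geqs \sum_{k\in\Z}\bigl(\nu_k^2 - 2M_2\nu_k + \gamma\bigr)\abs{z_k}^2 - LM_1(\gamma)\;.
\]
Up to here everything is correct and matches the paper.

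The final step, however, has a small but genuine gap. After your AM--GM bound and the choice $\gamma=\rho+2M_2^2$, the low modes $\abs{k}<K$ may fail $\tfrac12\nu_k^2+\rho\geqs\rho(1+k^2)$, and the resulting shortfall is $\sum_{\abs{k}<K}\bigl[\rho(1+k^2)-(\tfrac12\nu_k^2+\rho)\bigr]_+\abs{z_k}^2$. This is proportional to $\abs{z_k}^2$ and hence \emph{cannot} be absorbed into an additive constant independent of $z$: take $z$ supported on a single bad mode and let its amplitude grow. The paper avoids this by not fixing $\gamma$ prematurely. Since $\nu_k^2-2M_2\nu_k+\gamma-\rho(1+k^2)$ is a quadratic in $t=k^2$ with positive leading coefficient, its infimum over $t\geqs0$ is finite and can be made nonnegative simply by choosing $\gamma$ large enough (depending on $\rho$, $M_2$, $L$, $b$). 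With that choice the coefficient inequality holds for \emph{all} $k$ at once, and one takes $M_1'(\rho)=LM_1(\gamma)$. This is a one-line correction to your argument; the overall strategy is sound.
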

\begin{proof}
We expand the square in~\eqref{bp12} and evaluate the terms separately. Using
Assumptions~\ref{assump_U}~(U5) und (U6) and integration by parts, we have
for any $\gamma>0$ 
\begin{align}
\nonumber
\int_0^L u''(x)^2 \6x 
&= \sum_{k\in\Z} \frac{b^4\pi^4}{L^4} k^4 \abs{z_k}^2 \;, \\
\nonumber
\int_0^L U'(u(x))^2 \6x
&\geqs \gamma \sum_{k\in\Z} \abs{z_k}^2 - LM_1(\gamma)\;, \\
-2\int_0^L u''(x) U'(u(x)) \6x 
&= 2 \int_0^L u'(x)^2 U''(u(x)) \6x
\geqs -2M_2 \sum_{k\in\Z} \frac{b^2\pi^2}{L^2} k^2 \abs{z_k}^2\;,
 \label{bp14:02}
\end{align} 
so that 
\begin{equation}
 \label{bp14:03}
\norm{\nabla \V(z)}_{\ell^2}^2 \geqs \sum_{k\in\Z} \biggbrak{\frac{b^4\pi^4}{L^4}
k^4 -2M_2 \frac{b^2\pi^2}{L^2} k^2 + \gamma} \abs{z_k}^2 - M_1(\gamma)\;. 
\end{equation} 
For any $\rho>0$, we can find a $\gamma$ such that the term in brackets is
bounded below by $2\rho(1+k^2)$, uniformly in $k$.
This proves the result.
\end{proof}

\begin{cor}
\label{cor_V1}
For any $\delta>0$, there exists $H=H(\delta)$ such that 
$\norm{\nabla \V(z)}_{\ell^2}^2 \geqs \delta^2$ whenever $\V(z)\geqs H$. 
As a consequence, all stationary points of $\V$ belong to 
$\setsuch{z}{\V(z)\leqs H(0)}$. 
\end{cor}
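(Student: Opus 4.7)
The plan is to combine the upper bound on $\V$ from Lemma~\ref{lem_V1} with the lower bound on $\norm{\nabla\V}_{\ell^2}^2$ from Lemma~\ref{lem_V2}. Both lemmas use the norm $\norm{z}_{H^1}^2$ as a bridge, so the argument is essentially a two-step substitution once the free constant $\rho$ in Lemma~\ref{lem_V2} is fixed.

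First I would fix any convenient value of $\rho>0$, say $\rho=1$, and invoke Lemma~\ref{lem_V2} to obtain
\begin{equation*}
\norm{\nabla\V(z)}_{\ell^2}^2 \;\geqs\; \norm{z}_{H^1}^2 - M'_1(1)\;.
\end{equation*}
Next, from the lower bound in Lemma~\ref{lem_V1}, $\V(z)\geqs H$ implies $\norm{z}_{H^1}^2 \geqs (H+\alpha')/\beta'$. Substituting this into the previous display gives
\begin{equation*}
\norm{\nabla\V(z)}_{\ell^2}^2 \;\geqs\; \frac{H+\alpha'}{\beta'} - M'_1(1)\;.
\end{equation*}
Setting the right-hand side equal to $\delta^2$ and solving for $H$ yields the explicit choice
\begin{equation*}
H(\delta) \;=\; \beta'\bigpar{\delta^2 + M'_1(1)} - \alpha'\;,
\end{equation*}
which proves the first claim.

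For the consequence, any stationary point $z^*$ of $\V$ satisfies $\nabla\V(z^*)=0$. Were $\V(z^*)>H(0)$, the first part (applied with $\delta=0$) would force $\norm{\nabla\V(z^*)}_{\ell^2}^2 \geqs 0$; we need a strict inequality to derive a contradiction, so I would instead argue directly from Lemma~\ref{lem_V2}: at a stationary point $0\geqs \norm{z^*}_{H^1}^2 - M'_1(1)$, so $\norm{z^*}_{H^1}^2 \leqs M'_1(1)$, and the upper bound in Lemma~\ref{lem_V1} gives $\V(z^*)\leqs M'_0(1+M'_1(1))^{p_0}$. Taking $H(0)$ to be this constant (or the maximum of this and $\beta' M'_1(1)-\alpha'$ for consistency with the formula above) yields the stated inclusion.

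There is no real obstacle here; the only point to be a little careful about is that the $\rho$ in Lemma~\ref{lem_V2} is a free parameter, so the final $H(\delta)$ depends on the chosen $\rho$, but any fixed positive $\rho$ works and the simplest choice is $\rho=1$. The statement about stationary points is slightly cleaner when derived directly (substituting $\nabla\V=0$ into Lemma~\ref{lem_V2}) rather than via the contrapositive with $\delta=0$, since the latter only gives the non-strict inequality $\norm{\nabla\V}_{\ell^2}^2\geqs0$.
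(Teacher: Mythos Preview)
There is a genuine error in your first step. You write ``from the lower bound in Lemma~\ref{lem_V1}, $\V(z)\geqs H$ implies $\norm{z}_{H^1}^2 \geqs (H+\alpha')/\beta'$''. But the lower bound in Lemma~\ref{lem_V1} reads $\V(z)\geqs \beta'\norm{z}_{H^1}^2 - \alpha'$, which only yields the \emph{upper} bound $\norm{z}_{H^1}^2 \leqs (\V(z)+\alpha')/\beta'$. Knowing $\V(z)\geqs H$ does not allow you to conclude anything about a lower bound on $\norm{z}_{H^1}^2$ from this inequality: large $\V(z)$ is perfectly compatible with $\beta'\norm{z}_{H^1}^2-\alpha'$ being small. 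The implication you wrote simply does not follow.

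What you need is the \emph{upper} bound in Lemma~\ref{lem_V1}: from $\V(z)\leqs M'_0(1+\norm{z}_{H^1}^2)^{p_0}$ and $\V(z)\geqs H$ one obtains $\norm{z}_{H^1}^2 \geqs (H/M'_0)^{1/p_0}-1$, which is the lower bound on $\norm{z}_{H^1}^2$ required to feed into Lemma~\ref{lem_V2}. This is exactly what the paper does: combining Lemma~\ref{lem_V2} (with $\rho=\alpha'$, though any fixed $\rho>0$ works) with the inverted upper bound gives
\[
\norm{\nabla \V(z)}_{\ell^2}^2 \geqs \alpha'\bigbrak{(\V(z)/M_0')^{1/p_{0}}-1}-M'_1(\alpha')\;,
\]
and the right-hand side exceeds $\delta^2$ once $\V(z)$ is large enough.

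Your argument for the consequence about stationary points is correct and in fact cleaner than going through the first part with $\delta=0$: substituting $\nabla\V(z^*)=0$ directly into Lemma~\ref{lem_V2} bounds $\norm{z^*}_{H^1}^2$, and then the upper bound in Lemma~\ref{lem_V1} bounds $\V(z^*)$. Note that here you correctly used the upper bound from Lemma~\ref{lem_V1}; it is only in the first part that you reached for the wrong inequality.
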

\begin{proof}
This immediately follows from 
$\norm{\nabla \V(z)}_{\ell^2}^2 \geqs \alpha'[(\V(z)/M_0')^{1/p_{0}}-1]-M'_1(\alpha')$.
\end{proof}


\subsection{Normal forms}
\label{ssec_normalform}

We will rely on normal forms when analysing the system for $L$ near a critical value. In this situation we will always assume that the local potential $U$ is in $\cC^{5}$, so that we can write its Taylor expansion as 
\begin{equation}
 \label{nf02}
U(u) = -\frac12 u^2 + \frac{a_3\sqrt{L}}{3} u^3 + \frac{a_4 L}{4} u^4 +
\Order{u^5}\;. 
\end{equation} 
Then for small $u$, the potential energy admits the expansion 
\begin{equation}
 \label{nf03}
V[u] = \frac 12 \norm{u'}_{L^2}^2 - \frac 12 \norm{u}_{L^2}^2
+ \frac{a_3 \sqrt{L}}{3} \int_0^L u(x)^3\6x
+ \frac{a_4 L}{4} \norm{u}_{L^4}^4 
+ \Order{\norm{u}_{L^5}^5}\;.
\end{equation}
Equation~\eqref{bp03} shows that the potential energy in Fourier variables can
be decomposed as 
\begin{equation}
 \label{nf06}
\V(z) =  \V_2(z) + \V_3(z) + \V_4(z) + R(z)\;,
\end{equation} 
where the $\V_n(z)$ are given by the convolutions 
\begin{align}
\nonumber
\V_2(z) &= \frac{1}{2} \sum_{k\in\Z} \lambda_k z_k z_{-k} \;, \\
\nonumber
\V_3(z) &= \frac{a_3}{3} 
\sum_{\substack{k_1,k_2,k_3\in\Z \\ k_1+k_2+k_3=0}} z_{k_1} z_{k_2} z_{k_3} \;,
\\
\V_4(z) &= \frac{a_4}{4} 
\sum_{\substack{k_1,k_2,k_3,k_4\in\Z \\ k_1+k_2+k_3+k_4=0}} z_{k_1} z_{k_2}
z_{k_3} z_{k_4} \;, 
\label{nf07}  
\end{align}
where $\lambda_k = \nu_k-1$. It follows from~\eqref{norm14}, applied for $p=5$, 
that the remainder satisfies $R(z)= \Order{\norm{z}_{H^{s}}^5}$ for all $s>3/10$.


\begin{prop}
\label{prop_NormalForm}
Let $L$ be such that $\lambda_k$ is bounded away from $0$ for all $k\neq\pm1$.
Then there exists a map $g:\R^\Z\to\R^\Z$ such that 
\begin{equation}
 \label{nf08}
\V(z+g(z)) = \V_2(z) + C_4 z_1^2z_{-1}^2 + R_1(z)\;,
\end{equation}  
where 
\begin{equation}
 \label{nf09}
C_4 = \frac32a_4 + 2a_3^2 \biggpar{\frac{1}{\abs{\lambda_0}} -
\frac{1}{2\lambda_2}}\;, 
\end{equation} 
and the remainder satisfies 
\begin{equation}
 \label{nf10}
R_1(z) = \Order{\norm{z}_{H^s}^5}
\end{equation} 
for all $5/12 < s < 1/2$. 
Furthermore, $\norm{g(z)}_{H^t}=\Order{\norm{z}_{H^s}^2}$ for $t<2s-1/2$ 
and the Jacobian of the transformation $z\mapsto z+g(z)$ satisfies 
\begin{equation}
 \label{nf11}
\det(\one+\sdpar gz(z)) = 1 + 
\Order{a_3\norm{z}_{H^s}}
+\Order{a_4\norm{z}_{H^s}^2}
\end{equation} 
on the set $\set{z_k=z_{-k}}$.
\end{prop}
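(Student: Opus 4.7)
The proof is a standard normal-form computation, set up so that the (nearly) zero eigenvalue $\lambda_{\pm 1}$ plays the role of critical variable while the other $\lambda_k$'s (bounded away from zero) act as hyperbolic directions. Writing $g = g^{(2)} + g^{(3)}$ as a sum of homogeneous polynomials of degrees $2$ and $3$ in $z$ and imposing $g_{\pm 1} \equiv 0$, I would choose the two pieces successively to annihilate $\V_3$ and the non-resonant part of $\V_4$ in the Taylor expansion of $\V(z+g(z))$. The cubic homological equation $\langle \nabla \V_2(z), g^{(2)}(z) \rangle = -\V_3(z)$ is solvable component-wise because every monomial $z_{k_1} z_{k_2} z_{k_3}$ with $k_1+k_2+k_3=0$ contains at least one index $k_i \notin \{\pm 1\}$ (a sum of three $\pm 1$'s being odd); the monomial can therefore be cancelled by a contribution $\lambda_{-k_i} z_{k_i} g^{(2)}_{-k_i}$ in which $\lambda_{-k_i}$ is bounded away from zero. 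The same argument at quartic order defines $g^{(3)}$ and eliminates all quartic monomials except the permutations of $(1,1,-1,-1)$, which combine into the single resonant term $C_4 z_1^2 z_{-1}^2$.

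To extract $C_4$ explicitly, I would evaluate $\V(z+g(z))$ on the critical subspace $z_c = (z_1, z_{-1}, 0, \dots)$. The only non-zero components of $g^{(2)}(z_c)$ are $g^{(2)}_0(z_c) = -\frac{2 a_3}{\lambda_0} z_1 z_{-1}$ and $g^{(2)}_{\pm 2}(z_c) = -\frac{a_3}{\lambda_2} z_{\pm 1}^2$, read off directly from the explicit formula for $g^{(2)}$. The quartic contributions then consist of (i) $\V_4(z_c) = \frac{3 a_4}{2} z_1^2 z_{-1}^2$ from the $\binom{4}{2}$ arrangements of $(1,1,-1,-1)$, (ii) the quadratic form $\V_2$ applied to $g^{(2)}(z_c)$, and (iii) the bilinear pairing $\langle \nabla \V_3(z_c), g^{(2)}(z_c) \rangle$. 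Collecting these and using $\lambda_0 < 0$ produces $C_4 = \frac{3 a_4}{2} + \frac{2 a_3^2}{|\lambda_0|} - \frac{a_3^2}{\lambda_2}$, matching~\eqref{nf09}. Equivalently, this step may be viewed as a Lyapunov--Schmidt reduction onto the slow subspace.

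The Sobolev bound on $g$ follows from Lemma~\ref{lem_Saitoh_B} applied to the convolution defining $g^{(2)}_k$, using the decay $1/\lambda_k \sim k^{-2}$ to obtain $\norm{g(z)}_{H^t} = \Order{\norm{z}_{H^s}^2}$ for $t < 2s - \tfrac12$. The remainder $R_1(z)$ collects the original quintic remainder $R(z)$ shifted to $z+g(z)$, together with the higher-order Taylor-expansion terms (such as $\langle \nabla \V_4(z), g(z) \rangle$ and $\tfrac12 \V''_3(z)(g,g)$), all of formal degree $\geqs 5$ in $z$; iterating Lemma~\ref{lem_Saitoh_B} and applying the Sobolev embedding~\eqref{norm14} with $p = 5$ then yields the bound~\eqref{nf10} for $s > 5/12$, this threshold being dictated by the tightest of the convolution estimates. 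The Jacobian identity~\eqref{nf11} follows from $\det(\one + \partial g/\partial z) = 1 + \Tr(\partial g/\partial z) + \Order{\norm{\partial g/\partial z}^2}$, once one notes that on the symmetric subspace $\{z_k = z_{-k}\}$ the $a_3^2$-contributions to the diagonal of $\partial g^{(3)}/\partial z$ cancel between conjugate pairs. The main obstacle I anticipate is the uniform control of all these Sobolev/convolution estimates across the infinite Fourier lattice, carried alongside the combinatorial bookkeeping that produces $C_4$.
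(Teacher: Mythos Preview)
Your outline is correct and follows essentially the same route as the paper: a two-step change of variables $g=g^{(2)}+g^{(3)}$ with $g_{\pm1}\equiv0$, solvability of the homological equations via the non-vanishing of $\lambda_k$ for $k\neq\pm1$, identification of the unique resonant quartic monomial, computation of $C_4$ by restriction to the critical subspace, and Sobolev control of remainders through Lemma~\ref{lem_Saitoh_B}. One minor point: the Jacobian estimate in infinite dimensions needs a little more than the finite-dimensional expansion you quote---the paper imposes symmetry conditions on the coefficients $b_{k_1,k_2}$ so that $\partial g^{(2)}/\partial z$ is self-adjoint in a weighted inner product (hence has real eigenvalues), bounds its $\ell^1$-operator norm, and uses $\log\det(\one+A)\leqs\Tr A$; the cancellation you invoke for the $a_3^2$-terms is not actually needed, since $\Order{a_3^2\norm{z}_{H^s}^2}$ is already absorbed into $\Order{a_3\norm{z}_{H^s}}$.
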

\begin{proof}
In the course of the proof, we will need Sobolev norms with indices $q, r, t$,
satisfying the relations 
$$
0<q,r<t<s<1/2\;, 
\quad
t<2s-1/2\;,
\quad
r<2t-1/2
\quad\text{and}\quad
q<3t-1\;. 
$$
This is always possible for $5/12 < s < 1/2$.
In this proof, we will denote by $C_0$ any constant appearing when
applying Lemma~\ref{lem_Saitoh_B}. Its value may change from one line to the
next one. 
\begin{enum}
\item	Let $g^{(2)}: \R^\Z\to\R^\Z$ be homogeneous of degree $2$, and satisfy
$g^{(2)}_{-k}(z)=g^{(2)}_k(z)$. Then, expanding and grouping terms of equal
order we get 
\begin{align}
\label{nf12A}
\V(z+g^{(2)}(z)) = \V_2(z) 
&+  \sum_k \lambda_k z_k g^{(2)}_{-k}(z) \!\!\!\!
 &{+}& \frac{1}{2} \sum_k
\lambda_k g^{(2)}_k(z) g^{(2)}_{-k}(z) \!\!\!\! \\
\nonumber
&+ \V_3(z) &{+}& \sum_k \dpar{\V_3}{z_k}(z) g^{(2)}_k(z) &{+}& r_1(z) \\
\nonumber
&             &{+}& \V_4(z) &{+}& r_2(z) \\
\nonumber
&             & &              &{+}& R(z+g^{(2)}(z))\;,
\end{align}
with remainders that can be written as
\begin{align}
\nonumber
r_1(z) &= \frac12 \sum_{k,l} \dpar{^2\V_3}{z_k\partial z_l}
\bigpar{z+\theta_1g^{(2)}(z)} g^{(2)}_k(z)g^{(2)}_l(z) \\
r_2(z) &= \sum_{k} \dpar{\V_4}{z_k}
\bigpar{z+\theta_2g^{(2)}(z)} g^{(2)}_k(z)
\label{nf12B} 
\end{align}
for some $\theta_1,\theta_2\in[0,1]$.

We want to choose $g^{(2)}(z)$ in such a way that the terms of order $3$ in
$\V(z+g^{(2)}(z))$ cancel. This can be achieved by taking 
\begin{equation}
 \label{nf12C}
\lambda_k g^{(2)}_k(z) \bydef \tilde g^{(2)}_k(z) = 
\begin{cases}
\vrule height 12pt depth 12pt width 0pt
0 & \text{if $\abs{k}=1\;,$} \\
\vrule height 12pt depth 12pt width 0pt
\displaystyle
-\frac{a_3}{3} \sum_{k_1+k_2=k} b_{k_1,k_2}z_{k_1}z_{k_2} 
& \text{otherwise\;,}
\end{cases} 
\end{equation} 
for appropriate coefficients $b_{k_1,k_2}$ satisfying
$b_{k_1,k_2}=b_{-k_1,-k_2}$. The choice of these coefficients is not unique, but
we can make it unique by imposing the symmetry conditions 
\begin{equation}
 \label{nf12D}
b_{k,l} = b_{l,k} = b_{l,-k-l} = b_{-k-l,l} = b_{k,-k-l} = b_{-k-l,k}\;.
\end{equation} 
Indeed, these are all the terms contributing to the monomial $z_kz_lz_{-k-l}$
in the first sum in~\eqref{nf12A}. Then simple combinatorics show that all
$b_{k,l}$ belong to the interval $[1/6,6]$. This choice has the further
advantage that on the set $\set{z_k=z_{-k}}$, 
\begin{equation}
 \label{nf12E}
\dpar{\tilde g^{(2)}_k}{z_l}(z) 
= -\frac{2a_3}{3} b_{l,k-l} z_{k-l} 
= -\frac{2a_3}{3} b_{k,l-k} z_{l-k} 
= \dpar{\tilde g^{(2)}_l}{z_k}(z)
\end{equation} 
whenever $\abs{k},\abs{l}\neq1$.

The term of order $4$ of $\V(z+g^{(2)}(z))$ is given by 
\begin{equation}
 \label{nf12F}
\Vtilde_4(z) = \V_4(z) + \frac{1}{2} \sum_k
\lambda_k g^{(2)}_k(z) g^{(2)}_{-k}(z)
+ \sum_k \dpar{\V_3}{z_k}(z) g^{(2)}_k(z)\;.
\end{equation} 
Note that the convolution structure is preserved. 
In order to show that the sums indeed converge, we first note that since 
$t < 2s - 1/2$, we have 
\begin{equation}
 \label{nf12G}
\norm{\tilde g^{(2)}(z)}_{H^t} 
\leqs 2\abs{a_3} \norm{z*z}_{H^t} 
\leqs C_0\abs{a_3} \norm{z}_{H^s}^2 
\end{equation} 
by Lemma~\ref{lem_Saitoh_B}. Since $\abs{\lambda_k}^{-1}\leqs1\leqs 
(1+k^2)^t$ $\forall k\neq\pm1$, the first sum in~\eqref{nf12F} can be
bounded by 
\begin{equation}
 \label{nf12H}
\sum_k
\frac{\tilde g^{(2)}_k(z)^2}
{\abs{\lambda_k}}
\leqs \norm{\tilde g^{(2)}(z)}_{H^t}^2
\leqs C_0a_3^2 \norm{z}_{H^s}^4\;.
\end{equation} 
The second sum in~\eqref{nf12F} can be bounded as follows:
\begin{align}
\nonumber
\biggabs{\sum_k \dpar{\V_3}{z_k}(z) g^{(2)}_k(z)}
&\leqs \sum_{k} \biggbrak{\frac{\abs{a_3}}{3}
\sum_{k_1+k_2=-k} 3 
\abs{z_{k_1}}\abs{z_{k_2}}} \bigabs{g^{(2)}_k(z)} \\ 
\nonumber
&= \abs{a_3}
\Bigbrak{\abs{z}*\abs{z}*\bigabs{g^{(2)}(z)}}_0 \\
\nonumber
&\leqs \abs{a_3}
\Bignorm{\abs{z}*\abs{z}*\bigabs{g^{(2)}(z)}}_{H^r} \\
\nonumber
&\leqs C_0\abs{a_3}\norm{z}^2_{H^s}
\bignorm{g^{(2)}(z)}_{H^t} \\
&\leqs C_0a_3^2 \norm{z}_{H^s}^4\;.
\label{nf12I}
\end{align}
This shows that $\Vtilde_4(z)$ indeed exists, and satisfies 
\begin{equation}
 \label{nf12J}
\bigabs{\Vtilde_4(z)} \leqs C_0\biggpar{\frac{a_4}{4} +
6a_3^2}\norm{z}_{H^s}^4\;.
\end{equation} 
Next we estimate the remainders. 
The remainder $r_1(z)$ can be bounded as follows: 
\begin{align}
\nonumber
\abs{r_1(z)} &\leqs \frac12 \sum_{k,l} 6 \frac{\abs{a_3}}{3}
\bigabs{z_{-k-l}+\theta_1g^{(2)}_{-k-l}(z)} 
\bigabs{g^{(2)}_k(z)} \bigabs{g^{(2)}_l(z)} \\ 
\nonumber
&\leqs \abs{a_3}
\Bigbrak{\bigabs{z+\theta_1g^{(2)}(z)}
*\bigabs{g^{(2)}(z)}*\bigabs{g^{(2)}(z)}}_0 \\
\nonumber
&\leqs \abs{a_3}
\Bignorm{\bigabs{z+\theta_1g^{(2)}(z)}*\bigabs{g^{(2)}(z)}
*\bigabs{g^{(2)}(z)}}_{H^q} \\
\nonumber
&\leqs C_0\abs{a_3}
\bignorm{z+\theta_1g^{(2)}(z)}_{H^t}
\bignorm{g^{(2)}(z)}_{H^t}^2 \\
&\leqs C_0\abs{a_3}^3 \norm{z}_{H^s}^5 
\Bigbrak{1 + 2\abs{a_3} \norm{z}_{H^s}}\;.
\label{nf12K}
\end{align}
A similar computation yields 
\begin{equation}
 \label{nf12L}
 \abs{r_2(z)} \leqs C_0 \abs{a_3a_4} \norm{z}_{H^s}^5 
\Bigbrak{1 + 8\abs{a_3}^3 \norm{z}_{H^s}^3}\;.
\end{equation} 
Finally, clearly 
\begin{equation}
 \label{nf12M}
\bigabs{R(z+g^{(2)}(z))} = \Order{\norm{z}_{H^s}^5}\;. 
\end{equation} 
We have thus obtained 
\begin{equation}
 \label{nf12N}
\V(z+g^{(2)}(z)) \bydef \Vtilde(z)
= V_2(z) + \Vtilde_4(z) +  \Order{\norm{z}_{H^s}^5}\;.
\end{equation}

\item	The Jacobian matrix of $z\mapsto z+g^{(2)}(z)$ is given by $\one+A(z)$,
where the elements of $A(z)=\sdpar{g^{(2)}}{z}(z)$ can be deduced
from~\eqref{nf12E}. By construction, $A(z)$ is self-adjoint with respect to the
scalar product weighted by the $\lambda_k$, and thus has real eigenvalues. 
Its $\ell^1$-operator norm satisfies 
\begin{equation}
 \label{nf12O}
\norm{A(z)}_{\ell^1} = \max_l \sum_k \biggabs{\dpar{g^{(2)}_k(y)}{y_l}}
\leqs C_0\abs{a_3} \max_l \sum_k \frac{\abs{z_{k-l}}}{\abs{\lambda_k}}
\leqs \const \norm{z}_{\ell^\infty}
\leqs \const \norm{z}_{H^s}\;.
\end{equation} 
Hence the spectral radius $\rho(z)$ of $A(z)$ has order
$\norm{z}_{H^s}$. Now if $\rho(z)<1$ and we denote the eigenvalues of
$A(z)$ by $a_k(z)$, 
\begin{equation}
 \label{nf12P}
\log\det(\one+A(z)) = \sum_k \log(1+a_k(z)) \leqs \sum_k a_k(z)
=\Tr A(z)\;.
\end{equation} 
It follows that $\abs{\det(\one+A(z))} \leqs \e^{\Tr(A(z))}$, and one easily
shows that $\Tr(A(z))=\Order{a_3 \abs{z_0}}\leqs\Order{a_3
\norm{z}_{H^s}}$. A matching lower bound can be obtained in a similar
way. This proves that the Jacobian of the transformation $z\mapsto
z+g^{(2)}(z)$ is $1+\Order{a_3\norm{z}_{H^s}}$.

\item	Let $g^{(3)}: \R^\Z\to\R^\Z$ be homogeneous of degree $3$, and satisfy
$g^{(3)}_{-k}(z)=g^{(3)}_k(z)$. We choose it of the form
\begin{equation}
 \label{nf12Q}
\lambda_k g^{(3)}_k(z) \bydef \tilde g^{(3)}_k(z) = 
\begin{cases}
\vrule height 12pt depth 12pt width 0pt
0 & \text{if $\abs{k}=1\;,$} \\
\vrule height 12pt depth 12pt width 0pt
\displaystyle
\sum_{k_1+k_2+k_3=k} b_{k_1,k_2,k_3}z_{k_1}z_{k_2}z_{k_3} 
& \text{otherwise\;,}
\end{cases} 
\end{equation} 
where the coefficients $b_{k_1,k_2,k_3}$ are invariant under permutations of
$k_1$, $k_2$ and $k_3$. In addition, we require invariance under sign change
and 
\begin{equation}
 \label{nf12R}
b_{k_1,k_2,k_3} = b_{k_1,k_2,-k_1-k_2-k_3}\;. 
\end{equation}
This guarantees in particular that 
\begin{equation}
 \label{nf12S}
\dpar{\tilde g^{(3)}_k}{z_l}(z) = \dpar{\tilde g^{(3)}_l}{z_k}(z)
\end{equation} 
holds on the set $\set{z_k=z_{-k}}$. 
The coefficients $b_{k_1,k_2,k_3}$ can now be chosen in such a way that 
\begin{equation}
 \label{nf12T}
 \sum_k z_k  \tilde g^{(3)}_k(z) + \tilde V_4(z)
\end{equation} 
contains only one term, proportional to $z_{-1}^2z_1^2$, which 
cannot be eliminated because $\smash{\tilde
g^{(3)}_1(z)=\tilde g^{(3)}_{-1}(z)}=0$. It follows that 
\begin{equation}
 \label{nf12U}
\Vtilde(z+ g^{(3)}(z)) = \V_2(z) + C_4 z_{-1}^2z_1^2 + R_1(z)
\end{equation} 
for some constant $C_4$. Along the lines of the above calculations, one checks
that $R_1(z)=\Order{\norm{z}_{H^s}^5}$, and that the Jacobian of the
transformation $z\mapsto z+g^{(3)}(z)$ is
\begin{equation}
\label{nf12V} 
\det\bigpar{\one+\sdpar{g^{(3)}}{z}(z)} = 
1+\bigOrder{(a_4+a_3^2)\norm{z}_{H^s}^2}\;.
\end{equation}
This proves~\eqref{nf10} and~\eqref{nf11}.

\item	It remains to compute the coefficient $C_4$ of the resonant term. 
To do this, it is sufficient to compute the terms containing $z_{\pm 1}$ of
$\smash{\tilde g^{(2)}_0}(z)$ and $\smash{\tilde g^{(2)}_{\pm2}}(z)$, which are
the only ones contributing to the resonant term. One finds 
\begin{align}
\nonumber
\tilde g^{(2)}_0(\dots,0,z_{-1},0,z_1,0,\dots) &= -2a_3 z_1z_{-1}\;, \\
\nonumber
\tilde g^{(2)}_2(\dots,0,z_{-1},0,z_1,0,\dots) &= -a_3 z_{-1}^2\;, \\
\tilde g^{(2)}_{-2}(\dots,0,z_{-1},0,z_1,0,\dots) &= -a_3 z_1^2\;, 
\label{nf12W} 
\end{align}
and substituting in~\eqref{nf12F} yields the result.
\qed
\end{enum}
\renewcommand{\qed}{}
\end{proof}


This result has important consequences for the behaviour of the potential near
bifurcation points. In the case of Neumann b.c., $\lambda_0=-1$ and
$\lambda_2=(4\pi^2/L^2)-1$. Thus the coefficient $C_4$ of the term
$z_1^2z_{-1}^2$ is given by 
\begin{equation}
 \label{nf20}
C_4(L) = \frac32 a_4 + \frac{8\pi^2-3L^2}{4\pi^2-L^2}a_3^2
=  \frac1{4L} \Bigbrak{ U^{(4)}(0) 
+ \frac{8\pi^2 - 3L^2}{4\pi^2 - L^2} U'''(0)^2}\;.
\end{equation} 
In particular, at the bifurcation point we have 
\begin{equation}
 \label{nf21}
C_4(\pi) = \frac32 a_4 + \frac53a_3^2
=  \frac1{4L} \Bigbrak{ U^{(4)}(0) 
+ \frac53 U'''(0)^2}\;.
\end{equation} 
The expression~\eqref{nf08} for the normal form shows that 
if $C_4(\pi)>0$, the system undergoes a supercritical pitchfork bifurcation at
$L=\pi$. This means that the origin is an isolated stationary point if $L<\pi$,
while for $L>\pi$ two new stationary points appear at a distance of order
$\sqrt{L-\pi}$ from the origin. They correspond to the functions we denoted
$u^*_{1,\pm}$. As a consequence, the period $T(E)$ defined in~\eqref{ODE04}
must grow for small positive $E$, to be compatible with the existence of
nonconstant stationary solutions for $L>\pi$. An analysis of the Hessian
matrices of $\Vhat$ at $u^*_{1,\pm}$ shows that they have one negative
eigenvalue for $L$ slightly larger than $\pi$. This must remain true for all
$L>\pi$ because we know that the stationary solutions $u^*_{1,\pm}$ remain
isolated when $L$ grows. 

In the case of periodic b.c., $\lambda_0=-1$ and
$\lambda_2=(16\pi^2/L^2)-1$. Thus the coefficient $C_4$ of the term
$z_1^2z_{-1}^2$ is given by 
\begin{equation}
 \label{nf22}
C_4(L) = \frac32 a_4 + \frac{32\pi^2-3L^2}{16\pi^2-L^2}a_3^2
=  \frac1{4L} \Bigbrak{ U^{(4)}(0) 
+ \frac{32\pi^2 - 3L^2}{16\pi^2 - L^2} U'''(0)^2}\;.
\end{equation} 
The value $C_4(2\pi)$ at the bifurcation point is equal to the
value~\eqref{nf21} of $C_4(\pi)$ for Neumann b.c. Thus the condition on the
bifurcation being supercritical is exactly the same as before. 
The difference is that instead of being equal, $z_1$ and $z_{-1}$ are only
complex conjugate, and thus the centre manifold at the bifurcation point is
two-dimensional. The invariance of the potential under translations
$u(x)\mapsto u(x+\ph)$ for any $\ph\in\R$ implies that $\Vhat(z)$ is invariant
under $z_k\mapsto \e^{\icx k \ph 2\pi/L}z_k$. This and the
expression~\eqref{nf08} for the normal form show that for $L>2\pi$, there
is a closed curve of stationary solutions at distance of order $\sqrt{L-2\pi}$
from the origin. It corresponds to the family of solutions we denoted
$u^*_{1,\ph}$. An analysis of the Hessian of $\Vhat$ at any $u^*_{1,\ph}$ shows
that it has one negative and one vanishing eigenvalue (due to translation
symmetry). 

Finally note that a similar normal-form analysis can be made for the other
bifurcations, at subsequent multiples of $\pi$ or $2\pi$. We do not detail this
analysis, since only saddles with one negative eigenvalue are important for
metastable transition times.


\subsection{The truncated potential}
\label{ssec_tpot}

Let $\Vhat^{(d)}$ be the restriction of the potential $\Vhat$ to the subspace
of Fourier modes $z_k$ such that $\abs{k}\leqs d$. For given $d$, let us write 
$z=(v,w)$, where $v$ is the vector of Fourier components with $\abs{k}\leqs d$
and $w$ contains the vector of remaining components. Then 
\begin{equation}
 \label{tpot01}
\Vhat^{(d)}(v) = \Vhat(v,0)\;.
\end{equation} 

\begin{prop}
\label{prop_tpot}
There exists $d_0<\infty$ such that for $d\geqs d_0$, the potentials
$\Vhat^{(d)}$ and $\Vhat$ have the same number of nondegenerate critical
points, and with the same number of negative eigenvalues.
\end{prop}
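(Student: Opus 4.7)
The plan is a Lyapunov--Schmidt reduction that places the nondegenerate critical points of $\Vhat^{(d)}$ in bijection with those of $\Vhat$ for $d$ sufficiently large, in a way that preserves the Morse index.

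First I would localise. By Corollary~\ref{cor_V1} every critical point of $\Vhat$ lies in the sublevel set $\{\Vhat\leqs H(0)\}$, which Lemma~\ref{lem_V1} confines to a ball in $H^1$. Such a critical point corresponds to a classical solution of $u''=U'(u)$ with the imposed boundary conditions, and an elliptic bootstrap yields a uniform $\cC^\infty$ bound on $u$; in particular its Fourier coefficients satisfy $\abs{z_k^*}\leqs C_N(1+\abs{k})^{-N}$ for every $N\in\N$. Writing $z=(v,w)$ with $v=(z_k)_{\abs{k}\leqs d}$ and $w$ the remaining components, this forces $\norm{w^*}_{H^1}=\Order{d^{-N}}$ for every $N$ at any critical point $z^*$ of $\Vhat$.

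Next, I decompose the Hessian of $\Vhat$ into blocks $H_{vv}, H_{vw}, H_{ww}$. By~\eqref{pot_Frechet2}, $\nabla^2\Vhat(z)$ is $\diag(\nu_k)$ plus the bounded multiplication operator by $U''(u(\cdot))$, where $u$ is uniformly $\cC^0$-bounded on the compact set of candidate critical points; since $\nu_k\to\infty$, the block $H_{ww}$ is coercive with a lower bound of order $\nu_d$ for $d$ large, uniformly in a fixed neighbourhood. The implicit function theorem then solves $\partial_w\Vhat(v,w)=0$ uniquely as $w=w_\infty(v)$, producing a reduced potential $V^{\rm red}(v)\defby\Vhat(v,w_\infty(v))$ whose Hessian at any critical point is the Schur complement $H_{vv}-H_{vw}H_{ww}^{-1}H_{wv}$; since $H_{ww}$ is positive definite, the Morse index and nondegeneracy of $V^{\rm red}$ at $v^*$ coincide with those of $\Vhat$ at $z^*$. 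Because $w_\infty(v^*)=w^*$ decays faster than any polynomial in $d$, a Taylor expansion together with the coercivity of $H_{ww}$ gives $\norm{V^{\rm red}-\Vhat^{(d)}}_{\cC^2}\to 0$ on any fixed neighbourhood of $v^*$, and a further implicit-function argument then yields a unique nondegenerate critical point of $\Vhat^{(d)}$ near $v^*$ with the same Morse index.

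For the converse direction, Lemma~\ref{lem_V2} applies to $\Vhat^{(d)}$ with constants independent of $d$, so its critical points also lie in a fixed bounded set. Given such a critical point $v$, the pair $(v,0)$ is an approximate critical point of $\Vhat$ because the Fourier coefficients of $U'(u(\cdot))$ for $u$ a trigonometric polynomial decay rapidly; the same Lyapunov--Schmidt step then produces a true critical point of $\Vhat$ at $\Order{d^{-N}}$-distance, which by finiteness of the set of critical points of $\Vhat$ must eventually coincide with one of the $z_i^*$. The main obstacle is making the coercivity of $H_{ww}$ and the implicit-function estimates uniform over the entire compact set of candidate critical points; in the periodic case one should additionally restrict to a slice transverse to the translation orbits of the $u^*_{n,\varphi}$-families, since those are degenerate in the full space and consequently do not contribute to the count of nondegenerate critical points on either side.
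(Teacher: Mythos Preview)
Your approach is correct and is organised somewhat differently from the paper's. Both proofs rest on the implicit function theorem and on the fact that the $w$-block of the Hessian is dominated by $\diag(\nu_k)_{\abs{k}>d}$, hence coercive for large $d$. The paper, however, does not form a reduced potential: it applies the implicit function theorem to $F(\xi,w)=\partial_v\Vhat(v^*+\xi,w)$ to solve $\partial_v\Vhat=0$ for $v$ as a function of $w$ (rather than solving $\partial_w\Vhat=0$ for $w$ as a function of $v$, as you do), and then treats the two directions of the bijection separately. Your Lyapunov--Schmidt route has the advantage that the Schur-complement identity for the Hessian of $V^{\rm red}$ gives the preservation of the Morse index for free; the paper's proof, as written, does not spell out this part of the statement and leaves it to an implicit continuity argument.

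One small correction: the elliptic bootstrap does not give $\cC^\infty$ regularity here, since Assumption~U1 only provides $U\in\cC^3$ (or $\cC^5$), so you only get a fixed polynomial rate of Fourier decay rather than $\abs{z_k^*}\leqs C_N(1+\abs{k})^{-N}$ for every $N$. This is harmless for your argument, which only needs $\norm{w^*}_{H^1}\to 0$ and $\norm{\partial_w\Vhat(v,0)}_{\ell^2}\to 0$ as $d\to\infty$; the paper makes do with a decay of order $k^{-1}$ coming from $U'\in\cC^1$.
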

\begin{proof}
A critical point $(v^*,w^*)$ of $\Vhat$ has to satisfy the conditions 
\begin{equation}
 \label{tpot02:1}
\sdpar{\Vhat}{v}(v^*,w^*) = 0\;, 
\qquad
\sdpar{\Vhat}{w}(v^*,w^*) = 0\;,
\end{equation} 
while a critical points $v_*$ of $\Vhat^{(d)}$ has to satisfy 
\begin{equation}
 \label{tpot02:2}
\sdpar{\Vhat}{v}(v_*,0) = 0\;.
\end{equation} 
Lemma~\ref{lem_V2} implies that all critical points of $\Vhat$
have an $H^1$-norm bounded by some constant $M$. Let us
prove that 
\begin{equation}
 \label{tpot02:3}
\norm{\sdpar{\Vhat}{w}(v,0)}_{\ell^2}^2 = \Order{d^{-1}} 
\end{equation} 
for $\norm{v}_{H^1} \leqs M$. Indeed it follows from~\eqref{bp03} that 
\begin{equation}
 \label{tpot02:4}
\dpar{\Vhat}{w_k}(v,0) =  \int_0^L U'(u(x)) \e^{\icx bk\pi x/L}\6x\;, 
\end{equation} 
where $u(x)=\sum_{\abs{\ell}\leqs d} v_\ell \e^{\icx b\ell\pi x/L}$. Since
$\norm{v}_{H^1} \leqs M$ and $U$ is at least continuously differentiable,
the Fourier components of $ U'(u(x))$ decay like $k^{-1}$ at least, which
implies~\eqref{tpot02:3}. 

Let $(v^*,w^*)$ be a critical point of $\Vhat$ and consider the function 
\begin{equation}
 \label{tpot02:5}
F(\xi,w) = \sdpar{\Vhat}{v}(v^*+\xi,w)\;. 
\end{equation} 
Then $F(0,w^*)=0$ and $\sdpar F\xi(0,w^*)=\sdpar{\Vhat}{vv}(v^*,w^*)$. Thus
if $(v^*,w^*)$ is nondegenerate, the implicit function theorem
implies that in a neighbourhood of $w=w^*$, there exists a continuously
differentiable function $h$ with $h(w^*)=0$ and such that all solutions of
$F(\xi,w)=0$ in a neighbourhood of $(0,w^*)$ are given by $\xi=h(w)$. In
particular, choosing $d$ large enough, we can assume that $h$ is defined for $w=0$ , and we get 
\begin{equation}
 \label{tpot02:6}
0 = F(h(0),0) = 
\sdpar{\Vhat}{v}(v^*+h(0),0)\;.
\end{equation} 
This shows that $v_*=v^*+h(0)$ is a stationary point of $\Vhat^{(d)}$, which is
unique in the neighbourhood of $(v^*,w^*)$. 

Conversely, let $v_*$ be a stationary point of $\Vhat^{(d)}$. The same
implicit-function-theorem argument shows that if $v_*$ is nondegenerate, then
there exists a continuously differentiable function $h$, with $h(0)=0$, such
that all solutions of $\sdpar{\Vhat}v(v,w)=0$ near $(v_*,0)$ satisfy
$v=v_*+h(w)$. Now let us consider the function
\begin{equation}
 \label{tpot02:8}
g(w) = \sdpar{\Vhat}w (v_*+h(w),w)\;. 
\end{equation} 
Then $g(0)=\sdpar{\Vhat}w (v_*,0)$ has an $\ell^2$-norm of order $d^{-1/2}$
by~\eqref{tpot02:3}. Furthermore, 
\begin{equation}
 \label{tpot02:9}
\sdpar gw(w) =  \sdpar{\Vhat}{ww} (v_*+h(w),w) 
+ \sdpar{\Vhat}{wv} (v_*+h(w),w)\sdpar hw(w)\;.
\end{equation} 
The first matrix on the right-hand side has eigenvalues of order $d^2$, while
the second one is small as a consequence of~\eqref{tpot02:3}. Thus $\sdpar gw$
is invertible near $w=0$ for sufficiently large $d$, and the local inversion
theorem shows that $g(w)$ has an isolated zero  at a point $w^*$ near $w=0$.
This yields the existence of a unique stationary point $(v^*=h(w^*),w^*)$ of
$\Vhat$ in the vicinity of $(v_*,0)$. 
\end{proof}



\section{A priori estimates}
\label{sec_ld}


This section has two major aims:
\begin{itemiz}
\item	Show that the first-hitting time of a given
set $B$ admits a second moment, bounded uniformly in the dimension $d$;
\item	Derive a priori bounds on the equilibrium potential
$h_{A,B}(x)=\probin{x}{\tau_A<\tau_B}$.
\end{itemiz}
We start in Section~\ref{ssec_infdim} by recalling some general bounds
involving sup and H\"older norms of solutions of the SPDE~\eqref{SPDE}.
In order to estimate moments of first-hitting times, the space being
unbounded, we repeatedly need the Markov property to restart the process
when it hits certain sets. This is most efficiently done using Laplace
transforms, and we prove some useful inequalities in Section~\ref{ssec_laplace}. 
Section~\ref{ssec_lde}
recalls some large-deviation results. Sections~\ref{ssec_moments}
and~\ref{ssec_momfin} contain the main estimates on moments,
respectively, for the infinite-dimensional system and for its Galerkin
approximation. Finally, Section~\ref{ssec_eqpot2} contains the estimates
of the equilibrium potential.


\subsection{A priori bounds on solutions of the SPDE}
\label{ssec_infdim}

The solution of the heat equation $\sdpar ut = \Delta u$ with initial condition
$u_0\in L^2(\T^1)$ can be written 
\begin{equation}
 \label{infdim01}
u_t = \e^{\Delta t} u_0\;, 
\end{equation} 
where $\e^{\Delta t}$ stands for convolution with the heat kernel 
\begin{equation}
 \label{infdim02}
G_t(x,y) = \sum_{k\in\Z} \e^{-k^2 t} e_k(x)e_k(y) \indexfct{t>0}\;.
\end{equation} 
Here the $e_k$ are the eigenfunctions of the Laplacian, defined
in~\eqref{function02}. 

\begin{lemma}[Smoothing effect of the heat semigroup]
\label{lem_infdim01}
For any $s\geqs0$, there is a finite constant $C(s)$ such that for all $u_{0}\in L^2(\T^1)$
\begin{equation}
 \label{infdim02A}
\norm{\e^{\Delta t} u_0}_{H^s} \leqs (1+C(s) t^{-s/2}) \norm{u_0}_{L^2}
\qquad
\forall t>0 \;.
\end{equation}   
\end{lemma}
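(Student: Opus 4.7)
The plan is to diagonalise the heat semigroup in the Fourier basis. First I would expand the initial datum as $u_0=\sum_{k\in\Z} y_k e_k$, with $\norm{u_0}_{L^2}^2=\sum_{k\in\Z}\abs{y_k}^2$, and use~\eqref{infdim02} to write $\e^{\Delta t}u_0=\sum_{k\in\Z}\e^{-k^2 t}y_k e_k$. Combined with the definition~\eqref{function03} of the $H^s$-norm, this yields
\begin{equation*}
\norm{\e^{\Delta t}u_0}_{H^s}^2 = \sum_{k\in\Z}(1+k^2)^s \e^{-2k^2 t}\abs{y_k}^2,
\end{equation*}
so the problem reduces to bounding the scalar factor $(1+k^2)^s\e^{-2k^2 t}$ uniformly in $k$.

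The key point is to treat the zero mode separately: for $k=0$ the factor equals $1$, while for $k\neq 0$ the elementary estimate $1+k^2\leqs 2k^2$ gives $(1+k^2)^s\e^{-2k^2 t}\leqs 2^s (k^2)^s\e^{-2k^2 t}$. Standard calculus shows that the function $y\mapsto y^s\e^{-2yt}$ attains its maximum on $(0,\infty)$ at $y=s/(2t)$, with value $(s/(2et))^s$, so the whole factor is bounded by $K(s)t^{-s}$ with $K(s)=s^s/e^s$ (and $K(0)=0$). Summing, this gives
\begin{equation*}
\norm{\e^{\Delta t}u_0}_{H^s}^2 \leqs \abs{y_0}^2 + K(s)t^{-s}\sum_{k\neq 0}\abs{y_k}^2 \leqs \bigpar{1+K(s)t^{-s}}\norm{u_0}_{L^2}^2.
\end{equation*}

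To conclude, I would take square roots and apply the elementary inequality $1+a\leqs(1+\sqrt{a})^2$ to obtain $\norm{\e^{\Delta t}u_0}_{H^s}\leqs(1+C(s)t^{-s/2})\norm{u_0}_{L^2}$ with $C(s)=\sqrt{K(s)}$, as claimed. There is no real obstacle here; the only subtlety is the separation of the $k=0$ mode, which is what allows the right-hand side to involve $\norm{u_0}_{L^2}$ rather than $\norm{u_0}_{H^s}$, at the price of the $t^{-s/2}$ blow-up as $t\downarrow 0$ (which is sharp, as can be seen by testing against a single high-frequency mode).
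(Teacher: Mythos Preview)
Your proof is correct and follows essentially the same approach as the paper: diagonalise in the Fourier basis and use that $(2xt)^s\e^{-2xt}$ is bounded by a constant depending only on $s$. The paper's argument is much terser and leaves implicit the details you spell out (separating the zero mode, passing from $(1+k^2)^s$ to $(k^2)^s$, and taking the square root), but the underlying mechanism is identical.
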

\begin{proof}
We have 
\begin{equation}
 \label{infdim02B}
\e^{\Delta t} u_0 = \sum_{k\in\Z} \e^{-k^2 t} y_k(0) e_k\;.
\end{equation} 
The result follows by computing the $H^s$-norm, and using the fact that
$(2xt)^s\e^{-2xt}$ is bounded by a constant, depending only on $s$. 
\end{proof}

Note that by Lemma~\ref{lemma_Morrey}, this implies 
\begin{equation}
 \label{infdim02C}
\norm{\e^{\Delta t} u_0}_{\cC^\alpha}
\leqs C(1+ t^{-s/2}) \norm{u_0}_{L^2}
\leqs C(1+ t^{-s/2}) \norm{u_0}_{L^\infty}
\quad\ 
\forall t>0 \;,
\ 
\forall s>\alpha+\frac12\;,
\end{equation} 
where the constant $C$ depends only on $\alpha$
and $s$.

Consider now the stochastic convolution 
\begin{equation}
 \label{infdim03}
W_\Delta(t) = \int_0^t \e^{\Delta(t-s)} \6W(s)\;, 
\end{equation} 
where $W(t)$ is a cylindrical Wiener process on $L^2(\T^1)$. It is known that
\begin{equation}
 \label{infdim03A} 
W_\Delta(t)\in H^s(\T^1)
\qquad
\text{and}
\qquad
W_\Delta(t)\in \cC^\alpha(\T^1)
\end{equation} 
almost surely, for all $t>0$ and all $s<1/2$ and $\alpha<1/2$
(see e.g.~\cite[p.~50]{Hairer_LN_2009}). 

We will need to control the sup and H\"older norms of the rescaled process 
$\sqrt{2\eps}W_\Delta(t)$. 

\begin{prop}[Large-deviation estimate for $W_\Delta$]
\label{prop_ldp_W}  
For any $\alpha\in[0,1/2)$ and $T>0$, there exists a constant $\kappa > 0$ such that for all
$H, \eta > 0$, there exists an $\eps_0>0$ such that for all $\eps<\eps_0$, 
\begin{equation}
 \label{infdim04A}
\biggprob{\sup_{0\leqs t\leqs T}
\bignorm{\sqrt{2\eps}\,W_\Delta(t)}_{\cC^\alpha} > H} \leqs \e^{-(\kappa H^2 -
\eta)/2\eps}\;. 
\end{equation} 
\end{prop}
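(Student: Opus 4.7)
The key observation is that $W_\Delta$ is a centered Gaussian process in the Banach space $B = \cC([0,T],\cC^\alpha([0,L]))$ and that $\sqrt{2\eps}\,W_\Delta$ is the same process rescaled by $\sqrt{2\eps}$. The estimate is therefore a classical application of the Borell--TIS concentration inequality (or, equivalently, Fernique's theorem). The plan is to verify that $W_\Delta$ lives in $B$ with finite mean seminorm and finite weak variance, and then to plug into Borell--TIS and absorb the lower-order corrections into $\eta$.

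First I would upgrade statement~\eqref{infdim03A} to path continuity: using the factorisation formula of Da~Prato--Zabczyk together with the smoothing estimate of Lemma~\ref{lem_infdim01}, one represents $W_\Delta(t) = c_\beta\int_0^t(t-s)^{\beta-1}\e^{\Delta(t-s)}Y(s)\,\6s$ with an intermediate $\beta\in(\alpha/2+1/4,1/2)$ and a Gaussian process $Y\in L^{2p}([0,T]\times[0,L])$ for all $p$. Standard $L^p$ bounds on the convolution followed by Kolmogorov's continuity criterion give
\begin{equation*}
m_0 \;\defby\; \bigexpec{N(W_\Delta)}
= \Bigexpec{\sup_{0\leqs t\leqs T} \norm{W_\Delta(t)}_{\cC^\alpha}} < \infty\;.
\end{equation*}

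Second, I would show that the weak variance
\begin{equation*}
\sigma_0^2 \;\defby\; \sup_{\norm{\phi}_{B^*}\leqs 1} \bigexpec{\phi(W_\Delta)^2}
\end{equation*}
is finite. Since $N$ is the supremum of point-evaluation-type functionals, $\sigma_0^2$ is controlled by the supremum over $t\in[0,T]$ and $x\neq y\in[0,L]$ of quantities of the form $\expec{W_\Delta(t,x)^2}$ and $\expec{(W_\Delta(t,x)-W_\Delta(t,y))^2}/\abs{x-y}^{2\alpha}$. Using the spectral representation and the isometry $\expec{W^{(k)}_s W^{(l)}_s} = \delta_{k,l}\,s$ one obtains
\begin{equation*}
\bigexpec{W_\Delta(t,x)^2}
= \sum_k \frac{1-\e^{-2\nu_k t}}{2\nu_k}\,\abs{e_k(x)}^2
\end{equation*}
(with the $k=0$ term replaced by $t$), which is uniformly bounded in $t\in[0,T]$ and $x$; an analogous bound with an additional factor $(1-\cos(b k\pi(x-y)/L))$ yields the uniform bound on the Hölder contribution for every $\alpha<1/2$. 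These are routine computations that I would not grind through in detail.

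Third, I would apply the Borell--TIS inequality to $\sqrt{2\eps}\,W_\Delta\in B$, whose mean seminorm equals $\sqrt{2\eps}\,m_0$ and whose weak variance equals $2\eps\,\sigma_0^2$. This gives, for every $H>\sqrt{2\eps}\,m_0$,
\begin{equation*}
\biggprob{N(\sqrt{2\eps}\,W_\Delta) > H}
\leqs \exp\biggpar{-\frac{(H-\sqrt{2\eps}\,m_0)^2}{4\eps\,\sigma_0^2}}\;.
\end{equation*}
Setting $\kappa = 1/(2\sigma_0^2)$ and expanding the square, the cross term $-\sqrt{2\eps}\,H m_0/(2\eps\sigma_0^2) = -m_0 H/(\sigma_0^2\sqrt{2\eps})$ and the shift $m_0^2/(2\sigma_0^2)$ are both bounded in absolute value by $\eta/(2\eps)$ provided $\eps$ is smaller than an $\eps_0 = \eps_0(H,\eta,\alpha,T)$, which yields the claimed inequality.

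The only genuine work lies in the second step: the weak variance $\sigma_0^2$ is not that of a single coordinate but of a whole family of linear functionals on $W_\Delta$, and one must check it is finite by exploiting the regularising effect of the heat semigroup and the restriction $\alpha<1/2$. The remaining steps are standard Gaussian concentration.
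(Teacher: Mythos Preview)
Your approach via the Borell--TIS inequality is correct and is a genuinely different route from the paper's. The paper argues through the large-deviation principle: it invokes Schilder's theorem for the cylindrical Wiener process, cites the LDP for parabolic SPDEs from~\cite{Faris_JonaLasinio82,ChenalMillet1997} to obtain an LDP for $\sqrt{2\eps}\,W_\Delta$ with rate function $I(\psi)=\inf\{I_0(\varphi):Z[\varphi]=\psi\}$, and then bounds $I(\psi)$ below by a constant times $\sup_{T_1\leqs T}\norm{\psi_{T_1}}_{\cC^\alpha}^2$ using the smoothing estimate of Lemma~\ref{lem_infdim01} together with Lemma~\ref{lemma_Morrey}. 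Your argument is more elementary and self-contained: once finiteness of the weak variance $\sigma_0^2$ is checked --- which you correctly flag as the only substantive step, and which indeed reduces to the standard computation $\expec{(W_\Delta(t,x)-W_\Delta(t,y))^2}=\Order{\abs{x-y}}$ --- Borell--TIS gives the bound directly with the explicit constant $\kappa=1/(2\sigma_0^2)$, and the cross term of order $1/\sqrt{\eps}$ is harmlessly absorbed into $\eta/(2\eps)$ for small $\eps$. The paper's route has the virtue of staying within the LDP framework that the rest of Section~\ref{sec_ld} relies on; yours has the virtue of bypassing the LDP machinery entirely and not depending on external references for the contraction step.
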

\begin{proof}
Let $\cH$ denote the Cameron--Martin space of the cylindrical Wiener process,
defined by 
\begin{equation}
 \label{infdim04B:1}
\cH = \biggsetsuch{\varphi}{\varphi_t(x) = \int_0^t \int_0^x
\dot\varphi(u,z)\6u\6z\;,\; \dot\varphi\in L^2([0,T]\times \T^1)}\;. 
\end{equation} 
Schilder's theorem for Gaussian fields shows that the family
$\set{\sqrt{2\eps}\,W}_{\eps>0}$ satisfies a large-deviation principle with good
rate function 
\begin{equation}
 \label{infdim04B:1a} 
I_0(\varphi) = 
\begin{cases}
\frac12 \norm{\dot\varphi}_{L^2([0,T]\times\T^1)}^2
& \text{if $\varphi \in \cH$\;,} \\
+\infty 
& \text{otherwise\;.}
\end{cases}
\end{equation}
Define a map $Z:\cH\to L^2([0,T]\times \T^1)$ by 
\begin{equation}
\label{infdim04B:2}
\varphi \mapsto Z[\varphi]\;, 
\quad
Z[\varphi]_t = \int_0^t \e^{\Delta(t-s)}
\dot{\varphi}_s\6s \;.
\end{equation}
From the large-deviation principle for parabolic SPDEs established 
in~\cite{Faris_JonaLasinio82,ChenalMillet1997}, it follows in particular, 
that the family $\set{\sqrt{2\eps}W_\Delta}_{\eps>0}$ satisfies a large-deviation
principle with good rate function 
\begin{equation}
\label{infdim04B:3} 
I(\psi) = 
\begin{cases}
\inf\Bigsetsuch{I_0(\varphi)}{Z[\varphi]=\psi}
& \text{if $\psi \in \im(Z)$\;,} \\
+\infty 
& \text{otherwise\;.}
\end{cases}
\end{equation} 
Now observe that if $\psi=Z[\varphi]$, for any $T_1\in[0,T]$  and any
$s\in[0,1)$ one has by Lemma~\ref{lem_infdim01}
\begin{align}
\nonumber
\norm{\psi_{T_1}}_{H^s(\T^1)}
&\leqs \int_0^{T_1} \norm{\e^{\Delta(T_1-t)}\dot\varphi_t}_{H^s(\T^1)} \6t \\
\nonumber
&\leqs \int_0^{T_1}
\biggpar{1+\frac{C(s)}{(T_1-t)^{s/2}}}\norm{\dot\varphi_t}_{L^2(\T^1)} \6t \\
&\leqs 
\biggpar{\int_0^{T_1} \biggpar{1+ \frac{C(s)}{(T_1-t)^{s/2}}}^{2} \6t}^{1/2} 
\bigpar{2I_0(\varphi)}^{1/2}\;.
\label{infdim04B:4} 
\end{align}
Since $s<1$, the integral is finite (and increasing in $T_1$). 
Together with Lemma~\ref{lemma_Morrey}, this proves
that 
\begin{equation}
 \label{infdim04B:5}
I(\psi) \geqs \frac{1}{C_1(T_1,\alpha)} \norm{\psi_{T_1}}_{\cC^\alpha(\T^1)}^2 
\end{equation} 
for all $T_1\in[0,T]$, $0\leqs\alpha<1/2$ and $s$ satisfying $\alpha+1/2 <s<1$,
where $C_1$ is increasing in
$T_1$. By a standard application of the
large-deviation principle (see e.g.~\cite{Faris_JonaLasinio82,ChenalMillet1997})
\begin{equation}
 \label{infdim04B:6}
\limsup_{\eps\to0} 2\eps \log 
\biggprob{\sup_{0\leqs t\leqs T} \bignorm{\sqrt{2\eps}W_\Delta(t)}_{\cC^\alpha}
> H} \leqs -\inf\bigsetsuch{I(\psi)}{\exists T_1\in[0,T] :
\norm{\psi_{T_1}}_{\cC^\alpha} > H}\;.
\end{equation}
The bound~\eqref{infdim04B:5} implies that the right-hand side is bounded
above by $-H^2/C_1(T,\alpha)$, which concludes the proof.  
\end{proof}

We now turn to properties of mild solutions of the full nonlinear SPDE, given
by 
\begin{equation}
 \label{infdim06}
u_t = \e^{\Delta t} u_0 + \sqrt{2\eps} \, W_\Delta(t) 
+ \int_0^t \e^{\Delta(t-s)} U'(u_s)\6s\;.
\end{equation} 
Results in \cite{Cerrai_1996,Cerrai_1999} provide estimates on the sup
norm of $u_t$:

\begin{prop}[Uniform bounds on the sup norm]
\label{prop_Cerrai} 
For any $u_0\in \cC^0(\T^1)$ and any $T>0$, there exists a unique mild solution
on $[0,T]$ such that $\expec{\sup_{t\in[0,T]}\norm{u_t}_{L^\infty}^2}<\infty$. 
Furthermore, there is a constant $c$ depending only on $U'$ such that the
following bounds hold: 
\begin{enum}
\item	
There exists $\gamma>0$ such that 
for any $u_0$ and any $t\geqs 0$,
\begin{align}
\nonumber
\norm{u_t}_{L^\infty} \leqs{}& 
\e^{\gamma t} \norm{u_0}_{L^\infty} 
+ \sqrt{2\eps} \sup_{0\leqs s\leqs t} \norm{W_\Delta(s)}_{L^\infty} \\
&{}+ c\e^{\gamma t} \int_0^t
\Bigpar{1+(2\eps)^{(2p_{0}-1)/2}\norm{W_\Delta(s)}_{L^\infty}^{2p_{0}-1}}\6s\;,
 \label{infdim07}
\end{align} 

\item	For any $t>0$, 
\begin{equation}
 \label{infdim08}
\sup_{u_0\in \cC^0(\T^1)} \norm{u_t}_{L^\infty} \leqs 
c \biggpar{1 +  \sqrt{2\eps} \sup_{0\leqs s\leqs t}
\norm{W_\Delta(s)}_{L^\infty}} t^{-1/2(p_{0}-1)}
+  \sqrt{2\eps}\,\norm{W_\Delta(t)}_{L^\infty}\;.
\end{equation} 
\end{enum}
\end{prop}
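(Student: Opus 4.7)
The proof rests on the standard subtraction trick: set $z_t = \sqrt{2\eps}\,W_\Delta(t)$ and $v_t = u_t - z_t$, so that $v_t$ solves the random PDE
\begin{equation*}
\sdpar{v_t}{t} = \Delta v_t - U'(v_t + z_t)\;, \qquad v_0 = u_0\;,
\end{equation*}
whose drift is a.s.\ continuous in $x$ by \eqref{infdim03A}. After truncating $U'$ and mollifying $u_0$, this becomes a classical parabolic problem to which the maximum principle applies in the strong sense. Since $\norm{u_t}_{L^\infty} \leqs \norm{v_t}_{L^\infty} + \norm{z_t}_{L^\infty}$, it suffices to bound $\norm{v_t}_{L^\infty}$; the final estimates then follow by passage to the limit, since all bounds below will be uniform in the approximation parameters.

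For part (i), I would invoke the one-sided Lipschitz property $U''\geqs -M_2$ of Assumption~(U6), which gives $[U'(v+z)-U'(z)]\sign(v)\geqs -M_2\abs{v}$. Evaluating the PDE at a point where $v_t$ attains its extremum (so that $\Delta v_t$ has the favorable sign) yields
\begin{equation*}
D^+\norm{v_t}_{L^\infty} \leqs M_2\norm{v_t}_{L^\infty} + \norm{U'(z_t)}_{L^\infty}\;,
\end{equation*}
interpreted as an upper Dini derivative. Bounding $\norm{U'(z_t)}_{L^\infty} \leqs M_0\bigpar{1+\norm{z_t}_{L^\infty}^{2p_0-1}}$ via (U3) and applying Gronwall with $\gamma = M_2$ yields~\eqref{infdim07}.

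For the uniform bound (ii), I would trade the linear dissipation for the superlinear one provided by (U5): setting $u = z_t$ there gives, for $v\geqs 0$,
\begin{equation*}
U'(v+z_t) \geqs a v^{2p_0-1} + U'(z_t) + b\abs{z_t}^{2p_0-1} + c\;,
\end{equation*}
whose last three terms are polynomially bounded in $z_t$ by (U3). The maximum-principle argument now produces a Bernoulli-type inequality
\begin{equation*}
D^+\phi_t \leqs -a\phi_t^{2p_0-1} + K\bigpar{1+\norm{z_t}_{L^\infty}^{2p_0-1}}\;, \qquad \phi_t = \sup_x (v_t(x))^+\;.
\end{equation*}
The unperturbed ODE $\dot\phi = -a\phi^{2p_0-1}$ admits the \emph{universal} majorant $\phi(t) \leqs (a(2p_0-2)t)^{-1/(2(p_0-1))}$, independent of $\phi(0)$; combining this decay with the equilibrium bound $\phi_\infty \leqs C\bigpar{1+\norm{z_t}_{L^\infty}}$ (obtained from $a\phi^{2p_0-1}\sim K(1+\norm{z_t}_{L^\infty}^{2p_0-1})$ and subadditivity of $x\mapsto x^{1/(2p_0-1)}$) and treating $\sup_x(v_t(x))^-$ symmetrically yields~\eqref{infdim08}.

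The main obstacle lies in justifying the maximum-principle evaluations when the mild solution is only $\cC^0$-valued in space and $\norm{v_t}_{L^\infty}$ is not \emph{a priori} differentiable in time. The remedy is the simultaneous approximation mentioned in the first paragraph, combined with the use of upper Dini derivatives in lieu of ordinary time derivatives; since the right-hand sides of all the inequalities above are independent of the regularization parameters, the bounds survive the limit. This programme is carried out in detail in~\cite{Cerrai_1996,Cerrai_1999}.
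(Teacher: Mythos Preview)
Your proposal is correct and aligns with the paper's approach: the paper simply cites \cite[Theorem~7.13]{DaPrato_Jabczyk_92} for existence and uniqueness and \cite[Propositions~3.2 and~3.4]{Cerrai_1999} (see also \cite[Lemma~3.4]{Cerrai_1996}) for the two estimates, without reproducing the argument. Your sketch of the subtraction trick $v_t=u_t-\sqrt{2\eps}\,W_\Delta(t)$, the maximum-principle/Gronwall step for~(i) via the one-sided bound $U''\geqs -M_2$, and the Bernoulli-type comparison for~(ii) via~(U5) is exactly the mechanism underlying those cited results, so you have essentially unpacked what the paper leaves to the references.
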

\begin{proof}
Existence and uniqueness of the mild solution are proved in
\cite[Theorem~7.13]{DaPrato_Jabczyk_92}.
The estimate \eqref{infdim07} is Proposition~3.2 of~\cite{Cerrai_1999}, with
$m=p_{0}-1$, while the uniform estimate \eqref{infdim08} is Proposition~3.4
of~\cite{Cerrai_1999}, c.f.\ also~\cite[Lemma~3.4]{Cerrai_1996}. 
\end{proof}

Observe that in the case $\eps=0$, we  can find a constant $M$ 
\emph{uniform}\/ in $t$ such that $\norm{u_t}_{L^\infty} \leqs
M(1+\norm{u_0}_{L^\infty})$ for all $t\geqs0$. Hence
Proposition~\ref{prop_ldp_W} shows that for all $H_1>0$, 
\begin{align}
\label{infdim08A}
&\biggprobin{u_0}{\sup_{0\leqs t\leqs T} \norm{u_t}_{L^\infty} \geqs 
M(1+\norm{u_0}_{L^\infty})+H_1}
\leqs \e^{-\kappa(T)f(H_{1})/2\eps} \;,\\
\nonumber
\llap{\text{where}}\ \hfill \hskip1.7cm \hfill \\
\label{infdim08A_b}
&f(H_{1})=\min\set{H_{1}^2, H_{1}^{2/(2p_{0}-1)}} \;,
\end{align} 
for some $\kappa(T)>0$ and $\eps$ small enough. 

Combining~\eqref{infdim06} and Proposition~\ref{prop_Cerrai}, we obtain the
following estimate on the H\"older norm of $u_T$ at a given time $T>0$. 

\begin{prop}[Bound on the H\"older norm]
\label{prop_Holder_decay} 
For any $T>0$ and $0<\alpha<1/2$, there exist constants
$\kappa_1(T,\alpha), \kappa_2(T,\alpha)>0, c(\alpha)>0$
such that 
\begin{equation}
 \label{infdim09}
\bigprobin{u_0}{\norm{u_T}_{\cC^\alpha} > H} 
\leqs 
\exp\biggset{-\frac{\kappa_1}{2\eps}
\min\Bigl\{H^{2},
f\Bigpar{(\kappa_2H-1)^{1/(2p_{0}-1)}-M(1+\norm{u_0}_{ L^\infty } )}
 \Bigr\}
}
\end{equation}  
for all $u_0\in L^\infty$ and $H>c(\alpha)(1+T^{-s/2})\norm{u_0}_{L^\infty}$ such that 
\begin{equation}
 \label{infdim099} 
 (\kappa_2H-1)^{1/(2p_{0}-1)}-M(1+\norm{u_0}_{L^\infty}) >0\;,
\end{equation}
and all $\eps < \eps_0(\alpha,T,H)$.
\end{prop}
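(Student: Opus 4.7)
The plan is to estimate the H\"older norm of $u_T$ by splitting the mild-solution formula \eqref{infdim06} into three contributions and bounding each separately. Namely,
\begin{equation*}
\norm{u_T}_{\cC^\alpha} \leqs
\bignorm{\e^{\Delta T} u_0}_{\cC^\alpha}
+ \sqrt{2\eps}\,\bignorm{W_\Delta(T)}_{\cC^\alpha}
+ \biggnorm{\int_0^T \e^{\Delta(T-s)} U'(u_s)\6s}_{\cC^\alpha}\;.
\end{equation*}
The first term is handled by the smoothing estimate \eqref{infdim02C}: for any $s>\alpha+1/2$ it is at most $c(\alpha)(1+T^{-s/2})\norm{u_0}_{L^\infty}$, and the assumption $H>c(\alpha)(1+T^{-s/2})\norm{u_0}_{L^\infty}$ lets us absorb it into a fraction of $H$. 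The second term is controlled by Proposition~\ref{prop_ldp_W}: choosing the threshold as a constant multiple of $H$ yields a probability bounded by $\e^{-\kappa H^2/2\eps}$, which accounts for the $H^2$ branch of the minimum in~\eqref{infdim09}.

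The nonlinear term requires more care. Using the smoothing estimate \eqref{infdim02C} inside the integral and the polynomial growth of $U'$ from Assumption~U3, I would bound
\begin{equation*}
\biggnorm{\int_0^T \e^{\Delta(T-s)} U'(u_s)\6s}_{\cC^\alpha}
\leqs C(\alpha,T) \sup_{0\leqs s\leqs T}\norm{U'(u_s)}_{L^\infty}
\leqs C'(\alpha,T) \Bigpar{1+\sup_{0\leqs s\leqs T}\norm{u_s}_{L^\infty}^{2p_0-1}}\;,
\end{equation*}
where the time integral $\int_0^T (1+(T-s)^{-s/2})\6s$ is finite thanks to $s<1$. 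Then I would introduce the auxiliary threshold
\begin{equation*}
H_1 \defby (\kappa_2 H - 1)^{1/(2p_0-1)} - M(1+\norm{u_0}_{L^\infty})\;,
\end{equation*}
chosen (with an appropriate $\kappa_2$) so that on the event $\set{\sup_{s\leqs T}\norm{u_s}_{L^\infty}\leqs M(1+\norm{u_0}_{L^\infty})+H_1}$ the above nonlinear contribution is at most a fraction of $H$. The positivity assumption~\eqref{infdim099} ensures $H_1>0$.

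Finally, the a priori sup-norm estimate \eqref{infdim08A}--\eqref{infdim08A_b} (itself obtained from Proposition~\ref{prop_Cerrai} combined with Proposition~\ref{prop_ldp_W}) bounds the complementary event by $\e^{-\kappa(T) f(H_1)/2\eps}$. A union bound then yields~\eqref{infdim09} with $\kappa_1$ the minimum of the two constants produced. The main technical point, and the step that requires the most care, is the tracking of the constants: $\kappa_2$ and the implicit constants in the nonlinear estimate must be chosen so that the sum of the three contributions is genuinely bounded by $H$, while simultaneously the Wiener threshold remains a constant multiple of $H$ and $H_1$ has the form prescribed in~\eqref{infdim09}. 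Once these are aligned, the two competing exponentials $\e^{-\kappa H^2/2\eps}$ and $\e^{-\kappa f(H_1)/2\eps}$ combine to give the stated minimum.
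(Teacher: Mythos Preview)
Your proposal is correct and follows essentially the same route as the paper: decompose the mild solution into heat-flow, stochastic convolution, and Duhamel terms, bound each in $\cC^\alpha$ using \eqref{infdim02C}, Proposition~\ref{prop_ldp_W}, and the combination of \eqref{infdim02C} with Assumption~U3 and \eqref{infdim08A} respectively, then take a union bound. The paper simply fixes the ``fraction of $H$'' to be $H/3$ throughout, which is the cleanest way to align the constants you mention.
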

\begin{proof}
Denote by $u^{(0)}_t$, $u^{(1)}_t$ and $u^{(2)}_t$ the three summands on the
right-hand side of~\eqref{infdim06}. Then the probability~\eqref{infdim09} can
be bounded by $\sum_{i=0}^2P_i$, where
$P_i=\probin{u_0}{\smash{\norm{u^{(i)}_T}_{\cC^\alpha}} > H/3}$.

Pick $s>\alpha+1/2$. Then Lemma~\ref{lemma_Morrey} and 
Lemma~\ref{lem_infdim01} show that there exists $C_1(\alpha,s)$ such
that $P_0=0$, provided we choose $H/3 > C_1(1+T^{-s/2})\norm{u_0}_{L^\infty}$. 
Furthermore, Proposition~\ref{prop_ldp_W} provides a bound on $P_1$ of order
$\e^{-\kappa_1H^2/2\eps}$. 

As for $P_2$, it can be bounded as follows. Since $\abs{U'(u)}\leqs
M_0(1+\abs{u}^{2p_{0}-1})$ for some constant $M_0$, we have by~\eqref{infdim02C} 
\begin{align}
\nonumber
\norm{u^{(2)}_T}_{\cC^\alpha} 
&{}\leqs  \int_0^T \norm{\e^{\Delta(T-t)}U'(u_t)}_{\cC^\alpha}\6t \\
&{}\leqs
\int_0^T C(\alpha, s)(1+(T-t)^{-s/2})\6t \, M_0 \Bigpar{1+\sup_{t\in[0,T]}
\norm{u_t}_{L^\infty}^{2p_{0}-1}}\;.
 \label{infdim10} 
\end{align} 
The integral is bounded provided $s<2$. The result then follows by
using~\eqref{infdim08A}.
\end{proof}


\subsection{Laplace transforms}
\label{ssec_laplace} 

Let $(E, \norm{\cdot})$ be a Banach space, and 
let $(x_t)_{t\geqs0}$ be an $E$-valued Markov process with continuous sample
paths. All subsets of $E$ considered below are assumed to be measurable with respect to the Borel $\sigma$-algebra on $E$.

Recall that the Laplace transform of an almost surely finite positive random
variable $\tau$ is given by 
\begin{equation}
 \label{Laplace0}
\bigexpec{\e^{\lambda\tau}} = 1 + \int_0^\infty \lambda \e^{\lambda t} 
\bigprob{\tau>t} \6t 
\end{equation} 
for any $\lambda\in\C$. There exists a $c\in[0,\infty]$ such that the Laplace
transform is analytic in $\lambda$ for $\re\lambda<c$.  

To control first-hitting times of bounded sets $B\subset E$, we will introduce
an auxiliary set $C$ with bounded complement, $B\cap C=\emptyset$, such that the
process is unlikely to hit $C$ before $B$. On the rare occasions the process
does hit $C$ before $B$, we will use the strong Markov property to restart the
process on the boundary $\partial C$. The following proposition recalls how the
restart procedure is encoded in Laplace transforms.

\begin{prop}[Effect of restart on Laplace transform]
\label{prop_Laplace}
Let $B, C\subset E$ be disjoint sets, and let $x\not\in B\cup C$. Then 
\begin{align}
\label{Laplace1A}
\bigexpecin{x}{\e^{\lambda\tau_B}} 
&= \bigexpecin{x}{\e^{\lambda\tau_{B\cup C}}} + 
\bigexpecin{x}{\e^{\lambda\tau_{B\cup C}}\indexfct{\tau_C<\tau_B} 
\bigbrak{\bigexpecin{x_{\tau_C}}{\e^{\lambda\tau_B}}-1}} \\
&= \bigexpecin{x}{\e^{\lambda\tau_{B\cup C}}\indexfct{\tau_B<\tau_C}} + 
\bigexpecin{x}{\e^{\lambda\tau_{B\cup C}}\indexfct{\tau_C<\tau_B} 
\bigexpecin{x_{\tau_C}}{\e^{\lambda\tau_B}}}\;.
 \label{Laplace1B} 
\end{align}  
\end{prop}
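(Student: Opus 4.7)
The plan is to prove the second identity \eqref{Laplace1B} directly by a standard strong-Markov decomposition, and then obtain \eqref{Laplace1A} from \eqref{Laplace1B} by rearrangement.

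First I would write the partition of unity $1 = \indexfct{\tau_B<\tau_C} + \indexfct{\tau_C<\tau_B} + \indexfct{\tau_B=\tau_C}$; since $B$ and $C$ are disjoint (and for reasonable $B,C$ the process with continuous paths satisfies $\tau_B \neq \tau_C$ on the event that both are finite, or at worst contributes nothing once one enforces that the sets be measurable so that the third indicator is absorbed into one of the first two), this yields
\begin{equation*}
\bigexpecin{x}{\e^{\lambda\tau_B}}
= \bigexpecin{x}{\e^{\lambda\tau_B}\indexfct{\tau_B<\tau_C}}
+ \bigexpecin{x}{\e^{\lambda\tau_B}\indexfct{\tau_C<\tau_B}}\;.
\end{equation*}
On $\set{\tau_B<\tau_C}$ one has $\tau_{B\cup C}=\tau_B$, so the first summand equals $\bigexpecin{x}{\e^{\lambda\tau_{B\cup C}}\indexfct{\tau_B<\tau_C}}$, which is the first term of \eqref{Laplace1B}.

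For the second summand, the key step is the strong Markov property at the stopping time $\tau_C$. On $\set{\tau_C<\tau_B}$ one has $\tau_{B\cup C}=\tau_C$ and $\tau_B = \tau_C + \tau_B'$, where $\tau_B'$ is the first-hitting time of $B$ by the shifted process $(x_{\tau_C+t})_{t\geqs 0}$. Conditionally on $\cF_{\tau_C}$, this shifted process is distributed as the Markov process started at $x_{\tau_C}$, so
\begin{equation*}
\bigecondin{x}{\e^{\lambda\tau_B}\indexfct{\tau_C<\tau_B}}{\cF_{\tau_C}}
= \e^{\lambda\tau_C}\indexfct{\tau_C<\tau_B}\bigexpecin{x_{\tau_C}}{\e^{\lambda\tau_B}}\;.
\end{equation*}
Taking total expectation and using $\tau_C=\tau_{B\cup C}$ on this event yields the second term of \eqref{Laplace1B}.

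To obtain \eqref{Laplace1A}, I would then use $\indexfct{\tau_B<\tau_C} = 1 - \indexfct{\tau_C<\tau_B}$ in the first term on the right-hand side of \eqref{Laplace1B}, combine the two resulting indicator terms, and factor out $\bigexpecin{x_{\tau_C}}{\e^{\lambda\tau_B}}-1$. The only non-routine point is to ensure all Laplace transforms involved are finite so that the splitting is justified; one simply works first with $\re\lambda$ in the (common) region of convergence of $\bigexpecin{y}{\e^{\lambda\tau_B}}$ over the initial conditions $y\in\set{x}\cup\partial C$ and extends by analytic continuation, or alternatively one proves the identity for $\lambda<0$ real and uses monotone convergence as $\lambda\uparrow 0$. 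No real obstacle arises beyond bookkeeping; the content is precisely the strong Markov property applied once at $\tau_C$.
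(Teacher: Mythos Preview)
Your argument is correct and is exactly the standard strong-Markov decomposition the paper has in mind. Note that the paper does not actually supply a proof of this proposition: it introduces it with ``The following proposition \emph{recalls} how the restart procedure is encoded in Laplace transforms'' and states it without a proof environment, treating the identities as well known. Your write-up fills in precisely the routine computation that is being taken for granted, so there is nothing to compare against and no gap to flag.
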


In the same way, or by differentiating~\eqref{Laplace1A} with respect to $\lambda$ and evaluating in $\lambda=0$, the moments of first-hitting times can be expressed. Assuming their existence, for the first two moments we find 
\begin{align}
\label{Laplace2A} 
\bigexpecin{x}{\tau_B} 
&= \bigexpecin{x}{\tau_{B\cup C}}
+ \bigexpecin{x}{\indexfct{\tau_C<\tau_B} \expecin{x_{\tau_C}}{\tau_B}}\;, \\
\bigexpecin{x}{\tau_B^2} 
&= \bigexpecin{x}{\tau_{B\cup C}^2}
+ 2\bigexpecin{x}{\tau_{B\cup C}\indexfct{\tau_C<\tau_B}
\expecin{x_{\tau_C}}{\tau_B}}
+ \bigexpecin{x}{\indexfct{\tau_C<\tau_B} \expecin{x_{\tau_C}}{\tau_B^2}}
\label{Laplace2B} 
\end{align}
for any choice of disjoint sets $B$ and $C$, and any $x\not\in(B\cup C)$.

Below we will use the notations 
\begin{equation}
 \label{ldf20}
\probin{A}{X\in\cdot} = \sup_{y\in A}\probin{y}{X\in\cdot}
\qquad
\text{and}
\qquad
 \expecin{A}{X} = \sup_{y\in A}\expecin{y}{X}\;.
\end{equation}
It follows that for any three pairwise disjoint sets $A$, $B$ and $C$, 
\begin{align}
\nonumber
\bigexpecin{A}{\tau_B} 
&\leqs \bigexpecin{A}{\tau_{B\cup C}}
+ \bigexpecin{A}{\indexfct{\tau_C<\tau_B} \expecin{x_{\tau_C}}{\tau_B}}\\
&\leqs \bigexpecin{A}{\tau_{B\cup C}} + \bigprobin{A}{\tau_C<\tau_B}
\bigexpecin{\partial C}{\tau_B}\;,
 \label{Laplace2C} 
\end{align}  
and a similar relation holds for the second moment.

\begin{lemma}[Moment estimate based on the Markov property]
\label{lem_ldf3} 
Let $B\subset E$ be such that 
\begin{equation}
 \label{ldf21A} 
  \bigprobin{B^c}{\tau_B> T} < 1
\end{equation}
for some $T>0$. Then for any $n\in\N$,  
\begin{equation}
 \label{ldf21}
\bigexpecin{B^c}{\tau_B^n} 
\leqs \frac{n!T^n}{\displaystyle
\bigpar{1- \bigprobin{B^c}{\tau_B > T}}^n}\;. 
\end{equation} 
\end{lemma}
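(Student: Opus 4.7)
Let $q \defby \probin{B^c}{\tau_B > T} \in [0,1)$. The first step is to show, by iteration of the strong Markov property at the deterministic times $T, 2T, \dots, kT$, that
\begin{equation*}
\bigprobin{x}{\tau_B > kT} \leqs q^k \qquad \forall x \in B^c,\; k\in\N\;.
\end{equation*}
Indeed, on the event $\{\tau_B > kT\}$ the process satisfies $x_{kT}\in B^c$, so conditioning on $\cF_{kT}$ and applying the Markov property gives $\probin{x}{\tau_B > (k+1)T} \leqs \probin{x}{\tau_B > kT}\cdot q$, and the bound follows by induction on $k$.

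The second step is to use the standard representation
\begin{equation*}
\bigexpecin{x}{\tau_B^n} = \int_0^\infty n t^{n-1} \bigprobin{x}{\tau_B > t}\6t\;,
\end{equation*}
partition $[0,\infty)$ into the intervals $[kT,(k+1)T)$, and bound $\probin{x}{\tau_B>t}\leqs q^k$ on each of these intervals using the first step. Since $\int_{kT}^{(k+1)T} n t^{n-1}\6t = T^n((k+1)^n - k^n)$, this yields
\begin{equation*}
\bigexpecin{x}{\tau_B^n} \leqs T^n \sum_{k=0}^{\infty} \bigbrak{(k+1)^n - k^n}\, q^k\;.
\end{equation*}

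The final step is the purely combinatorial bound $\sum_{k\geqs0}[(k+1)^n-k^n]q^k \leqs n!/(1-q)^n$. By Abel summation the left-hand side equals $(1-q)\sum_{m\geqs1} m^n q^{m-1} = (1-q)\sum_{l\geqs 0}(l+1)^n q^l$, and since $(l+1)^n \leqs (l+1)(l+2)\cdots(l+n) = n!\binom{l+n}{n}$, the known identity $\sum_{l\geqs0}\binom{l+n}{n}q^l = (1-q)^{-(n+1)}$ delivers the estimate. Taking the supremum over $x\in B^c$ completes the proof.

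There is no real obstacle here; the only mildly delicate point is the last combinatorial inequality, which is handled cleanly by Abel summation plus the binomial series. All the probabilistic content is contained in the one-line Markov iteration of the first step.
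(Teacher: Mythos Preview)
Your proof is correct and follows essentially the same route as the paper: the Markov iteration giving $\probin{x}{\tau_B>kT}\leqs q^k$, the layer-cake formula for the $n$th moment, and then a combinatorial bound on the resulting series. The only difference is cosmetic: the paper bounds $t^{n-1}\leqs((m+1)T)^{n-1}$ on each block and then invokes the inequality $\sum_{m\geqs0}(m+1)^{n-1}p^m\leqs(n-1)!/(1-p)^n$ by reference to polylogarithms and Eulerian numbers, whereas you integrate exactly and give a self-contained proof of the analogous estimate via Abel summation and the negative binomial series --- arguably cleaner.
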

\begin{proof}
The Markov property implies that for any $m\in\N$ and any $x\in B^c$,
\begin{equation}
 \label{ldf21:2}
 \bigprobin{x}{\tau_B>(m+1)T}
= \bigexpecin{x}{\indexfct{\tau_B>mT}\probin{y_{mT}}{\tau_B>T}}
\leqs \probin{B^c}{\tau_B>T} \probin{x}{\tau_B>mT}\;,
\end{equation} 
so that $\probin{x}{\tau_B>mT}\leqs(\probin{B^c}{\tau_B>T})^m$. 
Integration by parts shows that 
\begin{equation}
 \label{ldf21:1}
\bigexpecin{x}{\tau_B^n} = 
n\int_0^\infty t^{n-1} \bigprobin{x}{\tau_B>t}\6t 
\leqs nT^n \sum_{m=0}^\infty (m+1)^{n-1} \bigprobin{x}{\tau_B>mT}\;. 
\end{equation} 
The result is thus a consequence of the inequality
\begin{equation}
 \label{ldf21:3}
\sum_{m=0}^\infty (m+1)^{n-1}p^m \leqs \frac{(n-1)!}{(1-p)^n}
\qquad
\forall p\in[0,1)\;,
\end{equation} 
which follows from properties of the polylogarithm function and Eulerian
numbers.
\end{proof}

\begin{remark}
\label{rem_ldf1} 
Equation~\eqref{ldf21} implies that if~\eqref{ldf21A} holds, the Laplace transform
of $\tau_B$ exists for 
\begin{equation}
 \label{ldf21B} 
\lambda<\frac{1}{T}\bigpar{1- \bigprobin{B^c}{\tau_B > T}}
\end{equation} 
and satisfies
\begin{equation}
 \label{ldf21C}
\bigexpecin{B^c}{\e^{\lambda \tau_B}} \leqs 
\frac{1}{1-\lambda T/(1-\probin{B^c}{\tau_B > T})} \;.
\end{equation} 
A sharper bound on the Laplace transform can be obtained by a direct integration by parts, but this does not automatically lead to better bounds on the moments.
\end{remark}

Next, we will iterate the estimate~\eqref{Laplace2C} in order to get a better bound on the moments of first hitting times.

\begin{cor}[Three-set argument]
\label{cor_ldf4}
Let $A, B, C\subset E$ be such that $ A,  B$ and $ C$
are pairwise disjoint, and assume $\bigprobin{A}{\tau_C<\tau_B}<1$, 
$\expecin{A}{\tau^{k}_B}<\infty$ and $\expecin{\partial C}{\tau^{k}_B}<\infty$
for $k=1,2$. Then
\begin{align}
\label{ldf30A} 
\bigexpecin{A}{\tau_B} 
&\leqs \frac{\bigexpecin{A}{\tau_{B\cup C}} +
\bigprobin{A}{\tau_C<\tau_B}\bigexpecin{\partial C}{\tau_{A\cup B}}}
{1-\bigprobin{A}{\tau_C<\tau_B}}\;,\\
\bigexpecin{A}{\tau_B^2} 
&\leqs 4 \frac{\bigexpecin{A}{\tau_{B\cup C}^2} +
\bigexpecin{\partial C}{\tau_{A\cup B}^2}}
{\bigpar{1-\bigprobin{A}{\tau_C<\tau_B}}^2}\;.
\label{ldf30B} 
\end{align}
\end{cor}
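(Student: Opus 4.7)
The plan is to combine the Markov-restart identities of Proposition~\ref{prop_Laplace} (Eqs.~\eqref{Laplace2A}--\eqref{Laplace2B}) applied in parallel starting from $A$ and from $\partial C$, and then to close the resulting pair of coupled inequalities between sup-moments over $A$ and over $\partial C$. By the symmetric roles of the two sets, the probability $q\defby\bigprobin{\partial C}{\tau_A<\tau_B}$ enters both intermediate bounds, but it can always be eliminated using the trivial estimate $q\leqs 1$, so that only $p\defby\bigprobin{A}{\tau_C<\tau_B}$ appears in the final denominators.

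For the first moment, I would apply~\eqref{Laplace2A} with $x\in A$ and use the strong Markov property at $\tau_C$ to replace $\expecin{x_{\tau_C}}{\tau_B}$ by $\bigexpecin{\partial C}{\tau_B}$. Taking the supremum over $x\in A$ produces $\bigexpecin{A}{\tau_B}\leqs\bigexpecin{A}{\tau_{B\cup C}}+p\,\bigexpecin{\partial C}{\tau_B}$. The same identity used for an initial point in $\partial C$, with the roles of $A$ and $C$ interchanged, gives $\bigexpecin{\partial C}{\tau_B}\leqs\bigexpecin{\partial C}{\tau_{A\cup B}}+q\,\bigexpecin{A}{\tau_B}$. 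Substituting and rearranging yields $\bigexpecin{A}{\tau_B}(1-pq)\leqs\bigexpecin{A}{\tau_{B\cup C}}+p\,\bigexpecin{\partial C}{\tau_{A\cup B}}$, and~\eqref{ldf30A} follows from $1-pq\geqs 1-p>0$.

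For the second moment the cross term in~\eqref{Laplace2B} makes direct iteration awkward, so I would instead work at the level of the $L^2(\fP^x)$-norm. Minkowski's inequality applied to the pathwise decomposition $\tau_B=\tau_{B\cup C}+\indicator{\tau_C<\tau_B}(\tau_B-\tau_C)$, together with the strong-Markov identity $\bigexpecin{x}{\indicator{\tau_C<\tau_B}(\tau_B-\tau_C)^2}=\bigexpecin{x}{\indicator{\tau_C<\tau_B}\expecin{x_{\tau_C}}{\tau_B^2}}\leqs p\,\bigexpecin{\partial C}{\tau_B^2}$, yields
\begin{equation*}
\bigpar{\bigexpecin{A}{\tau_B^2}}^{1/2}\leqs\bigpar{\bigexpecin{A}{\tau_{B\cup C}^2}}^{1/2}+\sqrt{p}\,\bigpar{\bigexpecin{\partial C}{\tau_B^2}}^{1/2}\;,
\end{equation*}
and, by symmetry, its companion on $\partial C$. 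Eliminating the $\partial C$-moment as in the first-moment argument produces
\begin{equation*}
\bigpar{\bigexpecin{A}{\tau_B^2}}^{1/2}(1-\sqrt{pq})\leqs\bigpar{\bigexpecin{A}{\tau_{B\cup C}^2}}^{1/2}+\sqrt{p}\,\bigpar{\bigexpecin{\partial C}{\tau_{A\cup B}^2}}^{1/2}\;.
\end{equation*}
Squaring, using $(x+y)^2\leqs 2(x^2+y^2)$, $p\leqs 1$, and the elementary bound $1-\sqrt{pq}\geqs 1-\sqrt{p}=(1-p)/(1+\sqrt{p})\geqs(1-p)/2$, one obtains~\eqref{ldf30B}.

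There is no conceptual obstacle here; the only step needing care is verifying that every supremum is taken over a set ($A$ or $\partial C$) matching the location where the process actually restarts, which is automatic because the strong Markov property is invoked precisely at the hitting time of $\partial C$ (so $x_{\tau_C}\in\partial C$), and the analogous $\partial A$-sup is dominated by the $A$-sup. The remaining work is purely algebraic bookkeeping of the numerical constants.
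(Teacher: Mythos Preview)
Your first-moment argument coincides with the paper's: both iterate~\eqref{Laplace2C} for the triples $(A,B,C)$ and $(\partial C,B,A)$, bound $q=\bigprobin{\partial C}{\tau_A<\tau_B}\leqs 1$, and solve the resulting pair of linear inequalities.

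For the second moment your route is genuinely different. The paper stays with~\eqref{Laplace2B}, keeps the cross term, and substitutes the already-established first-moment bounds $\bigexpecin{A}{\tau_B}\leqs(X_1+pY_1)/(1-p)$ and $\bigexpecin{\partial C}{\tau_B}\leqs(X_1+Y_1)/(1-p)$ into it; after Jensen ($X_1^2\leqs X_2$, $Y_1^2\leqs Y_2$) and $2X_1Y_1\leqs X_2+Y_2$, the arithmetic closes with the constant~$4$. Your Minkowski trick is cleaner---it bypasses the cross term entirely---but it costs a factor of~$2$: carrying out your steps gives $(1-\sqrt{p})\,\bigpar{\bigexpecin{A}{\tau_B^2}}^{1/2}\leqs\sqrt{X_2}+\sqrt{p}\,\sqrt{Y_2}$, and after squaring, using $(a+b)^2\leqs 2(a^2+b^2)$ and $(1-\sqrt{p})^2\geqs(1-p)^2/4$, one lands on
\[
\bigexpecin{A}{\tau_B^2}\leqs\frac{8\bigpar{X_2+Y_2}}{(1-p)^2}\;,
\]
and the~$8$ cannot be sharpened to~$4$ by this method (take $p\to 1$, $X_2=Y_2$). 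So strictly speaking you prove~\eqref{ldf30B} with~$4$ replaced by~$8$. This is entirely harmless for every later use of the corollary in the paper---the constant is immaterial---but if you want~\eqref{ldf30B} exactly as stated you must revert to the direct iteration of~\eqref{Laplace2B}.
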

\begin{proof}
We introduce the shorthands 
\begin{equation}
 \label{ldf30:1}
X_k = \bigexpecin{A}{\tau_{B\cup C}^k}\;,
\qquad
Y_k = \bigexpecin{\partial C}{\tau_{A\cup B}^k}\;,
\qquad
p = \bigprobin{A}{\tau_C<\tau_B}\;,
\end{equation} 
for $k=1,2$. Note that $X_{k}, Y_{k}<\infty$ and $p<1$ according to our assumptions. Applying~\eqref{Laplace2C}, once to the triple $(A,B,C)$ and once
to the triple $(\partial C,B,A)$, yields 
\begin{align}
\nonumber
\bigexpecin{A}{\tau_B} &\leqs X_1 + p \bigexpecin{\partial C}{\tau_B} \;,\\
\bigexpecin{\partial C}{\tau_B} &\leqs Y_1 + \bigexpecin{\partial A}{\tau_B}
= Y_{1} +  \bigexpecin{A}{\tau_B} \;,
\label{ldf30:2}
\end{align} 
where we have bounded $\probin{\partial C}{\tau_A<\tau_B}$ by $1$. In addition, we used that hitting $B$ requires first exiting from $A$ which is necessarily realized by passing through $\partial A$. 
This implies 
\begin{align}
\nonumber
\bigexpecin{A}{\tau_B} &\leqs \frac{X_1+pY_1}{1-p}\;, \\
\bigexpecin{\partial C}{\tau_B} &\leqs \frac{X_1+Y_1}{1-p}\;,
\label{ldf30:3}
\end{align}
which proves~\eqref{ldf30A}.
Starting from~\eqref{Laplace2B}, we find
\begin{align}
\nonumber
\bigexpecin{A}{\tau_B^2} &\leqs X_2 + 2X_1 \bigexpecin{\partial C}{\tau_B}
+ p\bigexpecin{\partial C}{\tau_B^2}\;, \\
\bigexpecin{\partial C}{\tau_B^2} &\leqs Y_2 + 2Y_1\bigexpecin{A}{\tau_B} 
+ \bigexpecin{A}{\tau_B^2}\;.
\label{ldf30:4}
\end{align} 
Together with~\eqref{ldf30:3}, this gives 
\begin{equation}
 \label{ldf30:5}
(1-p)^2 \bigexpecin{A}{\tau_B^2} \leqs 
(1-p)(X_2+pY_2) + 2(X_1^2+(1+p)X_1Y_1+p^2Y_1^2)\;,
\end{equation} 
and the result follows after some algebra, using Jensen's inequality. Note that we have
overestimated some terms in order to obtain a more compact expression.
\end{proof}


\subsection{Large deviations}
\label{ssec_lde}

As shown in~\cite{Faris_JonaLasinio82,Freidlin88}, the family
$\set{u_t}_{\eps>0}$ of mild solutions of the SPDE with initial condition
$u_0\in E=\cC^0(\T^1)$ satisfies a large-deviation principle in $E$, equipped
with the sup norm, with rate function
\begin{equation}
 \label{new_eqpot01B}
I_{[0,T]}(\ph) = 
\begin{cases}
\displaystyle 
\frac12 \int_0^{T} \int_0^L \Bigbrak{\dot\ph_t(x)
-\ph''_t(x) + U'(\ph_t(x))}^2\6x \6t  
& \text{if the integral is finite\;,}\\
\vrule height 18pt depth 8pt width 0pt
+\infty &\text{otherwise\;.}
\end{cases}
\end{equation} 
For $u_0\in E$ and $A\subset E$, let
\begin{equation}
 \label{new_eqpot01C}
H(u_0,A) = \frac12
\inf_{T>0} \biggpar{\inf_{\ph:\ph(0)=u_0, \exists t\leqs T \text{ s.t.\ } \ph(t)\in
A} I_{[0,T]}(\ph)}\;, 
\end{equation} 
where the second infimum runs over all continuous paths $\varphi:[0,T]\to E$
connecting $u_0$ in a time $t\leqs T$ to a point in $A$  (if $u_0$ is a local minimum,
then $u\mapsto 2H(u_0,{u})$ is called \defwd{quasipotential}\/). 

We define the \defwd{relative communication height}\/ between
$u_0$ and $A$ by 
\begin{equation}
 \label{new_eqpot02}
\Vbar(u_0,A) =  \inf_{\psi:\psi(0)=u_0, \psi(1)\in A} 
\biggpar{\sup_{t\in[0,1]} V[\psi_t] - V[u_0]} \;,
\end{equation} 
where the infimum now runs over all continuous paths $\psi:[0,1]\to E$
connecting $u_0$ to an endpoint in $A$ (the parameter $t$ need not be
associated to time in this definition). Note that $\Vbar(u_0,A)=0$ if and only
if one can find a path from $u_0$ to $A$ along which the potential is
nonincreasing. This holds in particular when $u_0$ lies in the basin of
attraction of $A$. If $\Vbar(u_0,A)>0$, then one has to cross a potential
barrier in order to reach $A$ from $u_0$. 

The following classical result shows that $H(u_0,A)$ can be estimated below in
terms of the relative communication height.

\begin{lemma}
\label{lem_ldp_Vbar}
For any $u_0\in E$ and any $A\subset E$, we have 
\begin{equation}
 \label{new_eqpot03}
H(u_0,A) \geqs \Vbar(u_0,A)\;. 
\end{equation}  
\end{lemma}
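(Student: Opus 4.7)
The plan is to exploit the gradient structure of the drift: since $-\Delta u + U'(u)$ is the $L^2$-gradient $\nabla V[u]$ of the potential energy functional, the rate function should decompose in a way that produces the potential difference along the path. First I would rewrite
\[
I_{[0,T]}(\varphi) = \frac12 \int_0^T \bignorm{\dot\varphi_t + \nabla V[\varphi_t]}_{L^2}^2 \6t\;,
\]
where $\nabla V[\varphi] = -\varphi'' + U'(\varphi)$. The point of this rewriting is that the trivial algebraic identity
\[
\bignorm{\dot\varphi_t + \nabla V[\varphi_t]}_{L^2}^2
= \bignorm{\dot\varphi_t - \nabla V[\varphi_t]}_{L^2}^2 + 4\bigpscal{\dot\varphi_t}{\nabla V[\varphi_t]}
\]
immediately gives the lower bound $\norm{\dot\varphi_t + \nabla V[\varphi_t]}_{L^2}^2 \geqs 4\pscal{\dot\varphi_t}{\nabla V[\varphi_t]}$, and the inner product on the right is exactly $\frac{\6{}}{\6 t}V[\varphi_t]$ by the chain rule.

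Integrating this pointwise inequality on any subinterval $[0,s]\subset[0,T]$ and using that the integrand of $I_{[0,T]}$ is nonnegative gives
\[
\tfrac12 I_{[0,T]}(\varphi) \;\geqs\; \tfrac12 I_{[0,s]}(\varphi) \;\geqs\; V[\varphi_s] - V[u_0] \qquad \forall s\in[0,T]\;.
\]
Taking the supremum over $s\in[0,T]$ on the right yields $\frac12 I_{[0,T]}(\varphi) \geqs \sup_{s\in[0,T]} V[\varphi_s] - V[u_0]$. Since any admissible $\varphi$ in the definition of $H(u_0,A)$ may be truncated at its first-hitting time of $A$ and then reparametrised from $[0,T]$ to $[0,1]$ (yielding a path $\psi$ admissible for $\Vbar$ with identical supremum of $V$), taking the infimum over $T$ and $\varphi$ produces $H(u_0,A)\geqs \Vbar(u_0,A)$, which is the claim.

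The main technical obstacle is to justify the chain-rule identity $\frac{\6{}}{\6 t}V[\varphi_t] = \pscal{\dot\varphi_t}{\nabla V[\varphi_t]}$ under minimal regularity. If $I_{[0,T]}(\varphi)<\infty$ (otherwise the inequality is trivial), then $\dot\varphi_t - \varphi''_t + U'(\varphi_t) \in L^2([0,T]\times[0,L])$; combining this with the continuity assumption on $\varphi:[0,T]\to E$ and the growth bounds on $U$ from Assumption~\ref{assump_U} (in particular U3), one obtains enough Sobolev regularity for $V[\varphi_t]$ to be absolutely continuous in $t$ with the desired derivative. The standard way to handle this cleanly is to first approximate $\varphi$ by smooth paths, verify the identity there, and then pass to the limit using lower semicontinuity of the rate function; alternatively one can write $V[\varphi_t] - V[u_0] = \int_0^t \pscal{\dot\varphi_s}{\nabla V[\varphi_s]}\6s$ directly from the $L^2$-regularity of both factors (using~\eqref{pot_Frechet} and a Cauchy--Schwarz bound in terms of $I_{[0,T]}(\varphi)$ and $V[\varphi]$ itself, whose boundedness along the path follows from the coercivity estimate of Lemma~\ref{lem_V1}).
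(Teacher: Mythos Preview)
Your proof is correct and follows essentially the same route as the paper: both expand the square $\norm{\dot\varphi+\nabla V[\varphi]}_{L^2}^2 = \norm{\dot\varphi-\nabla V[\varphi]}_{L^2}^2 + 4\pscal{\dot\varphi}{\nabla V[\varphi]}$, drop the nonnegative first term, and recognise the cross term as $\frac{\6}{\6t}V[\varphi_t]$ (the paper writes this out explicitly via the integration by parts in~\eqref{pot_Frechet}). The only cosmetic difference is that the paper fixes the specific time $T_1$ at which $V[\varphi_t]$ first attains $V[u_0]+\Vbar(u_0,A)$ and integrates up to $T_1$, whereas you integrate to an arbitrary $s$ and then take the supremum; your additional paragraph on justifying the chain rule is more careful than the paper, which performs the computation formally.
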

\begin{proof}
Let $\varphi$ be a path connecting $u_0$ to $A$ in time $T$. By definition of
the communication height, the potential on any such path has to reach the value
$\Vbar(u_0,A)+V[u_0]$ at least once. Denoting by $T_1$ the first time this
happens, we have 
\begin{align}
\nonumber
I_{[0,T]}(\ph) 
\geqs{}&  \frac12 \int_0^{T_1} \int_0^L \Bigbrak{\dot\ph_t(x)
+\ph''_t(x) - U'(\ph_t(x))}^2\6x \6t \\
\nonumber
{}&+ 2 \int_0^{T_1} \int_0^L \Bigbrak{
-\ph''_t(x) + U'(\ph_t(x))} \dot\ph_t(x) \6x 
 \6t \\
\nonumber
\geqs{}&2 \int_0^{T_1} \int_0^L \Bigbrak{\ph'_t(x)\dot\ph'_t(x) +
U'(\ph_t(x))\dot\ph_t(x)} \6x\6t \\
=& 2 \Bigbrak{V[\ph_{T_1}] - V[\ph_0]} 
= 2\Vbar(u_0,A)\;.
 \label{new_eqpot03:04}
\end{align}
The right-hand side being independent of $T$, the result follows. 
\end{proof}

By a direct application of the large-deviation principle to the set of paths
starting in~$u_0$ and reaching $A$ in a time less or equal to $T$, we obtain the
following estimate. 

\begin{cor}
\label{cor_ldp_Vbar} 
For any $\eta>0$, there exists $\eps_0(\eta)>0$ such that 
\begin{equation}
 \label{eqpot31}
 \bigprobin{u_0}{\tau_A < T} \leqs 
 \e^{-(H(u_0,A)-\eta)/\eps}
\end{equation} 
for all $\eps<\eps_0$. 
\end{cor}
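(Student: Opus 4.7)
The plan is to deduce Corollary~\ref{cor_ldp_Vbar} directly from the upper-bound half of the Freidlin--Wentzell large-deviation principle (already invoked via~\cite{Faris_JonaLasinio82,ChenalMillet1997}) together with the definition~\eqref{new_eqpot01C} of $H(u_{0},A)$ as (one half of) the infimum of the rate functional $I_{[0,T]}$ over continuous paths starting at $u_{0}$ and reaching $A$ in some time $\leqs T$. No new estimate beyond the cited LDP is needed; this really is a translation exercise.

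Concretely, first I would introduce the subset of path space
\begin{equation*}
F_{T} = \bigsetsuch{\varphi\in \cC([0,T],E)}{\varphi(0)=u_{0},\ \varphi(t)\in \cc{A}\text{ for some }t\in[0,T]}\;,
\end{equation*}
observe that $\set{\tau_{A}<T}\subset\set{u\in F_{T}}$ (passing from $A$ to its closure $\cc A$ only lowers $H$, so the resulting bound is stronger), and check that $F_{T}$ is closed in the sup-norm topology on $\cC([0,T],E)$, since $(\varphi,t)\mapsto\varphi(t)$ is jointly continuous and $\cc A$ is closed in $E$. The upper-bound half of the LDP for the mild solution at speed $2\eps$ (the factor~$2$ comes from the noise rescaling $\sqrt{2\eps}\,$) then yields
\begin{equation*}
\limsup_{\eps\to0} 2\eps \log \bigprobin{u_{0}}{\tau_{A}<T}
\leqs -\inf_{\varphi\in F_{T}} I_{[0,T]}(\varphi)\;.
\end{equation*}
Since the outer infimum over $T>0$ appearing in~\eqref{new_eqpot01C} can only decrease the right-hand side, one has $\inf_{\varphi\in F_{T}}I_{[0,T]}(\varphi)\geqs 2H(u_{0},A)$ for our fixed~$T$. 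Dividing by~$2$ and rewriting the $\limsup$ as a quantitative bound, for each $\eta>0$ one obtains an $\eps_{0}(\eta)>0$ such that $\bigprobin{u_{0}}{\tau_{A}<T}\leqs\e^{-(H(u_{0},A)-\eta)/\eps}$ for all $\eps<\eps_{0}$, which is the claim.

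There is no substantial obstacle here, precisely because the hard analytic work (namely, the LDP for the mild solution in the sup-norm topology) has been deferred to the cited references. The one point needing a line of care is the bookkeeping of the factor of~$2$ between $I_{[0,T]}$ and the effective exponential rate; this factor is precisely absorbed by the $\frac12$ in the definition of $H(u_{0},A)$, so that the resulting bound reads cleanly in terms of $H$ and $\eps$. If one wished a fully self-contained proof, the only serious item to establish would be the LDP upper bound for the mild solution in $\cC([0,T],E)$, which however is not the object of the present corollary.
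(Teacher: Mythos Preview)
Your approach is exactly the paper's: the paper merely states that the corollary follows ``by a direct application of the large-deviation principle to the set of paths starting in $u_0$ and reaching $A$ in a time less or equal to $T$,'' and you have spelled this out. The one wrinkle is your handling of $A$ versus $\cc A$: enlarging $F_T$ to use $\cc A$ \emph{lowers} $\inf_{F_T}I$, so what your argument actually yields is the bound with $H(u_0,\cc A)\leqs H(u_0,A)$ in the exponent, which is the \emph{weaker} inequality, not the stronger one --- the paper glosses over this point too, and in its applications (where $A$ is an open ball and only positivity of the exponent is used) the distinction is harmless.
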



\subsection{Bounds on moments of $\tau_B$ in infinite dimension}
\label{ssec_moments}

We will now apply the results of the previous sections to the mild solution
$u_t$ of the SPDE, with $E=\cC^0(\T^1)$ equipped with the sup norm. 
We fix $\alpha\in(0,1/2)$ and introduce two families of sets 
\begin{align}
\nonumber
A_1(R) &= \bigsetsuch{u\in\cC^0(\T^1)}{\norm{u}_{L^\infty}\leqs R}\;, \\ 
A_2(R) &= \bigsetsuch{u\in\cC^0(\T^1)}{\norm{u}_{\cC^\alpha}\leqs R}\;.
\label{moments01} 
\end{align}
Note  that $A_{2}\subset A_{1}$, and that $A_2$ is a compact subset of $E$, while
$A_1$ is not compact as a subset of $E$. 

Let $B\subset\cC^0(\T^1)$ be a non-empty, bounded open set in the
$\norm{\cdot}_{L^\infty}$-topology. We let 
\begin{equation}
 \label{moments00}
H_0 = H_0(B) = H(u^*_-,B)\vee H(u^*_+,B) 
\end{equation} 
be the cost, in terms of the rate function, to reach the set $B$ from either one of
the local minima. Our aim is to estimate the first two moments
of $\tau_B$, using the three-set argument Corollary~\ref{cor_ldf4} for
$A=A_2(R_2)\setminus B$ and $C=A_1(R_0)^c$, with appropriately chosen $R_0$ and
$R_2$. We thus proceed to estimating the  quantities appearing in the
right-hand side of~\eqref{ldf30A} and~\eqref{ldf30B}. 

We will use repeatedly the fact that 
Proposition~\ref{prop_ldp_W} and~\eqref{infdim08} yield the estimate 
\begin{equation}
 \label{moments99}
 \sup_{u_0\in E}
 \biggprobin{u_0}{\norm{u_T}_{L^\infty} > 
 H + \frac{c(1+H)}{T^{1/2(p_0-1)}}}
 \leqs \e^{-(\kappa H^2-\eta)/2\eps}\;,
\end{equation} 
valid for all $\eps < \eps_0(T,\eta,H)$. 

\begin{prop}[Bounds on moments of $\tau_{A_2}$]
\label{prop_moments1}
For any sufficiently large $R_2$, there exist $T_0 <\infty$, $\eps_0>0$ such
that for $\eps<\eps_0$, 
\begin{equation}
\label{moments02}
 \bigexpecin{A_2(R_2)^c}{\tau_{A_2(R_2)}^n} \leqs n!T_0^n
\end{equation} 
holds for all $n\geqs 1$. 
\end{prop}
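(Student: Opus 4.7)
The plan is to apply Lemma~\ref{lem_ldf3} with $B = A_2(R_2)$. Since $A_2(R_2)$ is non-empty (indeed, the constant function $0$ is in it), we only need to establish that, for some fixed $T_0 > 0$ independent of the initial condition, we have
\begin{equation*}
 \sup_{u_0\in A_2(R_2)^c} \bigprobin{u_0}{\tau_{A_2(R_2)} > T_0} \leqs \frac12
\end{equation*}
for $\eps$ small enough. Lemma~\ref{lem_ldf3} then immediately yields $\bigexpecin{A_2(R_2)^c}{\tau_{A_2(R_2)}^n} \leqs n!(2T_0)^n$ for all $n\geqs 1$.

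To establish the uniform probability bound, the key point is that solutions of the SPDE exhibit two smoothing effects over a fixed time interval: the nonlinear dissipation bounds the sup norm independently of the initial condition, and then the heat semigroup upgrades the sup norm to a H\"older norm. Concretely, I would proceed in two stages using the Markov property at time $T_0/2$:

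\emph{Stage 1 (sup-norm decay, uniform in $u_0$).} Apply the uniform estimate~\eqref{infdim08} of Proposition~\ref{prop_Cerrai}(ii) at time $T_0/2$. Combined with the large-deviation bound of Proposition~\ref{prop_ldp_W} applied to $\sqrt{2\eps}\,W_\Delta$, this shows that for $T_0$ fixed (sufficiently large so that the factor $(T_0/2)^{-1/2(p_0-1)}$ is moderate), there exist $R_1 = R_1(T_0) < \infty$ and $\kappa_1 > 0$ such that
\begin{equation*}
 \sup_{u_0\in E} \bigprobin{u_0}{\norm{u_{T_0/2}}_{L^\infty} > R_1}
 \leqs \e^{-\kappa_1 R_1^2/\eps}\;.
\end{equation*}

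\emph{Stage 2 (H\"older smoothing).} Given a starting point $v$ with $\norm{v}_{L^\infty}\leqs R_1$, apply Proposition~\ref{prop_Holder_decay} at time $T_0/2$: for $R_2$ chosen large enough (depending on $R_1, T_0, \alpha$) so that both the threshold condition $R_2 > c(\alpha)(1+(T_0/2)^{-s/2})R_1$ and the condition~\eqref{infdim099} are satisfied, one obtains
\begin{equation*}
 \sup_{\norm{v}_{L^\infty}\leqs R_1} \bigprobin{v}{\norm{u_{T_0/2}}_{\cC^\alpha} > R_2}
 \leqs \e^{-\kappa_2 R_2^2/\eps}
\end{equation*}
for some $\kappa_2 > 0$ and $\eps$ small enough.

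Combining the two stages via the strong Markov property at $T_0/2$ gives, uniformly in $u_0$,
\begin{equation*}
 \bigprobin{u_0}{\tau_{A_2(R_2)} > T_0}
 \leqs \bigprobin{u_0}{u_{T_0}\notin A_2(R_2)}
 \leqs \e^{-\kappa_1 R_1^2/\eps} + \e^{-\kappa_2 R_2^2/\eps}\;,
\end{equation*}
which is smaller than $1/2$ for $\eps$ sufficiently small, completing the proof.

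The main obstacle is bookkeeping the order in which the parameters are chosen: $T_0$ must be fixed first so that the smoothing estimate~\eqref{infdim08} is effective; then $R_1$ is determined by this choice of $T_0$; only then can $R_2$ be chosen large enough to make Proposition~\ref{prop_Holder_decay} applicable. The hypothesis ``$R_2$ sufficiently large'' in the statement is precisely what makes this chain of dependencies consistent. No other genuine difficulty arises, since the essential probabilistic ingredients (uniform sup-norm smoothing and H\"older regularisation) are already established.
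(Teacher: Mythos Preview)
Your proposal is correct and follows essentially the same approach as the paper: split the time interval at $T_0/2$, use the uniform sup-norm smoothing~\eqref{infdim08} together with Proposition~\ref{prop_ldp_W} for the first half, then Proposition~\ref{prop_Holder_decay} for the second half, and conclude via Lemma~\ref{lem_ldf3}. The only cosmetic difference is that the paper packages the first-stage estimate as~\eqref{moments99} and bounds each term by $1/4$ rather than writing out explicit exponential rates; your claimed form $\e^{-\kappa_2 R_2^2/\eps}$ for the H\"older bound is a slight oversimplification of~\eqref{infdim09}, but all that matters is that it is below $1/4$ for small $\eps$, which it is.
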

\begin{proof}
Choose a fixed $T_1>0$. Then the Markov property applied at time $T_1/2$ shows that for any $R_{1}>0$,
\begin{equation}
 \label{moments03}
\bigprobin{A_2(R_2)^c}{\tau_{A_2(R_2)}> T_1} \leqs 
\bigprobin{A_2(R_2)^c}{\norm{u_{T_1/2}}_{L^\infty} > R_1} + 
\bigprobin{A_1(R_1)}{\norm{u_{T_1/2}}_{\cC^\alpha} > R_2}\;.
\end{equation} 
The estimate~\eqref{moments99} shows that for 
$R_1=c(2/T_1)^{1/2(p_0-1)}+\theta$ with $\theta>0$, the
first term on the right-hand side is smaller than $1/4$ for
$\eps\leqs \eps_{0}(T_{1})$, uniformly in the initial condition. By
Proposition~\ref{prop_Holder_decay}, we can find $R_2$ such that the second
term is also smaller than $1/4$.  This shows
\begin{equation}
 \label{moments03_b}
\bigprobin{A_2(R_2)^c}{\tau_{A_2(R_2)}> T_1} \leqs \frac12\;.
\end{equation} 
By Lemma~\ref{lem_ldf3}, this
yields~\eqref{moments02} with $T_0=2T_1$.
\end{proof}

\begin{prop}
\label{prop_moments2}
For any $R_2, \eta>0$, there exists a constant 
$T(\eta)\in(0,\infty)$  
such that 
\begin{equation}
 \label{moments04}
\bigprobin{A_2(R_2)}{\tau_B > T} \leqs 
1 - \frac12\e^{-(H_0+\eta)/\eps}
\end{equation}  
for sufficiently small $\eps$. 
\end{prop}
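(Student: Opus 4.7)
The bound is equivalent to
\[
\inf_{u_0\in A_2(R_2)} \probin{u_0}{\tau_B\leqs T}
\geqs \tfrac12 \e^{-(H_0+\eta)/\eps}\;,
\]
so the plan is to prove a uniform Freidlin--Wentzell lower bound on the probability of hitting $B$ within time $T$, starting anywhere in $A_2(R_2)$. The set $A_2(R_2)$, being bounded in $\cC^\alpha$ with $\alpha>0$, is compact in $E=\cC^0$; on such compacts, the large-deviation lower bound associated with the rate function~\eqref{new_eqpot01B} holds uniformly in the initial condition, in the framework of~\cite{ChenalMillet1997, Faris_JonaLasinio82}. It therefore suffices to construct, for every $u_0\in A_2(R_2)$, a common $T$ and a continuous path $\psi^{u_0}\colon[0,T]\to E$ with $\psi^{u_0}(0)=u_0$, $\psi^{u_0}(T)\in\interior(B)$ and $I_{[0,T]}(\psi^{u_0})\leqs 2H_0+\eta/2$. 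Applied to an $L^\infty$-tube around such $\psi^{u_0}$, the lower LDP bound then yields $\probin{u_0}{\tau_B\leqs T}\geqs \e^{-(H_0+3\eta/4)/\eps}$ for $\eps$ small enough, which is stronger than required.

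I build $\psi^{u_0}$ by concatenating two pieces. On the second interval $[T_1,T_1+T_2]$, I use a near-minimiser of the rate function for reaching $\interior(B)$ from $u^*_-$ or $u^*_+$: for gradient dynamics, the time-reversed deterministic flow run uphill, completed by the deterministic flow run downhill, achieves $I=2\Vbar$, so by the definition~\eqref{moments00} of $H_0$ and Lemma~\ref{lem_ldp_Vbar} there exists, for each of $u^*_-$ and $u^*_+$, a continuous path into $\interior(B)$ with rate-function value at most $2H_0+\eta/8$ (a small truncation error arising from stopping the uphill phase just before the separating saddle). By continuity of $I$ under small $L^\infty$ perturbations of the path endpoint, available because the stable equilibria are exponentially attracting, this extends, for a sufficiently small radius $\rho>0$, to any starting point in
\[
U_\rho = \bigset{\norm{u-u^*_-}_{L^\infty}<\rho}\cup\bigset{\norm{u-u^*_+}_{L^\infty}<\rho}
\]
at total action at most $2H_0+\eta/4$. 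It then remains to steer $u_0$ into $U_\rho$ on $[0,T_1]$ at additional action at most $\eta/4$.

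The steering phase exploits the gradient structure of the deterministic equation: the semiflow $\phi_t(u_0)$ of $\partial_t u=\Delta u-U'(u)$ is the $L^2$-gradient flow of $V$, which is coercive by Lemma~\ref{lem_V1}, so every trajectory converges to a stationary point as $t\to\infty$. On the open and dense union of the basins of attraction of $u^*_-$ and $u^*_+$, $\phi_t(u_0)$ enters $U_\rho$ in a finite time depending continuously on $u_0$, and I take $\psi^{u_0}$ on $[0,T_1]$ to be this deterministic trajectory, which has zero action. For exceptional $u_0$ lying on a stable manifold of a saddle $\hat u$, I add a short-duration, small-amplitude forcing along an unstable eigendirection of the linearisation at $\hat u$ (which exists because $\hat u$ is a saddle); the action of a displacement of $L^\infty$-amplitude $\delta$ over a short time is $\Order{\delta^2}$ and can be made smaller than $\eta/4$, after which the deterministic flow carries the perturbed state into the basin of $u^*_-$ or $u^*_+$ and thence into $U_\rho$ within a bounded additional time.

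The main obstacle, which I expect to be the most delicate step, is uniformity of the steering construction over the whole compact set $A_2(R_2)$: along the deterministic flow, the time to enter $U_\rho$ can in principle blow up for $u_0$ close to a stable manifold of a saddle. This is handled by a compactness argument. For each $u_0\in A_2(R_2)$, the two-phase construction above produces a finite $T_1(u_0)$ and a path of action below $\eta/4$ reaching $U_\rho$; by continuity of $I$ under small $L^\infty$ perturbations of both the path and its initial point, the set of starting points for which such a path exists with $T_1\leqs T_1^{\max}$ is open in $\cC^0$, and letting $T_1^{\max}$ vary, these open sets cover $A_2(R_2)$. Extracting a finite subcover yields a single uniform $T_1$, hence a uniform $T=T_1+T_2$, and inserting this into the uniform LDP lower bound concludes the argument.
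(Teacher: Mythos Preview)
Your proposal is correct and follows the paper's strategy: construct, for each $u_0\in A_2(R_2)$, a path into $B$ with action close to $2H_0$ by following the deterministic flow to a neighbourhood of a stationary point and then crossing to $B$, and use compactness of $A_2(R_2)$ in $\cC^0$ to obtain uniformity.

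The one noteworthy technical difference lies in how uniformity over $u_0$ is secured. You invoke a uniform large-deviation lower bound over the compact $A_2(R_2)$ and carry out the compactness argument at the level of the path construction (uniform choice of $T_1$). The paper instead applies the LDP lower bound only at finitely many points $u_n$ from a cover of $A_2(R_2)$, and then transfers the bound to all of $A_2(R_2)$ via a Gronwall-type stochastic stability estimate: two solutions driven by the same noise with initial conditions $\delta$-close remain $\e^{KT}\delta$-close with probability $1-\e^{-\kappa_1/2\eps}$. The paper's route is more self-contained, since the stochastic Gronwall bound is proved directly, whereas the uniform LDP you cite, while true for this class of equations, requires some care to extract from the references. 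Conversely, your explicit treatment of initial conditions near a saddle's stable manifold (small perturbation along the unstable eigendirection at cost $\Order{\delta^2}$) is more detailed than the paper's, which simply asserts that stationary points can be connected at finite cost.
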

\begin{proof}
We start by fixing an initial condition
$u_0\in A_2(R_2)$. By the large-deviation principle, we have 
\begin{equation}
 \label{moments05:3}
\liminf_{\eps\to0} 2\eps \log\bigprobin{u_0}{\tau_B \leqs T} 
\geqs - \inf\bigsetsuch{I(\varphi)}{\varphi_0=u_0,\; \varphi_T\in B}\;. 
\end{equation} 
Following a classical procedure similar to the one in the proof
of~\cite[Theorem~9.1]{Faris_JonaLasinio82}, we construct a path
$\varphi^*_{u_0}$ connecting $u_0$ to a point $u^*\in B$ such that
$I(\varphi^*_{u_0}) \leqs 2H_0 + \eta$. This can be done by following
the deterministic
flow from $u_0$ to the neighbourhood of a stationary solution of the
deterministic PDE at zero cost, then connecting to that stationary solution at
finite cost. Any two stationary solutions and $u^*$ can also be connected at
finite cost, and $\varphi^*_{u_0}$ is obtained by concatenation. It follows that
there exists $\eps_0(\eta,u_0)>0$ such that 
\begin{equation}
 \label{moments05:4}
 \bigprobin{u_0}{\tau_B \leqs T} \geqs \e^{-(I(\varphi^*_{u_0})+\eta)/2\eps}
\qquad
\forall \eps < \eps_0(\eta,u_0)\;.
\end{equation} 
The set $A_2(R_2)$ being compact, we can find, for any $\delta>0$, a finite
cover of $A_2(R_2)$ with $N(\delta)$ balls of the form
$D_n=\setsuch{u\in\cC^0(\T^1)}{\norm{u-u_n}_{L^\infty}<\delta}$. Hence 
\begin{equation}
 \label{moments05:5}
\max_{1\leqs n\leqs N(\delta)} 
 \bigprobin{u_n}{\tau_B \leqs T} \geqs \e^{-(I^*(\delta)+\eta)/2\eps}
\qquad
\forall \eps < \eps_1(\eta,\delta)\;,
\end{equation} 
where $I^*(\delta)=\max_n I(\varphi^*_{u_n})<2H_0+\eta$  and
$\eps_1(\eta,\delta) = \min_n\eps_0(\eta,u_n)>0$. 

Consider now two solutions $u_t^{(1)}, u_t^{(2)}$ of the SPDE 
with initial conditions $u_0^{(1)}, u_0^{(2)} \in D_n$. By a Gronwall-type
argument similar to the one given in~\cite[Theorem~5.10]{Faris_JonaLasinio82}
and the  bound~\eqref{infdim09}, for any $\kappa_1>0$ there exist
$K(\kappa_1)>0$ such that 
\begin{equation}
 \label{moments05:6}
\biggprob{\sup_{0\leqs t\leqs T} \norm{u_t^{(1)} - u_t^{(2)}}_{L^\infty} 
> \e^{KT} \norm{u_0^{(1)} - u_0^{(2)}}_{L^\infty}} 
\leqs \e^{-\kappa_1/2\eps}\;.
\end{equation} 
The set $B$ being open, it contains a ball
$\setsuch{u\in\cC^0(\T^1)}{\norm{u-u^*}_{L^\infty}<\rho}$ with $\rho>0$. Thus
choosing $\delta=\e^{-KT}\rho$, combining~\eqref{moments05:5} and~\eqref{moments05:6}, we get 
\begin{equation}
 \label{moments05:7}
 \bigprobin{A_2(R_2)}{\tau_B> T} \leqs 
1 - \e^{-(H_0+\eta)/\eps} + \e^{-\kappa_1/2\eps}
\end{equation} 
for all sufficiently small $\eps$. Now choosing, e.g.,
$\kappa_1=I(\varphi^*_{u_1})$ and  $\delta=\e^{-K(\kappa_1)T}\rho$
guarantees that the term $\e^{-\kappa_1/2\eps}$ is negligible. 
\end{proof}

\begin{prop}
\label{prop_moments3}
For every $\eta>0$ and sufficiently large $R_0, R_2$ satisfying $R_{0}>R_{2}$,
there exists $T_0(\eta)\in(0,\infty)$
such that 
\begin{equation}
 \label{moments06}
\bigexpecin{A_1(R_0)\cap B^c}{\tau_{B\cup A_1(R_0)^c}^n} 
\leqs n!T_0^n \e^{n(H_0+\eta)/\eps}
\end{equation}  
for all $n\geqs1$. 
\end{prop}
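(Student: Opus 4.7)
The plan is to apply Lemma~\ref{lem_ldf3} to the target set $\tilde{B}=B\cup A_{1}(R_{0})^{c}$. It therefore suffices to exhibit a deterministic time $T^{*}=T^{*}(\eta)$, independent of $\eps$, such that
\begin{equation*}
\probin{A_{1}(R_{0})\cap B^{c}}{\tau_{\tilde{B}}>T^{*}}\leqs 1-\tfrac{3}{8}\e^{-(H_{0}+\eta)/\eps}\;;
\end{equation*}
Lemma~\ref{lem_ldf3} will then deliver~\eqref{moments06} with $T_{0}(\eta)=\tfrac{8}{3}T^{*}$.

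To establish this tail bound, I will exploit the inclusion $A_{2}(R_{2})\subset A_{1}(R_{0})$, valid because $\norm{u}_{L^{\infty}}\leqs\norm{u}_{\cC^{\alpha}}$ and $R_{0}>R_{2}$, and introduce, for each $u_{0}\in A_{1}(R_{0})\cap B^{c}$, the stopping time $\sigma = \tau_{A_{2}(R_{2})}\wedge\tau_{\tilde{B}}$. Proposition~\ref{prop_moments1} (with $n=1$) together with Markov's inequality furnishes a deterministic $T_{1}$, depending only on $R_{2}$, for which $\probin{u_{0}}{\sigma>T_{1}}\leqs 1/4$ uniformly in $u_{0}\in A_{1}(R_{0})\cap B^{c}$. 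On the event $\set{\sigma\leqs T_{1}}$ two disjoint scenarios arise: either $\tau_{\tilde{B}}\leqs\sigma\leqs T_{1}$, or else $\sigma=\tau_{A_{2}(R_{2})}<\tau_{\tilde{B}}$ and then necessarily $u_{\sigma}\in A_{2}(R_{2})\cap B^{c}$. In the second scenario, the strong Markov property at $\sigma$ combined with Proposition~\ref{prop_moments2} (applied to $u_{\sigma}$, using the trivial inequality $\tau_{\tilde{B}}\leqs\tau_{B}$) supplies a time $T(\eta)<\infty$ for which the conditional probability of $\set{\tau_{\tilde{B}}\leqs \sigma+T(\eta)}$ given $\cF_{\sigma}$ is at least $\tfrac{1}{2}\e^{-(H_{0}+\eta)/\eps}$; in the first scenario the same event holds deterministically. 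Taking expectations and combining,
\begin{equation*}
\probin{u_{0}}{\tau_{\tilde{B}}\leqs T_{1}+T(\eta)}\geqs \probin{u_{0}}{\sigma\leqs T_{1}}\cdot\tfrac{1}{2}\e^{-(H_{0}+\eta)/\eps}\geqs \tfrac{3}{8}\e^{-(H_{0}+\eta)/\eps}\;,
\end{equation*}
so that $T^{*}=T_{1}+T(\eta)$ has the required property.

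The main obstacle is that Proposition~\ref{prop_moments2}, which provides the essential $\e^{-(H_{0}+\eta)/\eps}$ factor, only applies to starting configurations in the H\"older ball $A_{2}(R_{2})$, while the data considered here lie in the larger sup-norm ball $A_{1}(R_{0})$. Since the hypothesis enforces $R_{0}>R_{2}$, a direct one-step regularisation through Proposition~\ref{prop_Holder_decay} is unavailable, as such a smoothing estimate would require $R_{2}$ to dominate $R_{0}$, the opposite inequality. The two-step decomposition above bypasses this difficulty by first waiting, at bounded expected cost thanks to Proposition~\ref{prop_moments1}, until the process either enters $A_{2}(R_{2})$---at which point Proposition~\ref{prop_moments2} applies---or has already reached $\tilde{B}$, both alternatives being equally acceptable for our target bound.
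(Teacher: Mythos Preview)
Your argument is correct and leads to the same conclusion via Lemma~\ref{lem_ldf3}, but the route differs from the paper's. The paper does not pass through a stopping time; instead it applies the simple Markov property at two fixed intermediate times $T_1/3$ and $2T_1/3$, introducing an auxiliary radius $R_1$: first the dissipativity bound~\eqref{moments99} contracts any initial condition into $A_1(R_1)$ by time $T_1/3$, then Proposition~\ref{prop_Holder_decay} regularises $A_1(R_1)$ into $A_2(R_2)$ by time $2T_1/3$ (with $R_2$ chosen large relative to $R_1$, not to $R_0$), and finally Proposition~\ref{prop_moments2} supplies the $\e^{-(H_0+\eta)/\eps}$ factor on the last third. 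Your decomposition is more economical in that it reuses Proposition~\ref{prop_moments1}, which already packages the two regularisation steps, and then applies the \emph{strong} Markov property at $\sigma$; the paper instead redoes the regularisation explicitly with fixed times. Both are valid; your version is slightly shorter, while the paper's avoids any measurability or path-regularity questions at the stopping time (these are harmless here since $A_2(R_2)$ is closed in $\cC^0$ by lower semicontinuity of the H\"older seminorm, so $u_\sigma\in A_2(R_2)$ genuinely holds). Your remark that a ``direct one-step regularisation'' is unavailable is accurate, but note that the paper's fix is different from yours: it inserts an intermediate sup-norm radius $R_1$ rather than waiting for a hitting time.
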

\begin{proof}
For any $T_1>1$ and $R_1>0$, we have
\begin{align}
\nonumber
\bigprobin{A_1(R_0)\cap B^c}{\tau_{B\cup A_1(R_0)^c} > T_1}
\leqs{}&  
\bigprobin{A_1(R_0)\cap B^c}{\norm{u_{T_1/3}}_{L^\infty} > R_1} \\
\nonumber
&{}+ \bigprobin{A_1(R_1)}{\norm{u_{T_1/3}}_{\cC^\alpha} > R_2} \\
&{}+ \bigprobin{A_2(R_2)}{\tau_B > T_1/3}\;.
 \label{moments07:1}
\end{align} 
The third term on the right-hand side is bounded by
$1-\frac12\e^{-(H_0+\eta)/\eps}$ by Proposition~\ref{prop_moments2}.
Proposition~\ref{prop_Holder_decay} shows that 
\begin{equation}
 \label{moments07:2}
\bigprobin{A_1(R_1)}{\norm{u_{T_1/3}}_{\cC^\alpha} > R_2}
\leqs 
\exp\biggset{-\frac{\kappa_1}{2\eps}\min\Bigset{R_{2}^{2},f\Bigpar{(\kappa_2
R_2-1)^{1/(2p_{0}-1)}-M(1+R_1)}} }\;,
\end{equation} 
while~\eqref{moments99} shows that there exists $\kappa_3(T_1)>0$ such that
\begin{equation}
 \label{moments07:3}
 \bigprobin{A_1(R_0)\cap B^c}{\norm{u_{T_1/3}}_{L^\infty} > R_1}
\leqs 
\exp\biggset{-\frac{\kappa_3}{2\eps} \biggpar{R_1-c\,\biggpar{\frac{T_1}{3}}^{-1/2(p_{0}-1)}}^2}\;. 
\end{equation} 
We have estimated the probability in~\eqref{moments07:1} by three terms. By first choosing 
$R_1$ large enough so that the exponent in~\eqref{moments07:3} is
 smaller than $-(H_0+2\eta)/\eps$, and then $R_2$ sufficiently large for the
exponent in~\eqref{moments07:2} to be smaller than $-(H_0+2\eta)/\eps$ as well, we see that the third summand in~\eqref{moments07:1}  is of leading order. This shows that the probability in~\eqref{moments07:1} is smaller than
$1-\frac14\e^{-(H_0+\eta)/\eps}$ for sufficiently small $\eps$, and therefore, the result
follows from Lemma~\ref{lem_ldf3} with $T_0=4T_1$. 
\end{proof}

\begin{prop}
\label{prop_moments4}
For any $R_2>0$, there exists a constant $R_0>R_{2}$ such that 
\begin{equation}
 \label{moments104}
\bigprobin{A_2(R_2)}{\tau_{A_1(R_0)^c}\leqs\tau_B} \leqs 
\frac12
\end{equation}  
for sufficiently small $\eps$. 
\end{prop}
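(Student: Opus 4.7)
The plan is to combine Proposition~\ref{prop_moments3} with Markov's inequality in order to reduce the problem to an escape estimate on a time window of length $T^* = 4T_0\e^{(H_0+\eta)/\eps}$, and then to use the dissipativity of the nonlinear drift together with the parabolic maximum principle to control the sup norm on that window. For $u_0\in A_2(R_2)\setminus B\subseteq A_1(R_0)\cap B^c$ (which holds as soon as $R_0\geqs R_2$), Proposition~\ref{prop_moments3} combined with Markov's inequality yields
\[
\bigprobin{A_2(R_2)}{\tau_{A_1(R_0)^c}\leqs\tau_B}
\leqs \bigprobin{A_2(R_2)}{\tau_{A_1(R_0)^c}\leqs T^*}+\tfrac14\;,
\]
so that it suffices to exhibit a \emph{fixed} constant $R_0$ for which the first term is at most $\tfrac14$ for $\eps$ small enough.

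For the sup-norm control I would use the Duhamel decomposition $u_t = v_t + \sqrt{2\eps}\,W_\Delta(t)$, so that $\partial_t v_t = \Delta v_t - U'\bigpar{v_t+\sqrt{2\eps}\,W_\Delta(t)}$. By Assumption~\ref{assump_U}(U5), one can choose $R^\star>R_2$ with $U'(u)>0$ for $u>R^\star-1$ and $U'(u)<0$ for $u<-(R^\star-1)$. On the event $\cE = \bigset{\sup_{0\leqs t\leqs T^*}\sqrt{2\eps}\norm{W_\Delta(t)}_{L^\infty}\leqs 1}$, the parabolic maximum principle applied to the (smooth) process $v_t$ yields $\sup_{t\leqs T^*}\norm{v_t}_{L^\infty}\leqs\max(R_2,R^\star)$: indeed at a spatial maximum $x^*$ with $v_t(x^*)>R^\star$ one has $v_t(x^*)+\sqrt{2\eps}W_\Delta(t,x^*)>R^\star-1$, so $\partial_t v_t(x^*)\leqs -U'(\cdot)<0$, and symmetrically at a minimum. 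Consequently $\sup_{t\leqs T^*}\norm{u_t}_{L^\infty}\leqs R^\star+1$, so choosing $R_0 = R^\star+2$ gives $\cE\subseteq\set{\tau_{A_1(R_0)^c}>T^*}$ and reduces the proof to showing $\fP(\cE^c)\to 0$.

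The main obstacle is estimating $\fP(\cE^c)$ on the exponentially long interval $[0,T^*]$, since Proposition~\ref{prop_ldp_W} only controls $\sqrt{2\eps}\norm{W_\Delta(\cdot)}_{L^\infty}$ on bounded time windows. This is especially delicate for the zero Fourier mode of $W_\Delta$ (a standard Brownian motion for Neumann b.c.), whose supremum on $[0,T^*]$ is typically of order $\sqrt{T^*}$, so a naive application of Proposition~\ref{prop_ldp_W} fails. The remedy is to handle the zero mode separately: the spatial mean $\bar u_t$ of the \emph{full nonlinear} solution itself satisfies a one-dimensional SDE whose drift is confining (as inherited from Assumption~\ref{assump_U}(U4)--(U5)), and a standard one-dimensional Kramers-type estimate yields $\bigprob{\sup_{0\leqs t\leqs T^*}\abs{\bar u_t}>R}\leqs T^*\e^{-V_0(R)/\eps}$ with $V_0(R)\to\infty$ as $R\to\infty$. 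For the nonzero Fourier modes, which form stationary Ornstein--Uhlenbeck processes, Proposition~\ref{prop_ldp_W} iterated on unit time intervals via the Markov property together with a Borel--Cantelli-type union bound suffices. Taking $R^\star$ with $V_0(R^\star)>H_0+2\eta$ then forces $\fP(\cE^c)\to 0$ as $\eps\to 0$ for the constant $R_0=R^\star+2$, completing the proof.
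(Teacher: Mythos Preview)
Your first reduction is sound: from Proposition~\ref{prop_moments3} and Markov's inequality one indeed obtains
\[
\bigprobin{A_2(R_2)}{\tau_{A_1(R_0)^c}\leqs\tau_B}
\leqs \bigprobin{A_2(R_2)}{\tau_{A_1(R_0)^c}\leqs T^*}
+\bigprobin{A_2(R_2)}{\tau_{B\cup A_1(R_0)^c}> T^*}
\leqs \bigprobin{A_2(R_2)}{\tau_{A_1(R_0)^c}\leqs T^*}+\tfrac14\;.
\]
The gap is in the second step, and your own diagnosis of the obstacle is correct but the proposed remedy does not close the argument. First, the spatial mean $\bar u_t$ does \emph{not} satisfy a closed one-dimensional SDE: its drift $-L^{-1}\int_0^L U'(u_t(x))\,\6x$ depends on the full profile $u_t$, and without a priori control on $u_t-\bar u_t$ you cannot compare it to a confining one-dimensional drift (such control is exactly what you are trying to establish, so the argument is circular). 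Second, and more decisively, even granting a bound on $\bar u_t$, your maximum-principle step for $v_t=u_t-\sqrt{2\eps}\,W_\Delta(t)$ still requires the event $\cE$, which concerns the zero mode of $W_\Delta$ itself. Since $\nu_0=0$ for both Neumann and periodic b.c., that mode is a genuine Brownian motion, so $\sup_{t\leqs T^*}\sqrt{2\eps}\,\abs{W_\Delta^{(0)}(t)}$ is typically of order $\sqrt{\eps T^*}\to\infty$ and $\fP(\cE)\to 0$. Controlling $\bar u_t$ and the nonzero modes of $W_\Delta$ separately gives no handle on the nonzero modes of $u_t$, so you cannot reassemble an $L^\infty$-bound from those pieces.

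The paper sidesteps the whole issue by never working on the exponentially long window. It fixes a bounded time $T$, establishes single-step bounds $p_1=\probin{A_2(R_2)}{\tau_B>T}$, $q_1=\probin{A_2(R_2)}{u_T\notin A_2(R_2)}$ and $r_1=\probin{A_2(R_2)}{\tau_{A_1(R_0)^c}\leqs T}$ using Proposition~\ref{prop_moments2} and the short-time estimate~\eqref{infdim08A} (which holds for \emph{fixed} $T$ and gives $r_1\leqs\e^{-H_1/\eps}$ for a fixed sufficiently large $R_0$), and then iterates via the Markov property $n\sim\e^{H_1/\eps}$ times to obtain $p_n+r_n\leqs p_1^n+r_1+nq_1[1+n(1-p_1)]\leqs\tfrac12$. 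This Markov iteration is precisely what replaces the missing long-time control of $W_\Delta$.
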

\begin{proof}
For any $T>0$ and $n\in\N$, we have 
\begin{equation}
 \label{moments105:1}
 \bigprobin{A_2(R_2)}{\tau_{A_1(R_0)^c}\leqs\tau_B} \leqs
\bigprobin{A_2(R_2)}{\tau_{A_1(R_0)^c}\leqs nT} +
\bigprobin{A_2(R_2)}{\tau_B> nT}\;.
\end{equation}
We introduce the quantities 
\begin{align}
\nonumber
p_n &= \bigprobin{A_2(R_2)}{\tau_B> nT}\;, \\
\nonumber
q_n &= \bigprobin{A_2(R_2)}{u_{nT}\not\in A_2(R_2)} \;, \\
r_n &= \bigprobin{A_2(R_2)}{\tau_{A_1(R_0)^c}\leqs nT}\;.
\label{moments105:2} 
\end{align}
Using the Markov property, they can all be expressed in terms of $p_1, q_1$ and
$r_1$. Namely, 
\begin{align}
\nonumber
q_{n+1} 
&\leqs \bigprobin{A_2(R_2)}{u_{nT}\notin A_2(R_2)} + 
\bigexpecin{A_2(R_2)}{\indexfct{u_{nT}\in A_2(R_2)} \probin{u_{nT}}{u_T\notin
A_2(R_2)}} \\
&\leqs q_n + q_1(1-q_n)\;,
\label{eqpot21:04} 
\end{align}
and one easily shows by induction that 
\begin{equation}
 \label{eqpot21:05}
q_n \leqs 1 - (1-q_1)^n \leqs n q_1\;. 
\end{equation} 
Splitting again according to whether $u_{nT}$ belongs to $A_2(R_2)$ or not, we
get
\begin{equation}
r_{n+1} \leqs q_n + r_1(1-q_n) \leqs r_1 + n q_1\;.
\label{eqpot21:10} 
\end{equation}
In a similar way, we have 
\begin{align}
\nonumber
p_{n+1} \leqs{}& 
\bigexpecin{A_2(R_2)}{\indexfct{u_{nT}\in A_2(R_2),\tau_{B}>nT} 
\probin{u_{nT}}{\tau_{B}>T}} \\
\nonumber
&{}+ \bigprobin{A_2(R_2)}{u_{nT}\notin A_2(R_2), \tau_{B}>nT} \\
\nonumber
\leqs{}& p_1 \bigprobin{A_2(R_2)}{u_{nT}\in A_2(R_2), \tau_{B}>nT}
+ \bigprobin{A_2(R_2)}{u_{nT}\notin A_2(R_2), \tau_{B}>nT} \\
\nonumber
=& p_1 p_n + (1-p_1)\bigprobin{A_2(R_2)}{u_{nT}\notin A_2(R_2),
\tau_{B}>nT} \\
\leqs{}& p_1 p_n + (1-p_1) q_n\;. 
\label{eqpot21:07} 
\end{align}
It follows by induction that 
\begin{equation}
 \label{eqpot21:08}
p_n \leqs p_1^n + n^2 q_1(1-p_1)\;. 
\end{equation} 
Putting together the different estimates, we obtain 
\begin{equation}
 \label{eqpot21:11}
\bigprobin{A_2(R_2)}{\tau_{A_1(R_0)^c}\leqs\tau_B}
\leqs p_n + r_n 
\leqs p_1^n + r_1 + n q_1\bigbrak{1+n(1-p_1)}\;.
\end{equation} 
It remains to estimate $p_1$, $q_1$ and $r_1$. Proposition~\ref{prop_moments2}
shows that 
\begin{equation}
 \label{moments105:10}
p_1 \leqs 1 - \frac12\e^{-H_1/\eps}\;, 
\end{equation} 
where $H_1=H_0+\eta$. We can estimate $q_1$ by 
\begin{equation}
 \label{moments105:11}
q_1 \leqs  \bigprobin{A_2(R_2)}{\norm{u_{T/2}}_{L^\infty} > R_1}
+ \bigprobin{A_1(R_1)}{\norm{u_{T/2}}_{\cC^\alpha} > R_2}\;,
\end{equation} 
and both terms can be bounded as in the proof of
Proposition~\ref{prop_moments3}. An appropriate choice of $R_1, R_2$ ensures
that $q_1 \leqs \e^{-H_1/\eps}$. Finally, by~\eqref{infdim08A} we also have 
\begin{equation}
 \label{moments105:20} 
r_1 = 
\bigprobin{A_2(R_2)}{\tau_{A_1(R_0)^c}\leqs T} \leqs 
\exp\biggset{-\frac{\kappa(T)}{2\eps} f(R_0 - M(1+R_2))}
\leqs \e^{-H_1/\eps}
\end{equation} 
for sufficiently large $R_0$. The choice 
\begin{equation}
 \label{moments105:21}
n = \Bigintpartplus{(4 \log 2) \e^{H_1/\eps}} 
\end{equation} 
yields $\log(p_1^n) \leqs -\frac12n\e^{-H_1/\eps} \leqs -2\log(2)$ so that
$p_1^n\leqs 1/4$, while the other terms in~\eqref{eqpot21:11} are
exponentially small. This concludes the proof. 
\end{proof}

Combining Propositions~\ref{prop_moments1}, \ref{prop_moments3} and
\ref{prop_moments4}, we finally get the main result of this section. 

\begin{cor}[Main estimate on the moments of $\tau_B$]
\label{cor_tauB_inf} 
Let $B\subset\cC^0(\T^1)$ be a non-empty, bounded open set in the
$\norm{\cdot}_{L^\infty}$-topology. Then for all $R_0,\eta>0$, there exist
constants $\eps_0 > 0$ and $T_0 < \infty$
such that 
\begin{equation}
 \label{moments08}
\bigexpecin{A_1(R_0)}{\tau_B} \leqs T_0\e^{(H_0(B)+\eta)/\eps}
\qquad
\text{and}
\qquad
\bigexpecin{A_1(R_0)}{\tau_B^2} \leqs T_0^2\e^{2(H_0(B)+\eta)/\eps} 
\end{equation} 
for all $\eps < \eps_0$. 
\end{cor}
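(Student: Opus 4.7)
The plan is to apply the three-set argument from Corollary~\ref{cor_ldf4} with the identifications $A = A_2(R_2)\setminus B$, target set $B$ as given, and $C = A_1(R_0)^c$ (where $R_0 > R_2$ will be chosen large enough), and then to bridge from $A_1(R_0)$ to $A_2(R_2)$ by one application of the strong Markov property. The three propositions fit into this scheme as follows:
Proposition~\ref{prop_moments3} supplies the $X_n$-term $\bigexpecin{A}{\tau_{B\cup C}^n}\leqs \bigexpecin{A_1(R_0)\cap B^c}{\tau_{B\cup A_1(R_0)^c}^n}\leqs n!T_0^n\e^{n(H_0+\eta)/\eps}$;
Proposition~\ref{prop_moments1} supplies the $Y_n$-term, since $A\cup B\supseteq A_2(R_2)$ implies $\tau_{A\cup B}\leqs\tau_{A_2(R_2)}$ and $\partial C\subseteq A_1(R_0)^c\subseteq A_2(R_2)^c$ (using $R_0>R_2$ and that the sup norm is dominated by the Hölder norm), giving $\bigexpecin{\partial C}{\tau_{A\cup B}^n}\leqs n!T_0^n$;
Proposition~\ref{prop_moments4} supplies $\bigprobin{A}{\tau_C<\tau_B}\leqs \bigprobin{A_2(R_2)}{\tau_{A_1(R_0)^c}\leqs\tau_B}\leqs 1/2$, so the denominator in~\eqref{ldf30A}--\eqref{ldf30B} is $\geqs 1/2$. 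Plugging these into Corollary~\ref{cor_ldf4}, after enlarging $T_0$ if necessary, yields
\begin{equation*}
\bigexpecin{A_2(R_2)\setminus B}{\tau_B}\leqs T_1\e^{(H_0+\eta)/\eps}\;,\qquad
\bigexpecin{A_2(R_2)\setminus B}{\tau_B^2}\leqs T_1^2\e^{2(H_0+\eta)/\eps}\;,
\end{equation*}
for suitable $T_1<\infty$ and small $\eps$.

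To pass from $A_2(R_2)$ to $A_1(R_0)$, I would apply the strong Markov property at $\tau_{A_2(R_2)}$. For $u_0\in A_1(R_0)\setminus A_2(R_2)$, we have
\begin{equation*}
\bigexpecin{u_0}{\tau_B}\leqs \bigexpecin{u_0}{\tau_{A_2(R_2)\cup B}}+\bigprobin{u_0}{\tau_{A_2(R_2)}<\tau_B}\bigexpecin{A_2(R_2)\setminus B}{\tau_B}\;,
\end{equation*}
and the first summand on the right is bounded by a constant $T_2$ via Proposition~\ref{prop_moments1} (note $\tau_{A_2(R_2)\cup B}\leqs\tau_{A_2(R_2)}$ and the starting point lies in $A_2(R_2)^c$); for $u_0\in A_1(R_0)\cap A_2(R_2)$ the bound is immediate since $\tau_{A_2(R_2)}=0$. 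Combining, $\bigexpecin{A_1(R_0)}{\tau_B}\leqs T_2+T_1\e^{(H_0+\eta)/\eps}$, which absorbs into the target form with enlarged $T_0$.

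For the second moment, I would use the analogous decomposition
\begin{equation*}
\bigexpecin{u_0}{\tau_B^2}\leqs 2\bigexpecin{u_0}{\tau_{A_2(R_2)\cup B}^2}+2\bigpar{\bigexpecin{u_0}{\tau_{A_2(R_2)}^2}}^{1/2}\bigpar{\bigexpecin{A_2(R_2)\setminus B}{\tau_B^2}}^{1/2}\cdot\text{(Cauchy--Schwarz)}
\end{equation*}
together with the identity $\expec{(\tau_1+\tau_2)^2}\leqs 2\expec{\tau_1^2}+2\expec{\tau_2^2}$ applied to the strong Markov decomposition, yielding the claimed $T_0^2\e^{2(H_0+\eta)/\eps}$ bound after enlarging $T_0$.

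The main technical point is orchestrating the choices of parameters in the right order: first fix $\eta$, then choose $R_2$ large enough for Proposition~\ref{prop_moments1}, then $R_0\geqs R_2$ large enough so that Proposition~\ref{prop_moments4} gives probability $\leqs 1/2$, and finally verify that the $R_0$ in the statement can be taken arbitrary by noting that we may always enlarge the $R_0$ supplied by Proposition~\ref{prop_moments4} (doing so only enlarges $A_1(R_0)$, which on one hand weakens the Proposition~\ref{prop_moments4} estimate but can be compensated by further enlarging $R_2$ beforehand, and on the other hand is compatible with Proposition~\ref{prop_moments3} which allows any sufficiently large $R_0$). The only genuine subtlety is this interdependence of parameters, but since each proposition leaves enough freedom in its large-parameter choices, the ordering can be made consistent.
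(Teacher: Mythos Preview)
Your proposal is correct and follows essentially the same approach as the paper: apply Corollary~\ref{cor_ldf4} with $A=A_2(R_2)\setminus B$ and $C=A_1(R_0)^c$, feed in Propositions~\ref{prop_moments1}, \ref{prop_moments3}, \ref{prop_moments4} for the three ingredients, and then extend from $A_2(R_2)$ to $A_1(R_0)$ by restarting at $\tau_{A_2(R_2)}$. Two minor slips worth cleaning up: the inclusion $\partial C\subset A_1(R_0)^c$ is false (the boundary $\{\norm{u}_{L^\infty}=R_0\}$ lies in $A_1(R_0)$, not its complement), but what you actually need is $\partial C\subset A_2(R_2)^c$, which holds directly since $\norm{u}_{\cC^\alpha}\geqs\norm{u}_{L^\infty}=R_0>R_2$; and your second-moment decomposition is written confusingly --- the Cauchy--Schwarz reference is irrelevant, you only need $(\tau_1+\tau_2)^2\leqs 2\tau_1^2+2\tau_2^2$ together with the strong Markov property, which you do state.
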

\begin{proof}
Making $R_0$ larger if necessary, we choose $R_2$ and $R_0 > R_2$ large enough
for the three previous results to hold, and such that $B \subsetneq A_1(R_0)$.
We apply the three-set argument Corollary~\ref{cor_ldf4} with
$A=A_2(R_2)\setminus B$ and $C=A_1(R_0)^c$.
Noting that $A\subset A_1(R_0)\setminus B$, we have 
\begin{equation}
 \label{moments08:01}
 \bigexpecin{A}{\tau^{n}_{B\cup C}}
 \leqs \bigexpecin{A_1(R_0)\setminus B}{\tau^{n}_{B\cup C}}
 \leqs n!T_0^n \e^{n(H_0+\eta)/\eps}
\end{equation} 
by Proposition~\ref{prop_moments3}. 
Since $\tau_{A\cup B}\leqs\tau_{A_2(R_2)}$ and $A_1(R_0)^c \subset 
A_2(R_2)^c$, we have 
\begin{equation}
 \label{moments08:02}
 \bigexpecin{C}{\tau^{n}_{A\cup B}}
 \leqs \bigexpecin{A_2(R_2)^c}{\tau_{A_2(R_2)}^n} \leqs n!T_0^n
\end{equation} 
by Proposition~\ref{prop_moments1}. Finally, 
 $\probin{A}{\tau_C<\tau_B} \leqs 1/2$ by 
Proposition~\ref{prop_moments4}. This shows the result for initial conditions
in $A_2(R_2)\setminus B$. Now we can easily extend these bounds to all initial
conditions in $A_1(R_0)$ by using Proposition~\ref{prop_moments1} and
restarting the process when it first hits $A_2(R_2)\setminus B$. 
\end{proof}

Note that in the proof of Proposition~\ref{prop_apriori_hit},
we apply this result when $B$ is a
neighbourhood of $u^*_+$. In that case, $H_0(B)$ is equal to the potential
difference between the transition state and the local minimum $u^*_{-}$. 


\subsection{Uniform bounds on moments of $\tau_B$ in finite dimension}
\label{ssec_momfin}

In this section, we derive bounds on the moments of first-hitting times,
similar to those in Corollary~\ref{cor_tauB_inf}, for the finite-dimensional
process, uniformly in the dimension. 

For $E=\cC^0(\T^1)$ and $d\in\N$, we denote by $E_d$ the finite-dimensional space
\begin{equation}
 \label{momfin01}
E_d = \biggsetsuch{u\in\cC^0(\T^1)}
{u(x)=\sum_{k\colon \abs{k}\leqs d} y_k e_k(x)\,,\; y_k\in\R}\;,
\end{equation} 
and by $\set{u_t^{(d)}}_{t\geqs0}$ the solution of the projected equation,
cf.~\eqref{Galerkin3}. Given a set $A\subset E$, we write $A_d=A\cap E_d$, and
denote by $\smash{\tau^{(d)}_{A_d}}$ the first time $u_t^{(d)}$ hits $A_d$,
while $\tau_A$ denotes as before the first time the infinite-dimensional process
$u_t$ hits $A$. 

\begin{prop}[Main estimate on moments of $\tau^{(d)}_{B_d}$]
\label{prop_tauB_finite} 
Let $B\subset E$ be an open ball of radius $r$ in the
$\norm{\cdot}_{L^{\infty}}$-norm. We assume that the centre of $B$ is some $w\in
E_{d}$. As in Corollary~\ref{cor_tauB_inf}, we define $A=A_{2}(R_{2})\setminus
B$. Then, there exist constants 
$\eps_0>0$ and $H_1, T_1<\infty$ such that for any $\eps\in(0,\eps_0)$, there is
a $d_0(\eps)\in\N$ such that 
\begin{equation}
 \label{momfin02}
\bigexpecin{A_d}{\tau^{(d)}_{B_d}} \leqs T_1\e^{H_1/\eps} 
\qquad
\text{and}
\qquad
\bigexpecin{A_d}{(\tau^{(d)}_{B_d})^2} \leqs T_1^2 \e^{2H_1/\eps} 
\end{equation} 
for all $d\geqs d_0(\eps)$. 
\end{prop}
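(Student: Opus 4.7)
The plan is to reproduce the proof of Corollary~\ref{cor_tauB_inf} for the projected dynamics $u^{(d)}_t$, with the additional requirement that every constant can be chosen independently of $d$ once $d\geqs d_0(\eps)$. Concretely, I would apply the three-set argument of Corollary~\ref{cor_ldf4} inside $E_d$ to $A_d = A_2(R_2)_d\setminus B_d$ and $C_d = (A_1(R_0))^c\cap E_d$, with the same choice of radii $R_0>R_2$ as in the infinite-dimensional proof. This reduces~\eqref{momfin02} to establishing uniform-in-$d$ analogues of Propositions~\ref{prop_moments1}--\ref{prop_moments4} for $u^{(d)}_t$.

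Most of the analytic ingredients transfer almost unchanged. The smoothing estimate of Lemma~\ref{lem_infdim01} still holds for $\e^{\Delta t}P_d$, since $P_d$ commutes with the Laplacian and is an $L^2$-orthogonal projection. The large-deviation bound of Proposition~\ref{prop_ldp_W} for $P_d W_\Delta(t)$ follows from the same proof, because the Cameron--Martin inequality~\eqref{infdim04B:5} is insensitive to Fourier truncation. Cerrai's sup-norm bounds of Proposition~\ref{prop_Cerrai} also survive the projection, since they rely only on the dissipativity U3--U6 of $U$, which is inherited by $P_d U'$ with the same constants. Consequently, Proposition~\ref{prop_Holder_decay}, the tail bound~\eqref{moments99}, and thereby the finite-dimensional analogues of Propositions~\ref{prop_moments1}, \ref{prop_moments3} and~\ref{prop_moments4} follow by verbatim adaptation with constants independent of $d$.

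The main obstacle is the dimension-free counterpart of Proposition~\ref{prop_moments2}, namely a lower bound
\begin{equation*}
\bigprobin{A_2(R_2)_d}{\tau^{(d)}_{B_d}\leqs T}\geqs \frac12 \e^{-H_1/\eps}
\end{equation*}
uniform in $d$. My strategy is to fix a continuous path $\varphi^\ast\colon[0,T]\to E$ of rate-function cost at most $2H_1$ steering any initial condition in the compact set $A_2(R_2)$ into the interior of $B$, exactly as in the proof of Proposition~\ref{prop_moments2}, and then to use $\varphi^{\ast,d}=P_d\varphi^\ast$ as a competitor for the rate function $I^{(d)}_{[0,T]}$ of the projected equation. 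Since $P_d U'(P_d\varphi)\to U'(\varphi)$ in $L^2$ uniformly on bounded subsets of $E$, one has $I^{(d)}_{[0,T]}(\varphi^{\ast,d})\to I_{[0,T]}(\varphi^\ast)$ as $d\to\infty$, and Proposition~\ref{prop_tpot} ensures that the stationary points of $\Vhat$ used as waypoints in $\varphi^\ast$ are close to stationary points of $\Vhat^{(d)}$. The Gronwall-type comparison analogous to~\eqref{moments05:6} is straightforward in finite dimensions because the drift of~\eqref{Galerkin4} is locally Lipschitz with a dimension-independent constant on bounded sets. Combining the four uniform estimates via Corollary~\ref{cor_ldf4} then yields~\eqref{momfin02} with $T_1$ and $H_1$ depending only on $R_0$, $R_2$ and $U$.
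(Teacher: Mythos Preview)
Your plan is plausible but takes a genuinely different route from the paper. The paper does \emph{not} redo Propositions~\ref{prop_moments1}--\ref{prop_moments4} for the Galerkin dynamics. Instead it couples $u^{(d)}_t$ to $u_t$ on the high-probability event
\[
\Omega_d=\Bigl\{\sup_{0\leqs t\leqs T}\norm{u^{(d)}_t-u_t}_{L^\infty}\leqs\delta\Bigr\},
\]
using Theorem~\ref{thm_Galerkin}, and then compares finite-dimensional hitting probabilities with the infinite-dimensional ones already controlled in Section~\ref{ssec_moments}, via inequalities of the form
\[
\bigprobin{u_0}{\tau^{(d)}_{D_{d,+}}>T}\leqs\bigprobin{u_0}{\tau_D>T}+\fP(\Omega_d^c)\;.
\]
The three-set argument is still applied in $E_d$, but every ingredient on the right-hand side of~\eqref{ldf30A}--\eqref{ldf30B} is estimated by passing to the infinite-dimensional process and absorbing $\fP(\Omega_d^c)$ for $d\geqs d_0(\eps)$. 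This is more economical: no finite-dimensional LDP, no rate-function convergence, and no need to re-verify Cerrai's bounds for the projected system.

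Your direct approach could be made to work, but the weakest link is the sentence ``Cerrai's sup-norm bounds \dots\ survive the projection, since they rely only on the dissipativity U3--U6 of $U$, which is inherited by $P_dU'$ with the same constants.'' Cerrai's $L^\infty$ estimates~\eqref{infdim07}--\eqref{infdim08} use a pointwise comparison with the scalar ODE $\dot z=-U'(z)$, and this relies on the nonlinearity being local. The projected drift $u\mapsto P_dU'(u)$ is \emph{not} local, so the comparison principle does not transfer by inspection; one would have to argue instead via energy estimates in $H^s$ for $s>1/2$ (where $P_d$ drops out in inner products against $u^{(d)}$) and then Sobolev embedding. This is doable but is real additional work, not a verbatim adaptation. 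The paper sidesteps the issue entirely by never needing a priori bounds on $u^{(d)}$ beyond what is encapsulated in the Bl\"omker--Jentzen theorem.
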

\begin{proof}
We fix constants $\delta, T>0$, and let $\Omega_d$ be the event 
\begin{equation}
 \label{momfin03:1}
\Omega_d = \biggset{\sup_{0\leqs t\leqs T} \norm{u_t^{(d)} - u_t}_{L^\infty}
\leqs \delta}\;.
\end{equation} 
Theorem~\ref{thm_Galerkin} shows that for given $\gamma<1/2$, there exists an almost surely finite
random variable $Z$ such that 
\begin{equation}
\label{momfin03:2}
\fP(\Omega_d^c) \leqs \bigprob{Z>\delta d^\gamma} \;.
\end{equation} 
Given $D\subset\cC^0(\T^1)$, we define the sets 
\begin{align}
\nonumber
D_{d,+} &= \bigsetsuch{u\in E_d}{\exists v\in D \text{ s.t. }
\norm{v-u}_{L^\infty} \leqs \delta}\;, \\
D_{d,-} &= \bigsetsuch{u\in E_d}{\setsuch{v\in E}{\norm{v-u}_{L^\infty} \leqs \delta}
\subset D}\;, 
 \label{momfin03:3}
\end{align} 
which satisfy $D_{d,-}\subset D_d\subset D_{d,+}$. Then for any initial
condition $u_0\in E_d$, we have the two inequalities 
\begin{align}
\nonumber
\bigprobin{u_0}{\tau^{(d)}_{D_{d,+}} > T} 
&\leqs  \bigprobin{u_0}{\tau_D > T} + \fP(\Omega_d^c)\;, \\
\bigprobin{u_0}{\tau^{(d)}_{D_{d,-}} \leqs T} 
&\leqs  \bigprobin{u_0}{\tau_D \leqs T} + \fP(\Omega_d^c)\;.
\label{momfin03:4}
\end{align} 
Let $R_0$ be as in the proof of Corollary~\ref{cor_tauB_inf}, and define the
sets 
\begin{align}
\nonumber
C &= \bigsetsuch{u\in E}{\norm{u}_{L^\infty} \geqs R_0}\;, \\
\nonumber
C'&= \bigsetsuch{u\in E}{\norm{u}_{L^\infty} \geqs R_0+2\delta}\;, \\
B'&= \bigsetsuch{u\in E}{\setsuch{v\in E}{\norm{v-u}_{L^\infty} \leqs \delta}
\subset B}\;.
\label{momfin03:5}
\end{align}
We assume $\delta$ to be small enough for $B'$ to be non-empty. Note that $C$
and $C'$ are the complements of open balls in the
$\norm{\cdot}_{L^{\infty}}$-norm while $B'$ is the open ball of radius
$r-\delta$ around the center $w$ of $B$.

Applying the three-set argument~\eqref{ldf30A} to the triple $(A_{d,+}\setminus B_{d},B_{d},C_{d,-})$, where the sets are disjoint for sufficiently large $R_{0}$, we get
\begin{equation}
\label{momfin03:6}
\bigexpecin{A_d}{\tau^{(d)}_{B_d}}
\leqs 
\frac{\bigexpecin{A_{d,+}} {\tau^{(d)}_{B_{d}\cup C_{d,-}}} +
\bigprobin{A_{d,+}} {\tau^{(d)}_{C_{d,-}} < \tau^{(d)}_{B_{d}}} 
\bigexpecin{C_{d,-}} {\tau^{(d)}_{A_{d,+}\cup B_{d}}} }
{1 - \bigprobin{A_{d,+}} {\tau^{(d)}_{C_{d,-}} < \tau^{(d)}_{B_{d}}} }\;. 
\end{equation} 
Using the facts that $A_d\subset A_{d,+}$, 
$(B')_{d,+} = B_d$
and $(C')_{d,+}\subset C_{d,-}$, we now reduce the estimation of each of the
terms on the right-hand side of~\eqref{momfin03:6} to probabilities that can be
controlled, via~\eqref{momfin03:4}, in terms of the infinite-dimensional
process.

Since $C_{d,-}\subset E_d\setminus (A_{2}(R_{2}))_{d,+}$ for sufficiently large $R_{0}$ and $A_{d,+}\cup B_{d}\supset (A_{2}(R_{2}))_{d,+}$, we have by Lemma~\ref{lem_ldf3}
\begin{equation}
 \label{momfin03:7}
\bigexpecin{C_{d,-}} {\tau^{(d)}_{A_{d,+}\cup B_{d}}}
\leqs
\bigexpecin{E_d\setminus  (A_{2}(R_{2}))_{d,+}} {\tau^{(d)}_{ (A_{2}(R_{2}))_{d,+}}}
\leqs 
\frac{T}{1-\bigprobin{E_d\setminus (A_{2}(R_{2}))_{d,+}} {\tau^{(d)}_{ (A_{2}(R_{2}))_{d,+}} > T} }\;.
\end{equation} 
By~\eqref{momfin03:4} we have for any $u_0\in E_d\setminus  (A_{2}(R_{2}))_{d,+}$ 
\begin{equation}
 \label{momfin03:8}
\bigprobin{u_0} {\tau^{(d)}_{ (A_{2}(R_{2}))_{d,+}} > T}
\leqs \bigprobin{u_0} {\tau_{A_{2}(R_{2})} > T} + \fP(\Omega_d^c)\;. 
\end{equation} 
As we have seen in~\eqref{moments03_b}, 
the first term on the right-hand side can be bounded by $1/2$. As for the
second term,~\eqref{momfin03:2} shows that it is smaller than $1/4$ for $d\geqs d_{0}(\eps)$
large enough. Hence the right-hand side of~\eqref{momfin03:7} can be bounded by
$4T/3$. 

The term $\bigexpecin{A_{d,+}} {\tau^{(d)}_{B_{d}\cup C_{d,-}}}\leqs \bigexpecin{A_{d,+}} {\tau^{(d)}_{(B')_{d,+}\cup (C')_{d,+}}}$ can be estimated in a similar way, by comparing with 
$\probin{A_{d,+}}{\tau_{B'\cup C'}>T}$ and proceeding as in the proof of Proposition~\ref{prop_moments3}, cf.~\eqref{moments07:1}. 

Finally, we have the bounds 
\begin{align}
\nonumber
\bigprobin{A_{d,+}} {\tau^{(d)}_{C_{d,-}} < \tau^{(d)}_{B_{d}}}
&\leqs \bigprobin{A_{d,+}} {\tau^{(d)}_{C_{d,-}} \leqs T} +
\bigprobin{A_{d,+}} {\tau^{(d)}_{(B')_{d,+}} > T} \\
&\leqs \bigprobin{A_{d,+}}{\tau_C\leqs T} + \bigprobin{A_{d,+}}{\tau_{B'}> T} +
2\fP(\Omega_d^c)\;.
\label{momfin03:9}
\end{align}
Proposition~\ref{prop_moments2} and~\eqref{moments105:20} show that 
the sum of the first two terms on the right-hand side can be bounded by
$1-\frac12\e^{-H_1/\eps}$.
The third term can be bounded by $\frac14 \e^{-H_1/\eps}$, provided $d$ is
larger than some (possibly large) $d_0(\eps)$. 

This completes the bound on the first moment, and the second moment can be
estimated in the same way.
\end{proof}

\subsection{Bounds on the equilibrium potential in finite dimension}
\label{ssec_eqpot2}

The aim of this subsection is to obtain bounds on the equilibrium potential 
\begin{equation}
 \label{new_eqpot01}
h^{(d)}_{A_d,B_d}(u_0) = 
\bigprobin{u_0}{\tau^{(d)}_{A_d} < \tau^{(d)}_{B_d}}\;, 
\end{equation} 
when $A$ and $B$ are small open balls, in the $L^\infty$-norm, around the local
minima $u^*_\pm$ of the potential $V$, and as before $A_d=A\cap E_d$ and
$B_d=B\cap E_d$. We denote the centre of $A$ by $u^*_1$ and the centre of $B$
by $u^*_2$, where either $u^*_1=u^*_-$ and $u^*_2=u^*_+$ or vice versa.

We now derive a bound on $h^{(d)}_{A_d,B_d}(u_0)$, which is useful when $u_0$
lies in the basin of attraction of $B_d$. 

\begin{prop}
\label{new_prop_eqpot1}
Let $u^*_1$ and $u^*_2$ be two different local minima of $V$ and consider the
balls $A=\set{\norm{u-u^*_1}_{L^\infty}<r}$ and
$B=\set{\norm{u-u^*_2}_{L^\infty}<r}$, where $r$ is small enough to guarantee
$\overbar A\cap\overbar B=\emptyset$. Then for any $\eta>0$, there
exist $\eps_0=\eps_0(\eta)>0$, $d_0=d_0(\eta,\eps)<\infty$ and $H_0=H_0(\eta)>0$
such that 
\begin{equation}
 \label{new_eqpot04}
h^{(d)}_{A_d,B_d}(u_0) \leqs 4\e^{-H_0/\eps}
\end{equation} 
holds for all $\eps<\eps_0$, all $d\geqs d_0$ and all $u_0$ satisfying 
$H(u_0,A)\geqs\eta$ and $H(u_0,B)=0$. The result holds uniformly for $u_{0}$
from a  $\norm{\cdot}_{L^{\infty}}$-bounded subset of $E_d$.
\end{prop}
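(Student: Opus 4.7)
The natural plan is to split the equilibrium potential according to a suitably chosen time horizon $T$ and show that both resulting probabilities are exponentially small, one because $A$ is hard to reach and the other because $B$ is easy to reach. Concretely, for any $T>0$ I would write
\begin{equation*}
h^{(d)}_{A_d,B_d}(u_0)
\;\leqs\; \bigprobin{u_0}{\tau^{(d)}_{A_d}\leqs T}
     + \bigprobin{u_0}{\tau^{(d)}_{B_d}>T}\;,
\end{equation*}
and then estimate each summand by $2\e^{-H_0/\eps}$.

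For the first summand, I would compare the finite-dimensional process with the full mild solution via Theorem~\ref{thm_Galerkin}. Fixing $\delta>0$ small enough that the inflated set $A_\delta=\set{u\in E\colon \norm{u-A}_{L^\infty}<\delta}$ still satisfies $H(u_0,A_\delta)\geqs\eta/2$ (this requires a continuity argument on the rate function, but since $A$ is a ball and the rate function is lower semicontinuous this causes no trouble), one gets
\begin{equation*}
\bigprobin{u_0}{\tau^{(d)}_{A_d}\leqs T}
\;\leqs\; \bigprobin{u_0}{\tau_{A_\delta}\leqs T} + \fP(\Omega_d^c)\;,
\end{equation*}
where $\Omega_d$ is the good event of Theorem~\ref{thm_Galerkin} on $[0,T]$. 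By Corollary~\ref{cor_ldp_Vbar} the first term is at most $\e^{-(\eta/2-\eta')/\eps}$ for $\eps$ small, while $\fP(\Omega_d^c)$ can be made arbitrarily small by choosing $d\geqs d_0(\eps)$ large (using Markov's inequality applied to the almost-surely finite variable $Z$).

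For the second summand, I exploit $H(u_0,B)=0$: this implies that $u_0$ lies in the basin of attraction of $u^*_2$ under the deterministic flow $\varphi^{\det}_t$, so there exists $T_0=T_0(u_0)$ such that $\varphi^{\det}_{T_0}$ lies in a ball of radius $r/3$ around $u^*_2$. A standard Gronwall-plus-Freidlin--Wentzell argument (as used in the proof of Proposition~\ref{prop_moments2}, combined with the H\"older estimate from Proposition~\ref{prop_Holder_decay}) yields $\bigprobin{u_0}{\sup_{t\leqs T_0}\norm{u_t-\varphi^{\det}_t}_{L^\infty}>r/3}\leqs \e^{-\kappa/\eps}$, hence $\bigprobin{u_0}{\tau_{B(u^*_2,2r/3)}>T_0}\leqs\e^{-\kappa/\eps}$. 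Transferring this to the finite-dimensional process via Theorem~\ref{thm_Galerkin} with $\delta<r/3$ gives $\bigprobin{u_0}{\tau^{(d)}_{B_d}>T_0}\leqs \e^{-\kappa/\eps}+\fP(\Omega_d^c)$. Setting $T=T_0$ and $H_0=\min\set{\eta/2-\eta',\kappa}/2$ (say) completes the bound. Uniformity over $u_0$ in a $L^\infty$-bounded subset of $E_d$ follows by compactness of the relevant sublevel sets in $\cC^\alpha$ and a covering argument, together with the continuity (in $u_0$) of the deterministic time $T_0$.

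The main obstacle I anticipate is not a single hard estimate but the bookkeeping of the three small parameters $\delta$, $\eta'$ and the rate $\kappa$ in the second bound, making sure that the loss in the large-deviation exponent from thickening $A$ to $A_\delta$ and from shrinking $B$ to its inner neighbourhood remains strictly positive, and that the required $d_0(\eps)$ depending on $\eps$ is allowed by the statement. The uniformity over $u_0$ in the second summand is the most delicate point, because the hitting time $T_0(u_0)$ of the deterministic flow need not be uniformly bounded in general; however, restricting to a $L^\infty$-bounded set in the basin of attraction (away from the separatrix, which is guaranteed by $H(u_0,A)\geqs\eta>0$) is enough to render $T_0$ bounded, by continuity of the flow and compactness.
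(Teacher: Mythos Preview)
Your proposal is correct and follows essentially the same approach as the paper's proof: the same time-splitting $h^{(d)}_{A_d,B_d}(u_0)\leqs\bigprobin{u_0}{\tau^{(d)}_{A_d}\leqs T}+\bigprobin{u_0}{\tau^{(d)}_{B_d}>T}$, the same Galerkin comparison to pass to the infinite-dimensional process on thickened/shrunk balls $A_+$, $B_-$, the same large-deviation bound for the first term using $H(u_0,A_+)\geqs\eta/2$, and the same tube-around-the-deterministic-flow estimate for the second. The only notable difference is the uniformity argument: the paper first lets the (infinite-dimensional) process run for a fixed time $T_1$ so that it enters a $\cC^\alpha$-compact set, and then applies a covering argument there, rather than arguing compactness directly on the $L^\infty$-bounded set of initial conditions.
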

\begin{proof}
Fix a $u_{0}$ such that $H(u_0,A)\geqs\eta$ and $H(u_0,B)=0$. 
For any constant $T>0$, we can write 
\begin{equation}
 \label{new_eqpot03:01}
 h^{(d)}_{A_d,B_d}(u_0) \leqs 
\bigprobin{u_0}{\tau^{(d)}_{A_d}\leqs T} + 
\bigprobin{u_0}{\tau^{(d)}_{B_d}> T}\;.
\end{equation} 
For $0<\delta<r$, we define $\Omega_d=\Omega_d(\delta)$ as
in~\eqref{momfin03:1}. Then we have, in a way similar to~\eqref{momfin03:4}, 
\begin{align}
\nonumber
\bigprobin{u_0}{\tau^{(d)}_{A_d}\leqs T} 
&\leqs  \bigprobin{u_0}{\tau_{A_+} \leqs T} + \fP(\Omega_d^c)\;, \\
\bigprobin{u_0}{\tau^{(d)}_{B_d}> T} 
&\leqs  \bigprobin{u_0}{\tau_{B_-} > T} + \fP(\Omega_d^c)\;, 
 \label{new_eqpot03:02}
\end{align} 
where $A_+=\set{\norm{u-u^*_1}_{L^\infty}<r+\delta}$ and
$B_-=\set{\norm{u-u^*_2}_{L^\infty}<r-\delta}$.
We choose $\delta$ small enough that 
$H(u_0,A_+)\geqs \eta/2$. 
The large-deviation principle shows that 
\begin{equation}
 \label{new_eqpot03:03}
\limsup_{\eps\to0} 2\eps \log \bigprobin{u_0}{\tau_{A_+} \leqs T} =
-2H(u_0,A_+) \leqs -\eta\;. 
\end{equation} 
Thus there exists $\eps_0(\eta)>0$, independent
of $T$, such that 
\begin{equation}
 \label{new_eqpot03:05}
 \bigprobin{u_0}{\tau_{A_+} \leqs T} \leqs \e^{-\eta/4\eps}
\end{equation} 
holds for all $\eps < \eps_0$. Choosing $d\geqs d_0(\eta,\eps)$
where $d_0$ is large enough that $\fP(\Omega_{d_0}^c) \leqs \e^{-\eta/4\eps}$,
we have  
\begin{equation}
 \label{new_eqpot03:06}
\bigprobin{u_0}{\tau^{(d)}_{A_d}\leqs T} \leqs 2\e^{-\eta/4\eps}
\end{equation} 
for $\eps < \eps_0$ and $d\geqs d_0$. 

To estimate the second term in~\eqref{new_eqpot03:02}, we assume $\delta<r/2$
and let $B'_-=\set{\norm{u-u^*_2}_{L^\infty}<r-2\delta}$. Assume $T$ is large
enough that the deterministic solution starting in $u_0$ reaches $B'_-$ in time
$T$. Then the large-deviation principle in~\cite{Freidlin88} shows that the
stochastic sample path starting in $u_0$ is unlikely to leave a tube of size
$\delta$ in the $L^\infty$-norm around the deterministic solution before time
$T$, which implies that there exists $\kappa>0$ such that 
\begin{equation}
 \label{new_eqpot03:07}
 \bigprobin{u_0}{\tau_{B_-} > T} \leqs \e^{-\kappa\delta^2/\eps}
\end{equation} 
for $\eps$ small enough. This implies the result, with
$H_0=\kappa\delta^2\wedge\eta/4$. 

As for the uniformity in $u_{0}$, note that after a first finite time $T_{1}$ we may assume that the process has reached a compact subset, cf.~the proof of Proposition~\ref{prop_moments3}. Restarting from this compact subset a standard compactness argument yields the uniformity of $\eps_{0}$, $\delta_{0}$ and $H_{0}$ in $u_{0}$.
\end{proof}

Next we derive a more precise bound, which is useful in situations where we
know $V[u_0]$ explicitly. 

\begin{prop}
\label{new_prop_eqpot2}
Let $u^*_1$ and $u^*_2$ be two different local minima of $V$ and consider the
balls $A=\set{\norm{u-u^*_1}_{L^\infty}<r}$ and
$B=\set{\norm{u-u^*_2}_{L^\infty}<r}$. Assume that $r$ is small enough that
$\overbar A\cap\overbar B=\emptyset$. Then for any $\eta,M>0$, there exist
$\eps_0=\eps_0(\eta,M)>0$ and $d_0=d_0(\eta,M,\eps)<\infty$ such that 
\begin{equation}
 \label{new_eqpot14}
h^{(d)}_{A_d,B_d}(u_0) \leqs 3 \biggpar{\e^{-[\Vbar(u_0,A)-\eta]/\eps} +
\e^{-1/\eta\eps}}
\end{equation} 
holds for all $\eps<\eps_0$, all $d\geqs d_0$ and all $u_0\in E_d$ such 
that $V[u_0]\leqs M$.
\end{prop}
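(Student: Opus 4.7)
The plan is to follow the proof of Proposition~\ref{new_prop_eqpot1}, but sharpen the estimate on $\probin{u_0}{\tau^{(d)}_{A_d}\leqs T}$ so that the rate reflects the actual potential barrier $\Vbar(u_0,A)$ rather than merely the statement that $u_0$ lies outside~$A$. We may assume $\Vbar(u_0,A)\geqs\eta$, since otherwise the first exponential in~\eqref{new_eqpot14} already exceeds $1$ and the bound is trivial.

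Fix $T>0$ and $\delta\in(0,r)$ to be tuned later, and introduce, as in~\eqref{momfin03:1}, the coupling event $\Omega_d$ under which the Galerkin approximation stays within $L^\infty$-distance $\delta$ of $u_t$ up to time $T$. With $A_+=\set{\norm{u-u^*_1}_{L^\infty}<r+\delta}$ and $B_-=\set{\norm{u-u^*_2}_{L^\infty}<r-\delta}$, the reduction used for Proposition~\ref{new_prop_eqpot1} gives
\begin{equation*}
h^{(d)}_{A_d,B_d}(u_0) \leqs \bigprobin{u_0}{\tau_{A_+}\leqs T} + \bigprobin{u_0}{\tau_{B_-}>T} + 2\fP(\Omega_d^c)\;.
\end{equation*}
For the first term, Corollary~\ref{cor_ldp_Vbar} together with Lemma~\ref{lem_ldp_Vbar} yields the bound $\e^{-(\Vbar(u_0,A_+)-\eta/2)/\eps}$ for small $\eps$. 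Since $\Vbar(u_0,A)\geqs\eta>0$, for $r$ and $\delta$ sufficiently small, $A_+$ remains contained in the basin of attraction of $u^*_1$, so any continuous path from $u_0$ to $A_+$ must cross the separatrix; its lowest point is the saddle, whose height forces $\Vbar(u_0,A_+)\geqs \Vbar(u_0,A)-\eta/2$. This produces a term of order $\e^{-(\Vbar(u_0,A)-\eta)/\eps}$.

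For the second term, $\Vbar(u_0,A)>0$ rules out $u_0$ lying in the basin of $u^*_1$, since a deterministic trajectory into $A$ would be a $V$-nonincreasing path forcing $\Vbar(u_0,A)=0$. As $u^*_\pm$ are the only local minima of $V$ (Assumption~\ref{assump_U}), the deterministic flow from $u_0$ must converge to $u^*_2$; the hypothesis $V[u_0]\leqs M$ together with Lemma~\ref{lem_V1} confines $u_0$ to an $H^1$-bounded, hence $L^\infty$-compact set on which the time to enter $B_-$ is uniformly bounded by some $T_{*}$. A standard Freidlin--Wentzell tube estimate around this deterministic trajectory then produces $\probin{u_0}{\tau_{B_-}>T}\leqs \e^{-\kappa/\eps}$ for $T\geqs T_{*}$ and $\eps$ small; tuning the tube radius we arrange $\kappa=1/\eta$. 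Finally, Theorem~\ref{thm_Galerkin} and Markov's inequality make $2\fP(\Omega_d^c)$ smaller than $\e^{-1/(\eta\eps)}$ for $d\geqs d_0(\eps,\eta,M)$, and the claim~\eqref{new_eqpot14} follows, with the prefactor $3$ absorbing all coupling losses.

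The main technical obstacle is the step $\Vbar(u_0,A_+)\geqs \Vbar(u_0,A)-\eta/2$. The potential $V$ is not continuous in the $L^\infty$-topology (high-frequency oscillations of small $L^\infty$-size carry large gradient energy), so enlarging the target ball by a small $L^\infty$-neighbourhood could a priori open up new low-energy routes of entry. One has to show that, for $r,\delta$ small and $V[u_0]\leqs M$, every admissible approach to $A_+$ still crosses the saddle at essentially the same height as an optimal path to $A$; this is where the uniform confinement of $u_0$ to the compact sublevel set $\set{V\leqs M}$ is essential.
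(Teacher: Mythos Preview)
Your reduction to $\probin{u_0}{\tau_{A_+}\leqs T}+\probin{u_0}{\tau_{B_-}>T}+2\fP(\Omega_d^c)$ and your treatment of the first and third terms agree with the paper. The gap is in the second term. You claim that a tube estimate around the deterministic trajectory gives $\probin{u_0}{\tau_{B_-}>T}\leqs\e^{-\kappa/\eps}$ with $\kappa=1/\eta$ obtainable by ``tuning the tube radius''. But the Freidlin--Wentzell tube rate is an increasing function of the tube width, of order $c\delta^2$ (cf.~\eqref{new_eqpot03:07}), and the width $\delta$ is bounded above by a quantity of order $r$ since the tube must stay inside $B_-$ once the deterministic path has entered it. Hence the rate is capped by some $\kappa_0$ depending only on the system and $r$, \emph{not} on $\eta$; for $\eta<1/\kappa_0$ and $\Vbar(u_0,A)>\kappa_0+\eta$ the term $\e^{-\kappa_0/\eps}$ dominates both $\e^{-[\Vbar(u_0,A)-\eta]/\eps}$ and $\e^{-1/\eta\eps}$, so~\eqref{new_eqpot14} does not follow. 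There is also a secondary gap in the uniform bound on $T_*$: the compact set $\{V\leqs M\}$ contains the separatrix and the basin of $u^*_1$, where the entry time into $B_-$ is infinite; one has to use the constraint $\Vbar(u_0,A)\geqs\eta$ to keep $u_0$ in a closed subset of the basin of $u^*_2$, which you do not do.

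The paper avoids the tube estimate entirely for this term. It introduces the set $D(\kappa)=\{u:\Vbar(u,A_+)>0,\ \norm{\nabla V[u]}_{L^2}>\kappa\}$ and splits
\[
\probin{u_0}{\tau_{B_-}>T}\leqs \probin{u_0}{\tau_{D(\kappa)^c}>T}+\probin{u_0}{\tau_{F(\kappa)}\leqs T}\;,\qquad F(\kappa)=D(\kappa)^c\cap B_-^c\;.
\]
On $D(\kappa)$ the drift has $L^2$-norm bounded below by $\kappa$, so any path remaining there for time $T$ has rate function at least $\tfrac12\kappa^2T+\inf_{D(\kappa)}V-V[u_0]$; since $V[u_0]\leqs M$, choosing $T=T(\kappa,M)$ forces this to exceed $\Vbar(u_0,A)$. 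For the second probability the paper proves $\Vbar(u_0,F(0))=\Vbar(u_0,A_+)$ (any point of $F(0)$ outside $B_-$ either satisfies $\Vbar(\cdot,A_+)=0$ or is a stationary point, and $u^*_2$ is the only stationary point with $\Vbar(\cdot,A_+)>0$), so the large-deviation bound yields the rate $\Vbar(u_0,A)-\eta$. Thus both contributions to $\probin{u_0}{\tau_{B_-}>T}$ carry the rate $\Vbar(u_0,A)-\eta$, matching the first term; the $\e^{-1/\eta\eps}$ contributions in~\eqref{new_eqpot14} come solely from $\fP(\Omega_d^c)$.
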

\begin{proof}
Fix a $u_{0}$ with $V[u_0]\leqs M$.
We decompose the equilibrium potential in the same way as
in~\eqref{new_eqpot03:01} and~\eqref{new_eqpot03:02}. 
It follows from~\eqref{new_eqpot03:03} and Lemma~\ref{lem_ldp_Vbar} that 
there exists $\eps_0(\eta)>0$, independent
of $T$, such that 
\begin{equation}
 \label{new_eqpot14:05}
 \bigprobin{u_0}{\tau_{A_+} \leqs T} \leqs \e^{-(\Vbar(u_0,A_+)-\eta/2)/\eps}
\end{equation} 
holds for all $\eps < \eps_0$. We choose $\delta$ in the definition of $A_{+}$ small enough that 
$\Vbar(u_0,A_+)\geqs \Vbar(u_0,A)-\eta/2$, and finally $d\geqs d_0(\eta,\eps)$
where $d_0$ is large enough that $\fP(\Omega_{d_0}^c) \leqs \e^{-1/\eta\eps}$.
This shows that 
\begin{equation}
 \label{new_eqpot14:06}
\bigprobin{u_0}{\tau^{(d)}_{A_d}\leqs T} \leqs \e^{-(\Vbar(u_0,A)-\eta)/\eps} +
\e^{-1/\eta\eps}
\end{equation} 
for $\eps < \eps_0$ and $d\geqs d_0$. 

In order to estimate $\probin{u_0}{\tau_{B_-}>T}$, we let
$D(\kappa)$ be the set of $u\in E$ such that $\Vbar(u,A_+)>0$ 
and $\norm{\nabla V[u]}_{L^2}>\kappa$. Then we can decompose
\begin{equation}
 \label{new_eqpot14:07}
 \bigprobin{u_0}{\tau_{B_-} > T}
\leqs \bigprobin{u_0}{\tau_{D(\kappa)^c} > T} +
\bigprobin{u_0}{\tau_{F(\kappa)} \leqs T}\;.
\end{equation} 
where $F(\kappa) = D(\kappa)^c\cap B_-^c$. 
Now the same argument as above shows that 
\begin{equation}
 \label{new_eqpot14:08}
 \bigprobin{u_0}{\tau_{F(\kappa)} \leqs T} \leqs
\e^{-(\Vbar(u_0,F(\kappa))-\eta/2)/\eps}\;.
\end{equation} 
Note that $\lim_{\kappa\to0}\Vbar(u_0,F(\kappa))=\Vbar(u_0,F(0))$. 
Let us show that $\Vbar(u_0,F(0))=\Vbar(u_0,A_+)$ provided $\delta$ is small
enough. We proceed in two steps:
\begin{enum}
\item 	First we show that $\Vbar(u_0,F(0))\leqs\Vbar(u_0,A_+)$. 
Observe that $F(0)^c= D(0)\cup B_-$. The fact that
$A$ and $B$ have disjoint closure implies that $A_+\cap B_-=\emptyset$ for 
sufficiently small~$\delta$. The fact that $\Vbar(u,A_+)>0$ in $D(0)$ shows  
that $A_+\cap D(0)=\emptyset$. It follows that $A_+\cap F(0)^c=\emptyset$, and
thus $A_+ \subset F(0)$, which implies $\Vbar(u_0,F(0))\leqs\Vbar(u_0,A_+)$. 

\item  Assume by contradiction that $\Vbar(u_0,F(0))<\Vbar(u_0,A_+)$.  Then
there must exist a path $\ph$, connecting $u_0$ to a point $u\in F(0)$, on which
the potential remains strictly smaller than $\Vbar(u_0,A_+)+V[u_0]$. If we can
show that $\Vbar(u,A_+)=0$, then this implies that we can connect $u_0$ to
$A_+$, via $u$, by a path on which the potential remains strictly smaller than
$\Vbar(u_0,A_+)+V[u_0]$, contradicting the definition of $\Vbar(u_0,A)$. 

It thus remains to show that $\Vbar(u,A_+)=0$. Note that 
\begin{equation}
 \label{new_eqpot14:085}
 D(0) = \bigsetsuch{u}{\Vbar(u,A_+)>0, \nabla V[u]\neq 0}\;. 
\end{equation} 
Since $u\in F(0)=D(0)^c\cap B_-^c$, we have $u\neq u^*_2$ and either 
$\Vbar(u,A_+)=0$, or $\nabla V[u]= 0$. However, the assumptions imply that 
$u_{2}^{*}$ is the only stationary point in the
set $\setsuch{u}{\Vbar(u,A_{+})>0}$, so that necessarily $\Vbar(u,A_+)=0$. 
\end{enum}
We have thus proved that $\Vbar(u_0,F(0))=\Vbar(u_0,A_+)$, and it follows that
there exists a $\delta_0(\eta)$ such that for $\delta<\delta_0(\eta)$
\begin{equation}
 \label{new_eqpot14:09}
 \bigprobin{u_0}{\tau_{F(\kappa)} \leqs T} \leqs
\e^{-(\Vbar(u_0,A_+)-\eta)/\eps}\;.
\end{equation} 
for all $\kappa<\kappa_0$. 

It remains to estimate the first term on the right-hand side
of~\eqref{new_eqpot14:07}. Let $\ph$ be a continuous path starting in $u_0$ and
remaining in $\cD(\kappa)$ up to time $T$. Then its rate function satisfies 
\begin{align}
\nonumber
I(\ph) 
&\geqs \int_0^{T} \int_0^L \Bigbrak{
-\ph''_t(x) + U'(\ph_t(x))} \dot\ph_t(x) \6x 
 \6t + \frac12\int_0^{T} \int_0^L \Bigbrak{
-\ph''_t(x) + U'(\ph_t(x))}^2 \6x 
 \6t \\
&\geqs V[\ph_T] - V[u_0] + \frac12 \kappa^2 T\;,
 \label{new_eqpot14:10}
\end{align}
where we have used the fact that the second integral is proportional to
$\norm{\nabla V}_{L^2}^2$, cf.~\eqref{bp12}. The large-deviation principle
implies that 
\begin{equation}
 \label{new_eqpot14:11}
\limsup_{\eps\to0} 2\eps\log \bigprobin{u_0}{\tau_{D(\kappa)^c} > T}
\leqs - \biggbrak{\frac12\kappa^2 T + \inf_{D(\kappa)}V - V[u_0]}\;. 
\end{equation} 
Since $V[u_0]\leqs M$, we can find for any $\kappa>0$, a $T=T(\kappa,M)$
such that the right-hand side is smaller than $-\Vbar(u_0,A)$. 

Finally note that $\setsuch{u_{0}}{V[u_0]\leqs M}$ is contained in a closed ball in the $\cC^{1/2}$-norm, 
so that a standard compactness argument allows to choose $\eps_{0}$ and $d_{0}$ uniformly in $u_{0}$ 
from this set. This concludes the proof.
\end{proof}



\section{Estimating capacities}
\label{sec_cap}



\subsection{Neumann b.c.}
\label{ssec_Neumann}

We consider the potential energy 
\begin{equation}
 \label{capn00}
V[u] =  \int_0^L \biggbrak{\frac12 u'(x)^2 + U(u(x))} \6x
\end{equation}
for functions $u(x)$ containing at most $2d+1$ nonvanishing Fourier modes and
satisfying Neumann boundary conditions, that is 
\begin{equation}
 \label{capn01}
u(x) = \sum_{k=-d}^d z_k \frac{\e^{ik\pi x/L}}{\sqrt{L}} 
= y_0\frac{1}{\sqrt{L}} + \sum_{k=1}^d y_k \sqrt{\frac 2L}\cos(k \pi x/L)\;,
\end{equation} 
where $y_0=z_0$ and $y_k=\sqrt{2}z_k=\sqrt{2}z_{-k}$ for $k\geqs1$. 
The expression $\Vhat$ of the potential in Fourier variables follows 
from~\eqref{nf06} and~\eqref{nf07}, with the sums restricted to $-d\leqs k\leqs
d$. 

Note that 
\begin{equation}
 \label{capn01B}
 u(L-x) = y_0\frac{1}{\sqrt{L}} + \sum_{k=1}^d (-1)^k y_k \sqrt{\frac 2L}\cos(k
\pi x/L)\;,
\end{equation} 
so that the fact that $V[u]=V[u(L-\cdot)]$ implies the symmetry
\begin{equation}
 \label{capn_symmetry}
\Vhat(y_0,y_1,\dots,y_d) = \Vhat(y_0,-y_1,\dots,(-1)^dy_d)\;. 
\end{equation} 
Our aim is to estimate the capacity $\capacity_{A}(B)$, where $A$ is a ball of
radius $r$ in the $L^\infty$-norm around the stationary point $u^*_-$, and $B$
is a ball of radius $r$ around $u^*_+$. Note that $u^*_-$ has $y$-coordinates 
$(u_-\sqrt{L},0,\dots,0)$ and $u^*_+$ has $y$-coordinates 
$(u_+\sqrt{L},0,\dots,0)$. We will rely on the variational
representation of capacities 
\begin{equation}
 \label{capn02}
\capacity_A(B) = \inf_{h\in\cH_{A,B}} \Phi_{(A\cup B)^c}(h)\;, 
\end{equation} 
in terms of the Dirichlet form 
\begin{equation}
 \label{capn03}
\Phi_D(h) = \eps \int_D \e^{-\Vhat(y)/\eps} \norm{\nabla h(y)}_{\ell^2}^2\6y\;,
\end{equation} 
where in~\eqref{capn02}, $\cH_{A,B}$ denotes the set of functions $h$ satisfying the boundary conditions $h=1$ on $\partial A$ and $h=0$ on $\partial B$ for which
$\Phi_{(A\cup B)^c}(h)$ is defined and finite.

\subsubsection{$L<\pi$}

We consider first the case where $L\leqs\pi-c$ for some constant $c>0$. We know
that in this case, $\Vhat$ has only three stationary points, all lying on the
$y_0$-axis. One of them is the origin $O$, where the Hessian of $\Vhat$ has
eigenvalues 
\begin{equation}
 \label{cap_s01}
\lambda_k = -1 + \biggpar{\frac{k\pi}L}^2\;,
\qquad
k=0,\dots,d\;. 
\end{equation} 
Thus $O$ is a saddle with one-dimensional unstable manifold, which in this
case is contained in the $y_0$-axis. Let $\cW^{\math s}(O)$ denote the
$d$-dimensional stable manifold of the origin. 

\begin{lemma}[Growth of the potential on the stable manifold]
\label{lem_Ws} 
There exists a constant $m_0>0$ such that for all $y\in\cW^{\math s}(O)$,
\begin{equation}
 \label{cap_s01a}
 \Vhat(y) \geqs m_0 \norm{y}_{H^1}^2\;.
\end{equation} 
\end{lemma}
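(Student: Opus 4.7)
The plan is to split $\cW^{\math s}(O)$ into three regimes in $\norm{y}_{H^1}$ and patch a local stable-manifold analysis with the global coercivity from Section~\ref{ssec_bounds_pot}. Throughout, I use that the Hessian of $\Vhat$ at $O$ is diagonal with eigenvalues $\lambda_k=(k\pi/L)^2-1$, with $\lambda_0<0$ along the unstable $y_0$-direction and $\lambda_k>0$ for $k\geqs1$ spanning the tangent space to $\cW^{\math s}(O)$; since $L\leqs\pi-c$, one has $\lambda_1>0$ and the map $k\mapsto\lambda_k/(1+k^2)$ is minimized on $\set{k\geqs1}$ at $k=1$, giving $\frac12\sum_{k\geqs1}\lambda_k y_k^2\geqs\frac{\lambda_1}{4}\norm{y'}_{H^1}^2$ where $y'=(y_1,\dots,y_d)$.

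For small $\norm{y}_{H^1}\leqs\delta$, I would apply the stable manifold theorem to $\dot y=-\nabla\Vhat(y)$ to represent $\cW^{\math s}(O)$ locally as a graph $y_0=g(y')$ with $g(0)=0$ and $\nabla g(0)=0$, so $g(y')=\Order{\norm{y'}^2}$. Plugging this into the Taylor expansion of $\Vhat$ about $O$, the destabilising term $-\tfrac12 y_0^2=\Order{\norm{y'}^4}$ is subleading, and the cubic-and-higher corrections coming from $U$ are controlled by $C\norm{y}_{H^1}^3$ via the one-dimensional Sobolev embedding $H^1\hookrightarrow L^\infty$. Choosing $\delta$ small enough to absorb these errors then yields $\Vhat(y)\geqs(\lambda_1/16)\norm{y}_{H^1}^2$ on this regime.

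At the opposite end, Lemma~\ref{lem_V1} gives $\Vhat(y)\geqs\beta'\norm{y}_{H^1}^2-\alpha'$ globally on $E_d$, so $\Vhat(y)\geqs(\beta'/2)\norm{y}_{H^1}^2$ once $\norm{y}_{H^1}^2\geqs R^2\defby 2\alpha'/\beta'$, without using the stable-manifold constraint. The lemma then follows by taking $m_0$ as the minimum of the three constants produced in the three regimes.

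The remaining regime $\delta\leqs\norm{y}_{H^1}\leqs R$ is where the main obstacle lies. Positivity $\Vhat>0$ on $\cW^{\math s}(O)\setminus\set{O}$ is easy: the gradient flow from any $y\in\cW^{\math s}(O)$ converges to $O$ while $\Vhat$ strictly decreases along it. For fixed $d$, continuity and closedness of the annulus then give $m_2(d)\defby\inf\Vhat>0$ on this set, yielding the claim with an $m_0(d)$ that a priori depends on $d$. The delicate point -- essential for the subsequent capacity estimates, which require $d$-independent constants -- is uniform control of $m_2(d)$ from below in $d$; I would handle this via Proposition~\ref{prop_tpot} together with compactness of $H^1$-bounded sets in $L^2$, comparing the finite-$d$ stable manifolds with an infinite-dimensional limit on which $\Vhat$ remains strictly positive away from $O$.
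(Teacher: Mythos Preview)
Your treatment of the small-$\norm{y}_{H^1}$ and large-$\norm{y}_{H^1}$ regimes matches the paper's proof: local stable-manifold graph plus Taylor expansion for the former, global coercivity from Lemma~\ref{lem_V1} for the latter. The difference lies entirely in the intermediate annulus, where you correctly identify the need for $d$-uniformity but take an unnecessarily indirect route.

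The paper avoids any compactness argument. You already observe that orbits of $\dot y = -\nabla\Vhat(y)$ on $\cW^{\math s}(O)$ converge to $O$ with $\Vhat$ strictly decreasing; now combine this with your small-regime estimate evaluated \emph{on} the sphere $\norm{y}_{H^1} = \delta$. Any orbit starting at $y \in \cW^{\math s}(O)$ with $\norm{y}_{H^1} \geqs \delta$ must cross this sphere at some time $t^*$ on its way to $O$, and at the crossing your local bound gives $\Vhat(y(t^*)) \geqs (\lambda_1/16)\delta^2$. Monotonicity of $\Vhat$ along the orbit then yields $\Vhat(y) \geqs (\lambda_1/16)\delta^2$ for \emph{all} such $y$, with constants depending only on $L$ and not on $d$ (provided the local step is itself set up $d$-uniformly, e.g.\ via the Banach-space stable-manifold theorem as the paper does). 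On the annulus $\delta \leqs \norm{y}_{H^1} \leqs R$ this gives $\Vhat(y) \geqs (\lambda_1/16)(\delta^2/R^2)\norm{y}_{H^1}^2$, and the three regimes patch exactly as you say.

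Your proposed detour through Proposition~\ref{prop_tpot} and Rellich compactness would be harder to make rigorous: Proposition~\ref{prop_tpot} compares critical points across dimensions, not stable manifolds, and passing to an infinite-dimensional limit of the finite-$d$ stable manifolds would require convergence estimates you have not supplied. The paper's argument sidesteps all of this by extracting a quantitative---not merely positive---lower bound directly from the gradient-flow monotonicity you already had in hand.
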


\begin{proof}
Let $y_\perp=(y_1,\dots,y_d)$. 
The centre-stable manifold theorem for
differential equations in Banach spaces~\cite[Theorem~1.1]{Gallay_92}
shows that $\cW^{\math s}(O)$ can be locally described by a graph of the form
$y_0=g(y_\perp)$. More precisely, the nonlinear part of $\nabla\Vhat(y)$ being
of order $\norm{y}_{H^s}^2$ for $s>1/4$, there exist constants $\rho,
M>0$ such that 
\begin{equation}
 \label{cap_s01a:1} 
\bigabs{g(y_\perp)} \leqs M \norm{y_\perp}_{H^s}^2 
\qquad
\forall y_\perp \colon \norm{y_\perp}_{H^s} \leqs \rho\;.
\end{equation} 
Since 
\begin{equation}
 \label{cap_s01a:2}
\Vhat(y) = \frac12 \sum_{k=0}^d \lambda_k y_k^2 +
\bigOrder{\norm{y}_{H^s}^3}
\end{equation} 
holds for all $s>1/4$, we have in particular 
\begin{equation}
 \label{cap_s01a:3}
\Vhat(g(y_\perp),y_\perp) = -\frac12 g(y_\perp)^2 
+ \frac12 \sum_{k=1}^d \lambda_k y_k^2 +
\bigOrder{\norm{y}_{H^1}^3}
\end{equation} 
whenever $\norm{y_\perp}_{H^1} \leqs \rho$. Thus using~\eqref{cap_s01a:1}
to bound $g(y_\perp)^2$, we obtain the existence of constants $m_1, \rho_1>0$
such that 
\begin{equation}
 \label{cap_s01a:4}
\Vhat(y) \geqs m_1 \norm{y}_{H^1}^2 
\qquad
\forall y\in\cW^{\math s}(O) \colon \norm{y}_{H^1}\leqs\rho_1\;.
\end{equation} 
We have used the fact that $\norm{y}_{H^1}^2 = \abs{y_0}^2 +
\norm{y_\perp}_{H^1}^2$ and estimated $\abs{y_0}^2$ on the stable
manifold by applying~\eqref{cap_s01a:1} once more. 
A similar computation shows that
\begin{equation}
 \label{cap_s01a:5} 
-\nabla\Vhat(y)\cdot \nabla( \norm{y}_{H^1}^2)<0 
\qquad
\forall y\in\cW^{\math s}(O) \colon \norm{y}_{H^1}\leqs\rho_1\;,
\end{equation} 
that is, the vector field $-\nabla\Vhat(y)$ points inward the ball of radius
$\rho_1$ on the stable manifold. 
By definition of the stable manifold, $\Vhat$ has to decrease on $\cW^{\math
s}(O)$ along orbits of the gradient flow $\dot y=-\nabla\Vhat(y)$. We thus
conclude from~\eqref{cap_s01a:4} and~\eqref{cap_s01a:5} that 
\begin{equation}
 \label{cap_s01a:6} 
\Vhat(y) \geqs m_1 \rho_1^2 
\qquad
\forall y\in\cW^{\math s}(O) \colon \norm{y}_{H^1}\geqs\rho_1\;.
\end{equation} 
Next, recall that by Lemma~\ref{lem_V1}, there exist constants
$\alpha,\beta>0$ such that 
\begin{equation}
 \label{cap_s01a:7}
\Vhat(y) \geqs -\alpha + \beta \norm{y}_{H^1}^2 
\end{equation} 
for all $y\in\R^{d+1}$. Define $\gamma>0$ by $-\alpha+\beta\gamma^2=1$. Then for
all $y\in\cW^{\math s}(O)$ such that
$\rho_1\leqs\norm{y}_{H^1}\leqs\gamma$, we have 
\begin{equation}
 \label{cap_s01a:8}
\Vhat(y) \geqs m_1\rho_1^2 = m_1\frac{\rho_1^2}{\gamma^2}\gamma^2
\geqs  m_1\frac{\rho_1^2}{\gamma^2}\norm{y}_{H^1}^2\;.
\end{equation} 
Together with~\eqref{cap_s01a:4} and~\eqref{cap_s01a:7} for $\norm{y}_{H^1}>\gamma$, this proves~\eqref{cap_s01a}, with the choice 
$m_0=\min\set{m_1, (m_1\rho_1^2/\gamma^2), (1/\gamma^2)}$. 
\end{proof}

\begin{prop}[Upper bound on the capacity]
\label{prop_cap_neumann_smallL_upper}
There exist constants $r_0>0$ and $\eps_0>0$ such that 
\begin{equation}
 \label{cap_s02}
\capacity_A(B) \leqs \frac{\eps}{\sqrt{2\pi\eps}} 
\biggpar{\prod_{k=1}^d \sqrt{\frac{2\pi\eps}{\lambda_k}}\,}
\bigbrak{1 + c_+\eps^{1/2}\abs{\log\eps}^{3/2}}
\end{equation} 
holds for all $r<r_0$, all $\eps<\eps_0$ and all $d\geqs1$, 
where the constant $c_+$ is independent of $\eps$ and $d$.
\end{prop}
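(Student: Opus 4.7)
The plan is to apply the Dirichlet variational principle~\eqref{capn02} with an explicit test function $h$ concentrated near the saddle $O$, and then to estimate the resulting Dirichlet form~\eqref{capn03}. Since $O$ has a one-dimensional unstable manifold along the $y_0$-axis, I choose $h$ to depend only on $y_0$ in a slab $\cS=\set{\abs{y_0}<\delta}$ of width $\delta=K\sqrt{\eps\abs{\log\eps}}$, with $K$ to be fixed later; $h$ equals $1$ for $y_0\leqs-\delta$ and $0$ for $y_0\geqs\delta$. Inside $\cS$ I take the one-dimensional optimal interpolant
\begin{equation*}
h(y_0,y_\perp) =
\frac{\displaystyle\int_{y_0}^{\delta}\e^{-t^2/(2\eps)}\6t}
{\displaystyle\int_{-\delta}^{\delta}\e^{-t^2/(2\eps)}\6t}\;,
\end{equation*}
which is independent of $y_\perp=(y_1,\dots,y_d)$. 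For small $\eps$ the balls $A, B$ (centred at $y_0=u_\pm\sqrt{L}$) lie outside the slab, so $h$ satisfies the required boundary conditions on $\partial A$ and $\partial B$, and $\nabla h$ is supported in $\cS$ with $\norm{\nabla h}_{\ell^2}^2=(\dpar{h}{y_0})^2$.

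To compute $\Phi_{(A\cup B)^c}(h)$, I would first apply the normal-form change of variables from Proposition~\ref{prop_NormalForm} with Sobolev exponent $s\in(5/12,1/2)$, placing $\Vhat$ in the form $\tfrac12\sum_k\lambda_k y_k^2+C_4 y_1^2 y_{-1}^2+R_1(y)$ with $R_1=\Order{\norm{y}_{H^s}^5}$ and Jacobian $1+\Order{\norm{y}_{H^s}}$. The choice $s<1/2$ is crucial for uniformity in $d$: it guarantees that $\eps\sum_{k\geqs1}(1+k^2)^s/\lambda_k$ is bounded uniformly, so that the Gaussian measure with density proportional to $\e^{-\sum\lambda_k y_k^2/(2\eps)}$ concentrates on the set $\set{\norm{y_\perp}_{H^s}\leqs K\sqrt{\eps\abs{\log\eps}}}$. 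On this typical set the integrand factorises to leading order: the $y_\perp$-integral produces $\prod_{k=1}^d\sqrt{2\pi\eps/\lambda_k}$ by the standard Gaussian formula, while an explicit computation gives
\begin{equation*}
\int_{-\delta}^\delta \biggpar{\dpar{h}{y_0}}^2 \e^{y_0^2/(2\eps)}\6y_0
= \biggpar{\int_{-\delta}^{\delta}\e^{-t^2/(2\eps)}\6t}^{-1}
= (2\pi\eps)^{-1/2}\bigbrak{1+\Order{\eps^{K^2/2}}}\;.
\end{equation*}
Multiplying by the prefactor $\eps$ from~\eqref{capn03} reproduces the leading factor $\eps/\sqrt{2\pi\eps}$ appearing in~\eqref{cap_s02}.

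The main obstacle is to bound all the error terms uniformly in the dimension $d$. Two sources of error must be controlled: (i) the Taylor remainder $R_1$ and the Jacobian of the normal-form transformation evaluated on the typical set, and (ii) the tail contribution from the region where the normal form breaks down. For (i), the bounds of Proposition~\ref{prop_NormalForm} combined with the estimate $\norm{y}_{H^s}\leqs K\sqrt{\eps\abs{\log\eps}}$ yield relative errors of order $\eps^{1/2}\abs{\log\eps}^{a}$ with $a\leqs 3/2$, where the worst case comes from the cubic piece of the change of variables. For (ii), I rely on Lemma~\ref{lem_Ws} together with the fact that $\lambda_k/(1+k^2)$ is bounded below by a positive constant for $L<\pi-c$, which produces a quadratic lower bound $\Vhat(y)\geqs c\norm{y_\perp}_{H^1}^2-C$ on the slab. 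The Gaussian weight $\e^{-\Vhat/\eps}$ then supplies sufficient decay to render the tail contribution subleading, uniformly in $d$. Choosing $K$ sufficiently large balances these error sources and delivers the bound $\Order{\eps^{1/2}\abs{\log\eps}^{3/2}}$ stated in~\eqref{cap_s02}.
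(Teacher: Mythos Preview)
Your overall strategy---variational principle with a test function depending only on $y_0$ near the saddle---matches the paper's, and your leading-order computation is correct. But there is a genuine gap in step~(ii), and the paper's construction differs from yours precisely in order to close it.

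Your $h$ is nonconstant on the entire slab $\cS=\{\abs{y_0}<\delta\}$, which extends to infinity in $y_\perp$. To control the Dirichlet form there you need a lower bound $\Vhat(y)\geqs c\norm{y_\perp}_{H^1}^2-C$ on $\cS$ with $C$ negligible. You cite Lemma~\ref{lem_Ws}, but that lemma asserts $\Vhat(y)\geqs m_0\norm{y}_{H^1}^2$ \emph{only on the stable manifold} $\cW^{\math s}(O)=\{y_0=g(y_\perp)\}$. Globally $g$ need not stay inside $(-\delta,\delta)$, so large portions of the slab can lie in the basins of $u^*_\pm$, where no such bound is available; the only general global estimate is Lemma~\ref{lem_V1}, which gives $\Vhat\geqs\beta'\norm{y}_{H^1}^2-\alpha'$ with a fixed $\alpha'>0$, and the resulting factor $\e^{\alpha'/\eps}$ swamps the main term.

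The paper's fix is geometric: it takes $h_+=f(y_0)$ only inside a finite \emph{box} $D=\prod_{k=0}^d[-\delta_k,\delta_k]$ with $\delta_k=\sqrt{c_k\eps\abs{\log\eps}/\abs{\lambda_k}}$ and $c_k=c_0(1+\log(1+k))$; outside $D$ it sets $h_+$ constant except in a thin layer $S$ of $\ell^2$-width $\sqrt{\eps\abs{\log\eps}}$ \emph{around $\cW^{\math s}(O)$ itself}, interpolating there with $\norm{\nabla h_+}_{\ell^2}^2\leqs M/(\eps\abs{\log\eps})$. Lemma~\ref{lem_Ws} then applies directly on $S\setminus D$, and the logarithmic growth of $c_k$ is what makes $\sum_k\eps^{\kappa c_k}$ summable uniformly in $d$. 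A minor remark: invoking the full normal form of Proposition~\ref{prop_NormalForm} is unnecessary here---since $\lambda_1$ is bounded away from zero for $L<\pi-c$, the quadratic Taylor expansion $\Vhat(y)=\tfrac12\sum_k\lambda_ky_k^2+\Order{\norm{y}_{H^s}^3}$ on $D$ already produces the error $\Order{\eps^{1/2}\abs{\log\eps}^{3/2}}$.
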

\begin{proof}
Choosing the radius $r$ of the balls $A$ and $B$ small enough, we can ensure that $A$ and $B$ lie at a
$L^\infty$-distance of order $1$ from the stable manifold $\cW^{\math s}(O)$. 

By the variational principle~\eqref{capn02}, it is sufficient to construct a
particular function $h_+\in\cH_{A,B}$ for which the claimed upper bound holds. 
We define $h_+$ separately in different sets $D, S$ defined below, and the
remaining part of $\R^{d+1}$. Let 
\begin{equation}
 \label{cap_s02:1}
\delta_k = \sqrt{\frac{c_k\eps\abs{\log\eps}}{\abs{\lambda_k}}}\;, 
\qquad
k=0,\dots d\;, 
\end{equation}
with $c_k=c_0(1+\log(1+k))$. We will choose $c_{0}$ sufficiently large below.
We set 
\begin{equation}
 \label{cap_s02:2}
D = \prod_{k=0}^d [-\delta_k,\delta_k]\;. 
\end{equation}  
Note that for any $s<1/2$ one has
\begin{equation}
 \label{cap_s023}
 \norm{y}_{H^s}^2
= \sum_{k=0}^d \frac{c_k(1+k^2)^s}{\abs{\lambda_k}}\eps\abs{\log\eps} = 
\Order{\eps\abs{\log\eps}}
\end{equation}
for any $y\in D$, uniformly in $d$. By~\eqref{cap_s01a:2} we thus have 
\begin{equation}
 \label{cap_s02:3b}
\Vhat(y) = \frac12 \sum_{k=0}^d \lambda_k y_k^2 +
\bigOrder{\eps^{3/2}\abs{\log\eps}^{3/2}}
\end{equation} 
for all $y\in D$, where again the remainder is uniformly bounded in the
dimension $d$. On $D$, we define $h_+$ by 
\begin{equation}
 \label{cap_s02:4}
h_+(y) = f(y_0) \defby 
\int_{y_0}^{\delta_0} \frac{\e^{\Vhat(t,0,\dots,0)/\eps}}
{\displaystyle
\int_{-\delta_0}^{\delta_0} \e^{\Vhat(s,0,\dots,0)/\eps}\6s}
\6t\;.
\end{equation}
The contribution of $h_{+}$ on $D$ to the Dirichlet form is given by 
\begin{align}
\nonumber
\Phi_D(h_+) &= \eps \int_D f'(y_0)^2 \e^{-\Vhat(y)/\eps} \6y \\
&= \eps \int_D \frac{\e^{-\Vhat(y)/\eps + 2\Vhat(y_0,0,\dots,0)/\eps}}
{\displaystyle
\Biggpar{\int_{-\delta_0}^{\delta_0} \e^{\Vhat(t,0,\dots,0)/\eps}\6t}^2}
\6y\;.
\label{cap_s02:5}
\end{align}
Using the expression~\eqref{cap_s02:3b} of the potential, one readily gets 
\begin{equation}
 \label{cap_s02:6}
\Phi_D(h_+) \leqs \eps 
\Biggpar{\int_{-\delta_0}^{\delta_0} \e^{-y_0^2/2\eps}\6y_0}^{-1}
\prod_{k=1}^d \int_{-\delta_k}^{\delta_k} \e^{-\lambda_k y_k^2/2\eps}\6y_k
\bigbrak{1+\bigOrder{\eps^{1/2}\abs{\log\eps}^{3/2}}}\;,
\end{equation}  
which implies that $\Phi_D(h_+)$ its bounded above by the right-hand side of~\eqref{cap_s02}, 
provided $c_0$ is chosen large enough. 

We now continue $h_+$ outside the set $D$. Let $S$ be a layer of thickness 
of order $\sqrt{\eps\abs{\log\eps}}$ in $\norm{\cdot}_{\ell_{2}}$-norm around the stable manifold $\cW^{\math
s}(O)$. We set $h_+=1$ in the connected component of ${\R^{d+1}\setminus~S}$
containing $A$, $h_+=0$ in the connected component of $\R^{d+1}\setminus S$
containing $B$, and interpolate $h_+$ in an arbitrary way inside $S$, 
requiring only
$\norm{\nabla h_+(y)}_{\ell^2}^2  \leqs M/(\eps\abs{\log\eps})$ for some
constant $M$. Then the contribution $\Phi_{\R^{d+1}\setminus S}(h_+)$ to the
capacity is zero, and it remains to estimate $\Phi_{S\setminus D}(h_+)$. By
Lemma~\ref{lem_Ws}, we have 
\begin{align}
\nonumber
\Phi_{S\setminus D}(h_+)
&\leqs\frac{M\eps}{\eps\abs{\log\eps}} \int_{S\setminus D}
\e^{-m_0\sum_{k=0}^d(1+k^2)y_k^2/\eps}\6y \\
\nonumber
&\leqs\frac{M\eps}{\eps\abs{\log\eps}}
\sum_{k=0}^d \prod_{j\neq k}\int_{-\infty}^\infty
\e^{-m_0(1+j^2)y_j^2/\eps}\6y_j 
\cdot2\int_{\delta_k}^\infty \e^{-m_0(1+k^2)y_k^2/\eps}\6y_k \\
&\leqs\frac{M\eps}{\eps\abs{\log\eps}}\prod_{j=0}^d
\sqrt{\frac{\pi\eps}{m_0(1+j^2)}}
\sum_{k=0}^d \eps^{\kappa c_k}\;,
 \label{cap_s02:10}
\end{align}
where $\kappa>0$ depends only on $m_0$ and $L$.  
Recalling the choice $c_k=c_0(1+\log(1+k))$, we find 
\begin{align}
\nonumber
\sum_{k=0}^d \eps^{\kappa c_k} &\leqs \eps^{\kappa c_0} + \int_0^{d}
\eps^{\kappa c_0(1+\log(1+x))} \6x \\
\nonumber
&= \eps^{\kappa c_0} + \eps^{\kappa c_0}
\int_1^{d+1} x^{-\kappa  c_0\abs{\log\eps}}\6x \\
&\leqs \eps^{\kappa c_0} \Biggbrak{1 + \frac{1}{\kappa
c_0\abs{\log\eps}-1}}\;,
 \label{cap_s02:11} 
\end{align} 
uniformly in $d$, provided $\kappa c_0\abs{\log\eps}>1$. Thus we can ensure
that $\Phi_{S\setminus D}(h_+)$ is negligible by making $c_0$ large enough. 
\end{proof}

\begin{prop}[Lower bound on the capacity]
\label{prop_cap_neumann_smallL_lower} 
There exist $r_0>0$, $\eps_0>0$ and $d_0(\eps)<\infty$ such that 
\begin{equation}
 \label{cap_s03}
\capacity_A(B) \geqs \frac{\eps}{\sqrt{2\pi\eps}} 
\biggpar{\prod_{k=1}^d \sqrt{\frac{2\pi\eps}{\lambda_k}}\,}
\bigbrak{1 - c_-\eps^{1/2}\abs{\log\eps}^{3/2}}
\end{equation} 
holds for all $r<r_0$, all $\eps<\eps_0$ and all $d\geqs d_0(\eps)$, 
where the constant $c_-$ is independent of $\eps$ and $d$.
\end{prop}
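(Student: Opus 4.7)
The plan is to derive the matching lower bound via the variational characterization
$\capacity_A(B) = \inf_{h\in\cH_{A,B}} \Phi_{(A\cup B)^c}(h)$, restricting the Dirichlet
form to the box $D = \prod_{k=0}^{d}[-\delta_k,\delta_k]$ introduced in the proof of
Proposition~\ref{prop_cap_neumann_smallL_upper} and then extracting a one--dimensional
estimate along the unstable direction $y_0$. Let $h^{*} = h_{A_d,B_d}$ denote the
minimiser, so that $\Phi_{(A\cup B)^c}(h^{*})=\capacity_A(B)$; dropping all contributions
from the complement of $D$, which is legitimate because the integrand is non-negative,
gives
\begin{equation*}
 \capacity_A(B) \;\geqs\; \eps \int_{D_\perp}\!\!\biggpar{\int_{-\delta_0}^{\delta_0}
 \bigpar{\partial_{y_0} h^{*}(y_0,y_\perp)}^2 \e^{-\Vhat(y_0,y_\perp)/\eps}\6y_0}\6y_\perp\;,
\end{equation*}
where $D_\perp = \prod_{k=1}^{d}[-\delta_k,\delta_k]$.

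The next step is a Cauchy--Schwarz argument slice by slice. For each fixed
$y_\perp\in D_\perp$, writing $\Delta h(y_\perp) = h^{*}(-\delta_0,y_\perp) -
h^{*}(\delta_0,y_\perp)$ we have
\begin{equation*}
 \int_{-\delta_0}^{\delta_0}\bigpar{\partial_{y_0} h^{*}}^2 \e^{-\Vhat/\eps}\6y_0
 \;\geqs\; \frac{\Delta h(y_\perp)^2}
 {\displaystyle\int_{-\delta_0}^{\delta_0}\e^{\Vhat(y_0,y_\perp)/\eps}\6y_0}\;.
\end{equation*}
Inserting the Taylor expansion~\eqref{cap_s02:3b} valid on $D$, the inner integral over
$y_0$ is evaluated by Laplace asymptotics and equals $\sqrt{2\pi\eps}\,
\e^{\frac{1}{2\eps}\sum_{k\geqs 1}\lambda_k y_k^2}\bigl[1+\Order{\eps^{1/2}\abs{\log\eps}^{3/2}}\bigr]$
uniformly in $y_\perp\in D_\perp$ and in $d$. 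Substituting back and carrying out the
Gaussian integrations over the $y_k$, $k\geqs 1$, produces exactly the prefactor
$(\eps/\sqrt{2\pi\eps})\prod_{k=1}^{d}\sqrt{2\pi\eps/\lambda_k}$, provided that
$\Delta h(y_\perp)^2$ can be replaced by $1-o(1)$ on a subset of $D_\perp$ carrying
almost all of the Gaussian mass.

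The main work is therefore to show that, up to an event of exponentially small
measure with respect to the Gaussian $\e^{-\sum_{k\geqs1}\lambda_k y_k^2/2\eps}\6y_\perp$,
one has $h^{*}(-\delta_0,y_\perp)\geqs 1-o(1)$ and $h^{*}(\delta_0,y_\perp)\leqs o(1)$.
This is where the a priori bounds on the equilibrium potential from Section~\ref{ssec_eqpot2}
enter: the hyperplane $\{y_0=+\delta_0\}\cap D$ lies in the basin of attraction of
$u_{+}^{*}$ (the $B$--side of the stable manifold $\cW^{\math s}(O)$) and at a
quasipotential distance at least $H_0 - \Order{\eps\abs{\log\eps}}$ from $A$, so
Proposition~\ref{new_prop_eqpot2} applied with $A$ and $B$ exchanged gives
$h^{*}(\delta_0,y_\perp)\leqs 3\bigpar{\e^{-(H_0-\eta)/\eps}+\e^{-1/\eta\eps}}$
for $y_\perp$ in the considered set; the symmetric statement on $\{y_0=-\delta_0\}$ gives
$1-h^{*}(-\delta_0,y_\perp)\leqs 3\bigpar{\e^{-(H_0-\eta)/\eps}+\e^{-1/\eta\eps}}$.
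Both bounds are uniform in $d\geqs d_0(\eps)$, which is the key point and the place
where Proposition~\ref{new_prop_eqpot2} was designed with care.

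The hard part is to establish these slice-wise bounds \emph{uniformly in the
transverse coordinate} $y_\perp$ throughout $D_\perp$ while keeping everything
uniform in the dimension. Points $y_\perp$ for which $\norm{y_\perp}_{H^1}$ is not
small break the local Taylor picture, but Lemma~\ref{lem_Ws} shows that these points
contribute to the Gaussian integral only a factor $\Order{\e^{-c\abs{\log\eps}}}$,
negligible thanks to the logarithmic enlargement built into the $\delta_k$. After
discarding this negligible set, inserting $\Delta h(y_\perp)^2 \geqs
1 - \Order{\e^{-c/\eps}}$ into the Cauchy--Schwarz lower bound and performing the
remaining Gaussian integrations yields
\begin{equation*}
 \capacity_A(B) \geqs \frac{\eps}{\sqrt{2\pi\eps}}
 \biggpar{\prod_{k=1}^{d}\sqrt{\frac{2\pi\eps}{\lambda_k}}\,}
 \bigbrak{1 - c_{-}\eps^{1/2}\abs{\log\eps}^{3/2}}\;,
\end{equation*}
with $c_{-}$ independent of $\eps$ and $d$, as claimed.
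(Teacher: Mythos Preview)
The architecture of your argument — restrict the Dirichlet form to a box, keep only the $\partial_{y_0}$ term, apply Cauchy--Schwarz slice by slice, then insert a priori bounds on $h^*$ at the endpoints — is exactly the paper's strategy. But your choice of the $y_0$-interval creates a genuine gap.

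You take $y_0\in[-\delta_0,\delta_0]$ with $\delta_0=\Order{\sqrt{\eps\abs{\log\eps}}}$, the same box as in the upper-bound proof. The assertion that $(\delta_0,y_\perp)$ lies ``at a quasipotential distance at least $H_0-\Order{\eps\abs{\log\eps}}$ from $A$'' is false: the point $(\delta_0,y_\perp)$ sits at distance $\Order{\sqrt{\eps\abs{\log\eps}}}$ from the saddle, so the communication height $\Vbar((\delta_0,y_\perp),A)$ is of order $\delta_0^2=\Order{\eps\abs{\log\eps}}$, not of order $1$. Plugging this into Proposition~\ref{new_prop_eqpot2} gives $h^*(\delta_0,y_\perp)\leqs 3\bigl(\e^{-[\Order{\eps\abs{\log\eps}}-\eta]/\eps}+\e^{-1/\eta\eps}\bigr)$, which is useless for any fixed $\eta>0$. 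Proposition~\ref{new_prop_eqpot1} does not apply either, since its hypothesis $H(u_0,A)\geqs\eta$ for a fixed $\eta>0$ is violated this close to the saddle. One could conceivably prove directly that $h^*(\pm\delta_0,y_\perp)$ is polynomially small in $\eps$ (a one-dimensional heuristic suggests $h^*(\delta_0,0)\sim\eps^{c_0/2}$), but this is not what the cited propositions deliver, and making it uniform in $d$ would require new work.

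The paper avoids this by taking the $y_0$-range to be $[-\rho,\rho]$ for a \emph{fixed} constant $\rho>0$, keeping only the transverse box $\widehat D_\perp$ of size $\hat\delta_k=\Order{\sqrt{\eps\abs{\log\eps}/\lambda_k}}$. The endpoints $(\pm\rho,y_\perp)$ are then at order-$1$ distance from the stable manifold, so $H((\rho,y_\perp),A)$ is bounded below by a fixed positive constant and Proposition~\ref{new_prop_eqpot1} yields $h^*(\rho,y_\perp)\leqs 4\e^{-H_0/\eps}$ and $h^*(-\rho,y_\perp)\geqs 1-4\e^{-H_0/\eps}$ uniformly in $y_\perp\in\widehat D_\perp$ and in $d\geqs d_0(\eps)$. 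The price is that the quadratic Taylor expansion~\eqref{cap_s02:3b} is no longer valid on all of $[-\rho,\rho]$; instead one performs a genuine one-dimensional Laplace expansion of $\int_{-\rho}^{\rho}\e^{\Vhat(y_0,y_\perp)/\eps}\6y_0$ for each fixed $y_\perp$, using that $y_0\mapsto\Vhat(y_0,y_\perp)$ has a unique nondegenerate maximum at some $y_0^*=\Order{\eps\abs{\log\eps}}$.
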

\begin{proof}
We write as before $y=(y_0,y_\perp)$, where $y_\perp=(y_1,\dots,y_d)$. Let 
\begin{equation}
 \label{cap_s04:1}
\widehat D_\perp = \prod_{k=1}^d [-\hat\delta_k, \hat\delta_k] 
\qquad
\text{with }
\hat\delta_k = \sqrt{\frac{\hat c_k\eps\abs{\log\eps}}{\lambda_k}}\;, 
\end{equation} 
where the constants $\hat c_k$ are of the form 
$\hat c_k=\hat c_0(1+\log(1+k))$. Note that as in the previous proof, this
implies
$\norm{y_\perp}_{H^s}=\Order{\sqrt{\eps\abs{\log\eps}}\,}$ for
$y_\perp\in\widehat D_\perp$ and all $s<1$. Given $\rho>0$, we set 
\begin{equation}
 \label{cap_s04:3}
\widehat D = [-\rho,\rho] \times \widehat D_\perp\;. 
\end{equation}
Let $h^*=h_{A,B}$ denote the equilibrium potential defined by $\capacity_A(B) = \Phi_{(A\cup B)^c}(h_{A,B})$, cf.~\eqref{capn02}. Then the capacity can  be bounded below as follows:\begin{align}
\nonumber
\capacity_A(B) &= \Phi_{(A\cup B)^c}(h^*) \\
\nonumber 
&\geqs \Phi_{\widehat D}(h^*) \\
\nonumber 
&= \eps\int_{\widehat D_\perp}\int_{-\rho}^{\rho}
\e^{-\Vhat(y_0,y_\perp)/\eps} \norm{\nabla h^*(y_0,y_\perp)}_{\ell^2}^2
\6y_0\6y_\perp \\
\nonumber 
&\geqs \eps\int_{\widehat D_\perp}\int_{-\rho}^{\rho}
\e^{-\Vhat(y_0,y_\perp)/\eps} \biggabs{\dpar{h^*}{y_0}(y_0,y_\perp)}^2
\6y_0\6y_\perp \\
&\geqs \eps\int_{\widehat D_\perp} 
\biggbrak{\inf_{f:f(-\rho)=h^*(-\rho,y_\perp),f(\rho)=h^*(\rho,y_\perp)}
\int_{-\rho}^{\rho}
\e^{-\Vhat(y_0,y_\perp)/\eps} f'(y_0)^2\6y_0}
\6y_\perp\;.
\label{cap_s04:4} 
\end{align}
Solving a one-dimensional Euler--Lagrange problem, we obtain that the infimum
is realised by the function $f$ such that 
\begin{equation}
 \label{cap_s04:5}
f'(y_0) = 
\frac{\bigbrak{h^*(\rho,y_\perp)-h^*(-\rho,y_\perp)}
\e^{\Vhat(y_0,y_\perp)/\eps}}{\displaystyle\int_{ -\rho}^{ \rho}
\e^{\Vhat(t,y_\perp)/\eps}\6t}\;. 
\end{equation} 
Substituting in~\eqref{cap_s04:4} and carrying out the integral over $y_0$, we
obtain
\begin{equation}
 \label{cap_s04:6}
\capacity_A(B) \geqs \eps \int_{\widehat D_\perp} 
\frac{\bigbrak{h^*(\rho,y_\perp)-h^*(-\rho,y_\perp)}^2}{\displaystyle\int_{
-\rho}^{\rho}
\e^{\Vhat(y_0,y_\perp)/\eps}\6y_0}
\6y_\perp \;.
\end{equation}
By \eqref{cap_s01a:1} (which also applies to the infinite-dimensional system)
and the fact that $\norm{y_\perp}_{H^s}=\Order{\sqrt{\eps\abs{\log\eps}}}$, any
point $(\rho,y_\perp)$ lies on the same side of the stable manifold
$\cW^{\math s}(O)$ as $u^*_+$. This implies that $H((\rho,y_\perp),B)=0$ while
$H((\rho,y_\perp),A)\geqs \eta$, where $\eta$ is uniform in $y_\perp\in\widehat
D_\perp$. We can thus apply Proposition~\ref{new_prop_eqpot1} to obtain 
the existence of $H_0>0$ such that 
\begin{equation}
 \label{cap_s04:6A}
 h^*(\rho,y_\perp) = \bigprobin{(\rho,y_\perp)}{\tau_A<\tau_B}
\leqs 4 \e^{-H_0/\eps}\;,
\end{equation} 
provided $\eps$ is small enough and $d$ is larger
than some $d_0(\eps)$. For similar reasons, we also have 
\begin{equation}
 \label{cap_s04:6B}
 h^*(-\rho,y_\perp) = 1 - \bigprobin{(-\rho,y_\perp)}{\tau_B<\tau_A}
\geqs 1 - 4 \e^{-H_0/\eps}\;.
\end{equation} 
Substituting in~\eqref{cap_s04:6}, we obtain 
\begin{equation}
 \label{cap_s04:6C}
\capacity_A(B) \geqs \eps \int_{\widehat D_\perp} 
\frac{\bigbrak{1 - 8 \e^{-H_0/\eps}}^2}{\displaystyle\int_{
-\rho}^{\rho}
\e^{\Vhat(y_0,y_\perp)/\eps}\6y_0}
\6y_\perp \;.
\end{equation}
Consider now, for fixed $y_\perp\in\widehat D_\perp$, the function 
$y_0\mapsto g(y_0)=\Vhat(y_0,y_\perp)$. It satisfies, for all $1/4<s<1/2$, 
\begin{align}
\nonumber
g(y_0) &= -\frac12 y_0^2 + \frac12 \sum_{k=1}^d \lambda_k y_k^2 
+ \bigOrder{\norm{y}_{H^s}^3}\;, \\
\nonumber
g'(y_0) &= -y_0 + \bigOrder{\norm{y}_{H^s}^2}
= -y_0 + \bigOrder{y_0^2} + \bigOrder{\eps\abs{\log\eps}}\;, \\
g''(y_0) &= -1 + \bigOrder{\norm{y}_{H^s}}
= -1 + \bigOrder{y_0} + \bigOrder{\eps^{1/2}\abs{\log\eps}^{1/2}}\;. 
 \label{cap_s04:7}
\end{align}
The assumption on $U$ being a double-well potential, the definitions of $A, B$
and the implicit-function theorem imply that $g$ admits a unique maximum at
$y_0^* = \bigOrder{\eps\abs{\log\eps}}$,
and we have 
\begin{align}
\nonumber
g(y_0^*) &= \frac12 \sum_{k=1}^d \lambda_k y_k^2 
+\bigOrder{\abs{y_0^*}^3}
+\bigOrder{\norm{y_\perp}_{H^s}^3}
= \frac12 \sum_{k=1}^d \lambda_k y_k^2 +
\bigOrder{\eps^{3/2}\abs{\log\eps}^{3/2}}\;, \\
g''(y_0^*) &= -1 +\bigOrder{\eps^{1/2}\abs{\log\eps}^{1/2}}\;.
 \label{cap_s04:9}
\end{align}
Thus by applying standard Laplace asymptotics, we obtain 
\begin{equation}
 \label{cap_s04:10}
 \int_{-\rho}^{\rho} \e^{\Vhat(y_0,y_\perp)/\eps}\6y_0
= \sqrt{2\pi\eps} \exp\biggset{\frac{1}{2\eps}\sum_{k=1}^d \lambda_k
y_k^2}
\bigbrak{1+\bigOrder{\eps^{1/2}\abs{\log\eps}^{3/2}}}\;.
\end{equation} 
Substituting in~\eqref{cap_s04:6C} yields 
\begin{align}
\nonumber
\capacity_A(B) 
&\geqs \frac{\eps}{\sqrt{2\pi\eps}}
\prod_{k=1}^d \int_{-\hat\delta_k}^{\hat\delta_k} \e^{-\lambda_ky_k^2/2\eps}\6y_k 
\bigbrak{1-\bigOrder{\eps^{1/2}\abs{\log\eps}^{3/2}}} \\
\nonumber
&= \frac{\eps}{\sqrt{2\pi\eps}}
\prod_{k=1}^d \biggpar{\sqrt{\frac{2\pi\eps}{\lambda_k}}
\bigbrak{1-\Order{\eps^{\hat c_k/2}}}}
\bigbrak{1-\bigOrder{\eps^{1/2}\abs{\log\eps}^{3/2}}} \\
&= \frac{\eps}{\sqrt{2\pi\eps}}
\biggpar{\prod_{k=1}^d \sqrt{\frac{2\pi\eps}{\lambda_k}}\,}
\biggbrak{1-\biggOrder{\sum_{k=1}^d \eps^{\hat c_k/2}}}
\bigbrak{1-\bigOrder{\eps^{1/2}\abs{\log\eps}^{3/2}}}\;, 
\label{cap_s04:11} 
\end{align}
and the result follows from the same estimate as in~\eqref{cap_s02:11}, taking
$\hat c_0\geqs 1$. 
\end{proof}

\subsubsection{$L$ near $\pi$}

We now turn to the case $\abs{L-\pi}\leqs c$, with $c$ small. Then the
eigenvalue $\lambda_1$ associated with the first Fourier mode satisfies
$\abs{\lambda_1}\leqs \eta$, where we can assume $\eta$ to be small by making
$c$ small. 

Recall from Proposition~\ref{prop_NormalForm} that if the local potential
$U$ is of class $\cC^5$, there exists a change of variables $y=z+g(z)$, with
$\norm{g(z)}_{H^t} = \Order{\norm{z}_{H^s}^2}$ for all
$5/12<s<1/2$ and $t<2s-1/2$, such that 
\begin{equation}
 \label{cap_Lpi_01}
\Vhat(z+g(z)) = \frac12 \sum_{k=0}^d \lambda_k z_k^2 + \frac12 C_4 z_1^4 +
\Order{\norm{z}_{H^s}^5} 
\end{equation} 
with $C_4>0$. 
Note that the factor $1/2$ in front of $C_4$ results from the change from
complex to real Fourier series. In order to localise this change of variables,
it will be
convenient to introduce a $\cC^\infty$ cut-off function
$\theta:\R^{d+1}\to[0,1]$ satisfying 
\begin{equation}
 \label{cap_Lpi_02}
\theta(z) = 
\begin{cases}
1 & \text{for $\norm{z}_{H^s} \leqs 1$\;,} \\
0 & \text{for $\norm{z}_{H^s} \geqs 2$\;.} 
\end{cases} 
\end{equation} 
Given $\rho>0$, we consider the potential 
\begin{equation}
 \label{cap_Lpi_03}
\Vtilde_\rho(z) = \Vhat\biggpar{z+\theta\Bigpar{\frac{z}{\rho}}g(z)}\;,  
\end{equation} 
which is equal to $\Vhat(z)$ for $\norm{z}_{H^s}\geqs 2\rho$, and to the
normal form~\eqref{cap_Lpi_01} for $\norm{z}_{H^s}\leqs \rho$. It what
follows, we will always assume that $\rho>\abs{\lambda_1}$.

The expression~\eqref{cap_Lpi_01} of the normal form shows that for
sufficiently small $\rho$, 
\begin{itemiz}
\item	if $\lambda_1\geqs0$, the origin $O$ is the only stationary point of
$\Vtilde_\rho$ in the ball $\norm{z}_{H^1}<\rho$, and $\lambda_0=-1$
is the only negative eigenvalue of the Hessian of $\Vtilde_\rho$ at $O$;
\item	if $\lambda_1<0$, the origin $O$ is a stationary point with two negative
eigenvalues, and there are two additional stationary points $P_\pm$ with
coordinates 
\begin{equation}
 \label{cap_Lpi_04}
z_1^\pm = \pm\sqrt{2\abs{\lambda_1}/C_4} + \Order{\lambda_1}\;, 
\quad
z_k^\pm = \Order{\lambda_1^2/\lambda_k} 
\quad\text{for $k=0,2\dots,d$\;.} 
\end{equation} 
The symmetry~\eqref{capn_symmetry} implies that $z_k^+=(-1)^k z_k^{-}$ and
$\Vhat(P_+)=\Vhat(P_-)$. 
The eigenvalues of the Hessian of the potential at $P_\pm$ are the same, owing
to the symmetry, and of the form 
\begin{equation}
 \label{cap_Lpi_05}
\mu_1 = - 2\lambda_1 + \Order{\abs{\lambda_1}^{3/2}}\;, 
\quad
\mu_k = \lambda_k + \Order{\abs{\lambda_1}^{3/2}} 
\quad\text{for $k=0,2\dots,d$\;,} 
\end{equation} 
which shows that $P_+$ and $P_-$ are saddles with a one-dimensional unstable
manifold, and a $d$-dimensional stable manifold. The unstable manifolds
necessarily converge to the two local minima of the potential. 
\end{itemiz}

The basins of attraction of the two minima of the potential are separated by a
$d$-dimensional manifold that we will denote $\cW^{\math s}$. For
$\lambda_1>0$, $\cW^{\math s}=\cW^{\math s}(O)$ is the stable manifold of the
origin. For $\lambda_1<0$, we have $\cW^{\math s}=\cW^{\math s}(O) \cup
\cW^{\math s}(P_-) \cup \cW^{\math s}(P_+)$. See for instance~\cite{Jolly_89}
for a picture of the situation. 

\begin{lemma}[Growth of the potential along $\cW^{\math s}$]
\label{lem_Ws_Lpi} 
Let $z_\perp=(z_2,\dots,z_d)$. 
There exist constants $\rho>0$, $m_0>0$ and $\eta>0$ such that for
$\abs{\lambda_1}<\eta$ and all $z\in\cW^{\math s}$, one has 
\begin{equation}
 \label{cap_Lpi_06}
\Vtilde_\rho(z) \geqs 
m_0 \biggbrak{z_0^2 
+ z_1^2 \wedge \Bigpar{\frac12\lambda_1z_1^2 + \frac12 C_4 z_1^4} 
+ \norm{z_\perp}_{H^1}^2}\;.
\end{equation} 
\end{lemma}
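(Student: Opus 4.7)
The strategy follows that of Lemma~\ref{lem_Ws} but uses the normal form of Proposition~\ref{prop_NormalForm} to handle the bifurcation. Inside $\set{\norm{z}_{H^s}\leqs\rho}$ with $s\in(5/12,1/2)$, the localised potential reads
\begin{equation*}
\Vtilde_\rho(z) = \frac12\sum_{k=0}^d \lambda_k z_k^2 + \frac12 C_4 z_1^4 + \bigOrder{\norm{z}_{H^s}^5}\;,
\end{equation*}
whose stationary set is $\set{O}$ for $\lambda_1\geqs 0$ and $\set{O,P_+,P_-}$ for $\lambda_1<0$, with $P_\pm$ given by~\eqref{cap_Lpi_04} and Hessian eigenvalues~\eqref{cap_Lpi_05}. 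Since $\lambda_0=-1$ and $\mu_1\approx-2\lambda_1>0$ at $P_\pm$, the coordinate $z_0$ is the unique unstable direction at every such saddle.

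The next step is to parametrise $\cW^{\math s}$ locally via the centre-stable manifold theorem. Near $O$, for $\lambda_1>0$ this yields $\cW^{\math s}(O)=\set{z_0=g_0(z_1,z_\perp)}$, and for $\lambda_1<0$ the codimension-$2$ graph $\set{z_0=g_0(z_\perp),\,z_1=g_1(z_\perp)}$. Near $P_\pm$, the graph is $\set{z_0=\widetilde g_\pm(z_1,z_\perp)}$ in a neighbourhood of $(z_1^\pm,0)$. Since the cubic part of the normal form vanishes identically, each $g_\cdot$ starts at order four in its arguments, so $g_\cdot^2$ is negligible. Substituting into $\Vtilde_\rho$ yields
\begin{equation*}
\Vtilde_\rho\big|_{\cW^{\math s}(O)}
= Q(z_1) + \frac12\sum_{k=2}^d\lambda_k z_k^2 - \frac12 g_0^2 +\bigOrder{\norm{z}_{H^s}^5}
\end{equation*}
(with $z_1$ replaced by $g_1(z_\perp)$ in the codim-$2$ case), and the analogous expansion around $P_\pm$ whose $z_1$-section equals $Q(z_1)$ by Taylor expansion ($Q'(z_1^\pm)=0$ and $Q''(z_1^\pm)=\mu_1$). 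Using $\lambda_k\geqs c_0(1+k^2)$ for $k\geqs 2$, the $z_\perp$-quadratic term dominates $m_0\norm{z_\perp}_{H^1}^2$, while the quartic $g_\cdot^2$ and quintic remainder are absorbed by choosing $\rho$ small.

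The main technical point is to check that $z_1^2\wedge Q(z_1)$ interpolates correctly between the two local regimes. On $\cW^{\math s}(O)$ for $\lambda_1<0$, both $z_1=g_1(z_\perp)$ and $Q(z_1)$ are of higher order than $\norm{z_\perp}_{H^1}^2$ and can be absorbed. On $\cW^{\math s}(P_\pm)$, one has $\abs{z_1}\leqs\sqrt{2/C_4}$ so that $Q(z_1)\leqs z_1^2$, the wedge selects $Q(z_1)$, and this matches the expansion of $\Vtilde_\rho$ there. The identification $\cW^{\math s}=\cW^{\math s}(O)\cup\cW^{\math s}(P_+)\cup\cW^{\math s}(P_-)$ for $\lambda_1<0$ follows from the Morse--Smale structure of the gradient flow in the normal form, and the continuity of the local graphs across $z_1\approx 0$ follows from the heteroclinic connections between $O$ and $P_\pm$ along the $z_1$-axis. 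By shrinking $\rho$ and $\eta$, all constants can be chosen uniformly in $\lambda_1\in(-\eta,\eta)$ and in $d$.

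Finally, to extend the bound outside the small ball, I will repeat the monotonicity argument of Lemma~\ref{lem_Ws}: on $\cW^{\math s}$ the potential is nonincreasing along $-\nabla\Vtilde_\rho$, and on a sphere $\norm{z}_{H^1}=\rho_1$ the field points inward, so the local lower bound propagates to a uniform constant $m_1\rho_1^2$ on $\cW^{\math s}\cap\set{\norm{z}_{H^1}\geqs\rho_1}$. For $\norm{z}_{H^s}\geqs 2\rho$ the cutoff enforces $\Vtilde_\rho=\Vhat$, and the coercive estimate of Lemma~\ref{lem_V1} provides the $\norm{z}_{H^1}^2$-growth at infinity. Matching the three regimes on concentric spheres and shrinking $m_0$ if necessary produces the claim.
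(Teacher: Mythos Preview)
Your approach is in the right spirit but takes an unnecessarily complicated route, and the patching step hides a uniformity issue that the paper's proof avoids entirely.

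The paper's key simplification is that the whole $d$-dimensional basin boundary $\cW^{\math s}$, for \emph{both} signs of $\lambda_1$, is described by a \emph{single} graph $z_0=\psi(z_1,z_\perp)$ with $\abs{\psi}\leqs M(z_1^4+\norm{z_\perp}_{H^s}^4)$ on a ball of fixed radius $\rho_0$. This is the centre--stable manifold of $O$ with $z_1$ treated as a centre direction, so its domain and the bound on $\psi$ are uniform in $\abs{\lambda_1}<\eta$. Substituting once into the normal form gives
\[
\Vtilde_\rho(\psi,z_1,z_\perp)=\tfrac12\lambda_1z_1^2+\tfrac12C_4z_1^4+\tfrac12\sum_{k\geqs2}\lambda_kz_k^2+\Order{\abs{z_1}^5+\norm{z_\perp}_{H^s}^5}\;,
\]
which is exactly the right-hand side of the lemma on $\set{\norm{z}_{H^s}\leqs\rho_1}$; the $z_0^2$ term in the claimed bound comes for free because $z_0^2=\psi^2$ is of higher order. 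The extension to the whole of $\cW^{\math s}$ then follows the monotonicity argument of Lemma~\ref{lem_Ws}, exactly as you describe in your last paragraph.

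By contrast, you split according to the sign of $\lambda_1$ and, for $\lambda_1<0$, work with separate stable-manifold graphs near $O$, $P_+$, $P_-$. Two points deserve scrutiny. First, for $\lambda_1<0$ the piece $\cW^{\math s}(O)$ is $(d{-}1)$-dimensional and lies in the closure of $\cW^{\math s}(P_\pm)$, so your codimension-$2$ description is redundant; what you actually need is the codimension-$1$ basin boundary near $O$, which is precisely the single graph the paper uses. Second, and more seriously, the \emph{ordinary} stable-manifold graphs $\tilde g_\pm$ at $P_\pm$ are only guaranteed on neighbourhoods whose size is governed by the distance to the nearest other equilibrium, i.e.\ $\abs{z_1^\pm}=\Order{\sqrt{\abs{\lambda_1}}}$. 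Hence your three local charts need not cover a ball of radius independent of $\lambda_1$, and your assertion that ``all constants can be chosen uniformly in $\lambda_1\in(-\eta,\eta)$'' is not justified by the construction as written. The appeal to ``continuity of the local graphs across heteroclinic connections'' does not by itself produce a $\lambda_1$-uniform domain or a $\lambda_1$-uniform $m_0$. The single centre--stable graph resolves this in one stroke: its domain is fixed, and the $z_1$-dependence of the potential on it is exactly $Q(z_1)=\tfrac12\lambda_1z_1^2+\tfrac12C_4z_1^4$, which already encodes the bifurcation without any patching.
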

\begin{proof}
The manifold $\cW^{\math s}$ can be locally described by a graph 
$z_0=\psi(z_1,z_\perp)$, where 
\begin{equation}
 \label{cap_Lpi_06:1}
\abs{\psi(z_1,z_\perp)} \leqs M(z_1^4 + \norm{z_\perp}_{H^s}^4)
\qquad
\text{whenever } z_1^2 + \norm{z_\perp}_{H^s}^2 \leqs \rho_0^2
\end{equation} 
for any $s>1/4$ and some $M>0$ and $\rho_0>0$. This implies 
\begin{equation}
 \label{cap_Lpi_06:2}
\Vtilde_\rho(\psi(z_1,z_\perp),z_1,z_\perp) = 
\frac12\lambda_1 z_1^2 + \frac12 C_4 z_1^4 + \frac12\sum_{k=2}^d\lambda_k z_k^2 
+ \Order{\abs{z_1^5}+\norm{z_\perp}_{H^s}^5} 
\end{equation} 
for $z_1^2 + \norm{z_\perp}_{H^s}^2 \leqs (\rho_0\wedge\rho)^2 =
\rho_1^2$, and proves~\eqref{cap_Lpi_06} for $\norm{z}_{H^s}\leqs
\rho_1$. In particular, for $z_1^2 + \norm{z_\perp}_{H^1}^2 = \rho_1^2$,
we obtain the existence of a constant $m_2>0$ such that
$\smash{\Vtilde_\rho}(z)\geqs m_2\rho_1^4$, provided $\abs{\lambda_1}$ is small
enough. The remainder of the proof is similar to the proof of
Lemma~\ref{lem_Ws}.
\end{proof}

\begin{prop}[Upper bound on the capacity]
\label{prop_cap_neumann_Lpi_upper} 
There exist constants $\eps_0, \eta, c_+ > 0$ such that 
for $\eps<\eps_0$ and $d\geqs 1$, 
\begin{enum}
\item	If $0\leqs\lambda_1\leqs\eta$, then 
\begin{equation}
 \label{cap_Lpi_07}
\capacity_A(B) \leqs \frac{\eps}{\sqrt{2\pi\eps}}
\int_{-\infty}^\infty \e^{-u_1(y_1)/\eps}\6y_1 
\biggpar{\prod_{k=2}^d \sqrt{\frac{2\pi\eps}{\lambda_k}}\,}
\bigbrak{1+c_+ R(\eps,\lambda_1)}\;,
\end{equation} 
where 
\begin{equation}
 \label{cap_Lpi_08}
u_1(y_1) = \frac12 \lambda_1 y_1^2 + \frac12 C_4 y_1^4 
\end{equation}
and 
\begin{equation}
 \label{cap_Lpi_09}
R(\eps,\lambda) = 
\biggbrak{\frac{\eps\abs{\log\eps}^3}
{\lambda\vee\sqrt{\eps\abs{\log\eps}}}}^{1/2}\;. 
\end{equation} 

\item	If $-\eta < \lambda_1 < 0$, then  
\begin{equation}
 \label{cap_Lpi_10}
\capacity_A(B) \leqs 2\eps
\sqrt{\frac{\abs{\mu_0}}{2\pi\eps}}
\int_{0}^\infty \e^{-u_2(y_1)/\eps}\6y_1 
\Biggpar{\prod_{k=2}^d \sqrt{\frac{2\pi\eps}{\mu_k}}\,}
\e^{-\Vhat(P_\pm)/\eps} 
\bigbrak{1+c_+R(\eps,\mu_1)}\;,
\end{equation} 
where 
\begin{equation}
 \label{cap_Lpi_11}
u_2(y_1) = \frac12 C_4 \biggpar{y_1^2 - \frac{\mu_1}{4C_4}}^2\;.
\end{equation}
\end{enum}
\end{prop}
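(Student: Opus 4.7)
The plan is to prove the upper bound via the variational representation
$$
\capacity_A(B) = \inf_{h\in\cH_{A,B}} \Phi_{(A\cup B)^c}(h)
$$
by exhibiting a good test function $h_+$. Since $\lambda_1$ is small, the quadratic expansion of $\Vhat$ is insufficient in the $z_1$-direction, so I would work in the normal-form coordinates of Proposition~\ref{prop_NormalForm}. Using the localised change of variables associated with $\Vtilde_\rho$ in~\eqref{cap_Lpi_03}, the potential in the core ball $\set{\norm{z}_{H^s}\leqs\rho}$ is exactly $\frac12\lambda_0 z_0^2 + \frac12\lambda_1 z_1^2 + \frac12 C_4 z_1^4 + \frac12\sum_{k\geqs 2}\lambda_k z_k^2 + \Order{\norm{z}_{H^s}^5}$. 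The change of variables has Jacobian $1+\Order{\norm{z}_{H^s}}$ by~\eqref{nf11}, so integrals computed in $z$-coordinates only pick up harmless multiplicative factors $1+o(1)$.

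In Case (i) ($0\leqs\lambda_1\leqs\eta$) the origin is still the unique saddle in the core, with a one-dimensional unstable direction $z_0$. Following the scheme of Proposition~\ref{prop_cap_neumann_smallL_upper}, I would define widths $\delta_0=\Order{\sqrt{\eps\abs{\log\eps}}}$, $\delta_1$ adapted to the quartic term (of order $\max\set{\sqrt{\eps\abs{\log\eps}/\lambda_1},(\eps\abs{\log\eps}/C_4)^{1/4}}$), and $\delta_k=\sqrt{c_k\eps\abs{\log\eps}/\lambda_k}$ for $k\geqs 2$, then set
\begin{equation*}
h_+(z) = \frac{\int_{z_0}^{\delta_0} \e^{\Vtilde_\rho(t,0,\ldots,0)/\eps}\6t}{\int_{-\delta_0}^{\delta_0}\e^{\Vtilde_\rho(s,0,\ldots,0)/\eps}\6s}
\end{equation*}
on $D=\prod_k[-\delta_k,\delta_k]$, extend $h_+$ by $0,1$ outside a thin layer $S$ around $\cW^{\math s}$, and interpolate in $S\setminus D$ with $\norm{\nabla h_+}_{\ell^2}^2 = \Order{(\eps\abs{\log\eps})^{-1}}$. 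In the Dirichlet form, the numerator factors over directions; the $z_0$-integral gives $\sqrt{2\pi\eps}$ (since the denominator squared is $2\pi\eps(1+o(1))$), the $z_k$ for $k\geqs 2$ give $\sqrt{2\pi\eps/\lambda_k}$ by Laplace asymptotics, and the $z_1$-integral is precisely $\int e^{-u_1(y_1)/\eps}\6y_1$, which is left unexpanded because the quadratic term is not dominant. The layer contribution $\Phi_{S\setminus D}(h_+)$ is negligible by Lemma~\ref{lem_Ws_Lpi} and the geometric-series argument of~\eqref{cap_s02:11}.

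In Case (ii) ($-\eta<\lambda_1<0$) the origin is a saddle of index 2 and the two genuine transition states are $P_\pm$, each a saddle of index 1. I would partition a neighbourhood of $\cW^{\math s}$ into two pieces meeting near $\cW^{\math s}(O)$, and on each piece mimic the construction above centred at $P_\pm$ using local coordinates $(\xi_0,\xi_1,\xi_\perp)$ aligned with the Hessian of $\Vtilde_\rho$ at $P_\pm$. Here the unstable direction gives $\sqrt{2\pi\eps/\abs{\mu_0}}$, the stable directions $\xi_k$, $k\geqs 2$, yield $\sqrt{2\pi\eps/\mu_k}$, and the $\xi_1$-direction keeps the quartic shape $u_2$ because $\mu_1$ is small. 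The two saddles contribute equally by the $\Z_2$-symmetry~\eqref{capn_symmetry}, whence the factor $2$ and the restriction to $[0,\infty)$ in the $y_1$-integral. The prefactor $\e^{-\Vhat(P_\pm)/\eps}$ arises from the potential value at the saddle, using $\Vhat(P_\pm) = -\lambda_1^2/(8C_4)+\Order{\abs{\lambda_1}^{5/2}}$ to merge the constant part of $u_2$ with it.

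The main obstacle is the uniform control, both in $d$ and in the size of $\lambda_1$ (resp.~$\mu_1$), of the error terms: the normal-form remainder $\Order{\norm{z}_{H^s}^5}$ must be shown negligible in $D$, which dictates the scale $\delta_1$ and produces the remainder $R(\eps,\lambda_1)$ with the threshold $\sqrt{\eps\abs{\log\eps}}$ reflecting the crossover between regimes where the quadratic and the quartic terms dominate in $u_1$ (resp.~$u_2$). A secondary subtlety in Case (ii) is arranging the partition so that the cross term between the two constructions at the ``ridge'' $\cW^{\math s}(O)$ through the origin does not contribute at leading order; this is handled by choosing the interpolation inside the thin layer in the zone where $\Vtilde_\rho$ exceeds $\Vhat(P_\pm)$ by a definite margin, using Lemma~\ref{lem_Ws_Lpi} to obtain an exponentially small contribution.
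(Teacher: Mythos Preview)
Your proposal is correct and follows essentially the same approach as the paper: pass to normal-form coordinates via the localised potential $\Vtilde_\rho$, build a test function $h_+$ depending only on the unstable coordinate on a box $D$ with $\delta_1$ adapted to the quartic term $u_1$, and kill the layer contribution using Lemma~\ref{lem_Ws_Lpi} together with the geometric-series estimate~\eqref{cap_s02:11}.

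One minor organisational difference worth noting: the paper subdivides your Case~(ii) further. In the transitional regime $-c\sqrt{\eps\abs{\log\eps}}\leqs\lambda_1<0$, it simply reuses the single-box construction centred at the origin (the saddles $P_\pm$ then lie inside $D$ and $\e^{-\Vhat(P_\pm)/\eps}=1+o(1)$), so no partition is needed there. Only for $\lambda_1<-c\sqrt{\eps\abs{\log\eps}}$ does it split into the half-spaces $\set{z_1>0}$ and $\set{z_1<0}$, and even then it stays in the normal-form $z$-coordinates rather than rotating to the Hessian frame at $P_\pm$, exploiting the algebraic identity $u_1(z_1)=u_2(z_1)+\Vtilde_\rho(P_\pm)+\Order{\mu_1^{3/2}z_1^2}$ directly. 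Your rotated-coordinate variant works too, but the paper's shortcut avoids tracking the eigenvectors at $P_\pm$ and makes the ridge issue at $z_1=0$ trivial.
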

\begin{proof}
The proof is similar to those of~\cite[Proposition~5.1 and
Theorem~4.1]{BG2010}, the main difference lying in the dimension-dependence of
the domains of integration. 
The capacity can be bounded above by $\Phi_{A\cup B}(h_+)$ for any
$h_+\in\cH_{A,B}$. The change of variables $y=z+g(z)$ and~\eqref{nf11} lead to 
\begin{equation}
 \label{cap_Lpi_11:2}
\Phi_{A\cup B}(h_+) = \eps\int_{(A\cup B)^c} \e^{-\Vtilde_\rho(z)/\eps} 
\norm{\nabla h_+(z)}_{\ell^2}^2 
\Bigbrak{1+\indexfct{\norm{z}_{H^s}\leqs\rho}
\bigOrder{\norm{z}_{H^s}} } \6z\;.
\end{equation} 
Consider first the case $\lambda_1\geqs0$. Let $D$
be a box defined by~\eqref{cap_s02:2}, where we take $\delta_k$ as
in~\eqref{cap_s02:1} for $k\neq1$, while $\delta_1$ is the positive solution of
$u_1(\delta_1)=c_1\eps\abs{\log\eps}$, which satisfies 
\begin{equation}
 \label{cap_Lpi_11:1}
\delta_1^2 =
\biggOrder{\frac{\eps\abs{\log\eps}}{\lambda_1\vee\sqrt{\eps\abs{\log\eps}}}}
\;. 
\end{equation} 
Note that if $z\in D$, then $\norm{z}_{H^s} = \Order{\delta_1}$ for all
$s<1$. This ensures that the potential $\Vtilde_\rho$ is given by the normal
form~\eqref{cap_Lpi_01}. The rest of the proof then proceeds exactly as in
Proposition~\ref{prop_cap_neumann_smallL_upper}. We have slightly overestimated
the logarithmic part of the error terms to get more compact expressions. 

For $-c\sqrt{\eps\abs{\log\eps}}\leqs\lambda_1<0$, the proof is the same, with
$\delta_1$ of order $(\eps\abs{\log\eps})^{1/4}$. Note that in this case, the
potential at the saddles $P_\pm$ has order $\eps\abs{\log\eps}$, so that
$\e^{-\Vtilde_\rho(P_\pm)/\eps}$ is still close to $1$ for small~$c$. 

Finally, for
$-\eta\leqs\lambda_1<-c\sqrt{\eps\abs{\log\eps}}$, we evaluate separately the
capacities on each half-space $\set{z_1<0}$ and $\set{z_1>0}$. Each Dirichlet
form is dominated by the integral over a box around $P_+$, respectively $P_-$,
where the extension of the box in the $z_1$-direction is of order
$\sqrt{\eps\abs{\log\eps}/\mu_1}$. The main point is to notice that  
\begin{equation}
 \label{cap_Lpi_11:3}
u_1(z_1) = \frac12 C_4 \biggpar{y_1^2 - \frac{\mu_1}{48C_4}}^2 +
\Vtilde_\rho(P_\pm)
+ \Order{\mu_1^{3/2}y_1^2}
\end{equation} 
(see~\cite[Proposition~5.4]{BG2010}).
\end{proof}

\begin{remark}
\label{rem_cap_Lpi}
As shown in~\cite[Section~5.4]{BG2010}, the integrals of $\e^{-u_1(y_1)/\eps}$
and $\e^{-u_2^\pm(y_1)/\eps}$ can be expressed in terms of Bessel functions,
yielding the functions $\Psi_\pm$ given in~\eqref{psiplus}
and~\eqref{psiminus}.
\end{remark}

\begin{prop}[Lower bound on the capacity]
\label{prop_cap_neumann_Lpi_lower} 
There exist constants $\eps_0, \eta, c_- > 0$ and $d_0(\eps)<\infty$ such that 
for $\eps<\eps_0$ and $d\geqs d_0(\eps)$, 
\begin{enum}
\item	If $0\leqs\lambda_1\leqs\eta$, then 
\begin{equation}
 \label{cap_Lpi_12}
\capacity_A(B) \geqs \frac{\eps}{\sqrt{2\pi\eps}}
\int_{-\infty}^\infty \e^{-u_1(y_1)/\eps}\6y_1 
\Biggpar{\prod_{k=2}^d \sqrt{\frac{2\pi\eps}{\lambda_k}}\,}
\bigbrak{1-c_-R(\eps,\lambda_1)}\;,
\end{equation} 
where $u_1$ and $R$ are defined in~\eqref{cap_Lpi_08} and~\eqref{cap_Lpi_09}.

\item	If $-\eta < \lambda_1 < 0$, then  
\begin{equation}
 \label{cap_Lpi_13}
\capacity_A(B) \geqs 2\eps
\sqrt{\frac{\abs{\mu_0}}{2\pi\eps}}
\int_{0}^\infty \e^{-u_2(y_1)/\eps}\6y_1 
\Biggpar{\prod_{k=2}^d \sqrt{\frac{2\pi\eps}{\mu_k}}\,}
\e^{-\Vhat(P_\pm)/\eps} 
\bigbrak{1-c_-R(\eps,\mu_1)}\;,
\end{equation} 
where $u_2$ is the function defined in~\eqref{cap_Lpi_11}. 
\end{enum}
\end{prop}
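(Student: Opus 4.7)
The plan is to adapt the lower-bound strategy from Proposition~\ref{prop_cap_neumann_smallL_lower} to the bifurcation regime, replacing the direct Gaussian Laplace analysis in the $z_0$-direction with the normal-form expansion of Proposition~\ref{prop_NormalForm}, while keeping the quartic term in the $z_1$-direction. First I would perform the change of variables $y = z + g(z)$ (with Jacobian $1+\bigOrder{\norm{z}_{H^s}}$ inside the cutoff region) so that the potential becomes $\Vtilde_\rho$, which agrees with the normal form~\eqref{cap_Lpi_01} for $\norm{z}_{H^s}\leqs\rho$. The $z_0$-direction remains the unstable direction at every relevant critical point (both for $\lambda_1\geqs0$ at $O$ and for $\lambda_1<0$ at $P_\pm$, where $\mu_0<0$ and $\mu_1>0$).

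Next I would drop the gradient of the equilibrium potential $h^*$ in all directions except $z_0$, restrict the Dirichlet form to a box $\widehat D = [-\rho,\rho]\times\widehat D_1\times \widehat D_\perp$, and minimize along the $z_0$-fiber by an Euler--Lagrange argument exactly as in~\eqref{cap_s04:4}--\eqref{cap_s04:6}, obtaining
\begin{equation}
\capacity_A(B) \geqs \eps \int_{\widehat D_1\times \widehat D_\perp}
\frac{\bigbrak{h^*(\rho,z_1,z_\perp)-h^*(-\rho,z_1,z_\perp)}^2}{\displaystyle\int_{-\rho}^{\rho} \e^{\Vtilde_\rho(z_0,z_1,z_\perp)/\eps}\6z_0}\6z_1\6z_\perp\;.
\end{equation}
The box $\widehat D_\perp$ is chosen as in~\eqref{cap_s04:1}; in case (i), $\widehat D_1=[-\hat\delta_1,\hat\delta_1]$ with $\hat\delta_1$ the positive root of $u_1(\hat\delta_1)=\hat c_1\eps\abs{\log\eps}$ so that $\hat\delta_1^2$ satisfies~\eqref{cap_Lpi_11:1}, while in case (ii) one splits $\widehat D_1$ into two pieces centred on the $z_1$-coordinates $z_1^\pm$ of the saddles and applies the argument separately on each half. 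On the boundaries $z_0=\pm\rho$, $(\pm\rho,z_1,z_\perp)$ lies on the correct side of the separatrix $\cW^{\math s}$ (for small enough $z_1,z_\perp$ and $\rho$), so Proposition~\ref{new_prop_eqpot1} gives $h^*(\rho,z_1,z_\perp)\leqs 4\e^{-H_0/\eps}$ and $h^*(-\rho,z_1,z_\perp)\geqs 1-4\e^{-H_0/\eps}$ uniformly in $d\geqs d_0(\eps)$.

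For the denominator I would apply one-dimensional Laplace asymptotics on the $z_0$-fiber, using the expansions~\eqref{cap_s04:7}--\eqref{cap_s04:9} adapted to $\Vtilde_\rho$: for each fixed $(z_1,z_\perp)\in\widehat D_1\times\widehat D_\perp$, $z_0\mapsto \Vtilde_\rho(z_0,z_1,z_\perp)$ has a unique maximum at some $z_0^*=\bigOrder{\eps\abs{\log\eps}}$ with $\partial_{z_0z_0}^2\Vtilde_\rho=-1+\bigOrder{\eps^{1/2}\abs{\log\eps}^{1/2}}$, and the value at the maximum equals (up to the same error) $u_1(z_1)+\frac12\sum_{k\geqs2}\lambda_k z_k^2$. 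Hence
\begin{equation}
\int_{-\rho}^{\rho} \e^{\Vtilde_\rho(z_0,z_1,z_\perp)/\eps}\6z_0
= \sqrt{2\pi\eps}\,\e^{u_1(z_1)/\eps}\exp\biggset{\frac{1}{2\eps}\sum_{k=2}^d\lambda_k z_k^2}
\bigbrak{1+\bigOrder{\eps^{1/2}\abs{\log\eps}^{3/2}}}\;.
\end{equation}
In case (ii) one uses~\eqref{cap_Lpi_11:3} to replace $u_1(z_1)$ by $\Vhat(P_\pm) + u_2(z_1-z_1^\pm)$ on each half. The remaining integrations over $z_1\in\widehat D_1$ and $z_\perp\in\widehat D_\perp$ produce, respectively, $\int_{-\infty}^\infty\e^{-u_1/\eps}$ (or $2\int_0^\infty\e^{-u_2/\eps}\e^{-\Vhat(P_\pm)/\eps}$) and $\prod_{k\geqs2}\sqrt{2\pi\eps/\lambda_k}$ (or $\sqrt{2\pi\eps/\mu_k}$), modulo the truncation errors $\sum_k\eps^{\hat c_k/2}$ already controlled in~\eqref{cap_s02:11}.

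The main obstacle will be propagating all the error terms uniformly in $d$: one must choose the truncation constants $\hat c_k=\hat c_0(1+\log(1+k))$ so that the box $\widehat D_\perp$ simultaneously captures all the Gaussian mass (up to negligible tails), lies in the validity region $\norm{z}_{H^s}\leqs\rho$ of the normal form, and keeps the remainders in the $z_0$-Laplace expansion of order $\Order{R(\eps,\lambda_1)}$ rather than only $\Order{\eps^{1/2}\abs{\log\eps}^{3/2}}$ when $\lambda_1$ is itself of order $\sqrt{\eps\abs{\log\eps}}$. The dimension dependence in the remainders is handled exactly as in~\eqref{cap_s02:11}, while the uniformity of the a priori bound on $h^*$ comes from the uniformity statement in Proposition~\ref{new_prop_eqpot1}.
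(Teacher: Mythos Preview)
Your proposal is correct and follows essentially the same approach as the paper: pass to normal-form coordinates, restrict the Dirichlet form to a box with $z_1$-extent governed by $u_1(\hat\delta_1)=\hat c_1\eps\abs{\log\eps}$, minimise along the $z_0$-fibre via Euler--Lagrange, control $h^*$ on the faces $z_0=\pm\rho$ by Proposition~\ref{new_prop_eqpot1}, and apply Laplace asymptotics. The paper treats the range $-c\sqrt{\eps\abs{\log\eps}}\leqs\lambda_1\leqs\eta$ exactly as you describe (single box, quartic $u_1$), and for $-\eta\leqs\lambda_1<-c\sqrt{\eps\abs{\log\eps}}$ defers to the construction of Proposition~\ref{prop_cap_neumann_largeL_lower}, where two boxes $\widehat D_\pm$ are placed around $P_\pm$ in the rotated eigenbasis $S_\pm$; your version does the same splitting but stays in normal-form coordinates, which is equivalent here since by~\eqref{cap_Lpi_05} the normal-form Hessian at $P_\pm$ is already diagonal up to $\Order{\abs{\lambda_1}^{3/2}}$, and this error is absorbed in $R(\eps,\mu_1)$.
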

\begin{proof}
For $-c\sqrt{\eps\abs{\log\eps}}\leqs\lambda_1\leqs\eta$, the proof is exactly
the same as the proof of Proposition~\ref{prop_cap_neumann_smallL_lower}, except
that $\hat\delta_1$ is defined in a similar way as $\delta_1$
in~\eqref{cap_Lpi_11:1}, and thus the error terms are larger. 
For $-\eta\leqs\lambda_1<c\sqrt{\eps\abs{\log\eps}}$, the definition of the set
$\widehat D$ has to be slightly modified. Since the same modification is needed
for all $L-\pi$ of order $1$, we postpone that part of the proof to the next
subsection. 
\end{proof}


\subsubsection{$L>\pi$}

We finally consider the case $L\geqs\pi+c$. Recall from Section~\ref{ssec_det}
the following properties of the deterministic system: 
\begin{enum}
\item	The infinite-dimensional system has exactly two saddles of index $1$,
given by functions $u^*_\pm(x)$ of class $\cC^2$ (at least). The fact that
$u^*_\pm(x)\in\cC^2$ implies that their Fourier
components decrease like $k^{-2}$. 
\item	The Hessian of $V$ corresponds to the second Fr\'echet derivative of $V$
at $u^*_\pm$, given by the map 
\begin{align}
\nonumber
(v_1,v_2) &\mapsto \int_0^L 
\Bigbrak{-v_1''(x) + U''(u^*_\pm(x))v_1(x)}v_2(x) \6x \\ 
& = \int_0^L 
\Bigbrak{v_1'(x)v_2'(x) + U''(u^*_\pm(x))v_1(x)v_2(x)} \6x\;. 
 \label{cap_largeL_00}
\end{align} 
The eigenvalues $\mu_k$ of the Hessian are solutions of the Sturm--Liouville
problem $v''(x)=- U''(u^*_\pm(x))v(x)$. They satisfy $\mu_0<0<\mu_1<\dots$ and 
\begin{equation}
 \label{cap_largeL_00A}
 -\gamma_1 + \gamma_2 k^2 \leqs \mu_k \leqs \gamma_3 k^2
\qquad\forall k\;,
\end{equation} 
for some constants $\gamma_1, \gamma_2, \gamma_3 > 0$ (this follows from
expressions for the asymptotics of the eigenvalues of Sturm--Liouville
equations, see for instance~\cite{VinokurovSadovnichi2000}).
In Fourier variables, we have 
\begin{equation}
 \label{cap_largeL_00B}
\hessian\Vhat(u^*_\pm) = \Lambda  + Q(u^*_\pm)\;,
\end{equation} 
where $\Lambda$ is a diagonal matrix with entries $\lambda_k$, and the matrix
$Q$ represents the second summand in the integral~\eqref{cap_largeL_00}. 
Thus if $v$ has Fourier coefficients $z$, we have
\begin{equation}
 \label{cap_largeL_00C}
\pscal{z}{Q(u^*_\pm)z} = \int_0^L U''(u^*_\pm(x))v(x)^2 \6x \;.
\end{equation} 
If $M$ is a constant such that $\abs{U''(u^*_\pm(x))}\leqs M$, for all $x$, we
get 
\begin{equation}
\bigabs{\pscal{z}{Q(u^*_\pm)z}} 
\leqs M \norm{v}_{L^2}^2 \\
= M \norm{z}_{\ell^2}^2\;.
 \label{cap_largeL_00D}
\end{equation} 
\item	As shown in Section~\ref{ssec_tpot}, similar statements hold true for
the finite-dimensional potential for sufficiently large $d$. As above we denote
the two saddles by $P_\pm$, and the eigenvalues of the Hessian by
$\mu_k=\mu_k(d)$. Let $S_\pm$ be the orthogonal change-of-basis matrices such
that 
\begin{equation}
 \label{cap_largeL_00E}
S_\pm\hessian\Vhat(u^*_\pm) \transpose{S_\pm} = \diag(\mu_0,\dots,\mu_d)\;.
\end{equation} 
\end{enum}

\begin{lemma}[Equivalence of norms]
\label{lem_norms_largeL} 
There exists a constant $\beta_0>0$, independent of $d$, such
that 
\begin{equation}
 \label{cap_largeL_01} 
\beta_0 \norm{y}_{H^1}^2 
\leqs \norm{S_\pm y}_{H^1}^2 
\leqs  \frac{1}{\beta_0} \norm{y}_{H^1}^2\;.
\end{equation} 
\end{lemma}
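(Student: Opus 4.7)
The plan is to compare both $\norm{y}_{H^1}^2$ and $\norm{S_\pm y}_{H^1}^2$ to a common quadratic form, namely $\pscal{y}{(A+cI)y}$ where $A=\hessian\Vhat(u_\pm^*)$ (understood as its finite-dimensional restriction, using the notation of item (3) above) and $c>0$ is a sufficiently large constant. The decomposition $A=\Lambda+Q(u_\pm^*)$ from~\eqref{cap_largeL_00B} will give the comparison with $\norm{y}_{H^1}^2$, while the diagonalization~\eqref{cap_largeL_00E} will give the comparison with $\norm{S_\pm y}_{H^1}^2$; these two estimates together yield the claim.

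First I would fix $M$ such that $\abs{\pscal{z}{Q(u_\pm^*)z}}\leqs M\norm{z}_{\ell^2}^2$, as in~\eqref{cap_largeL_00D}. Since by Section~\ref{ssec_tpot} the finite-dimensional saddles $P_\pm$ approach $u_\pm^*$ as $d\to\infty$ and thus remain in a compact set in $L^\infty$, the constant $M$ can be chosen uniform in $d$. Next, choose $c>M+\gamma_1+1$ so that $A+cI$ is strictly positive definite uniformly in $d$ (here $\gamma_1$ is the constant from~\eqref{cap_largeL_00A}). Using $A=\Lambda+Q(u_\pm^*)$ and the fact that $\lambda_k+c=(k\pi/L)^2-1+c$ satisfies $\alpha(1+k^2)\leqs \lambda_k+c\leqs\beta(1+k^2)$ for some $0<\alpha\leqs\beta$ independent of $k$ and $d$, I obtain
\begin{equation}
\alpha_1\norm{y}_{H^1}^2 \leqs \pscal{y}{(A+cI)y} \leqs \beta_1\norm{y}_{H^1}^2
\label{plan01}
\end{equation}
for constants $\alpha_1,\beta_1>0$ that do not depend on $d$, after absorbing the term $\pscal{y}{Q(u_\pm^*)y}$ into $\norm{y}_{H^1}^2$ via $\norm{y}_{\ell^2}\leqs\norm{y}_{H^1}$.

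On the other hand, using the diagonalization $A+cI=S_\pm^\top\diag(\mu_k+c)S_\pm$, one has
\begin{equation}
\pscal{y}{(A+cI)y} = \sum_{k=0}^d (\mu_k+c)(S_\pm y)_k^2.
\label{plan02}
\end{equation}
By Weyl's inequality $\abs{\mu_k-\lambda_k}\leqs\norm{Q(u_\pm^*)}_{\mathrm{op}}\leqs M$, so $\mu_k+c$ also satisfies $\alpha'(1+k^2)\leqs \mu_k+c\leqs\beta'(1+k^2)$ uniformly in $d$. Substituting into~\eqref{plan02} yields
\begin{equation}
\alpha_2\norm{S_\pm y}_{H^1}^2 \leqs \pscal{y}{(A+cI)y} \leqs \beta_2\norm{S_\pm y}_{H^1}^2.
\label{plan03}
\end{equation}
Combining the lower bound in~\eqref{plan01} with the upper bound in~\eqref{plan03} gives $\norm{S_\pm y}_{H^1}^2\geqs(\alpha_1/\beta_2)\norm{y}_{H^1}^2$, while the lower bound in~\eqref{plan03} and the upper bound in~\eqref{plan01} give $\norm{S_\pm y}_{H^1}^2\leqs(\beta_1/\alpha_2)\norm{y}_{H^1}^2$. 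Setting $\beta_0=\min\set{\alpha_1/\beta_2,\alpha_2/\beta_1}$ concludes the proof.

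The essential ingredient is that both $\Lambda$ and $A=\hessian\Vhat$ have eigenvalues growing like $k^2$, and that the gap between them is uniformly bounded. The main potential obstacle is ensuring uniformity in $d$: the bound on $Q$ must be controlled uniformly, which relies on the uniform convergence of $P_\pm\to u_\pm^*$ established in Section~\ref{ssec_tpot}, and the upper Weyl bound on $\mu_k$ must hold uniformly in $d$, which follows directly from the decomposition $A=\Lambda+Q$ with $\norm{Q}_{\mathrm{op}}\leqs M$ regardless of the dimension.
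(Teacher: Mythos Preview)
Your proof is correct and takes a genuinely different, cleaner route than the paper's. The paper argues more directly: it writes $\norm{y}_{H^1}^2$ as a linear combination of $\norm{z}_{\ell^2}^2$ (equal to $\norm{y}_{\ell^2}^2$ by orthogonality of $S_\pm$) and $\sum_k\lambda_k y_k^2$, uses the identity $\sum_k\mu_k z_k^2=\sum_k\lambda_k y_k^2+\pscal{y}{Qy}$ to convert between the two eigenvalue sums, and then combines the resulting inequality $\norm{y}_{H^1}^2\geqs\sum_k c_k z_k^2$ (with $c_k$ linear in $\mu_k$) with the trivial bound $\norm{y}_{H^1}^2\geqs\norm{z}_{\ell^2}^2$ via a case split on small versus large $k$ to absorb the finitely many indices where $c_k$ may be negative.

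Your approach of comparing both norms to the single positive quadratic form $\pscal{y}{(A+cI)y}$ is more transparent: the shift by $cI$ makes everything positive in one stroke, and Weyl's inequality $\abs{\mu_k-\lambda_k}\leqs\norm{Q}_{\mathrm{op}}$ replaces the paper's appeal to the Sturm--Liouville asymptotics~\eqref{cap_largeL_00A}. The paper's method has the minor advantage of making the constants slightly more explicit, but your argument is shorter and the uniformity in $d$ is immediate from $\norm{Q}_{\mathrm{op}}\leqs M$, with no need to separately track the behaviour of the finite-dimensional $\mu_k(d)$.
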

\begin{proof}
On one hand, the $S_\pm$ being orthogonal, $y$ and $z=S_\pm y$ have the same
$\ell^2$-norm and we have the obvious bound 
\begin{equation}
 \label{cap_largeL_01:01}
\norm{y}_{H^1}^2 \geqs \norm{y}_{\ell^2}^2 = \norm{z}_{\ell^2}^2\;. 
\end{equation}  
On the other hand, using $1+k^2 = 1+(1+\lambda_k)L^2/\pi^2$ and again equality
of the $\ell^2$-norms, we get 
\begin{equation}
 \label{cap_largeL_01:02}
\norm{y}_{H^1}^2  = 
\biggpar{1+\frac{L^2}{\pi^2}} \norm{z}_{\ell^2}^2 
+ \frac{L^2}{\pi^2} \sum_{k=0}^d \lambda_k y_k^2\;. 
\end{equation} 
Now by~\eqref{cap_largeL_00B} and~\eqref{cap_largeL_00D}, we have 
\begin{equation}
 \label{cap_largeL_01:03}
\sum_{k=0}^d \mu_k z_k^2 = 
\pscal{y}{\hessian\Vhat(P_+)y} =
\sum_{k=0}^d \lambda_k y_k^2 + \pscal{y}{Qy}
\leqs \sum_{k=0}^d \bigpar{\lambda_k+M} y_k^2\;.
\end{equation} 
It follows that 
\begin{equation}
 \label{cap_largeL_01:04}
\norm{y}_{H^1}^2 
\geqs \sum_{k=0}^d \biggpar{1 + \frac{L^2}{\pi^2} \bigbrak{1+\mu_k-M}} z_k^2
\bydef \sum_{k=0}^d c_k z_k^2\;. 
\end{equation} 
The lower bound~\eqref{cap_largeL_00A} implies that 
\begin{equation}
 \label{cap_largeL_01:05}
c_k \geqs  1 + \frac{L^2}{\pi^2} \bigbrak{1-M-\gamma_1}
+ \gamma_2\frac{L^2}{\pi^2}  k^2\;.
\end{equation} 
Let $k_0$ be the smallest integer such that $c_{k_0}\geqs 1$. We may assume
$d>k_0$, since otherwise there is nothing to prove. It is easy to check that 
\begin{equation}
 \label{cap_largeL_01:06}
c_k \geqs 
\begin{cases}
-\beta_1 & \text{for $0\leqs k \leqs k_0$\;,} \\
1+\beta_2 k^2 & \text{for $k_0+1 \leqs k \leqs d$\;,}
\end{cases} 
\end{equation} 
where $\beta_1=(M+\gamma_1-1)(L^2/\pi^2)-1$ and
$\beta_2=\gamma_2L^2/((k_0^2+1)\pi^2)$. Thus setting 
\begin{equation}
 \label{cap_largeL_01:07}
a = \sum_{k=0}^{k_0} z_k^2\;, \qquad
b_1 = \sum_{k=k_0+1}^d z_k^2\;, \qquad
b_2 = \sum_{k=k_0+1}^d (1+\beta_2 k^2)z_k^2\;,
\end{equation} 
we can write the bounds~\eqref{cap_largeL_01:01} and~\eqref{cap_largeL_01:04}
in the form 
\begin{equation}
 \label{cap_largeL_01:08}
\norm{y}_{H^1}^2 \geqs a + b_1 
\qquad\text{and}\qquad
\norm{y}_{H^1}^2 \geqs -\beta_1 a + b_2\;. 
\end{equation} 
By distinguishing the cases $(1+\beta_1) a \leqs -b_1+b_2$ and $(1+\beta_1) a
> -b_1+b_2$, one can deduce from these two inequalities that 
\begin{equation}
 \label{cap_largeL_01:09}
\norm{y}_{H^1}^2 \geqs \frac{a+b_2}{2+\beta_1}\;, 
\end{equation} 
which implies $\norm{y}_{H^1}^2 \geqs \beta_0\norm{z}_{H^1}^2$
for some $\beta_0>0$. The inequality $\norm{z}_{H^1}^2 \geqs
\beta_0\norm{y}_{H^1}^2$ can be proved in a similar way, using the upper
bound on the $\mu_k$. 
\end{proof}

We denote again by $\cW^{\math s}$ the basin boundary, which is formed by the
closure of the stable manifolds of $P_+$ and $P_-$.

\begin{lemma}[Growth of the potential along $\cW^{\math s}$]
\label{lem_Ws_largeL} 
There exists a constant $m_0>0$ such that for all $y\in\cW^{\math s}$, 
\begin{equation}
 \label{cap_largeL_10}
\Vhat(y) - \Vhat(P_+) = \Vhat(y) - \Vhat(P_-) \geqs m_0 
\bigpar{\norm{y-P_+}_{H^1}^2 \wedge \norm{y-P_-}_{H^1}^2}\;. 
\end{equation} 
\end{lemma}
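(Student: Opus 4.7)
The plan is to follow the same two-stage strategy used in Lemmas~\ref{lem_Ws} and~\ref{lem_Ws_Lpi}: first establish a local quadratic lower bound in a small neighbourhood of each saddle, then extend it to the whole stable manifold by combining the gradient-flow monotonicity with the global coercivity of Lemma~\ref{lem_V1}. Since the symmetry~\eqref{capn_symmetry} gives $\Vhat(P_+)=\Vhat(P_-)$ and since $\cW^{\math s}=\overline{\cW^{\math s}(P_+)}\cup\overline{\cW^{\math s}(P_-)}$, it suffices to prove, for each sign $\sigma\in\{+,-\}$, the one-sided bound $\Vhat(y)-\Vhat(P_\sigma)\geqs m_0\norm{y-P_\sigma}_{H^1}^2$ on $\cW^{\math s}(P_\sigma)$; taking the pointwise minimum over $\sigma$ yields~\eqref{cap_largeL_10}.

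For the local analysis, fix $\sigma=+$ and pass to the eigencoordinates $z=S_+(y-P_+)$ diagonalizing $\hessian\Vhat(P_+)$. The centre-stable manifold theorem, applied as in the proof of Lemma~\ref{lem_Ws}, describes $\cW^{\math s}(P_+)$ locally as a graph $z_0=\psi_+(z_\perp)$ with $\psi_+(0)=0$, $D\psi_+(0)=0$, and $\abs{\psi_+(z_\perp)}\leqs M\norm{z_\perp}_{H^s}^2$ for $s>1/4$ and $\norm{z_\perp}_{H^s}\leqs\rho_0$. Taylor expansion of $\Vhat$ around $P_+$ yields
\begin{equation*}
\Vhat(y)-\Vhat(P_+)=\frac12\sum_{k=0}^d\mu_kz_k^2+\bigOrder{\norm{z}_{H^s}^3}\;,
\end{equation*}
and the contribution of the negative eigenvalue is absorbed into higher-order corrections on the manifold since $z_0^2=\bigOrder{\norm{z_\perp}_{H^s}^4}$. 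The decisive input is that, because $L$ is bounded away from $\pi$, $\mu_1$ is bounded away from zero uniformly in $d\geqs d_0$ (via Proposition~\ref{prop_tpot}), which together with the asymptotic growth~\eqref{cap_largeL_00A} yields a dimension-free lower bound $\mu_k\geqs c(1+k^2)$ for all $k\geqs1$. Hence $\sum_{k\geqs1}\mu_kz_k^2\geqs c\norm{z_\perp}_{H^1}^2$, and for $\norm{z_\perp}_{H^1}\leqs\rho_1$ small enough the cubic remainder is dominated, giving $\Vhat(y)-\Vhat(P_+)\geqs(c/4)\norm{z}_{H^1}^2$ on $\cW^{\math s}(P_+)$ in this neighbourhood. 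The norm equivalence of Lemma~\ref{lem_norms_largeL} then converts this into $\Vhat(y)-\Vhat(P_+)\geqs m_1\norm{y-P_+}_{H^1}^2$ for $\norm{y-P_+}_{H^1}\leqs R_1$, with $m_1,R_1>0$ uniform in $d$.

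For the global extension, I would exploit the fact that along the gradient flow $\dot y=-\nabla\Vhat(y)$ the potential is strictly decreasing on $\cW^{\math s}(P_+)\setminus\{P_+\}$ and every orbit converges to $P_+$. Any $y\in\cW^{\math s}(P_+)$ with $\norm{y-P_+}_{H^1}\geqs R_1$ therefore satisfies $\Vhat(y)-\Vhat(P_+)\geqs m_1 R_1^2$. For $\norm{y-P_+}_{H^1}$ large, Lemma~\ref{lem_V1} gives $\Vhat(y)\geqs\beta'\norm{y}_{H^1}^2-\alpha'$, and since $\norm{P_+}_{H^1}$ is bounded uniformly in $d$ by Proposition~\ref{prop_tpot}, the triangle inequality yields $\Vhat(y)-\Vhat(P_+)\geqs(\beta'/4)\norm{y-P_+}_{H^1}^2$ once $\norm{y-P_+}_{H^1}\geqs R_2$ for a suitable $R_2$. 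In the intermediate annulus $R_1\leqs\norm{y-P_+}_{H^1}\leqs R_2$, the constant lower bound $m_1 R_1^2$ already exceeds $(m_1R_1^2/R_2^2)\norm{y-P_+}_{H^1}^2$, so choosing $m_0=\min\{m_1,\,m_1R_1^2/R_2^2,\,\beta'/4\}$ patches the three regimes together. Repeating the argument around $P_-$ and taking the minimum of the two lower bounds yields~\eqref{cap_largeL_10}. The main obstacle is securing uniformity of the constants $m_1$, $R_1$, $R_2$ in the Galerkin dimension $d$; this is exactly what the dimension-free norm equivalence of Lemma~\ref{lem_norms_largeL}, the uniform eigenvalue asymptotics~\eqref{cap_largeL_00A}, and the convergence of saddles from Proposition~\ref{prop_tpot} are designed to provide.
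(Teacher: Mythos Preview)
Your proposal is correct and follows essentially the same two-stage strategy as the paper: a local quadratic lower bound near each saddle using the stable-manifold graph description and the Taylor expansion~\eqref{cap_largeL_10:01}, then a global extension combining gradient-flow monotonicity with the coercivity bound~\eqref{cap_s01a:7}, with Lemma~\ref{lem_norms_largeL} mediating between eigencoordinates and the original $H^1$-norm. The only organizational difference is that you prove the one-sided bound $\Vhat(y)-\Vhat(P_\sigma)\geqs m_0\norm{y-P_\sigma}_{H^1}^2$ separately on each $\cW^{\math s}(P_\sigma)$ and then take the minimum, whereas the paper works directly with $\norm{y-P_+}_{H^1}^2\wedge\norm{y-P_-}_{H^1}^2$ throughout; both routes are valid and the constants patch together the same way.
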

\begin{proof}
We have 
\begin{equation}
 \label{cap_largeL_10:01}
\Vhat(P_++\transpose{S_+}z) = \Vhat(P_+) + \frac12\sum_{k=0}^d \mu_k z_k^2 +
\Order{\norm{z}_{H^s}^3}
\end{equation} 
for any $s>1/4$. Since the stable manifold can be described locally by an
equation of the form $z_0=g(z_\perp)$, we obtain, as in the proof of
Lemma~\ref{lem_Ws}, the existence of constants $m_1, \rho_1>0$ such that 
\begin{equation}
 \label{cap_largeL_10:02}
\Vhat(P_++\transpose{S_+}z) \geqs \Vhat(P_+) + m_1 \norm{z}_{H^1}^2
\qquad
\forall z\colon P_++\transpose{S}z \in \cW^{\math s}\,,\, 
\norm{z}_{H^1}\leqs\rho_1\;. 
\end{equation} 
By Lemma~\ref{lem_norms_largeL}, this implies 
\begin{equation}
 \label{cap_largeL_10:03}
\Vhat(y) \geqs \Vhat(P_+) + \beta_0 m_1 \norm{y-P_+}_{H^1}^2
\qquad
\forall y \in \cW^{\math s} \colon 
\norm{y-P_+}_{H^1}\leqs\sqrt{\beta_0}\,\rho_1\;. 
\end{equation} 
A similar bound holds in the neighbourhood of $P_-$. 

Now choose a $\gamma>0$ such that $-\alpha+\beta\gamma^2/4\geqs 1$, 
where $\alpha$ and $\beta$ are the constants appearing in~\eqref{cap_s01a:7},
and such that $\gamma\geqs 3\norm{P_+}_{H^1}$. We want to consider the case  of 
$y\in\cW^{\math s}$ satisfying $\sqrt{\beta_0}\rho_1\leqs\norm{y-P_+}_{H^1} \wedge \norm{y-P_-}_{H^1}\leqs\gamma$. Without loss of generality we may assume $\norm{y-P_+}_{H^1} \geqs \norm{y-P_-}_{H^1}$. As in the proof of Lemma~\ref{lem_Ws}, we use the fact that the vector field $-\nabla \Vhat(y)$ is pointing inward. Thus,
\begin{equation}
 \label{cap_largeL_10:04}
 \Vhat(y) - \Vhat(P_+) \geqs m_1\beta_0 (\sqrt{\beta_{0}} \rho_{1})^{2}
\geqs m_1 \beta_{0}^{2} \frac{\rho_1^2}{\gamma^2} 
\bigpar{\norm{y-P_+}_{H^1}^2 \wedge
\norm{y-P_-}_{H^1}^2}\;.
\end{equation} 
Together with~\eqref{cap_s01a:7} for $\norm{y-P_+}_{H^1} \wedge \norm{y-P_-}_{H^1}>\gamma$, this proves~\eqref{cap_largeL_10} for all
$y\in\cW^{\math s}$.  
\end{proof}

\begin{prop}[Upper bound on the capacity]
 \label{prop_cap_neumann_largeL_upper} 
There exist $r_0, \eps_0 >0$ and $d_0<\infty$ such that for $r<r_0$,
$\eps<\eps_0$ and $d\geqs d_0$, 
\begin{equation}
 \label{cap_largeL_11}
\capacity_A(B) \leqs 2\eps\sqrt{\frac{\abs{\mu_0}}{2\pi\eps}}
\Biggpar{\prod_{k=1}^d \sqrt{\frac{2\pi\eps}{\mu_k}}\,}
\e^{-\Vhat(P_\pm)/\eps}
\bigbrak{1 + c_+\eps^{1/2}\abs{\log\eps}^{3/2}}\;,
\end{equation} 
where the constant $c_+$ is independent of $\eps$ and $d$. 
\end{prop}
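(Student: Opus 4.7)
The plan is to apply the variational characterization~\eqref{capn02} and construct an explicit competitor $h_+\in\cH_{A,B}$ whose Dirichlet form matches the claimed right-hand side up to the error $c_+\eps^{1/2}\abs{\log\eps}^{3/2}$. The factor $2$ in front reflects the fact that there are two relevant saddles $P_+$ and $P_-$ connecting the basins of $u^*_-$ and $u^*_+$, and the test function will be designed so that the flux of $h_+$ passes additively through neighbourhoods of both.

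First I would work near $P_+$ (the case of $P_-$ being identical by the symmetry~\eqref{capn_symmetry}, which also gives $\Vhat(P_+)=\Vhat(P_-)$). Using the orthogonal change of coordinates $z=S_+(y-P_+)$ from~\eqref{cap_largeL_00E}, the potential has the Taylor expansion
\begin{equation*}
\Vhat(P_++\transpose{S_+}z) = \Vhat(P_+) - \tfrac12\abs{\mu_0}z_0^2
+\tfrac12\sum_{k=1}^d \mu_k z_k^2 + \bigOrder{\norm{z}_{H^s}^3}\;,
\end{equation*}
for $s>1/4$, and Lemma~\ref{lem_norms_largeL} guarantees $\norm{z}_{H^1}\asymp\norm{y-P_+}_{H^1}$ uniformly in $d$. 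I would define a box
\begin{equation*}
D_+ = \bigsetsuch{z}{\abs{z_k}\leqs\delta_k,\;k=0,\dots,d}\;,
\qquad
\delta_k = \sqrt{\frac{c_k\eps\abs{\log\eps}}{\abs{\mu_k}}}\;,
\end{equation*}
with $c_k=c_0(1+\log(1+k))$, so that $\norm{z}_{H^s} = \Order{\sqrt{\eps\abs{\log\eps}}\,}$ on $D_+$ for all $s<1/2$ (using $\abs{\mu_k}\asymp 1+k^2$ from~\eqref{cap_largeL_00A}), and the cubic remainder contributes only $\Order{\eps^{3/2}\abs{\log\eps}^{3/2}}$ to $\Vhat/\eps$. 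On $D_+$ I would set $h_+$ to depend only on $z_0$ and to interpolate via the one-dimensional Euler--Lagrange minimizer
\begin{equation*}
h_+(z) = \int_{z_0}^{\delta_0}\e^{\Vhat(P_++\transpose{S_+}(t,0,\dots,0))/\eps}\6t \Bigg/ \int_{-\delta_0}^{\delta_0}\e^{\Vhat(P_++\transpose{S_+}(s,0,\dots,0))/\eps}\6s\;,
\end{equation*}
and an analogous construction is used on a box $D_-$ around $P_-$. A standard Gaussian integration then yields on each box a contribution equal to
$\sqrt{\abs{\mu_0}/(2\pi\eps)}\prod_{k=1}^d\sqrt{2\pi\eps/\mu_k}\,\e^{-\Vhat(P_\pm)/\eps}$ times $\eps[1+\Order{\eps^{1/2}\abs{\log\eps}^{3/2}}]$.

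Outside $D_+\cup D_-$, I would continue $h_+$ so as to equal $1$ on the side of $\cW^{\math s}$ containing $A$ and $0$ on the side containing $B$, interpolating in a layer $S$ of width $\Order{\sqrt{\eps\abs{\log\eps}}}$ around $\cW^{\math s}\setminus(D_+\cup D_-)$ with $\norm{\nabla h_+}_{\ell^2}^2\leqs M/(\eps\abs{\log\eps})$. Lemma~\ref{lem_Ws_largeL} gives $\Vhat(y)-\Vhat(P_\pm)\geqs m_0\bigpar{\norm{y-P_+}_{H^1}^2\wedge\norm{y-P_-}_{H^1}^2}$ on $\cW^{\math s}$, which is the analogue of Lemma~\ref{lem_Ws} and lets me estimate the layer contribution $\Phi_{S\setminus(D_+\cup D_-)}(h_+)$ by a Gaussian tail sum of the form $\prod_j\sqrt{\pi\eps/(m_0(1+j^2))}\sum_k\eps^{\kappa c_k}$, exactly as in~\eqref{cap_s02:10}--\eqref{cap_s02:11}; choosing $c_0$ large enough renders this negligible uniformly in $d$. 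Adding the contributions from $D_+$ and $D_-$ gives the prefactor $2$ and the stated bound.

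The main obstacle I anticipate is the uniformity in $d$: the boxes $D_\pm$ have $d$-dependent shape, the change of basis $S_\pm$ mixes all coordinates, and both $\Vhat(P_\pm)$ and the infinite product $\prod_{k\geqs 1}\sqrt{2\pi\eps/\mu_k}$ must be kept stable as $d\to\infty$. The equivalence of $H^1$-norms under $S_\pm$ (Lemma~\ref{lem_norms_largeL}) together with the asymptotics $\mu_k\asymp k^2$ are exactly what is needed to ensure that the normal-form error and the layer error are summable in $k$ uniformly in $d$; this is where the proof genuinely differs from the $L<\pi$ case, which was diagonal to begin with.
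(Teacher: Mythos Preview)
Your proposal is correct and follows essentially the same route as the paper: two boxes $D_\pm$ in the rotated coordinates $z=S_\pm(y-P_\pm)$ with the one-dimensional test function in $z_0$, plus a layer $S$ around $\cW^{\math s}$ controlled via Lemmas~\ref{lem_norms_largeL} and~\ref{lem_Ws_largeL}. You have in fact written out more of the mechanics than the paper does (it merely refers back to Proposition~\ref{prop_cap_neumann_smallL_upper}), and you correctly pinpoint that the new ingredient compared to the $L<\pi$ case is the $H^1$-norm equivalence under $S_\pm$, which is what makes the tail-sum estimate~\eqref{cap_s02:10}--\eqref{cap_s02:11} go through in the rotated variables.
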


\begin{proof}
The proof is similar to the proof of
Proposition~\ref{prop_cap_neumann_smallL_upper}. We first compute the Dirichlet
form over a box $D_+$, defined in rotated coordinates $z=S_+(y-P_+)$ by
$\abs{z_k}\leqs\sqrt{c_k\eps\abs{\log\eps}/\abs{\mu_k}}$. Constructing $h_+$ as
a function of $z_0$ as before yields a contribution equal to half the
expression in~\eqref{cap_largeL_11}. The other half comes from a similar
contribution from a box $D_-$ centred in $P_-$. The remaining part of the
Dirichlet form can be shown to be negligible with the help of
Lemmas~\ref{lem_norms_largeL} and~\ref{lem_Ws_largeL}. 
\end{proof}

\begin{prop}[Lower bound on the capacity]
 \label{prop_cap_neumann_largeL_lower} 
There exist $r_0, \eps_0 >0$ and $d_0(\eps)<\infty$ such that for $r<r_0$,
$\eps<\eps_0$ and $d\geqs d_0(\eps)$, 
\begin{equation}
 \label{cap_largeL_12}
\capacity_A(B) \geqs 2\eps\sqrt{\frac{\abs{\mu_0}}{2\pi\eps}}
\Biggpar{\prod_{k=1}^d \sqrt{\frac{2\pi\eps}{\mu_k}}\,}
\e^{-\Vhat(P_\pm)/\eps}
\bigbrak{1 - c_-\eps^{1/2}\abs{\log\eps}^{3/2}}\;,
\end{equation} 
where the constant $c_-$ is independent of $\eps$ and $d$. 
\end{prop}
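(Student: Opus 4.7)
The plan is to adapt the strategy of Proposition~\ref{prop_cap_neumann_smallL_lower} to the two-saddle setting. Starting from the variational representation $\capacity_A(B) = \Phi_{(A\cup B)^c}(h^*)$ evaluated at the equilibrium potential $h^* = h_{A,B}$, I would restrict the Dirichlet form to two disjoint neighbourhoods of the saddles $P_+$ and $P_-$, exploit the fact that $V$ is minimised at $P_\pm$ on the basin boundary $\cW^{\math s}$, and use the symmetry $\Vhat(P_+)=\Vhat(P_-)$ together with the equality of the Hessian spectra to produce the factor $2$ in~\eqref{cap_largeL_12}.

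Concretely, around each saddle introduce rotated coordinates $z = S_\pm(y - P_\pm)$ so that $\hessian\Vhat$ at $P_\pm$ is diagonal with entries $\mu_0<0<\mu_1<\dots<\mu_d$. Define $\widehat D_{\pm} = [-\rho,\rho]\times \widehat D_{\pm,\perp}$, with $\widehat D_{\pm,\perp} = \prod_{k=1}^d[-\hat\delta_k,\hat\delta_k]$ and $\hat\delta_k = \sqrt{\hat c_k\eps\abs{\log\eps}/\mu_k}$, $\hat c_k = \hat c_0(1+\log(1+k))$. For small enough $\rho$ the two boxes are disjoint and, by Lemma~\ref{lem_norms_largeL}, contain only points with $\norm{y-P_\pm}_{H^1}$ bounded, and $\norm{z}_{H^s} = \Order{\sqrt{\eps\abs{\log\eps}}\,}$ uniformly in $d$ for every $s<1$. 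Bounding $\capacity_A(B) \geqs \Phi_{\widehat D_+}(h^*) + \Phi_{\widehat D_-}(h^*)$, and retaining in each piece only the derivative in the unstable direction $z_0$, the standard one-dimensional Euler--Lagrange argument used in~\eqref{cap_s04:4}--\eqref{cap_s04:6} yields
\begin{equation*}
\Phi_{\widehat D_\pm}(h^*) \geqs \eps \int_{\widehat D_{\pm,\perp}}
\frac{\bigbrak{h^*_\pm(\rho,z_\perp) - h^*_\pm(-\rho,z_\perp)}^2}
{\displaystyle\int_{-\rho}^{\rho}\e^{\Vhat(P_\pm+\transpose{S_\pm}(z_0,z_\perp))/\eps}\6z_0}\6z_\perp,
\end{equation*}
with $h^*_\pm$ denoting $h^*$ in the rotated coordinates based at $P_\pm$.

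The two endpoints $z_0=\pm\rho$ lie on opposite sides of $\cW^{\math s}$ (with a quasipotential barrier of size bounded away from zero, uniformly for $z_\perp\in\widehat D_{\pm,\perp}$), so Proposition~\ref{new_prop_eqpot1} gives $h^*_\pm(\pm\rho,z_\perp) \in \set{\Order{\e^{-H_0/\eps}}, 1-\Order{\e^{-H_0/\eps}}}$, and the squared bracket is $1 - \Order{\e^{-H_0/\eps}}$. For the denominator, the Taylor expansion $\Vhat(P_\pm+\transpose{S_\pm}z) = \Vhat(P_\pm) + \frac12\sum_{k=0}^d\mu_k z_k^2 + \Order{\norm{z}_{H^s}^3}$ combined with the uniform estimate $\norm{z}_{H^s}=\Order{\sqrt{\eps\abs{\log\eps}}}$ and an implicit-function argument (as in~\eqref{cap_s04:7}--\eqref{cap_s04:10}) locates a unique maximiser at $z_0^* = \Order{\eps\abs{\log\eps}}$ with second derivative $-\abs{\mu_0}(1+\Order{\eps^{1/2}\abs{\log\eps}^{1/2}})$, giving
\begin{equation*}
\int_{-\rho}^{\rho}\e^{\Vhat/\eps}\6z_0 = \sqrt{\frac{2\pi\eps}{\abs{\mu_0}}}\,\e^{\Vhat(P_\pm)/\eps}\exp\biggset{\frac{1}{2\eps}\sum_{k\geqs1}\mu_k z_k^2}\bigbrak{1+\bigOrder{\eps^{1/2}\abs{\log\eps}^{3/2}}}.
\end{equation*}
Substituting and performing the $d$ truncated Gaussian integrals over $z_\perp$ reproduces $\prod_{k\geqs1}\sqrt{2\pi\eps/\mu_k}$ up to factors $1-\Order{\eps^{\hat c_k/2}}$ whose sum is negligible by the same geometric argument as in~\eqref{cap_s02:11}. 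Adding the two equal contributions from $\widehat D_+$ and $\widehat D_-$ produces the prefactor $2$.

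The main obstacle will be the \emph{uniformity in $d$} of the error terms in the Laplace expansion. Because the potential is not diagonal in the $y$-variables, the change of basis $S_\pm$ must be controlled through the whole calculation; this is precisely what Lemma~\ref{lem_norms_largeL} provides, and it is crucial that both the quadratic lower bound on $\Vhat$ on the basin boundary (Lemma~\ref{lem_Ws_largeL}) and the a priori equilibrium-potential bound (Proposition~\ref{new_prop_eqpot1}) hold with constants independent of $d$ for $d$ large enough. A secondary technical point is verifying that the Dirichlet-form contribution \emph{outside} the boxes $\widehat D_\pm$ is negligible: this is handled by a tube argument around $\cW^{\math s}$ using Lemma~\ref{lem_Ws_largeL}, entirely analogous to the bookkeeping in~\eqref{cap_s02:10}--\eqref{cap_s02:11}, the logarithmic weights $\hat c_k$ ensuring the sum $\sum_k \eps^{\kappa \hat c_k}$ converges uniformly in $d$.
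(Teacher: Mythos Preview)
Your proposal is essentially the paper's own argument: change to rotated coordinates $z=S_\pm(y-P_\pm)$, restrict the Dirichlet form to slabs $\widehat D_\pm=[-\rho,\rho]\times\widehat D_{\pm,\perp}$, apply the one-dimensional Euler--Lagrange reduction~\eqref{cap_s04:4}--\eqref{cap_s04:6}, bound the boundary values of $h^*$ via Proposition~\ref{new_prop_eqpot1}, and do Laplace asymptotics on the remaining one-dimensional integral. The paper allows the two slabs to overlap and remarks that the overlap contribution is negligible; your disjointness claim is also fine since each slab sits in an $\ell^2$-ball of radius $O(\rho)$ about its centre, and $\norm{P_+-P_-}_{\ell^2}$ is bounded away from zero for $L\geqs\pi+c$.

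One correction: your final paragraph is misplaced. For a \emph{lower} bound you simply discard the Dirichlet form outside $\widehat D_+\cup\widehat D_-$ --- it is nonnegative --- so there is nothing to verify about the ``contribution outside the boxes''. The tube argument around $\cW^{\math s}$ and the bookkeeping in~\eqref{cap_s02:10}--\eqref{cap_s02:11} belong to the \emph{upper}-bound proof (Proposition~\ref{prop_cap_neumann_largeL_upper}), where one must control the test function $h_+$ in the layer $S\setminus D$. Drop that paragraph and your argument is complete.
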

\begin{proof}
We perform the change of variables $y=P_++\transpose{S_+}z$ in the Dirichlet
form, which is an isometry, and thus of unit Jacobian. Let $A', B'$ denote the
images of $A$ and $B$ under the inverse isometry. 

Let $\Vtilde(z)=\Vhat(S_+(y-P_+))$ be the expression of the potential in the new
variables, given by~\eqref{cap_largeL_10:01}. We define $\widehat D_\perp$ as
in~\eqref{cap_s04:1} and set 
\begin{equation}
\label{cap_largeL12:3} 
\widehat D_+ = \bigsetsuch{z=(z_0,z_\perp)}
{z_\perp\in\widehat D_\perp, -\rho<z_0<\rho}\;.
\end{equation} 
Then by the same computation as in~\eqref{cap_s04:4}--\eqref{cap_s04:6C}, 
we have 
\begin{equation}
\label{cap_largeL12:4} 
\Phi_{\widehat D_+}(h^*) 
\geqs \eps \int_{\widehat D_\perp}
\frac{\bigbrak{1 - \Order{\e^{-H_0/\eps}}}^2}{\displaystyle \int_{-\rho}^{\rho}
\e^{\widetilde V(z_0,z_\perp)/\eps}\6z_0} \6z_\perp\;.
\end{equation} 
The function $z_0\mapsto \widetilde V(z_0,z_\perp)$ admits its
maximum in a point $z_0^*=\Order{\eps\abs{\log\eps}}$. We can thus apply the
Laplace method to obtain 
\begin{equation}
 \label{cap_largeL12:5}
 \int_{-\rho}^{\rho} \e^{\widetilde
V(z_0,\varphi(z_0)+z_\perp)/\eps}\6z_0
= \sqrt{\frac{2\pi\eps}{\abs{\mu_0}}} \e^{\Vhat(P_+)/\eps} 
\exp\biggset{\frac1{2\eps} \sum_{k=1}^d\mu_kz_k^2}
\bigbrak{1+\Order{\eps^{1/2}\abs{\log\eps}^{3/2}}}\;.
\end{equation} 
Substituting this into~\eqref{cap_largeL12:4}, the Dirichlet form
$\Phi_{\widehat D_+}(h^*)$ can be estimated as in~\eqref{cap_s04:11}.
 
Now a similar estimate holds for the Dirichlet form $\Phi_{\widehat D_-}(h^*)$
on a set $\widehat D_-$ constructed around $P_-$. The two sets may overlap, but
the contribution of the overlap to the capacity is negligible. 
\end{proof}


\subsection{Periodic b.c.}
\label{ssec_periodic}

We turn now to the study of capacities for periodic b.c. Since most arguments
are the same as for Neumann b.c., we only give the main results and briefly
comment on a few differences. 

The potential energy~\eqref{capn00} is invariant under translations $u\mapsto
u(\cdot+\varphi)$. As a consequence, when expressed in Fourier variables
it satisfies the symmetry 
\begin{equation}
 \label{capp01}
\Vhat\bigpar{\set{z_k}_{-d\leqs k\leqs d}} = 
\Vhat\bigpar{\set{\e^{2\icx\pi k\varphi/L} z_k}_{-d\leqs k\leqs d}}\;.
\end{equation} 
The eigenvalues of the Hessian of $\Vhat$ at the origin are of the form
\begin{equation}
 \label{capp02}
\lambda_k = -1 + \biggpar{\frac{2k\pi}{L}}^2\;, 
\qquad k = -d,\dots,d\;, 
\end{equation} 
and are thus doubly degenerate for $k\neq 0$. 

The case $L\leqs 2\pi - c$ is treated in exactly the same way as the case $L\leqs \pi - c$ for Neumann
b.c., with the result 
\begin{equation}
 \label{capp03}
\capacity_A(B) = \frac{\eps}{\sqrt{2\pi\eps}} 
\biggpar{\prod_{k=1}^d \frac{2\pi\eps}{\lambda_k} }
\bigbrak{1 + \Order{\eps^{1/2}\abs{\log\eps}^{3/2}}}\;, 
\end{equation} 
where the error term is uniform in $d$. 

For $2\pi - c < L \leqs 2\pi$, the capacity can again be estimated by using
the normal form. The only difference is that the centre manifold is now
two-dimensional, which leads to the expression 
\begin{equation}
 \label{capp04}
 \capacity_A(B) = \frac{\eps}{\sqrt{2\pi\eps}} 
\int_0^{2\pi} \int_{-\infty}^\infty \e^{-u(r_1,\varphi_1)/\eps}
r_1\6r_1\6\varphi_1 \biggpar{\prod_{k=2}^d \frac{2\pi\eps}{\lambda_k} }
\bigbrak{1 + \Order{R(\eps,\lambda_1)}}\;,
\end{equation} 
where 
\begin{equation}
 \label{capp05}
 u(r_1,\varphi_1) = \frac12\lambda_1 r_1^2 + C_4 r_1^4
\end{equation} 
results from the terms in $z_{\pm1}$ written in polar coordinates, and 
$R(\eps,\lambda_1)$ is the same as in \eqref{cap_Lpi_09}. The integral can be
expressed in terms of the distribution function of a Gaussian random variable,
cf.~\cite[Section~5.4]{BG2010}. 

For $2\pi \leqs L \leqs 2\pi + c$, the expression for the capacity is given
by~\eqref{capp04} with an extra term $\e^{V/\eps}$, where
$V\simeq-\lambda_1^2/(16C_4\eps)$ is the value of the potential at the
transition state. 

Finally in the case $L \geqs 2\pi + c$, we have to take into account the fact
that instead of isolated transition states, there is a whole family of
transition states $\set{P(\varphi)}_{0\leqs\varphi<L}$, satisfying by symmetry 
\begin{equation}
 \label{capp06}
P_k(\varphi) = \e^{2\icx\pi k\varphi/L} P_k(0)\;. 
\end{equation} 
The eigenvalues $\mu_k$ of the Hessian at any transition state
satisfy
\begin{equation}
 \label{capp07}
\mu_0 < \mu_{-1}=0 < \mu_{1} < \mu_2, \mu_{-2} <  \dots
\end{equation}
When evaluating the Dirichlet form, we construct an approximation of 
the equilibrium potential in a neighbourhood of the transition states
in a way which is invariant under the symmetry. The result is 
\begin{equation}
 \label{capp08}
 \capacity_A(B) = \frac{\eps}{\sqrt{2\pi\eps\abs{\mu_0}}} 
\ell_{\text{saddle}} \biggpar{\frac{2\pi\eps}{\mu_1}
\prod_{\abs{k}\geqs2}^d \frac{2\pi\eps}{\mu_k} }^{1/2}
\e^{-\Vhat(P(0))/\eps}
\bigbrak{1 + \Order{\eps^{1/2}\abs{\log\eps}^{3/2}}}\;,
\end{equation} 
where $\ell_{\text{saddle}}$ is the \lq\lq length of the saddle\rq\rq, due to
the integration along the direction with vanishing eigenvalue $\mu_{-1}$. It is
given by 
\begin{equation}
 \label{capp09}
\ell_{\text{saddle}} = \int_0^L \biggnorm{\dpar{P}{\varphi}}_{\ell^2}
\6\varphi \;, 
\end{equation}
where~\eqref{capp06} shows that 
\begin{equation}
 \label{capp10}
 \biggnorm{\dpar{P}{\varphi}}_{\ell^2}^2 = 
\sum_{k=-d}^d \biggpar{\frac{2k\pi}{L}}^2 \abs{P_k(0)}^2\;,
\end{equation}  
which converges as $d\to\infty$, by Parseval's identity, to
$\norm{(u^*_{1,0})'}_{L^2}^2$. Hence we have 
\begin{equation}
 \label{capp11}
\lim_{d\to\infty} \ell_{\text{saddle}}(d) 
= L \norm{(u^*_{1,0})'}_{L^2}\;.
\end{equation} 
When $L$ is close to $2\pi$, the normal form shows that $\abs{P_1(0)}^2 =
\abs{\lambda_1}/(2C_4) + \Order{\lambda_1^2}$, while the other components of
$P(0)$ are of order $\lambda_1^2$. Also the eigenvalue $\mu_1$ satisfies
$\mu_1=-2\lambda_1+\Order{\abs{\lambda_1}^{3/2}}$. This shows that 
$\ell_{\text{saddle}}=2\pi\sqrt{\mu_1/(8C_4)} + \Order{\mu_1}$, and allows to
check that the expressions~\eqref{capp08} and~\eqref{capp04} for the capacity
are indeed compatible.


\section{Uniform bounds on expected first-hitting times}
\label{sec_times}


\subsection{Integrating the equilibrium potential against the invariant measure}
\label{ssec_integ}

We define as before the sets $A, B\subset E$ as the open balls 
\begin{align}
\nonumber
A &= \bigsetsuch{u\in E}{\norm{u-u^*_-}_{L^\infty} < r}\;, \\
B &= \bigsetsuch{u\in E}{\norm{u-u^*_+}_{L^\infty} < \rho}\;.
\label{integ02B} 
\end{align}
The aim of this subsection is to obtain sharp upper and lower bounds on the 
integral 
\begin{equation}
 \label{integ01}
J_d(A,B) = 
\int_{E_d\setminus B_d} h^{(d)}_{A_d,B_d}(y) \e^{-\Vhat(y)/\eps} \6y\;, 
\end{equation} 
where $h^{(d)}_{A_d,B_d}$ is the equilibrium potential 
\begin{equation}
 \label{integ02}
 h^{(d)}_{A_d,B_d}(y) = \bigprobin{y}{\tau^{(d)}_{A_d} < \tau^{(d)}_{B_d}}\;.
\end{equation} 
Recall that the local minima $u^*_\pm$ of $V$ are also local minima of the
truncated potential, and that the eigenvalues of the Hessian of the potential at
$u^*_-$ are given by $\nu_k^-=(bk\pi/L)^2+U''(u_-)$, where $b=1$ and $k\in\N_0$
for Neumann b.c., and $b=2$ and $k\in\Z$ for periodic b.c. Recall that $u_{\pm}$ denote the minima of the local potential $U$.

\begin{prop}[Upper bound on the integral]
\label{prop_integ_upper}
There exist constants $r_0>0$, $\eps_0>0$ such that for any $\eps<\eps_0$, there exists a
$d_0=d_0(\eps)<\infty$ such that 
\begin{equation}
 \label{integ03}
J_d(A,B) \leqs \prod_{\abs{k}\leqs d} \sqrt{\frac{2\pi\eps}{\nu_k^-}}
\e^{-V[u^*_-]/\eps}
\bigbrak{1+c_+\eps^{1/2}\abs{\log\eps}^{3/2}} 
\end{equation} 
for all $0<r,\rho<r_0$, and all $d\geqs d_0$, where the
constant $c_+$ is
independent of $\eps$ and $d$. 
\end{prop}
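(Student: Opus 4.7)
\textsc{Proof plan.}
The strategy is to localize the integral near the minimum $u^*_-$, where the equilibrium potential is close to $1$, and to show that contributions from elsewhere are negligible. Concretely, I would decompose $E_d\setminus B_d$ into three regions: a thin box
\begin{equation*}
D = u^*_- + \prod_{\abs{k}\leqs d}[-\delta_k,\delta_k]\;,
\qquad
\delta_k = \sqrt{\frac{c_k \eps\abs{\log\eps}}{\nu^-_k}}\;,
\quad
c_k = c_0(1+\log(1+\abs{k}))\;,
\end{equation*}
chosen exactly as for the capacity upper bound so that $\norm{y-u^*_-}_{H^s}^2 = \Order{\eps\abs{\log\eps}}$ on $D$ uniformly in $d$ (for any $s<1/2$); a moderate region $F_1 = \set{V[y]\leqs M}\setminus(D\cup B_d)$; and the tail $F_2 = \set{V[y]>M}$, for a suitably large constant $M$.

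On $D$, I would use the crude bound $h^{(d)}_{A_d,B_d}\leqs 1$ together with the Taylor expansion
\begin{equation*}
\Vhat(u^*_-+z) = V[u^*_-] + \frac12\sum_{\abs{k}\leqs d}\nu_k^- z_k^2 + \Order{\norm{z}_{H^s}^3}\;,
\end{equation*}
so that standard Laplace asymptotics produces the claimed main term, with relative error $\Order{\eps^{1/2}\abs{\log\eps}^{3/2}}$ coming from the cubic remainder; the truncation error from integrating the Gaussian only over $D$ rather than $\R^{d+1}$ is $\sum_k \Order{\eps^{\kappa c_k/2}}$ and, by the geometric-series trick already used in the proof of Proposition~\ref{prop_cap_neumann_smallL_upper} (see~\eqref{cap_s02:11}), is uniformly bounded in $d$ once $c_0$ is taken large.

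On $F_1$, I would apply Proposition~\ref{new_prop_eqpot2} to obtain
\begin{equation*}
h^{(d)}_{A_d,B_d}(y)\,\e^{-\Vhat(y)/\eps}
\leqs 3\e^{-(\Vbar(y,A)+\Vhat(y)-\eta)/\eps}
+ 3\e^{-(\Vhat(y)+1/\eta)/\eps}\;,
\end{equation*}
valid for $d\geqs d_0(\eta,M,\eps)$. Two sub-cases now must be handled. If $y\in F_1$ lies in the basin of attraction of $u^*_-$, then $\Vbar(y,A)=0$, but $y\notin D$ forces $\Vhat(y)-V[u^*_-] \geqs \kappa_1 c_0\eps\abs{\log\eps}$ in at least one coordinate direction, by the Taylor expansion above; choosing $c_0$ large makes the resulting factor $\eps^{\kappa_1 c_0}$ arbitrarily small. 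If instead $y$ lies outside the basin of $u^*_-$, any path from $y$ to $A$ must cross a transition state, so $\Vhat(y)+\Vbar(y,A)\geqs V[u^*_{\text{ts}}]$, giving an exponentially smaller contribution than $\e^{-V[u^*_-]/\eps}$. The second term in the bound is handled analogously.

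Finally, on $F_2$, I would bound $h^{(d)}\leqs1$ and use Lemma~\ref{lem_V1}, which gives $\Vhat(y)\geqs \beta'\norm{y}_{H^1}^2-\alpha'$, together with Lemma~\ref{lem_weighted} to control the resulting Gaussian integral uniformly in $d$; the contribution is at most $\e^{-M/(2\eps)}$ times a $d$-independent constant, hence negligible for $M$ large. Summing the three estimates yields~\eqref{integ03}. The main technical obstacle is ensuring that the $d_0$ required by Proposition~\ref{new_prop_eqpot2} can be chosen uniformly in the integration variable $y\in F_1$; this is why I restrict $F_1$ to $\set{V[y]\leqs M}$, within which a compactness argument in $\cC^{1/2}$ (as in the proof of Proposition~\ref{new_prop_eqpot2}) provides the required uniformity.
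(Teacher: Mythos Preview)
Your overall architecture matches the paper's closely: the same box $D$ (the paper calls it $C_d$), the same Laplace computation for the main term, and the same appeal to Proposition~\ref{new_prop_eqpot2} to kill the region near the wrong well. The paper decomposes the complement of the box into $D_d=\{\Vbar(y,u^*_-)>0\}$ (the non-basin part) and the remaining basin points, while you use a level set $\{V\leqs M\}$; this is a cosmetic difference.

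There is, however, a real gap in your treatment of the sub-case $y\in F_1$ outside the basin of $u^*_-$. You correctly get the pointwise bound $h^{(d)}_{A_d,B_d}(y)\e^{-\Vhat(y)/\eps}\leqs 3\e^{-(V[u^*_{\text{ts}}]-\eta)/\eps}$, but then you claim this yields ``an exponentially smaller contribution than $\e^{-V[u^*_-]/\eps}$''. That is not enough: you still have to multiply by the Lebesgue measure of the region, and the right comparison is with the main term $\prod_{\abs{k}\leqs d}\sqrt{2\pi\eps/\nu_k^-}\,\e^{-V[u^*_-]/\eps}$, which carries a factor of order $\eps^{(2d+1)/2}$. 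The ratio $\mathrm{vol}(F_1)\big/\prod_k\sqrt{2\pi\eps/\nu_k^-}$ therefore grows like $(C/\eps)^{d}$ as $d$ increases, and the fixed gain $\e^{-(H_0-\eta)/\eps}$ cannot absorb this for all $d$. The paper deals with this explicitly: it bounds the volume of $\{\norm{y}_{H^1}\leqs M'\}$ via the sphere-volume formula $2\pi^d/\Gamma(d)\leqs(M_1/d)^d$ (Stirling), and then chooses $d_0$ of order $1/\eps$ so that the factorial decay $(M_1/d)^d$ overwhelms the $(C/\eps)^d$ growth. You need to insert this step; otherwise the bound is not uniform in $d\geqs d_0(\eps)$.

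A smaller point: in the ``in basin, outside $D$'' sub-case, the Taylor expansion you invoke is only valid in a neighbourhood of $u^*_-$, so it does not directly give $\Vhat(y)-V[u^*_-]\geqs\kappa_1c_0\eps\abs{\log\eps}$ throughout the basin. What you actually need is a global quadratic-growth bound $\Vhat(y)-V[u^*_-]\geqs m_0\norm{y-u^*_-}_{H^1}^2$ on the basin (proved exactly as Lemma~\ref{lem_Ws}), after which the integral over $(\text{basin})\setminus D$ is controlled by the computation in~\eqref{cap_s02:10}--\eqref{cap_s02:11}. The paper makes the same move by reference, so this is more a matter of precision than a genuine error.
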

\begin{proof}
Let $\delta_k=\sqrt{c_k\eps\abs{\log\eps}/\nu_k^-}$, where
$c_k=c_0(1+\log(1+\abs{k}))$. 
We introduce two sets 
\begin{align}
\nonumber 
C_d &= [u_- -\delta_0,u_-+\delta_0]\times\prod_{0<\abs{k}\leqs d}
[-\delta_k,\delta_k]\;, \\
D_d &= \bigsetsuch{y\in E_d}{\Vbar(y,u^*_-) > 0}\;,
\label{integ04:1} 
\end{align}
and split the domain of integration into $C_d$, $D_d\setminus B_d$, and the
remaining part of $E_d\setminus B_d$. By Laplace asymptotics (cf.~the quadratic
approximation argument used in the proof
of Proposition~\ref{prop_cap_neumann_smallL_upper}) one obtains that
\begin{equation}
 \label{integ04:2}
\int_{C_d} h^{(d)}_{A_d,B_d}(y) \e^{-\Vhat(y)/\eps} \6y
\leqs \int_{C_d} \e^{-\Vhat(y)/\eps} \6y
\end{equation} 
satisfies the upper bound~\eqref{integ03}. To bound the integral over $D_d$, we
use the bound on the equilibrium potential in
Proposition~\ref{new_prop_eqpot2} to get 
\begin{equation}
 \int_{D_d} h^{(d)}_{A_d,B_d}(y) \e^{-\Vhat(y)/\eps} \6y
\leqs 3 \int_{D_d}
\biggpar{\e^{-[\Vbar(y,A)-\eta+\Vhat(y)]/\eps} +
\e^{-[1/\eta + \Vhat(y)]/\eps}} \;.
 \label{integ04:3}
\end{equation} 
If $u^*_{\text{ts}}$ denotes a transition state, we have 
$\Vbar(y,A)=V[u^*_{\text{ts}}]-\Vhat(y)$. Choosing $\eta$ small enough that
$1/\eta \geqs V[u^*_{\text{ts}}] - V[u_+^*]$, we thus obtain
\begin{equation}
 \int_{D_d} h^{(d)}_{A_d,B_d}(y) \e^{-\Vhat(y)/\eps} \6y
\leqs 6 \e^{-(V[u^*_{\text{ts}}]-\eta)/\eps} \int_{D_d} \6y\;,
 \label{integ04:3B}
\end{equation} 
The lower bound~\eqref{bp05} on the potential implies that $D_d$ is contained
in a set $\set{\norm{y}_{H^1}\leqs M}$ for some $M$. The scaling
$y_k=\sqrt{M/(1+k^2)}z_k$ shows that 
\begin{equation}
 \label{integ04:4}
\int_{D_d} \6y \leqs M^{d+1/2} \prod_{\abs{k}\leqs d} \frac{1}{\sqrt{1+k^2}}
\int_{\fS^{2d}} \6z\;.
\end{equation} 
The volume of the sphere $\fS^{2d}$ is given by $2\pi^d/\Gamma(d)$, which by
Stirling's formula is bounded by $(M_1/d)^d$ for some constant $M_1$. Thus
choosing $d_0$ of order $1/\eps$ or larger ensures that the
integral~\eqref{integ04:3} is negligible if we take $\eta$ small enough. 

Finally, we can bound the integral of $\e^{-\Vhat(y)/\eps}$ over the remaining
space in the same way as in the proof of
Proposition~\ref{prop_cap_neumann_smallL_upper}, using again~\eqref{bp05} to
bound the potential below by a quadratic form. Choosing $c_0$ large enough
ensures that this integral is negligible as well. 
\end{proof}

\begin{prop}[Lower bound on the integral]
\label{prop_integ_lower}
There exist constants $r_0>0$, $\eps_0>0$ such that for any $\eps<\eps_0$, there exists a
$d_0=d_0(\eps)<\infty$ such that 
\begin{equation}
 \label{integ05}
J_d(A,B) \geqs \prod_{\abs{k}\leqs d} \sqrt{\frac{2\pi\eps}{\nu_k^-}}
\e^{-V[u^*_-]/\eps}
\bigbrak{1-c_-\eps^{1/2}\abs{\log\eps}^{3/2}} 
\end{equation} 
for all $0<r,\rho<r_0$, and all $d\geqs d_0$, where the
constant $c_-$ is independent of $\eps$ and $d$. 
\end{prop}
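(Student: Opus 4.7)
The strategy is dual to that of Proposition~\ref{prop_integ_upper}: restrict the integration domain to a box $C_d$ centred at the minimum $u^*_-$, replace the equilibrium potential by a uniform lower bound close to $1$, and apply Laplace asymptotics to the resulting Gaussian integral. Writing $y^-$ for the Fourier coordinates of $u^*_-$, I would choose
\begin{equation*}
 C_d = \prod_{\abs{k}\leqs d}[y^-_k-\delta_k,y^-_k+\delta_k]\;,
 \qquad
 \delta_k = \sqrt{\frac{c_k\eps\abs{\log\eps}}{\nu^-_k}}\;,
 \qquad
 c_k=c_0\bigpar{1+\log(1+\abs{k})}\;,
\end{equation*}
so that for $c_0$ sufficiently large the box lies strictly inside $A_d$, and on $C_d$ one has
$\norm{y-y^-}_{H^s}^2 = \Order{\eps\abs{\log\eps}}$ uniformly in $d$ for every $1/4<s<1/2$ (the argument is the same as in~\eqref{cap_s023}, using $\nu^-_k\sim k^2$).

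On $C_d$, I would use the Taylor expansion around $u^*_-$:
\begin{equation*}
 \Vhat(y) = V[u^*_-] + \frac12\sum_{\abs{k}\leqs d}\nu^-_k (y_k-y^-_k)^2
 + \bigOrder{\norm{y-y^-}_{H^s}^3}\;,
\end{equation*}
where the remainder is controlled in $H^s$-norm exactly because $U\in\cC^3$ acts nicely on Sobolev spaces in one dimension, cf.\ the arguments of Section~\ref{ssec_normalform}. On $C_d$ this gives $\Vhat(y)=V[u^*_-]+\frac12\sum\nu^-_k(y_k-y^-_k)^2+\Order{\eps^{3/2}\abs{\log\eps}^{3/2}}$, whence
\begin{equation*}
 \int_{C_d} \e^{-\Vhat(y)/\eps}\6y
 = \e^{-V[u^*_-]/\eps} \prod_{\abs{k}\leqs d}
 \int_{-\delta_k}^{\delta_k} \e^{-\nu^-_k t^2/2\eps}\6t
 \bigbrak{1-\Order{\eps^{1/2}\abs{\log\eps}^{3/2}}}\;.
\end{equation*}
Extending each one-dimensional integral to $\R$ introduces a tail error of order $\e^{-c_k\abs{\log\eps}/2}=\eps^{c_k/2}$; summing these tails over $k$ as in~\eqref{cap_s02:11} and choosing $c_0$ large enough makes their total contribution negligible compared to $\eps^{1/2}\abs{\log\eps}^{3/2}$ uniformly in $d$.

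It remains to bound $h^{(d)}_{A_d,B_d}(y)$ from below uniformly on $C_d$. For every $y\in C_d$ the radius of the box guarantees $H(y,B)\geqs\eta$ for some $\eta>0$ independent of $d$, while $H(y,A)=0$ because $C_d$ lies in the basin of attraction of $u^*_-$. Applying Proposition~\ref{new_prop_eqpot1} with $A$ and $B$ exchanged yields a $H'_0=H'_0(\eta)>0$ and a $d_0(\eps)$ such that
\begin{equation*}
 h^{(d)}_{A_d,B_d}(y) = 1-\bigprobin{y}{\tau^{(d)}_{B_d}<\tau^{(d)}_{A_d}}
 \geqs 1-4\e^{-H'_0/\eps}
\end{equation*}
for all $d\geqs d_0$ and $y\in C_d$, the exponential term being negligible. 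Combining these three ingredients produces the lower bound~\eqref{integ05}.

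The main potential obstacle is the uniformity of all error terms in the dimension $d$. The choice of weights $c_k\propto 1+\log(1+\abs{k})$ together with the $k^{-2}$-decay of $1/\nu^-_k$ is what keeps both the Sobolev norm on $C_d$ and the sum of Gaussian tail errors bounded independently of $d$; this is the quantitatively delicate point, but it parallels exactly the argument already used for the upper bound in Proposition~\ref{prop_integ_upper} and for the capacity estimates of Section~\ref{ssec_Neumann}.
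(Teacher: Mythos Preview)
Your proof is correct and essentially identical to the paper's: restrict the integral to the box $C_d$, bound $h^{(d)}_{A_d,B_d}$ from below using Proposition~\ref{new_prop_eqpot1} with the roles of $A$ and $B$ exchanged, and evaluate the remaining Gaussian integral with uniformly controlled tails. The paper presents the middle step as the splitting $\int_{C_d} \e^{-\Vhat/\eps}\,\6y - \int_{C_d} h^{(d)}_{B_d,A_d}\,\e^{-\Vhat/\eps}\,\6y$ rather than a pointwise lower bound on $h$, but this is just a cosmetic difference.

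One slip worth flagging: the assertion that ``for $c_0$ sufficiently large the box lies strictly inside $A_d$'' is backwards---larger $c_0$ enlarges $C_d$---and in fact need not hold uniformly in $d$, since your $H^s$-control for $s<1/2$ does not imply an $L^\infty$-bound on $C_d$. Fortunately you never actually use this claim: on $C_d\cap A_d$ one has $h^{(d)}_{A_d,B_d}=1$ by the boundary condition, and on $C_d\setminus A_d$ your appeal to Proposition~\ref{new_prop_eqpot1} is exactly what is needed. You should simply drop the sentence.
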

\begin{proof}
We define $C_d$ as in the previous proof. 
The fact that $h^{(d)}_{A_d,B_d}(y) = 1 - h^{(d)}_{B_d,A_d}(y)$ shows that 
\begin{align}
\nonumber
J_d(A,B) &\geqs \int_{C_d} h^{(d)}_{A_d,B_d}(y) \e^{-\Vhat(y)/\eps} \6y \\
&= \int_{C_d} \e^{-\Vhat(y)/\eps} \6y
- \int_{C_d} h^{(d)}_{B_d,A_d}(y) \e^{-\Vhat(y)/\eps} \6y\;.
\label{integ06:1} 
\end{align}
The first term on the right-hand side satisfies the claimed lower bound, by a
computation similar to the one in the proof of
Proposition~\ref{prop_cap_neumann_smallL_lower}, cf.~\eqref{cap_s04:11}.
Proposition~\ref{new_prop_eqpot1} shows that the second term on the right-hand
side is smaller than the first one by an exponentially small term.
\end{proof}


\subsection{Averaged bounds on expected first-hitting times}
\label{ssec_times}

We define the sets $A$ and $B$ as in~\eqref{integ02B}. According
to~\eqref{pot11}, 
\begin{equation}
 \label{times01}
\nu^{(d)}_{A,B}(dz) = \frac{-e_{A_d,B_d}(dz)
\e^{-\Vhat(z)/\eps}}{\capacity_{A_d}(B_d)} 
\end{equation} 
is a probability measure on $\partial A_d$. 

The following result implies Proposition~\ref{prop_bounds_finite}.

\begin{prop}
\label{prop_times_1}
There exist $r_0, \eps_{0}>0$ such that for $0<r,\rho<r_0$ and
$0<\eps<\eps_{0}$, there exists a $d_0=d_{0}(\eps)<\infty$ such that for all
$d\geqs d_0$,
\begin{equation}
 \label{times02}
C(d,\eps) \e^{H(d)/\eps} \bigbrak{1-R^-_{d,B}(\eps)}
\leqs \int_{\partial A_d} \bigexpecin{z}{\tau^{(d)}_{B_d}}\nu^{(d)}_{A,B}(dz)
\leqs 
C(d,\eps) \e^{H(d)/\eps} \bigbrak{1+R^+_{d,B}(\eps)}\;,
\end{equation} 
where the quantities $C(d,\eps)$, $H(d)$ and $R^\pm_{d,B}(\eps)$ are detailed
below.
\end{prop}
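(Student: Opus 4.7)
The plan is to combine the potential-theoretic identity with the capacity estimates from Section~\ref{sec_cap} and the integral estimates from Section~\ref{ssec_integ}. The key observation is that, for the finite-dimensional Galerkin approximation~\eqref{Galerkin4}, the identity~\eqref{pot12} applied with the roles of the sets swapped (taking $C=A_d$ and $A=B_d$) yields the exact representation
\begin{equation}
\label{times_exact}
\int_{\partial A_d} \bigexpecin{z}{\tau^{(d)}_{B_d}} \nu^{(d)}_{A,B}(dz)
= \frac{J_d(A,B)}{\capacity_{A_d}(B_d)}\;,
\end{equation}
where the normalisation in~\eqref{times01} is designed precisely so that~\eqref{times_exact} holds. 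Thus the problem reduces to controlling the ratio on the right-hand side.

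First I would verify~\eqref{times_exact}. The integrand $w_{B_d}(z)=\expecin{z}{\tau^{(d)}_{B_d}}$ solves the Poisson problem~\eqref{pot05} with $A$ replaced by $B_d$ in the finite-dimensional setting, and the chain of equalities in~\eqref{pot12} is a direct consequence of reversibility plus the definition of the equilibrium measure $e_{A_d,B_d}$. Here the a~priori moment bound of Proposition~\ref{prop_tauB_finite} guarantees that $w_{B_d}$ is finite and the integrals are absolutely convergent, which is needed to justify the exchanges of integration implicit in~\eqref{pot12}. Next, I would insert the bounds. For the numerator, Propositions~\ref{prop_integ_upper} and~\ref{prop_integ_lower} give
\begin{equation}
J_d(A,B) = \prod_{\abs{k}\leqs d} \sqrt{\frac{2\pi\eps}{\nu_k^-}}\,
\e^{-V[u^*_-]/\eps}\bigbrak{1+\Order{\eps^{1/2}\abs{\log\eps}^{3/2}}}\;,
\end{equation}
uniformly for $d\geqs d_0(\eps)$. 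For the denominator, I would apply whichever of Propositions~\ref{prop_cap_neumann_smallL_upper}--\ref{prop_cap_neumann_largeL_lower} (or their analogues for periodic b.c.\ summarised in Section~\ref{ssec_periodic}) is appropriate to the regime of $L$ under consideration. Taking the ratio of upper $J_d$ with lower capacity, and vice versa, one reads off $C(d,\eps)$, $H(d)$ and $R^\pm_{d,B}(\eps)$ by inspection. For example, in the non-bifurcating Neumann regime $L<\pi$ one obtains
\begin{equation}
C(d,\eps) \,\e^{H(d)/\eps}
= \sqrt{\frac{2\pi}{\abs{\lambda_0}\nu_0^-}}\,
\prod_{k=1}^{d}\sqrt{\frac{\lambda_k}{\nu_k^-}}\;
\e^{(V[u^*_0]-V[u^*_-])/\eps}\;,
\end{equation}
and analogous closed forms appear in the other three Neumann regimes and the four periodic regimes, matching the prefactors appearing in Theorems~\ref{thm_Neumann} and~\ref{thm_Kramers_periodic_bc}.

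Finally I would check the three limiting properties. The existence of $C(\infty,\eps)=\lim_d C(d,\eps)$ follows from the product formulas~\eqref{product_formulas} (and their extensions via functional-determinant identities) together with the fact that the ratios $\lambda_k/\nu_k^-$ and $\mu_k/\nu_k^-$ differ from unity by $\Order{k^{-2}}$, which is summable. The identification $\lim_d H(d)=H_0$ follows from Proposition~\ref{prop_tpot}: for $d\geqs d_0$ the truncated potential $\Vhat^{(d)}$ has the same critical points as $\Vhat$, with potential values converging to those of the infinite-dimensional saddle. The uniform smallness of $R^\pm_{d,B}(\eps)$ follows from the fact that the error terms in Section~\ref{sec_cap} and Section~\ref{ssec_integ} were all tracked uniformly in $d$ (this is the purpose of the logarithmically growing coefficients $c_k=c_0(1+\log(1+\abs{k}))$ used in the bounding boxes). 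The main obstacle in the argument is precisely this uniform-in-$d$ control: one must verify that the tail contributions, from Fourier modes with $\abs{k}$ between some $d_*(\eps)$ and $d$, never accumulate enough to spoil the remainder estimates. This is handled by the geometric-decay computation~\eqref{cap_s02:11} (and its analogues), which ensures that the tails of the integrals defining both $J_d$ and $\capacity_{A_d}(B_d)$ contribute at most $\eps^{\kappa c_0}$, negligible once $c_0$ is taken large enough.
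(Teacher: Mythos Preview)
Your proof is correct and follows exactly the paper's approach: the identity~\eqref{times_exact} is precisely the paper's~\eqref{times03:1}, obtained from~\eqref{pot12}, after which one simply quotes Propositions~\ref{prop_integ_upper}, \ref{prop_integ_lower} and the capacity bounds of Section~\ref{sec_cap}. One minor arithmetic slip: in your displayed formula for the $L<\pi$ Neumann regime the leading constant should be $2\pi$ rather than $\sqrt{2\pi}$ (the extra $\sqrt{2\pi\eps}$ from the $k=0$ factor of $J_d$ combines with the $\sqrt{2\pi\eps}/\eps$ from the capacity to give $2\pi$), matching~\eqref{times04}.
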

\begin{proof}
By~\eqref{pot12}, we have
\begin{equation}
 \label{times03:1}
\int_{\partial A_d} \bigexpecin{z}{\tau^{(d)}_{B_d}}\nu^{(d)}_{A,B}(dz)
= \frac{J_d(A,B)}{\capacity_{A_d}(B_d)}\;.
\end{equation} 
Hence the result follows immediately from
Propositions~\ref{prop_integ_upper}, 
\ref{prop_integ_lower}
and the bounds on capacities obtained in Section~\ref{sec_cap}.
\end{proof}

We end by listing the expressions of the quantities appearing
in~\eqref{times02}. In the case of Neumann b.c., they are of the following
form, depending on the value of $L$. 
\begin{itemiz}
\item	For $L<\pi-c$, Propositions~\ref{prop_cap_neumann_smallL_upper} 
and \ref{prop_cap_neumann_smallL_lower} yield a prefactor 
\begin{equation}
 \label{times04}
C(d,\eps) = 2\pi \biggpar{\frac{1}{\abs{\lambda_0}\nu_0^-} 
\prod_{k=1}^d \frac{\lambda_k}{\nu_k^-}}^{1/2} 
\end{equation} 
(recall that $\lambda_0=-1$). As $d\to\infty$, the product converges to an
infinite product which is finite, due to the fact that both $\lambda_k$
and $\nu_k^-$ grow like $(k\pi/L)^2$. Since the transition state is $u^*_0$, 
the exponent is given by 
\begin{equation}
 \label{times05}
H(d) = V[u^*_0] - V[u^*_-] = \abs{U(u_-)} 
\end{equation} 
and is independent of $d$. The error terms satisfy 
\begin{equation}
 \label{times06}
R^\pm_{d,B}(\eps) = \bigOrder{\eps^{1/2}\abs{\log\eps}^{3/2}} 
\end{equation} 
uniformly in $d$. 

\item	For $L>\pi+c$, Propositions~\ref{prop_cap_neumann_largeL_upper} 
and \ref{prop_cap_neumann_largeL_lower} yield a prefactor 
\begin{equation}
 \label{times07}
C(d,\eps) = \pi \biggpar{\frac{1}{\abs{\mu_0(d)}\nu_0^-} 
\prod_{k=1}^d \frac{\mu_k(d)}{\nu_k^-}}^{1/2}\;, 
\end{equation}
where the eigenvalues $\mu_k(d)$ depend on $d$. They converge, as $d\to\infty$,
to those of the Hessian at the transition state $u^*_{1,+}$, by the
implicit-function theorem argument given in Proposition~\ref{prop_tpot}. The
exponent is given by 
\begin{equation}
 \label{times08}
H(d) = \Vhat(P_\pm(d)) - V[u^*_-]\;,
\end{equation}
and converges to $V[u^*_{1,+}] - V[u^*_-]$ as $d\to\infty$. 
The error terms satisfy~\eqref{times06} as well.

\item	For $L\leqs\pi$, the value of the prefactor follows from
Propositions~\ref{prop_cap_neumann_Lpi_upper}
and~\ref{prop_cap_neumann_Lpi_lower}. Using the computations
of~\cite[Section~5.4]{BG2010} to determine the integral in~\eqref{cap_Lpi_07}
(note that our $C_4$ is equal to half the $C_4$ in that reference), we get 
\begin{equation}
 \label{times09}
C(d,\eps) = 2\pi \biggpar{\frac{1}{\abs{\lambda_0}\nu_0^-} 
\frac{\lambda_1+\sqrt{C_4\eps}}{\nu_1^-} 
\prod_{k=2}^d \frac{\lambda_k}{\nu_k^-}}^{1/2}
\frac{1}{\Psi_+(\lambda_1/\sqrt{C_4\eps}\,)}\;, 
\end{equation} 
where $\Psi_+$ is the function defined in~\eqref{psiplus}. The exponent is
still given by~\eqref{times05}, while the error terms are of the form 
\begin{equation}
 \label{times10}
R^\pm_{d,B}(\eps) = \biggOrder{\biggbrak{\frac{\eps\abs{\log\eps}^3}
{\lambda_1\vee\sqrt{\eps\abs{\log\eps}}}}^{1/2}}\;.
\end{equation} 

\item	For $L\geqs\pi$, again by 
Propositions~\ref{prop_cap_neumann_Lpi_upper}
and~\ref{prop_cap_neumann_Lpi_lower} and~\cite[Section~5.4]{BG2010}, 
\begin{equation}
 \label{times11}
C(d,\eps) = 2\pi \biggpar{\frac{1}{\abs{\mu_0(d)}\nu_0^-} 
\frac{\mu_1(d)+\sqrt{C_4\eps}}{\nu_1^-} 
\prod_{k=2}^d \frac{\mu_k(d)}{\nu_k^-}}^{1/2}
\frac{1}{\Psi_-(\mu_1(d)/\sqrt{C_4\eps}\,)}\;, 
\end{equation} 
where $\Psi_+$ is the function defined in~\eqref{psiminus}. The exponent is
again given by~\eqref{times08}, and the error terms satisfy~\eqref{times10}. 
\end{itemiz}

The expressions are similar for periodic b.c.



\appendix

\section{Monotonicity of the period}
\label{app_A} 

Consider the Hamiltonian system defined by the Hamiltonian~\eqref{ODE02}. 
Let $T(E)$ be the period of its periodic solution with energy $E$, given
by~\eqref{ODE04}. The following lemma provides a sufficient condition for $T$
being increasing in $E$.

\begin{lemma}
\label{lem_appendixA}
Assume that 
\begin{equation}
 \label{appA01}
U'(u)^2 - 2U(u)U''(u) > 0 
\qquad
\text{for all $u\in(u_-,u_+)\setminus\set{0}$\;.} 
\end{equation} 
Then $T(E)$ is strictly increasing on $[0,E_0)$. 
\end{lemma}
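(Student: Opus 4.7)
The plan is to desingularize the period integral by a change of variables that renders the turning points energy-independent, and then differentiate directly. On the right half of the orbit I would introduce $w=\sqrt{-U(u)}$ for $u\in[0,u_+)$; since $U$ is strictly decreasing there, the implicit function theorem gives a smooth inverse $\phi_+(w)$ satisfying $U(\phi_+(w))=-w^2$, with $\phi_+(0)=0$. The analogous construction on $(u_-,0]$ produces a smooth map $\phi_-(w)$ with $\phi_-(0)=0$. Substituting first $w=\sqrt{-U(u)}$ in~\eqref{ODE04} and then $w=\sqrt{E}\sin\theta$ converts each turning-point singularity into a Jacobian factor that cancels exactly, yielding
\begin{equation*}
T(E) = \sqrt{2}\int_{0}^{\pi/2}\bigl[\phi_+'(\sqrt{E}\sin\theta)-\phi_-'(\sqrt{E}\sin\theta)\bigr]\,d\theta\;,
\end{equation*}
an integral over a fixed interval with a smooth integrand. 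From $U''(0)=-1$ one reads off $\phi_\pm'(0)=\pm\sqrt{2}$, recovering $T(0)=2\pi$ in the limit $E\to 0$.

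With the singularity removed, differentiation under the integral sign is legitimate and gives
\begin{equation*}
T'(E) = \frac{1}{\sqrt{2E}}\int_{0}^{\pi/2}\bigl[\phi_+''(\sqrt{E}\sin\theta)-\phi_-''(\sqrt{E}\sin\theta)\bigr]\sin\theta\,d\theta\;,
\end{equation*}
so the claim reduces to showing $\phi_+''(w)>0>\phi_-''(w)$ for all $w\in(0,\sqrt{E_0}\,)$. Differentiating the defining relation $U(\phi_\pm(w))=-w^2$ twice and eliminating $\phi_\pm'$ using the first-order relation yields the closed-form expression
\begin{equation*}
\phi_\pm''(w) = -\frac{2\bigl[U'(u)^2-2U(u)U''(u)\bigr]}{U'(u)^3}\bigg|_{u=\phi_\pm(w)}\;.
\end{equation*}
Hypothesis~\eqref{appA01} makes the bracketed numerator strictly positive off the origin, while $U'<0$ on $(0,u_+)$ and $U'>0$ on $(u_-,0)$ fix the sign of the denominator on the two respective branches. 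This delivers $\phi_+''(w)>0>\phi_-''(w)$ and hence $T'(E)>0$.

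The only real subtlety is the first step, the desingularization: one must check that $\phi_\pm$ extend smoothly enough to $w=0$ (where $U'$ vanishes, so the implicit function theorem does not apply directly) and that the two successive substitutions are admissible up to the endpoints. This is routine given $U\in\cC^3$ with $U''(0)=-1$, since the expansion $U(u)=-u^2/2+\Order{u^3}$ gives $\phi_\pm(w)=\pm\sqrt{2}\,w+\Order{w^2}$, so that $\phi_\pm'$ and $\phi_\pm''$ are bounded at the origin. Once this is in place, the monotonicity criterion is an immediate consequence of the sign structure that the hypothesis~\eqref{appA01} imposes on $U$.
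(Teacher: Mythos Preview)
Your proof is correct and follows essentially the same route as the paper's: both desingularize the period integral via the substitution $-U(u)=E\cos^2\varphi$ (equivalently your $w=\sqrt{-U(u)}$ followed by $w=\sqrt{E}\sin\theta$), differentiate under the integral, and arrive at the same decisive factor $\bigl[U'(u)^2-2U(u)U''(u)\bigr]/U'(u)^3$ whose sign is fixed by the hypothesis. The only cosmetic difference is that the paper keeps a single $E$-dependent inverse $f_E(\varphi)$ on $[0,\pi]$, whereas you split into two $E$-independent branches $\phi_\pm$ and push the energy into the argument $\sqrt{E}\sin\theta$.
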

\begin{proof}
We parametrize the upper half of the periodic orbit by 
\begin{align}
\nonumber
u' &= \sqrt{2E} \sin\varphi\;, \\
-U(u) &= E \cos^2\varphi\;,
\label{appA02} 
\end{align}
where $\varphi\in[0,\pi]$. The second relation can be inverted, writing
$u=f_E(\varphi)$, where the function $f_E:[0,\pi]\to[u_2(E),u_3(E)]$ is
increasing and maps $[0,\pi/2]$ on $[u_2,0]$ and $[\pi/2,\pi]$ on $[0,u_3]$. 
Differentiating the relation $E\cos^2\varphi=-U(f_E(\varphi))$ shows that 
\begin{equation}
 \label{appA03}
\dpar{f_E}{\varphi} = \frac{2E\sin\varphi\cos\varphi}{U'(f_E(\varphi))}\;, 
\qquad
\dpar{f_E}{E} = -\frac{\cos^2\varphi}{U'(f_E(\varphi))}\;.
\end{equation} 
The period is given by  
\begin{equation}
\label{appA04} 
\frac{T(E)}{2} = \int_{u_2(E)}^{u_3(E)} \frac{\6u}{u'}
= \int_0^\pi \frac{\sqrt{2E}\cos\varphi}{U'(f_E(\varphi)))}
\6\varphi\;.
\end{equation} 
By~\eqref{appA03} we have 
\begin{equation}
 \label{appA05}
\dtot{}{E} \biggpar{\frac{\sqrt{2E}\cos\varphi}{U'(f_E(\varphi))} }
= \bigbrak{U'(f_E(\varphi))^2 - 2U(f_E(\varphi))U''(f_E(\varphi))}
\frac{\cos\varphi }{\sqrt{2E}\,U'(f_E(\varphi))^3}\;.
\end{equation} 
Since $\cos\varphi$ and $-U'(f_E(\varphi)$ have the same sign, the
assumption~\eqref{appA01} implies that the integral~\eqref{appA04} is strictly
increasing in $E$. 
\end{proof}

\small
\bibliography{../../BFG}
\bibliographystyle{amsalpha}               

\goodbreak
\bigskip\bigskip\noindent
{\small 
\noindent
Nils Berglund \\ 
Universit\'e d'Orl\'eans, Laboratoire {\sc Mapmo} \\
{\sc CNRS, UMR 6628} \\
F\'ed\'eration Denis Poisson, FR 2964 \\
B\^atiment de Math\'ematiques, B.P. 6759\\
45067~Orl\'eans Cedex 2, France \\
{\it E-mail address: }{\tt nils.berglund@univ-orleans.fr}

\bigskip\noindent
Barbara Gentz \\ 
Faculty of Mathematics, University of Bielefeld \\
P.O. Box 10 01 31, 33501~Bielefeld, Germany \\
{\it E-mail address: }{\tt gentz@math.uni-bielefeld.de}

}


\end{document}